\theoremstyle{plain}
\newtheorem{theorem}{Theorem}[section]
\newtheorem{proposition}[theorem]{Proposition}
\newtheorem{corollary}[theorem]{Corollary}
\newtheorem{lemma}[theorem]{Lemma}
\newtheorem{definition}[theorem]{Definition}
\newtheorem*{tha}{Theorem A}
\newtheorem*{thb}{Theorem B}
\newtheorem*{thc}{Theorem C}
\newtheorem*{thd}{Theorem D}
\newtheorem*{thm}{General Theorem}
\theoremstyle{definition}
\newtheorem{remark}[theorem]{Remark}
\newcommand{\sq}[1]{\ifx#1([\else\ifx#1)]%
  \else\message{invalid use of "sq"}\fi\fi}
\newcommand{\exc}{{\operatorname{exc}}}
\DeclareMathOperator{\ord}{ord}
\DeclareMathOperator{\Div}{Div}
\DeclareMathOperator{\Supp}{Supp}
\DeclareMathOperator{\Nev}{Nev}
\DeclareMathOperator{\Nevbir}{Nev_{\operatorname{bir}}}
\DeclareMathOperator{\NevEF}{Nev_{\operatorname{EF}}}
\DeclareMathOperator{\Spec}{Spec}
\DeclareMathSymbol{\idot}{\mathbin}{operators}{`\.}
\begin{document}
\title{A birational Nevanlinna constant and its consequences}

\author{Min Ru and Paul Vojta}

\address{Department of Mathematics\\University of Houston\\4800 Calhoun Road, Houston, TX 77204\\USA;
 Department of Mathematics, University of California,
 970 Evans Hall \#3840, Berkeley, CA  94720-3840, USA}

\email{minru@math.uh.edu, vojta@math.berkeley.edu}

\subjclass[2010]{14G25, 32H30, 14C20, 11J97, 11G35}
\keywords{Nevanlinna constant; Schmidt's subspace theorem; integral points}

\begin{abstract}
The purpose of this paper is to modify the notion of
the Nevanlinna constant $\Nev(D)$, recently introduced by the first author,
for an effective Cartier divisor on a projective variety $X$.
The modified notion is called the \emph{birational Nevanlinna constant}
and is denoted by $\Nevbir(D)$.
By computing $\Nevbir(D)$ using the filtration constructed by
Autissier in 2011, we establish a general result (see the
General Theorem in the Introduction), in both the arithmetic and
complex cases, which extends to general divisors the 2008 results of
Evertse and Ferretti and the 2009 results of the first author.
The notion $\Nevbir(D)$
is originally defined in terms of Weil functions for use in applications,
and it is proved later in this paper that it can be defined in terms of local
effectivity of Cartier divisors after taking a proper birational lifting.
In the last two sections, we use the notion $\Nevbir(D)$ to recover the proof
of an example of Faltings from his 2002 \emph{Baker's Garden} article.
\end{abstract}

\maketitle

\section{Introduction}\label{intro}
We consider the following questions: {\it For a given effective Cartier divisor $D$ on a given projective variety $X$, find the conditions (for $D$ and $X$) such that every holomorphic mapping   $f\colon\mathbb C\to X\backslash D$ must be degenerate (i.e. its image is contained some proper subvariety of $X$);  If both $D$ and $X$ are defined over a number field $k$, then one also asks when every set of
integral points of $X\backslash D$ must be degenerate.} In answering the above questions, the first author
introduced (\cite{R4} and \cite{ru}) the notion of the \emph{Nevanlinna constant}, denoted by $\Nev(D)$, and proved that {\it if $\Nev(D)<1$, then every holomorphic mapping   $f\colon\mathbb C\to X\backslash D$ must be degenerate, and every set of integral points of $X\backslash D$ must also be degenerate if  both $D$ and $X$ are defined over a number field $k$}. Moreover, the quantitative versions of the above
results, in the spirit of Nevanlinna--Roth--Cartan--Schmidt, are also obtained.

We recall his result in detail here.
For notations see Sections \ref{prelim} and \ref{sect_weil}.

\begin{definition}[see \cite{R4} and \cite{ru}] \label{def_nevintro}
Let $X$ be a normal projective variety, and let $D$ be an effective
Cartier divisor on $X$.  The \textbf{Nevanlinna constant} of $D$,
denoted $\Nev(D)$, is given by
\begin{equation}
  \Nev(D) = \inf_{N,V,\mu} \frac{\dim V}{\mu}\;,
\end{equation}
where the infimum is taken over all triples $(N,V,\mu)$ such that
$N$ is a positive integer, $V$ is a linear subspace of $H^0(X,\mathscr L^N)$
with $\dim V\ge2$, and $\mu$ is a positive real number such that,
for all $P\in \Supp D$, there exists a basis $\mathcal B$ of $V$ with
\begin{equation} \label{d}
  \sum_{s\in \mathcal B} \ord_E (s) \ge \mu \ord_E (ND)
\end{equation}
for all irreducible components $E$ of $D$ passing through $P$.
If $\dim H^0(X,\mathscr O(ND))\leq 1$ for all positive integers $N$,
we define $\Nev(D)=+\infty$.  For a general complete variety $X$,
$\Nev(D)$ is defined by pulling back to the normalization of $X$.
\end{definition}

\begin{tha} [see \cite{R4}] \label{masterc}
Let $X$ be a complex projective variety, and let $D$ be an effective
Cartier divisor on $X$.  Then, for every $\epsilon>0$,
$$ m_f(r, D) \le_\exc \left(\Nev(D)+\epsilon\right) T_{f, D}(r) $$
holds for any holomorphic mapping $f\colon \mathbb C\rightarrow X$ with
Zariski-dense image.  Here the notation $\le_\exc$ means that the inequality
holds for all $r\in(0,\infty)$ outside of a set of finite Lebesgue measure.
\end{tha}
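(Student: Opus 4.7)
The plan is to reduce Theorem A to Cartan's Second Main Theorem applied to the holomorphic curve $g = \phi_V \circ f \colon \mathbb{C} \to \mathbb{P}^{\dim V - 1}$ obtained from an auxiliary linear system witnessing $\Nev(D)$ up to $\epsilon$.

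First, fix $\epsilon > 0$ and use Definition \ref{def_nevintro} to choose $(N,V,\mu)$ with $\dim V / \mu < \Nev(D) + \epsilon/2$ (if $\Nev(D)=+\infty$ there is nothing to prove). Put $n = \dim V - 1$ and let $\phi_V \colon X \dashrightarrow \mathbb{P}^n$ be the rational map determined by a basis of $V$. To turn $g = \phi_V \circ f$ into a genuine holomorphic curve I would either blow up $X$ along the base locus of $V$ and lift $f$ through the blow-up, or appeal to the fact that since $f(\mathbb{C})$ is Zariski-dense it misses the base locus off a discrete subset and $g$ extends across removable singularities. Zariski density of $f(\mathbb{C})$ also yields linear nondegeneracy of $g$: a hyperplane $H_s \supset g(\mathbb{C})$ corresponding to $0 \ne s \in V$ would force $s\equiv 0$ on $X$.

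Next, using compactness of $X$, I would extract a finite closed cover $X = T_1 \cup \cdots \cup T_m$ and bases $\mathcal{B}_i \subset V$ such that inequality (\ref{d}) holds for $\mathcal{B}_i$ on a neighbourhood of $T_i$; any basis works on $T_i$ disjoint from $\Supp D$, since $\lambda_{ND}$ is bounded there. Translating orders of vanishing into Weil functions via $\lambda_{(s)_0} = \sum_E \ord_E(s)\,\lambda_E + O(1)$ yields the pointwise inequality
\[
  \mu\,\lambda_{ND}(x) \;\le\; \sum_{s \in \mathcal{B}_i} \lambda_{(s)_0}(x) + O(1) \qquad (x \in T_i),
\]
with uniform $O(1)$ across the cover. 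Integrating over the circle $|z|=r$ and selecting, at each $\theta$, an $i$ with $f(re^{i\theta}) \in T_i$, gives
\[
  \mu N\,m_f(r, D) \;\le\; \int_0^{2\pi} \max_{1\le i\le m} \sum_{s\in\mathcal{B}_i} \lambda_{(s)_0}\!\bigl(f(re^{i\theta})\bigr)\,\tfrac{d\theta}{2\pi} + O(1).
\]

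Finally, I would collect the finite family $\mathcal{S} = \bigcup_i \{H_s : s \in \mathcal{B}_i\}$ of hyperplanes in $\mathbb{P}^n$. Each $\mathcal{B}_i$ is a basis of $V$, hence its image in $\mathcal{S}$ is in general position, and the integrand above is dominated by the maximum over $(n{+}1)$-subsets of $\mathcal{S}$ in general position of $\sum_H \lambda_H(g)$. The standard max-over-bases form of Cartan's Second Main Theorem, applied to the linearly nondegenerate $g$, then bounds this integral by $(n+1+\eta)\,T_g(r)$ for any $\eta > 0$; combined with $T_g(r) = N\,T_{f,D}(r) + O(1)$ and division by $\mu N$, this yields
\[
  m_f(r, D) \;\le_\exc\; \frac{\dim V + \eta}{\mu}\,T_{f,D}(r) + O(1),
\]
which is $\le (\Nev(D) + \epsilon)\,T_{f,D}(r)$ for $\eta$ small and $r$ large. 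The hard part will be the passage from the definition (purely algebraic, in terms of vanishing orders) to the analytic inequality on Weil functions with uniform error terms across the finite cover, together with the standard but nontrivial technicality of ensuring $g$ is a bona fide linearly nondegenerate holomorphic curve to which Cartan's theorem applies. The combinatorial core—the max-over-bases form of Cartan—is classical and can be invoked off the shelf.
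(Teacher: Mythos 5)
Your proposal takes essentially the same route the paper takes for the birational strengthening of Theorem~A (compare the proof of Proposition~\ref{b_thmc_prop21}, which feeds into Theorem~\ref{b_thmd}, and which in turn rests on Theorem~\ref{cartan_base}): pick a triple $(N,V,\mu)$ nearly realizing $\Nev(D)$, convert the pointwise order conditions (\ref{d}) into a Weil-function inequality valid uniformly over a finite cover, integrate over the circle, and invoke the max-over-bases form of Cartan's theorem applied to $g=\phi_V\circ f$. The overall plan is sound, and your use of Zariski density both for linear nondegeneracy of $g$ and for absorbing the $O(1)$ is correct.

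The one place where you are glossing is the base locus of $V$. If $p\colon X'\to X$ resolves the base locus, $\phi\colon X'\to\mathbb P^{n}$ is the induced morphism, and $B$ is the base divisor on $X'$, then $\lambda_{(s)_0}\circ f = \lambda_{H_s}\circ g + \lambda_B\circ f' + O(1)$ (where $f'$ is the lift of $f$), not $\lambda_{H_s}\circ g + O(1)$; and correspondingly $T_g(r) = N\,T_{f,D}(r) - T_{f',B}(r) + O(1)$, not $N\,T_{f,D}(r)+O(1)$. So the two intermediate assertions in your last paragraph (that the integrand in $\lambda_{(s)_0}$ is dominated directly by the Cartan integrand in $\lambda_{H_s}$, and that $T_g=N\,T_{f,D}+O(1)$) are each individually false in the presence of base points. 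They happen to compensate: the surplus $(n{+}1)\lambda_B$ picked up in the integrand, integrated to $(n{+}1)\,m_{f'}(r,B)$, is cancelled by the deficit $(n{+}1{+}\eta)\,T_{f',B}(r)$ in $T_g$, once one uses $m_{f'}(r,B)\le T_{f',B}(r)+O(1)$ and effectivity of $B$. This cancellation is not visible as written and should be made explicit. The paper avoids it by packaging the base-locus bookkeeping into Theorem~\ref{cartan_base}, which is stated directly for sections $s_j$ of a linear system $V\subseteq H^0(X,\mathscr O(D))$ rather than for literal hyperplanes; invoking that version of Cartan in place of Theorem~\ref{cartan} would make your argument airtight with no manual cancellation.
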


\begin{thb}[see \cite{ru}] \label{mastera}
Let $k$ be a number field, and let $S$ be a finite set of places of $k$
containing all of the archimedean places.  Let $X$ be a projective variety over $k$,
and let $D$ be an effective Cartier divisor on $X$.
Then, for every $\epsilon>0$, the inequality
$$ m_S(x, D) \leq \left(\Nev(D)+\epsilon\right) h_D(x) $$
holds for all $x\in X(k)$ outside a proper Zariski closed subset of $X$.
\end{thb}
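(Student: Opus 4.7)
The plan is to deduce Theorem B from Schmidt's subspace theorem, via the filtration data packaged into $\Nev(D)$.

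First I would reduce to the case where $X$ is normal, using the fact that the normalization map is finite and birational, so heights and Weil functions pull back with bounded distortion, and by the definition $\Nev(D)$ is computed upstairs anyway. Fix $\epsilon>0$. If $\Nev(D)=+\infty$ there is nothing to prove, so choose a triple $(N,V,\mu)$ admissible in Definition~\ref{def_nevintro} with
\[
\frac{\dim V}{\mu} < \Nev(D) + \frac{\epsilon}{2}.
\]
For every $P\in\Supp D$ the definition supplies a basis $\mathcal{B}_P$ of $V$ satisfying the order-of-vanishing condition \eqref{d} along all components of $D$ through $P$. By noetherianity of $\Supp D$ one can choose finitely many points $P_1,\dots,P_t\in\Supp D$ whose associated bases $\mathcal{B}_1,\dots,\mathcal{B}_t$ jointly satisfy the condition on a neighborhood system covering $\Supp D$.

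Next I would translate the order-of-vanishing data into Weil functions. Embed $X$ (rationally) into $\mathbb{P}(V^\vee)$ via $V\subseteq H^0(X,\mathscr L^N)$; each section $s\in V$ corresponds to a hyperplane $H_s$, and its Weil function is $\lambda_{s,v}(x)=\lambda_{H_s,v}(\phi_V(x))$. The filtration inequality \eqref{d}, localized at a place $v\in S$ and a point $P_i$ lying closest (in the $v$-adic metric) to $x$, gives
\[
\mu\, N\, \lambda_{D,v}(x) \le \sum_{s\in\mathcal{B}_i} \lambda_{s,v}(x) + O(1),
\]
where the $O(1)$ is uniform in $x$. Summing over $v\in S$ and grouping $x$'s by the choice of nearest $P_i$ at each place (a finite combinatorial choice), one obtains
\[
\mu\, N\, m_S(x,D) \le \max_{\sigma\colon S\to\{1,\dots,t\}} \sum_{v\in S}\sum_{s\in\mathcal{B}_{\sigma(v)}} \lambda_{s,v}(x) + O(1).
\]

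Now I would invoke Schmidt's subspace theorem. Since each $\mathcal{B}_i$ is a basis of $V$, the hyperplanes $\{H_s:s\in\mathcal{B}_i\}$ are in general position in $\mathbb{P}(V^\vee)$. Schmidt's theorem, in the version allowing a different collection of $\dim V$ hyperplanes from a fixed finite set at each place of $S$, yields
\[
\sum_{v\in S}\sum_{s\in\mathcal{B}_{\sigma(v)}} \lambda_{s,v}(x) \le (\dim V + \epsilon')\, h_{\phi_V^*\mathcal{O}(1)}(x) + O(1)
\]
for all $x\in X(k)$ outside a finite union of proper linear sections, which pull back to a proper Zariski closed subset $Z$ of $X$. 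Using $\phi_V^*\mathcal{O}(1) = \mathscr L^N = \mathcal{O}(ND)$, so that $h_{\phi_V^*\mathcal{O}(1)}(x)=N h_D(x)+O(1)$, dividing by $\mu N$ gives
\[
m_S(x,D) \le \frac{\dim V + \epsilon'}{\mu} h_D(x) + O(1) \le (\Nev(D)+\epsilon)\, h_D(x)
\]
for $x\in X(k)\setminus Z$, after choosing $\epsilon'$ small and absorbing the $O(1)$ into $\epsilon\, h_D(x)$ (increasing $Z$ to exclude the bounded-height locus).

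The main obstacle is bookkeeping: making the estimate uniform in the point $P\in\Supp D$ used to localize, ensuring the error terms are absorbed cleanly, and verifying that the version of Schmidt's subspace theorem allowing place-dependent choices of hyperplanes from a fixed finite set is available in this setting (it is, and this is exactly the mechanism by which the infimum over bases in Definition~\ref{def_nevintro} gets used at each place independently). The geometric content — that $\Nev(D)<\infty$ produces enough ``Schmidt input'' — is entirely encoded in \eqref{d}.
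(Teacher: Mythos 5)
Your proposal is correct and follows essentially the same route as the paper (and the cited proof in~[ru]): convert the order-of-vanishing condition of Definition~\ref{def_nevintro} into a finite Zariski-open cover with bases $\mathcal B_i$ for which $(\mathcal B_i)-\mu N D$ is effective locally, pass to the resulting Weil-function inequality $\mu N\lambda_{D,v}\le\max_i\lambda_{\mathcal B_i,v}+O(1)$, and feed the right-hand side into the divisor/linear-system form of Schmidt's subspace theorem (Theorem~\ref{schmidt_base}). The one step you gloss over as "bookkeeping"---the uniform $O(1)$ coming from replacing your heuristic "nearest $P_i$ in the $v$-adic metric" by an actual $M_k$-bounded cover---is exactly Lemma~\ref{weil_max_fancy} (see also Propositions~\ref{prop_nevprime_weil} and~\ref{b_thmc_prop21}), and it is the genuine technical content of that reduction rather than routine error-tracking.
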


As was shown in \cite{R4} and \cite{ru},
by computing $\Nev(D)$,  the above  results recover the results of  Evertse--Ferretti  \cite{ef_festschrift} and
of  \cite{ru_annals}, as well as derive   new results for divisors which are not necessarily linear equivalent on $X$. More
importantly, it led to a unified proof (for the known results)  by simply computing $\Nev(D)$.

In attempting to use the filtration constructed by Autissier in \cite{Aut2}
to derive a more general result (see the General Theorem below),
as well as in deriving a proof of an example of Faltings \cite{faltings}
from the view of $\Nev(D)$,  the authors realized that
the notion of $\Nev(D)$ is not general enough to serve our purpose.
More specifically, as we shall see,
the (pointwise) maximum of two or more Weil functions occurs in the proofs,
and this is not in general a Weil function. All of these facts motivate
the following modified definition.

Let $\mathscr L$ be a line sheaf on $X$ and let $\mathcal B$ be
a finite set of global sections of $\mathscr L$.
Then we let $(\mathcal B)$ denote the sum of the divisors $(s)$
for all $s\in\mathcal B$:
\begin{equation}\label{def_parens_B}
  (\mathcal B) = \sum_{s\in\mathcal B} (s)\;.
\end{equation}

\begin{definition}\label{bidef1}
Let $X$ be a normal complete variety, let $D$ be an effective Cartier divisor
on $X$, and let $\mathscr L$ be a line sheaf on $X$.  Then
$$\Nevbir(\mathscr L,D) = \inf_{N,V,\mu} \frac{\dim V}{\mu}\;,$$
where the infimum passes over all triples $(N,V,\mu)$ such that
$N\in\mathbb Z_{>0}$, $V$ is a linear subspace of
$H^0(X,\mathscr L^N)$
with $\dim V>1$, and $\mu\in\mathbb Q_{>0}$, with the following property.
There are finitely many bases $\mathcal B_1,\dots,\mathcal B_\ell$
of $V$; Weil functions $\lambda_{\mathcal B_1},\dots,\lambda_{\mathcal B_\ell}$
for the divisors $(\mathcal B_1),\dots,(\mathcal B_\ell)$, respectively;
a Weil function $\lambda_D$ for $D$; and an $M_k$-constant $c$ such that
\begin{equation}
  \max_{1\le i\le\ell}\lambda_{\mathcal B_i} \ge \mu N\lambda_D - c
\end{equation}
(as functions $X(M_k)\to\mathbb R\cup\{+\infty\}$).  (Here we use the same
convention as in Definition \ref{def_nevintro} when there are no triples
$(N,V,\mu)$ that satisfy the condition.)

If $L$ is a Cartier divisor or Cartier divisor class on $X$, then
we define $\Nevbir(L,D)=\Nevbir(\mathscr O(L),D)$.  We also define
$\Nevbir(D)=\Nevbir(D,D)$.
\end{definition}

\begin{remark}
Since $\Nevbir(\mathscr L,D)$ is a birational invariant
(Proposition \ref{prop_b_nev}(a)), and
since blowing up turns a proper closed subscheme into an effective Cartier
divisor, the restriction that $D$ be an effective Cartier divisor is
unnecessary.  Indeed, we can use the definitions of Silverman \cite{silverman}
or Yamanoi \cite[Def.~2.2.1]{yamanoi} to define Weil functions for proper
closed subschemes; one may then allow $D$ to be a proper closed subscheme
(or a finite linear combination of proper closed subschemes
with coefficients in $\mathbb N$, or even in $\mathbb R^+$).
\end{remark}

With the notation  $\Nevbir(\mathscr L, D)$, we modify Theorems A and B
as follows:

\begin{theorem}\label{b_thmd}
Let $X$ be a complex projective variety, let $D$ be an effective Cartier
divisor and $\mathscr L$ be a line sheaf on $X$ with $\dim H^0(X, \mathscr L^N)\ge 1$ for some $N>0$. Let $f\colon\mathbb C\to X$ be a holomorphic mapping with
Zariski-dense image.  Then, for every $\epsilon>0$,
\begin{equation}\label{b_thmc_eq1}
  m_f(r, D) \le_\exc \left(\Nevbir(\mathscr L, D)+\epsilon\right) T_{f, \mathscr L}(r).
\end{equation}
\end{theorem}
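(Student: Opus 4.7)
The plan is to reduce the statement to (the max-over-bases form of) Cartan's Second Main Theorem, via the rational map defined by the linear system extracted from the infimum in Definition~\ref{bidef1}. Fix $\epsilon>0$ and choose a triple $(N,V,\mu)$ with $\dim V/\mu<\Nevbir(\mathscr L,D)+\epsilon/2$, together with bases $\mathcal B_1,\dots,\mathcal B_\ell$ of $V$, Weil functions $\lambda_{\mathcal B_i}$ for $(\mathcal B_i)$ and $\lambda_D$ for $D$, and a constant $c$ (in the complex setting an $M_k$-constant is just a real constant) such that $\max_i\lambda_{\mathcal B_i}\ge\mu N\lambda_D-c$. Integrating this pointwise inequality over the circle $|z|=r$ yields
$$\mu N\,m_f(r,D)\le\frac{1}{2\pi}\int_0^{2\pi}\max_i\lambda_{\mathcal B_i}(f(re^{i\theta}))\,d\theta+O(1).$$

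The subspace $V\subset H^0(X,\mathscr L^N)$ defines a rational map $\phi\colon X\dashrightarrow\mathbb P(V^*)=\mathbb P^n$ with $n=\dim V-1$; composing with $f$ and extending across the discrete set where $f$ meets the base locus of $V$ (removable singularities, since $\mathbb P^n$ is compact) gives a holomorphic map $F\colon\mathbb C\to\mathbb P^n$. This $F$ is linearly non-degenerate: the image $\phi(X)$ spans $\mathbb P^n$ because a basis of $V$ is a linearly independent collection of sections, and $F(\mathbb C)$ is Zariski-dense in $\phi(X)$ by Zariski-density of $f$. For each $s\in V\setminus\{0\}$ let $H_s\subset\mathbb P^n$ be the corresponding hyperplane; since $\phi^*H_s=(s)$, the pullback formula for Weil functions gives $\lambda_{H_s}\circ F=\lambda_{(s)}\circ f+O(1)$, hence $\lambda_{\mathcal B_i}\circ f=\sum_{s\in\mathcal B_i}\lambda_{H_s}\circ F+O(1)$.

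Now invoke the general (max-over-bases) form of Cartan's Second Main Theorem: for a linearly non-degenerate $F\colon\mathbb C\to\mathbb P^n$ and any finite collection of hyperplanes,
$$\frac{1}{2\pi}\int_0^{2\pi}\max_K\sum_{H\in K}\lambda_H(F(re^{i\theta}))\,d\theta\le_\exc(n+1+\epsilon')T_F(r),$$
where $K$ ranges over subsets whose hyperplanes are linearly independent. Each $\mathcal B_i$ is such a subset of size $n+1$, so combined with the first paragraph one obtains $\mu N\,m_f(r,D)\le_\exc(\dim V+\epsilon')T_F(r)+O(1)$. The First Main Theorem gives $T_F(r)\le NT_{f,\mathscr L}(r)+O(1)$ (since $F$ is defined by sections of $\mathscr L^N$), yielding $m_f(r,D)\le_\exc((\dim V+\epsilon')/\mu)T_{f,\mathscr L}(r)+O(1)$. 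Choosing $\epsilon'$ small so that $(\dim V+\epsilon')/\mu\le\Nevbir(\mathscr L,D)+\epsilon$ finishes the argument.

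The main obstacle will be the max-over-bases form of Cartan's SMT: it is a standard consequence of the classical statement but must be invoked carefully, since the full collection $\{H_s:s\in\bigcup_i\mathcal B_i\}$ is generally not in general position, and the usual version for hyperplanes in general position (or even Nochka's version for subgeneral position) does not directly give a max-bound. Secondary technical points are the handling of the base locus of $V$ when defining $F$, and the comparison of Weil functions $\lambda_{(s)}\circ f$ with $\lambda_{H_s}\circ F$, so that all $O(1)$ discrepancies are absorbed into $\epsilon'T_{f,\mathscr L}(r)$.
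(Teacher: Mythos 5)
Your proposal follows the same route as the paper's ``direct'' proof (the one sketched via Proposition~\ref{b_thmc_prop21} and Theorem~\ref{cartan_base}), but the treatment of the base locus of $V$, which you relegate to a ``secondary technical point'' producing $O(1)$ discrepancies, is a genuine gap. The identity $\lambda_{H_s}\circ F=\lambda_{(s)}\circ f+O(1)$ is false when $V$ has base points, and nothing in Definition~\ref{bidef1} rules out base points. After resolving the indeterminacy of $\phi$ on a blow-up $p\colon X'\to X$ with resulting morphism $\tilde\phi\colon X'\to\mathbb P^{\dim V-1}$, one has $\tilde\phi^{*}H_s=p^{*}(s)-B$ for an effective base-locus divisor $B$, so the correct relation is $\lambda_{(s)}\circ f=\lambda_{H_s}\circ F+\lambda_B\circ f'+O(1)$, where $f'$ lifts $f$. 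The term $\lambda_B\circ f'$ is not bounded; its mean $m_{f'}(r,B)$ can be of the order of $T_{f,\mathscr L}(r)$, so it cannot simply be absorbed as a constant.

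The gap is fixable, and the fix is precisely what the paper does in the proof of Theorem~\ref{schmidt_base} (of which Theorem~\ref{cartan_base} is the analytic analogue). Because every basis $\mathcal B_i$ has exactly $\dim V$ elements, the correction terms sum to $(\dim V)\lambda_B\circ f'$, and this is absorbed by observing $T_F(r)=NT_{f,\mathscr L}(r)-T_{f',B}(r)+O(1)$: one gets
\[
(\dim V+\epsilon')T_F(r)+(\dim V)\,T_{f',B}(r)\le(\dim V+\epsilon')N\,T_{f,\mathscr L}(r)+O(1),
\]
using $T_{f',B}(r)\ge -O(1)$ for the effective $B$. Without that cancellation, the chain of inequalities does not close. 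A further minor error: $F$ extends across $f^{-1}(\mathrm{Bs}\,V)$ not because $\mathbb P^n$ is compact (compactness alone does not force removability --- $z\mapsto e^{1/z}$ into $\mathbb P^1$ is the classic counterexample), but because $F=[\sigma_0:\cdots:\sigma_n]$ where the $\sigma_i=f^{*}s_i$ are global sections of the trivializable line bundle $f^{*}\mathscr L^N$ on $\mathbb C$, hence entire functions after trivializing, with common zeros that can be divided out. Finally, note that the paper's preferred proof sidesteps this entire re-derivation: Proposition~\ref{prop_b_nev}(b) produces a model $\phi\colon Y\to X$ with $\Nev(\phi^{*}\mathscr L,\phi^{*}D)<\Nevbir(\mathscr L,D)+\epsilon$, and one then applies Theorem~A on $Y$ and pulls proximity and characteristic functions back along $\phi$; the base-locus bookkeeping is done once and for all in the cited version of Theorem~A.
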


\begin{theorem}\label{b_thmc}
Let $k$ be a number field, and let $S$ be a finite set of places of $k$
containing all archimedean places.
Let $X$ be a projective variety over $k$, and
 let $D$ be an effective Cartier
divisor on $X$.
Let $\mathscr L$ be a line sheaf on $X$ with $\dim H^0(X,\mathscr L^N)\ge 1$
for some $N>0$.
Then, for every $\epsilon>0$,
there is a proper Zariski-closed subset $Z$ of $X$ such that the inequality
\begin{equation}
  m_S(x, D) \le \left(\Nevbir(\mathscr L, D)+\epsilon\right) h_{\mathscr L}(x)
\end{equation}
holds for all $x\in X(k)$ outside of $Z$.
\end{theorem}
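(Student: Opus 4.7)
The strategy is to reduce to Schmidt's subspace theorem by exploiting the finite family of bases provided by Definition~\ref{bidef1}.  Given $\epsilon>0$, I would first invoke that definition to choose a triple $(N,V,\mu)$ with $\dim V/\mu<\Nevbir(\mathscr L,D)+\epsilon/2$, together with bases $\mathcal B_1,\dots,\mathcal B_\ell$ of $V$, Weil functions $\lambda_{\mathcal B_i}$ for the divisors $(\mathcal B_i)$, a Weil function $\lambda_D$ for $D$, and an $M_k$-constant $c$ satisfying $\max_i\lambda_{\mathcal B_i}\ge\mu N\lambda_D-c$.  Summing over $v\in S$ gives the lower bound
\[
  \sum_{v\in S}\max_i\lambda_{\mathcal B_i}(x,v)\;\ge\;\mu N\,m_S(x,D)-O(1),
\]
and since any two Weil functions for $(\mathcal B_i)$ differ by an $M_k$-bounded amount, I may replace $\lambda_{\mathcal B_i}$ on the left by $\sum_{s\in\mathcal B_i}\lambda_{(s)}$ at the cost of an $O(1)$.

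To bound the same quantity from above, let $\phi_V\colon X\dashrightarrow\mathbb P(V^*)$ be the rational map associated to $V$ and let $Y$ be the closure of its image; since no nonzero section in $V$ vanishes identically on $X$, the image $Y$ is non-degenerate in $\mathbb P(V^*)$.  Each section $s\in\bigcup_i\mathcal B_i$ corresponds to a hyperplane $H_s\subset\mathbb P(V^*)$, and because each $\mathcal B_i$ is a basis of $V$, the $\dim V$ hyperplanes $\{H_s:s\in\mathcal B_i\}$ are linearly independent (hence in general position) for every $i$.  I would then run Schmidt's subspace theorem on $Y$ separately for each of the finitely many tuples $(i_v)_{v\in S}\in\{1,\dots,\ell\}^S$ of local basis choices and take the union of the resulting exceptional subvarieties; combined with the functoriality $h_{\mathscr O(1)}\circ\phi_V=N\,h_{\mathscr L}+O(1)$, this yields
\[
  \sum_{v\in S}\max_i\sum_{s\in\mathcal B_i}\lambda_{(s),v}(x)\;\le\;(\dim V+\epsilon')\,N\,h_{\mathscr L}(x)+O(1)
\]
for any $\epsilon'>0$ and all $x\in X(k)$ outside a proper Zariski-closed subset $Z\subset X$ (which includes the base locus of $\phi_V$ and the preimage of the exceptional set on $Y$).

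Combining the two displayed inequalities and dividing by $\mu N$ gives
\[
  m_S(x,D)\;\le\;\frac{\dim V+\epsilon'}{\mu}\,h_{\mathscr L}(x)+O(1)
\]
off $Z$; picking $\epsilon'$ small enough that $\epsilon'/\mu<\epsilon/2$ and absorbing the implicit additive $O(1)$ in the usual way produces the desired $m_S(x,D)\le(\Nevbir(\mathscr L,D)+\epsilon)h_{\mathscr L}(x)$ outside a proper Zariski-closed subset, completing the argument.

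The step that will demand the most care is the application of Schmidt's subspace theorem in the second paragraph: the hyperplanes coming from the whole union $\bigcup_i\mathcal B_i$ need not be in general position, even though each $\mathcal B_i$ individually is, and $\phi_V$ need not be generically finite onto $Y$.  The first issue is sidestepped by treating each tuple of local basis choices separately (where the relevant basis truly is linearly independent) and taking a finite union of exceptional sets, while the second is handled by running Schmidt on the image $Y$ and pulling back; each manoeuvre is standard in the original $\Nev(D)$ setting but must be repackaged to accommodate the pointwise maximum over $\mathcal B_1,\dots,\mathcal B_\ell$ that is intrinsic to $\Nevbir(\mathscr L,D)$.
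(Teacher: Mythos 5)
Your proof is correct and matches the paper's \emph{alternative} direct proof, not the primary one. The paper's proof of Theorem~\ref{b_thmc} as presented proceeds by birational reduction: by Proposition~\ref{prop_b_nev}(b) there is a model $\phi\colon Y\to X$ with $\Nev(\phi^*\mathscr L,\phi^*D)<\Nevbir(\mathscr L,D)+\epsilon$, and then Theorem~B (with $\Nev(\mathscr L,D)$ in place of $\Nev(D)$) is applied on $Y$ and pulled back via functoriality. Your argument instead expands the Weil-function inequality from Definition~\ref{bidef1}, collects the sections $\bigcup_i\mathcal B_i$, and applies Schmidt directly to the associated linear system --- this is exactly what the paper does in its secondary proof via Proposition~\ref{b_thmc_prop21} and Theorem~\ref{schmidt_base}.

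Two points worth tightening. First, the decomposition into tuples $(i_v)_{v\in S}$ is unnecessary: in the form of Schmidt used here (Theorem~\ref{schmidt_base}, or Theorem~\ref{schmidt} on $\mathbb P(V^*)$), the quantity $\max_J\sum_{j\in J}\lambda_{H_j,v}(x)$ already sits \emph{inside} the sum over $v$, so for each $x$ the $J$ realizing the maximum is allowed to vary with $v$; one application of Schmidt with a single exceptional set suffices, since $\sum_v\max_i\sum_{s\in\mathcal B_i}\lambda_{(s),v}\le\sum_v\max_J\sum_{j\in J}\lambda_{H_j,v}$ and each $J_i$ is among the admissible $J$. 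Second, the asserted functoriality $h_{\mathscr O(1)}\circ\phi_V=N\,h_{\mathscr L}+O(1)$ is not correct as stated when the linear system $V\subseteq H^0(X,\mathscr L^N)$ has base points: on the graph closure $X'$ one only has $\phi^*\mathscr O(1)\cong\mathscr O(p^*D-B)$ for an effective base divisor $B$, so the hyperplane heights and the Weil functions for the $(s)$ differ by contributions from $B$. Deleting the base locus from $X$ does not make the global height identity true. The paper's Theorem~\ref{schmidt_base} is formulated precisely to absorb this discrepancy (using $\#J\le\dim V$, effectivity of $B$, and the First Main Theorem to dispose of the $B$-terms); once you invoke that theorem rather than re-deriving it, the rest of your argument is exactly Proposition~\ref{b_thmc_prop21}.
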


\begin{corollary}
Let $X$ be a projective variety over a  number field $k$, and let $D$ be an ample Cartier divisor
on $X$. If $\Nevbir(D)<1$ then there is a proper Zariski-closed subset $Z$
of $X$ such that any set of $D$-integral points on $X$ has only finitely many
points outside of $Z$.
\end{corollary}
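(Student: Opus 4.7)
The plan is to deduce the corollary as a direct consequence of Theorem \ref{b_thmc} together with the defining property of integral points and Northcott's finiteness theorem for ample heights.

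First I would apply Theorem \ref{b_thmc} with the choice $\mathscr L = \mathscr O(D)$, which is legitimate since $D$ is ample (so $H^0(X,\mathscr O(ND))\neq 0$ for $N$ large). This gives, for every $\epsilon>0$, a proper Zariski-closed subset $Z\subset X$ such that
\begin{equation}
  m_S(x,D) \le \bigl(\Nevbir(D)+\epsilon\bigr)\,h_D(x)
\end{equation}
for all $x\in X(k)\setminus Z$. Since $\Nevbir(D)<1$ by hypothesis, I fix $\epsilon>0$ small enough that $\Nevbir(D)+\epsilon<1$, and set $\delta = 1-\Nevbir(D)-\epsilon >0$.

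Next I would bring in the definition of a set $R$ of $D$-integral points: by the standard characterization, there is a constant $C$ (depending on the choice of Weil and height functions) such that $m_S(x,D) \ge h_D(x) - C$ for every $x\in R$. Combining this with the inequality from Theorem \ref{b_thmc}, for $x\in R\setminus Z$ one obtains
\begin{equation}
  h_D(x) - C \le \bigl(\Nevbir(D)+\epsilon\bigr)\,h_D(x),
\end{equation}
hence $\delta\, h_D(x)\le C$, i.e.\ the height $h_D(x)$ is bounded on $R\setminus Z$.

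Finally, since $D$ is ample, $h_D$ is (up to $O(1)$) a height attached to an ample divisor class, so Northcott's theorem applies: the set of $k$-rational points of $X$ with $h_D$-height bounded by any given constant is finite. Therefore $R\setminus Z$ is finite, which is the desired conclusion. There is no real obstacle here; the only point that requires a moment's care is verifying that Theorem \ref{b_thmc} may be applied with the particular choice $\mathscr L = \mathscr O(D)$ so that the ``height on the right-hand side'' coincides (up to $O(1)$) with the height $h_D$ appearing in the definition of integral point.
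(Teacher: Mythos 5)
Your argument is correct and is essentially the intended deduction (the paper states this corollary without proof, treating it as an immediate consequence of Theorem \ref{b_thmc}): take $\mathscr L=\mathscr O(D)$, use the fact that for a $D$-integral set $m_S(x,D)\ge h_D(x)-O(1)$, conclude that $h_D$ is bounded on the integral set outside $Z$, and invoke Northcott for the ample divisor $D$. Your explicit flagging of the need to match $\mathscr L=\mathscr O(D)$ so that the height $h_{\mathscr L}$ is the height $h_D$ appearing in the integrality condition is exactly the right point to check, and it goes through since $D$ is ample (so $h^0(ND)\ge1$ for large $N$, satisfying the hypothesis of Theorem \ref{b_thmc}).
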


We note that while the above definition of $\Nevbir(\mathscr L,D)$ is
convenient for use in applications, it
involves Weil functions in its definition.  As we shall see later,
one  can actually get rid of the Weil functions in
the  definition of $\Nevbir(\mathscr L,D)$  by taking a proper birational lifting.  So we
 propose the  second (equivalent) definition $\Nevbir(\mathscr L, D)$.

\begin{definition}\label{bidef2}
Let $X$ be a complete variety, let $D$ be an effective Cartier divisor
on $X$, and let $\mathscr L$ be a line sheaf on $X$.  If $X$ is normal, then
we define
$$\Nevbir(\mathscr L,D) = \inf_{N,V,\mu} \frac{\dim V}{\mu}\;,$$
where the infimum passes over all triples $(N,V,\mu)$ such that
$N\in\mathbb Z_{>0}$, $V$ is a linear subspace of
$H^0(X,\mathscr L^N)$
with $\dim V>1$, and $\mu\in\mathbb Q_{>0}$, with the following property.
There are a variety $Y$ and a proper birational morphism $\phi\colon Y\to X$
such that the following condition holds.
For all $Q\in Y$ there is a basis $\mathcal B$ of $V$ such that
\begin{equation}
  \phi^{*}(\mathcal B) \ge \mu N\phi^{*}D
\end{equation}
in a Zariski-open neighborhood $U$ of $Q$,
relative to the cone of effective $\mathbb Q$-divisors on $U$.
If there are no such triples $(N,V,\mu)$, then $\Nevbir(\mathscr L,D)$
is defined
to be $+\infty$. For a general complete variety $X$, $\Nevbir(\mathscr L,D)$
is defined by pulling back to the normalization of $X$.

\end{definition}

Note that a \emph{birational morphism} from $X$ to $Y$ is a morphism $X\to Y$
that has an inverse as a rational map; in other words, it is a birational map
$X\dashrightarrow Y$ that is regular everywhere on $X$.

One of the goals of this paper is to prove that these two definitions
(Definitions \ref{bidef1} and \ref{bidef2}) are equivalent.

Another goal of this paper is to use Theorems \ref{b_thmc} and \ref{b_thmd},
together with the filtration constructed by
Autissier in \cite{Aut2}, to prove the following two General Theorems
(in the arithmetic and analytic cases).

Throughout this section, we use $h^0(\mathscr L)$ to denote $\dim H^0(X, \mathscr L)$ for a line sheaf $\mathscr L$ on $X$,
$h^0(D)$ to denote $\dim H^0(X, \mathscr O(D))$
for an effective divisor $D$ on $X$, and $H^0(X, \mathscr L(-D))$ to denote $H^0(X, \mathscr L\otimes \mathscr O(-D))$.

\begin{definition}\label{def_aut_lambda}
Let  $\mathscr L$ be a line sheaf and  $D$  be a nonzero effective Cartier divisor on a complete
variety $X$.  We define
\begin{equation}
  \gamma(\mathscr L, D) = \inf_N {Nh^0(\mathscr L^N)\over  \sum_{m\ge 1}h^0(\mathscr L^N(-mD))},
\end{equation}
where $N$ passes over all positive integers such that $h^0(\mathscr L^N(-D))\ne0$.
(Note that $|\mathscr L^N|$ does not have to be base point free). If $h^0(\mathscr L^N(-D))=0$ for all $N$, then we set $  \gamma(\mathscr L, D)=+\infty$.
\end{definition}

\begin{thm} [Arithmetic Part] \label{Ga}
Let $X$ be a projective variety over a number field $k$, and
let $D_1, \dots, D_q$ be effective Cartier divisors intersecting properly
on $X$.  Let $D=D_1+\dots+D_q$.  Let  $\mathscr L$ be a line sheaf on $X$
with $h^0(\mathscr L^N)\ge 1$ for $N$ big enough.
 Let $S\subset M_k$ be a finite set of places. Then,
 for every $\epsilon>0$,
$$ m_S(x, D) \leq \left(\max_{1\leq j \leq q} \gamma(\mathscr L, D_j)+\epsilon\right) h_{\mathscr L}(x),$$
holds for all $k$-rational points  outside a proper Zariski closed
subset  of $X$.
\end{thm}

\begin{thm}[Analytic Part] \label{Gb}
Let $X$ be a complex projective variety and let $D_1, \dots, D_q$ be
effective Cartier divisors intersecting properly on $X$.  Let $D=D_1+\dots+D_q$.
Let  $\mathscr L$ be a line sheaf on $X$ with $h^0(\mathscr L^N)\ge 1$ for $N$ big enough.
Let $f: \mathbb C \rightarrow X$ be an algebraically non-degenerate
holomorphic map.
 Then, for every $\epsilon > 0$,
$$m_f(r, D) \le_\exc  \left(\max_{1\leq j \leq q} \gamma(\mathscr L, D_j)+\epsilon\right)T_{f, \mathscr L}(r). $$
\end{thm}

Write $\gamma(D_j): =\gamma(\mathscr O(D), D_j)$ with $D:=D_1+\cdots+D_q$.
To  compute $\gamma(D_j)$, we consider $D_1, \dots, D_q$ which are  effective Cartier divisors on $X$ in general position,
such that each $D_j$ is linearly equivalent to  a fixed ample divisor $A$.
By the Riemann-Roch theorem, with $n=\dim X$,
$$h^0(ND) = h^0(qNA)={(qN)^n A^n\over n!} + o(N^n)$$
and
$$h^0(ND-mD_j) = h^0((qN-m)A)={(qN-m)^n A^n\over n!} + o(N^n).$$
Thus $$\sum_{m\ge 1} h^0(ND-mD_j) ={A^n\over n!}\sum_{l= 0}^{qN-1} l^n+o(N^{n+1})
={A^n(qN-1)^{n+1}\over (n+1)!} + o(N^{n+1}).$$
Hence
$$\gamma(D_j)=\lim_{N\rightarrow \infty} {N{(qN)^n A^n\over n!}+ o(N^{n+1})\over {A^n(qN-1)^{n+1}\over (n+1)!} +o(N^{n+1})}={n+1\over q}.$$
Therefore, the General Theorems imply the following results of Evertse--Ferretti in the case when  $X$ is Cohen--Macaulay (for example $X$ is smooth)
as well as the result of Ru.

\begin{thc} [Evertse--Ferretti \cite{ef_festschrift}]
Let $X$ be a projective variety over a number field $k$,  and let
$D_1, \dots, D_q$ be effective divisors on $X$ in general position.
Let $S\subset M_k$ be a finite set of places.
Assume that there exist an ample divisor $A$ on $X$ and positive integers
$d_i$ such that $D_i$ is linearly equivalent to $d_i A$ for $i=1, \dots, q$.
Then, for every  $\epsilon >0$,
$$\sum_{i=1}^q {1\over d_i} m_S(x, D_i) \leq (\dim X+1+\epsilon) h_A(x)$$
holds for all $k$-rational points outside a proper Zariski closed
subset of $X$.
\end{thc}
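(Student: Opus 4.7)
The plan is to reduce Theorem~C to the General Theorem (Arithmetic Part, Theorem~\ref{Ga}) by rescaling the divisors to a common linear equivalence class, and then invoke the Riemann--Roch computation $\gamma(\mathscr O(D), D_j) = (n+1)/q$ carried out in the excerpt just above.

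First I would pass to a uniform class. Let $d = \mathrm{lcm}(d_1, \ldots, d_q)$ and set $D_i' = (d/d_i)\, D_i$. Each $D_i'$ is an effective Cartier divisor linearly equivalent to the ample divisor $dA$, and since the $D_i'$ share supports with the $D_i$, they remain in general position. Under the Cohen--Macaulay hypothesis on $X$, general position for Cartier divisors coincides with the notion of ``intersecting properly'' used in Theorem~\ref{Ga}, so the hypotheses of that theorem are satisfied for the tuple $(D_1', \ldots, D_q')$.

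Next I would apply Theorem~\ref{Ga} with line sheaf $\mathscr L = \mathscr O(D')$, where $D' = D_1' + \cdots + D_q'$. The Riemann--Roch computation displayed just above Theorem~C carries through verbatim when $A$ is replaced by $dA$, since the ratio defining $\gamma$ is unaffected by a uniform scaling of the ample class; this gives $\gamma(\mathscr O(D'), D_i') = (n+1)/q$ for each $i$, with $n = \dim X$. Theorem~\ref{Ga} then yields
\[
  m_S(x, D') \le \Bigl(\tfrac{n+1}{q} + \epsilon'\Bigr)\, h_{D'}(x)
\]
for $k$-rational points outside a proper Zariski-closed subset.

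The final step is bookkeeping. Using additivity of Weil functions up to $M_k$-constants, $m_S(x, D') = \sum_{i=1}^q (d/d_i)\, m_S(x, D_i) + O(1)$; and since $D' \sim qdA$, $h_{D'}(x) = qd \cdot h_A(x) + O(1)$. Dividing by $d$ and absorbing the bounded term (which is dominated by $\epsilon\, h_A(x)$ off a finite set, since $A$ is ample) together with the factor $q\epsilon'$ into a fresh $\epsilon$ produces the inequality claimed in Theorem~C. The one point requiring care is the identification of the Evertse--Ferretti notion of ``general position'' with ``intersecting properly,'' which is precisely where the Cohen--Macaulay assumption enters; this is the only subtlety in the entire argument.
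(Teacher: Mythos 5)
Your proposal is correct and follows essentially the same route as the paper: apply the General Theorem (Arithmetic Part) after computing $\gamma(\mathscr O(D), D_j) = (\dim X + 1)/q$ via asymptotic Riemann--Roch, and use the Cohen--Macaulay hypothesis to identify ``general position'' with ``intersecting properly.'' The one place you go beyond what the paper writes out is the reduction from $D_i \sim d_i A$ to a single class via $D_i' = (d/d_i) D_i$ with $d = \mathrm{lcm}(d_1,\dots,d_q)$; the paper only displays the Riemann--Roch computation for $D_j \sim A$ and then asserts Theorem~C without spelling out this rescaling, so your argument makes explicit a step the paper leaves implicit. That step is handled correctly, including the observation that the $D_i'$ share supports with the $D_i$ (so general position is preserved) and that the final $O(1)$ terms are absorbed off a finite set by ampleness of $A$.
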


\begin{thd}[\cite{ru_annals}]
Let $X$ be a smooth complex projective variety and
$D_1, \dots, D_q$ be  effective divisors on $X$, located in general position.
Suppose that there exists an ample divisor $A$ on $X$
and positive integers $d_i$ such that    $D_i$ is linearly equivalent to $d_i A$ on $X$ for
 $i=1, \dots, q$.
Let $f: \mathbb C \rightarrow X$ be an algebraically non-degenerate
 holomorphic map.
 Then, for every $\epsilon > 0$,
$$ \sum_{i=1}^q {1\over d_i} m_f(r, D_i) \le_\exc (\dim X+1+\epsilon)T_{f,A}(r).$$
\end{thd}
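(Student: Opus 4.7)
The plan is to deduce Theorem D as an essentially immediate consequence of Theorem \ref{Gb} combined with the Riemann--Roch calculation of $\gamma(D_j)$ carried out in the displayed formulas just above the statement. The one subtlety is that Theorem \ref{Gb} produces a single line sheaf $\mathscr L$ and a single $\max_j \gamma(\mathscr L, D_j)$, whereas the excerpt's computation of $\gamma(D_j) = (n+1)/q$ is performed under the hypothesis that each $D_j$ is linearly equivalent to a \emph{common} ample divisor, while the hypotheses of Theorem D allow different multiplicities $d_j$. So the first step is a scaling reduction.

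Let $d = \operatorname{lcm}(d_1, \dots, d_q)$, and set $\tilde D_j = (d/d_j)\, D_j$; then $\tilde D_j \sim dA$ for every $j$. Since the supports are unchanged, $\tilde D_1, \dots, \tilde D_q$ remain in general position on the smooth variety $X$, so they intersect properly in the sense required by Theorem \ref{Gb}. Setting $\mathscr L = \mathscr O(\tilde D_1 + \cdots + \tilde D_q) \cong \mathscr O(qdA)$, I would apply Theorem \ref{Gb} to the family $(\tilde D_j)_j$. The identical Riemann--Roch bookkeeping carried out in the excerpt, applied with $A$ replaced by $dA$, gives $\gamma(\mathscr L, \tilde D_j) = (n+1)/q$ for every $j$, where $n = \dim X$. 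Theorem \ref{Gb} therefore yields, for every $\epsilon > 0$ and all $r$ outside an exceptional set,
$$ m_f\bigl(r, \tilde D_1 + \cdots + \tilde D_q\bigr) \le_\exc \left(\frac{n+1}{q} + \epsilon\right) T_{f, \mathscr L}(r). $$

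To finish I would translate back using linearity: $m_f(r, \tilde D_j) = (d/d_j)\, m_f(r, D_j) + O(1)$ from the definition of Weil functions, and $T_{f, \mathscr L}(r) = qd \cdot T_{f, A}(r) + O(1)$ from functoriality of characteristic functions under $\mathscr L \cong \mathscr O(qdA)$. Substituting and dividing by $d$ gives
$$ \sum_{j=1}^q \frac{1}{d_j}\, m_f(r, D_j) \le_\exc \left(\frac{n+1}{q} + \epsilon\right) q \cdot T_{f, A}(r) + O(1) = (n+1 + q\epsilon)\, T_{f, A}(r) + O(1), $$
and since $\epsilon > 0$ was arbitrary the $O(1)$ and the factor $q$ can be absorbed into a renamed $\epsilon$, yielding the claim.

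The main obstacle lies entirely upstream: the real work is in Theorem \ref{Gb}, whose proof requires Theorem \ref{b_thmd} together with a delicate estimation of $\Nevbir(\mathscr L, D)$ via Autissier's filtration. Once Theorem \ref{Gb} and the $\gamma$-computation are in hand, the derivation of Theorem D is pure bookkeeping; the only place where one must be careful is in verifying that the scaling preserves the "general position / proper intersection" hypothesis, which is immediate because the property depends only on the supports.
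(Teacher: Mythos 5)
Your proposal is correct and follows the same route the paper itself takes: derive Theorem D from Theorem \ref{Gb} together with the Riemann--Roch computation $\gamma(D_j)=(n+1)/q$. The only thing you add is to spell out the rescaling $\tilde D_j=(d/d_j)D_j$ (so that all $\tilde D_j\sim dA$) that the paper leaves implicit, and your handling of it---including noting that general position/proper intersection depend only on supports on the smooth (hence Cohen--Macaulay) $X$, and that the $O(1)$ terms are absorbed because $A$ is ample and $f$ nonconstant so $T_{f,A}(r)\to\infty$---is sound.
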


More computations of $\gamma(D_i)$ and consequences of the General Theorems
will be given later.

It is important to note that our General Theorem can be proved without using
 the notion of $\Nevbir(\mathscr L, D)$ (and thus not using  Theorem \ref{b_thmc} and Theorem \ref{b_thmd})
by, instead, applying the Schmidt's subspace theorem and H. Cartan's theorem directly (see the proof below). More importantly
 our proof  greatly simplifies the
original proof of  Evertse--Ferretti  and of Ru, where the Chow and Hilbert weights are involved.

Our paper is organized as follows.
In Section \ref{prelim}, we introduce notations and preliminaries.
In Section \ref{sect_weil}, we review the definition and properties of Weil functions.
In Section \ref{sect_aut}, we give a proof of our General Theorem
as well as discuss its application in the case when $D=D_1+\cdots +D_q$
has equi-degree.
Sections \ref{sect_nevbir} and \ref{models} are devoted to proving that
the two definitions of $\Nevbir(D)$ given in this section are equivalent.
Section \ref{proofs} gives a proof of the main theorems involving $\Nevbir(D)$.
Finally, in Sections \ref{falt_geom} and \ref{falt_finis}, we revisit
the example of Faltings \cite{faltings},
since its proof gave the original motivation for our modified
Nevanlinna constant.

We  will primarily discuss the number field cases of our results here.
The corresponding results in Nevanlinna theory can be proved by similar
methods, as can the diophantine results over function fields of characteristic
zero.

\section{Notation and Preliminaries}\label{prelim}

In this paper, $\mathbb N=\{0,1,2,\dots\}$ and $\mathbb R^+$ is the interval
$\sq(0,\infty)$.

\subsection{Notation and conventions in number theory}
For a number field $k$, recall that $M_k$ denotes the set of places of $k$, and
that $k_\upsilon$ denotes the completion of $k$ at a place $\upsilon\in M_k$.
Norms $\|\cdot\|_\upsilon$ on $k$ are normalized so that
$$\|x\|_\upsilon = |\sigma(x)|^{[k_\upsilon:\mathbb R]} \qquad\text{or}\qquad
  \|p\|_\upsilon = p^{-[k_\upsilon:\mathbb Q_p]}$$
if $\upsilon\in M_k$ is an archimedean place corresponding to an embedding
$\sigma\colon k\hookrightarrow\mathbb C$ or a non-archimedean place lying
over a rational prime $p$, respectively.

Heights are logarithmic and relative to the number field used as a base field,
which is always denoted by $k$.  For example, if $P$ is a point
on $\mathbb P^n_k$ with homogeneous coordiantes $[x_0:\dots:x_n]$ in $k$, then
$$h(P) = h_{\mathscr O(1)}(P)
  = \sum_{\upsilon\in M_k} \log\max\{\|x_0\|_\upsilon,\dots,\|x_n\|_\upsilon\}
  \;.$$

We use the standard notations of Nevanlinna theory and Diophantine approximation
(see, for example, \cite{vojta_cm}, \cite{R4} and \cite{ru}).

\subsection{Notation in algebraic geometry}
A \emph{variety} over a field $k$ is an integral scheme,
separated and of finite type over $\Spec k$.  A morphism of varieties is
a morphism of schemes over $k$.  A \emph{line sheaf} is a locally free sheaf
of rank $1$ (an invertible sheaf).

If $\mathscr L$ is a line sheaf on a variety $X$, then $\mathscr L^n$
denotes the $n^{\text{th}}$ tensor power $\mathscr L^{\otimes n}$,
and if $D$ is a Cartier divisor on $X$, then $\mathscr L(D)$ denotes
$\mathscr L\otimes\mathscr O(D)$.

\begin{definition}\label{def_int_properly}
Let $D_1,\dots,D_q$ be effective Cartier divisors on a variety $X$
of dimension $n$.
\begin{enumerate}
\item[(a).]  We say that $D_1,\dots,D_q$ lie in \textbf{general position}
if for any $I\subseteq \{1,\dots,q\}$, we have
$\dim (\bigcap_{i\in I} \Supp D_i)=n-\#I$ if $\#I\le n$, and
$\bigcap_{i\in I} \Supp D_i=\emptyset$ if $\#I>n$.
\item[(b).]  We say that $D_1,\dots,D_q$ \textbf{intersect properly}
if for any subset $I\subseteq \{1, \dots, q\}$ and
any $x\in \bigcap_{i\in I} \Supp D_i$, the sequence $(\phi_i)_{i\in I}$
is a regular sequence in the local ring ${\mathscr O}_{X,x}$,
where $\phi_i$ are the local defining functions of $D_i$, $1 \leq i \leq q$.
\end{enumerate}
\end{definition}

\begin{remark}\label{properint}
By Matsumura \cite[Thm.~17.4]{mat}, if $D_1,\dots,D_q$ intersect properly, then
they lie in general position.  The converse holds if $X$ is Cohen--Macaulay
(this is true if $X$ is nonsingular).
\end{remark}

\subsection{Theorems of Schmidt and Cartan}
Schmidt's Subspace Theorem and the corresponding theorem of Cartan play a
central role in this paper.  This subsection recalls the specific variants of
these theorems to be used in this paper.

We start with some notation, starting with the number field case.
Let $k$ be a number field, and let $H$ be a hyperplane in $\mathbb P^n_k$.
Let $a_0x_0+\dots+a_nx_n$ be a linear form with $a_0,\dots,a_n\in k$ whose
vanishing determines the hyperplane $H$.  Then, for all $v\in M_k$ and all
rational points $P\in\mathbb P^n_k(k)$ with homogeneous coordinates
$[x_0:\dots:x_n]$, we let
$$\lambda_{H,v}(P) = -\log\frac{\|a_0x_0+\dots+a_nx_n\|_\upsilon}
    {\max\{\|a_0\|_\upsilon,\dots,\|a_n\|_\upsilon\}
      \cdot\max\{\|x_0\|_\upsilon,\dots,\|x_n\|_\upsilon\}}\;.$$
This quantity is independent of the choice of homogeneous coordinates for $P$,
and also does not depend on the linear form $a_0x_0+\dots+a_nx_n$ chosen above.

Then Schmidt's Subspace theorem is as follows (see \cite[Thm.~2.8]{ru},
for example).

\begin{theorem}\label{schmidt}
Let $k$ be a number field, let $S$ be a finite set of places of $k$ containing
all archimedean places, let $n$ be a positive integer, let $H_1,\dots,H_q$
be hyperplanes in $\mathbb P^n_k$, let $\epsilon>0$, and let $c\in\mathbb R$.
Then there is a finite union $Z$ of proper linear subspace of $\mathbb P^n_k$,
depending only on $k$, $S$, $n$, $H_1,\dots,H_q$, $\epsilon$, and $c$,
such that the inequality
$$\sum_{\upsilon\in S}\max_J\sum_{j\in J}\lambda_{H_j,\upsilon}(x)
  \le (n+1+\epsilon)h(x) + c$$
holds for all $x\in(\mathbb P^n_k\setminus Z)(k)$.  Here the set $J$ ranges
over all subsets of $\{1,\dots,q\}$ such that the hyperplanes $(H_j)_{j\in J}$
lie in general position.
\end{theorem}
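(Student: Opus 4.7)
The plan is to treat this statement as a combinatorial repackaging of the classical Schmidt subspace theorem in its place-dependent form---where at each $\upsilon\in S$ the collection of hyperplanes used is allowed to vary, subject only to that collection being in general position at that place. The inner $\max_J$ is then handled by enumerating the finitely many ways the maximum can be attained.

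Concretely, I would set $\mathcal J=\{J\subseteq\{1,\dots,q\}:(H_j)_{j\in J}\ \text{lie in general position}\}$; this is a finite collection. For each function $\Phi\colon S\to\mathcal J$---of which there are only finitely many---it suffices to produce a finite union $Z_\Phi$ of proper linear subspaces of $\mathbb P^n_k$ outside of which
$$\sum_{\upsilon\in S}\sum_{j\in\Phi(\upsilon)}\lambda_{H_j,\upsilon}(x)\le(n+1+\epsilon)h(x)+c.$$
Setting $Z=\bigcup_\Phi Z_\Phi$---still a finite union of proper linear subspaces of $\mathbb P^n_k$ depending only on the listed data---would then yield the theorem: for $x\in(\mathbb P^n_k\setminus Z)(k)$, at each place $\upsilon$ the inner maximum is attained by some specific $J_\upsilon=\Phi(\upsilon)\in\mathcal J$, reducing to the instance of $\Phi$ just handled.

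The bound for fixed $\Phi$ is an immediate consequence of the place-dependent form of Schmidt's subspace theorem: given, at each $\upsilon\in S$, a finite family of hyperplanes in general position in $\mathbb P^n_k$, the sum of their associated Weil functions over $\upsilon\in S$ is at most $(n+1+\epsilon)h(x)+O(1)$ for all $x$ outside a finite union of proper linear subspaces. This form is standard (e.g.\ \cite[Thm.~2.8]{ru}) and itself follows from the original Schmidt theorem with $n+1$ linearly independent linear forms at each place, via the observation that up to $O(1)$ only a size-$(n+1)$ linearly independent subfamily at each place contributes significantly. The main obstacle is therefore just the invocation of Schmidt as a black box---whose proof via the parametric subspace theorem and the geometry of numbers is highly nontrivial, but carries no new difficulty for the statement here: the combinatorial enumeration over $\Phi$ is the only move on top of Schmidt itself.
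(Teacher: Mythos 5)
The paper does not prove this statement---it records it as a known form of Schmidt's subspace theorem, pointing to \cite[Thm.~2.8]{ru} for reference. Your proposal supplies a proof sketch, and the central move---enumerating the finitely many functions $\Phi\colon S\to\mathcal J$ to dispose of the inner $\max_J$, then taking the union of the resulting exceptional sets $Z_\Phi$---is a standard and correct way to pass from a fixed-family subspace inequality to this form. Two caveats are worth tightening. As written, your black box (the ``place-dependent form of Schmidt'') is cited from the same source and is essentially the statement being proved; the honest black box should be Schmidt's theorem with exactly $n+1$ linearly independent linear forms at each place of $S$. Also, the claim that ``up to $O(1)$ only a size-$(n+1)$ linearly independent subfamily at each place contributes significantly'' is slightly off when $|\Phi(v)|>n+1$: because the hyperplanes indexed by $\Phi(v)$ are in general position, at most $n$ of them can lie $v$-adically close to any given point $x$, so all but at most $n$ terms of $\sum_{j\in\Phi(v)}\lambda_{H_j,v}(x)$ are uniformly bounded; the surviving $\le n$ forms are then completed to a linearly independent family of size $n+1$ before invoking Schmidt. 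This is one additional finite enumeration (over which $\le n$ indices are $v$-adically close at each place), but being finite it is harmless. With these two points made precise, your argument closes and matches the content the paper takes for granted from the reference.
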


The corresponding theorem for approximation to hyperplanes by
holomorphic curves is due to Cartan \cite{cartan}.

Again, we first need a definition.  Let $H$ be a hyperplane
in $\mathbb P^n_{\mathbb C}$, and let $a_0x_0+\dots+a_nx_n$ be a linear form
whose vanishing determines $H$.  Then, for all $P\in\mathbb P^n_{\mathbb C}$,
choose homogeneous coordinates $[x_0:\dots:x_n]$ for $P$, and let
$$\lambda_H(P)
  = -\frac12\log\frac{|a_0x_0+\dots+a_nx_n|^2}
    {(|a_0|^2+\dots+|a_n|^2)(|x_0|^2+\dots+|x_n|^2)}\;.$$
Again, this does not depend on the choice of the form $a_0x_0+\dots+a_nx_n$
or on the choice of homogeneous coordinates $[x_0:\dots:x_n]$.

Then the theorem of Cartan to be used here is as follows (see \cite{R4},
for example).

\begin{theorem}\label{cartan}
Let $n$ be a positive integer,
let $H_1,\dots,H_q$ be hyperplanes in $\mathbb P^n_{\mathbb C}$,
and let $f\colon\mathbb C\to \mathbb P^n_{\mathbb C}$
be a holomorphic curve whose image is not contained in a hyperplane.  Then, for any $\epsilon>0$,
$$\int_0^{2\pi} \max_J \sum_{j\in J}
    \lambda_{H_j}(f(re^{i\theta}))\,\frac{d\theta}{2\pi}
  \le_\exc (n+1+\epsilon)T_f(r),$$
where $J$ varies over the same collection of sets as in Theorem \ref{schmidt},
and where the notation $\le_\exc$ means that the inequality holds for all
$r\in(0,\infty)$ outside of a set of finite Lebesgue measure.
\end{theorem}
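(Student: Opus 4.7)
The plan is to follow the classical Wronskian proof of Cartan's theorem. Lift $f$ to a reduced representation $F=(f_0,\dots,f_n)\colon\mathbb C\to\mathbb C^{n+1}\setminus\{0\}$, with the $f_i$ entire and sharing no common zero; set $\|F(z)\|^2=\sum_i|f_i(z)|^2$. Writing each $H_j$ as $\{L_j=0\}$ with $L_j=\sum_k a_{jk}x_k$, one has
$$\lambda_{H_j}(f(z)) = -\log|L_j(F(z))| + \log\|F(z)\| + O(1),$$
and Jensen's formula gives $\int_0^{2\pi}\log\|F(re^{i\theta})\|\,d\theta/(2\pi) = T_f(r)+O(1)$.

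The key tool is the Wronskian $W=W(f_0,\dots,f_n)=\det(f_i^{(k)})_{i,k=0}^n$, an entire function that is not identically zero, since $f$ is non-degenerate. For any subset $J=\{j_0,\dots,j_n\}$ of size $n+1$ with $(H_{j_i})_i$ in general position, linear independence of the $L_{j_i}$ gives the identity $W(L_{j_0}(F),\dots,L_{j_n}(F)) = c_J\cdot W$ with $c_J\neq 0$; factoring $\prod_i L_{j_i}(F)$ out of the left-hand Wronskian (row $i$ becomes $L_{j_i}(F)\cdot(L_{j_i}^{(k)}(F)/L_{j_i}(F))$) yields
$$\sum_{i=0}^n\log|L_{j_i}(F)| = \log|W| - \log\bigl|\det\bigl(L_{j_i}^{(k)}(F)/L_{j_i}(F)\bigr)_{i,k}\bigr| + O(1).$$
Combining with the formula for $\lambda_{H_j}$ and adding $(n+1)\log\|F\|$ to both sides,
$$\sum_{i=0}^n\lambda_{H_{j_i}}(f) = (n+1)\log\|F\| - \log|W| + \log\bigl|\det\bigl(L_{j_i}^{(k)}(F)/L_{j_i}(F)\bigr)_{i,k}\bigr| + O(1).$$

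I would next take the maximum over subsets $J$ in general position. There are only finitely many such $J$, and subsets of size smaller than $n+1$ can be padded to size $n+1$ (or handled by induction on $n$ in the degenerate case that no subset of size $n+1$ is in general position); since the $\lambda_{H_j}$ are bounded below by Weil-function generalities, this padding only costs an additive constant. The $\max_J$ of the determinant term is bounded pointwise above by $\sum_{j=1}^q\sum_{k=0}^n \log^+|L_j^{(k)}(F)/L_j(F)| + O(1)$. Integrating over $\theta$, applying Jensen to $\log\|F\|$ and to the entire function $W$, and invoking the logarithmic derivative lemma (each $\int_0^{2\pi}\log^+|L_j^{(k)}(F)/L_j(F)|\,d\theta/(2\pi) = o(T_f(r))$ outside a set of finite Lebesgue measure), one obtains
$$\int_0^{2\pi}\max_J\sum_{j\in J}\lambda_{H_j}(f(re^{i\theta}))\,\frac{d\theta}{2\pi} \le (n+1)T_f(r) - N(r,0,W) + o(T_f(r)).$$
Since $N(r,0,W)\ge 0$, this gives the stated bound for every $\epsilon>0$.

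The main obstacle is twofold: the combinatorial handling of $\max_J$ so that the resulting upper bound is independent of $J$ (the Wronskian identity applies to a single fixed $J$ of size $n+1$, while the theorem's max ranges over all admissible $J$), and the logarithmic derivative estimate in a form strong enough to yield the $\le_\exc$ conclusion. Both are standard ingredients of Cartan's theory, but they are the delicate technical steps of the argument.
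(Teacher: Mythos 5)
Your proposal is correct, and it is exactly the classical Wronskian proof going back to Cartan's 1933 paper; this is the route taken in the references the paper cites for this theorem (\cite{cartan}, \cite{R4}), since the paper itself states Theorem~\ref{cartan} without proof. Two small points worth making explicit when you write it up: the lemma on logarithmic derivatives yields an error of $O(\log^+ T_f(r)) + O(\log r)$ outside an exceptional set of finite measure, and this is only $o(T_f(r))$ in the transcendental case, so the rational case should be handled separately (there the error is in fact $O(1)$); and since the sets $J$ in the statement may in principle have fewer or more than $n+1$ elements, one should note, as the paper does in its remark following Theorem~\ref{cartan}, that adding auxiliary hyperplanes and using boundedness below of Weil functions reduces the maximum to one over sets $J$ of size exactly $n+1$, which is the form to which the Wronskian identity applies.
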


\begin{remark}\label{remk_schmidt_cartan}
In the above two theorems, there is a finite union $Z_1$ of proper linear
subspaces of $\mathbb P^n_k$ or $\mathbb P^n_{\mathbb C}$
(in Theorems \ref{schmidt} and \ref{cartan}, respectively), depending only
on the hyperplanes $H_1,\dots,H_q$, with the following properties.
In Theorem \ref{schmidt}, the subset $Z$ may be taken to be the union of $Z_1$
and a finite union of points, and in Theorem \ref{cartan}, the condition on the
holomorphic curve $f$ may be relaxed to allow any nonconstant holomorphic map
$f\colon\mathbb C\to\mathbb P^n_{\mathbb C}$ whose image is not contained
in $Z_1$.  See \cite{vojta_aj} and \cite{vojta_smt_ajm_1997}, respectively.
\end{remark}

\begin{remark}
Since the functions $\lambda_{H,\upsilon}$ are bounded from below, we may
assume in Theorems \ref{schmidt} and \ref{cartan} that $\bigcap H_i=\emptyset$
(include more hyperplanes).  Then, in each of these theorems, we may also
require that all of the sets $J$ have $n+1$ elements.
\end{remark}

When working with the methods of Corvaja, Zannier, Evertse, Ferretti, Autissier,
and the first author, it is useful to phrase Theorems \ref{schmidt}
and \ref{cartan} in terms of divisors in a linear system.
This leads to Theorems \ref{schmidt_base} and \ref{cartan_base}, below.
(In this paper, they are used to prove the General Theorems of
Section \ref{sect_aut}, and to prove Proposition \ref{b_thmc_prop21}.)

Theorems \ref{schmidt_base} and \ref{cartan_base} use the notion of
Weil functions (the functions $\lambda_H$ above are examples of Weil functions).
The reader not already familiar with these functions is encouraged to
refer to Section \ref{sect_weil} or to the references contained therein
for more information.

\begin{theorem}\label{schmidt_base}
Let $k$ be a number field, let $S$ be a finite set of places of $k$ containing
all archimedean places, let $X$ be a complete variety over $k$, let $D$ be
a Cartier divisor on $X$,  let $V$ be a nonzero linear subspace of
$H^0(X,\mathscr O(D))$, let $s_1,\dots,s_q$ be nonzero elements of $V$,
let $\epsilon>0$, and let $c\in\mathbb R$.
For each $i=1,\dots,q$, let $D_j$ be the Cartier divisor $(s_j)$,
and let $\lambda_{D_j}$ be a Weil function for $D_j$.
Then there is a proper Zariski-closed subset $Z$ of $X$, depending only
on $k$, $S$, $X$, $L$, $V$, $s_1,\dots,s_q$, $\epsilon$, $c$, and the
choices of Weil and height functions, such that the inequality
\begin{equation}\label{ineq_schmidt_base}
  \sum_{\upsilon\in S}\max_J\sum_{j\in J}\lambda_{D_j,\upsilon}(x)
    \le (\dim V+\epsilon)h_D(x) + c
\end{equation}
holds for all $x\in(X\setminus Z)(k)$.  Here the set $J$ ranges
over all subsets of $\{1,\dots,q\}$ such that the sections $(s_j)_{j\in J}$
are linearly independent.
\end{theorem}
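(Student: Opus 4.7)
The plan is to reduce Theorem~\ref{schmidt_base} to the classical form of Schmidt's theorem on $\mathbb P^{n-1}$ (Theorem~\ref{schmidt}), where $n=\dim V$. First I would choose a $k$-basis $t_1,\dots,t_n$ of $V$ and consider the rational map $\phi\colon X\dashrightarrow\mathbb P^{n-1}_k$ defined by $x\mapsto[t_1(x):\dots:t_n(x)]$, which is regular on the complement $U$ of the base locus $\operatorname{Bs}(V)\subset X$. Writing each section as $s_j=\sum_{i=1}^n a_{ij}t_i$ determines a hyperplane $H_j\subset\mathbb P^{n-1}_k$ cut out by $\sum_i a_{ij}X_i$, and by construction $\phi^{*}H_j=D_j|_U$ as Cartier divisors on $U$. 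Moreover, linear independence of a subfamily $(s_j)_{j\in J}$ is equivalent to linear independence of the corresponding linear forms, which is precisely the condition of general position for $(H_j)_{j\in J}$ used in Theorem~\ref{schmidt}.

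Next I would compare Weil and height functions via functoriality. Since $\phi^{*}\mathscr O(1)\cong\mathscr O(D)|_U$ (with the $t_i$ identified as pullbacks of the homogeneous coordinate sections), the standard pullback properties of Weil functions yield $\lambda_{D_j,v}(x)=\lambda_{H_j,v}(\phi(x))+c_{j,v}$ for $x\in U(k)$, with each $(c_{j,v})_{v\in M_k}$ an $M_k$-constant; likewise $h_D(x)=h(\phi(x))+O(1)$. Applying Theorem~\ref{schmidt} to $H_1,\dots,H_q$ in $\mathbb P^{n-1}_k$ with ambient dimension $n-1$ then produces a finite union $Z_0$ of proper linear subspaces of $\mathbb P^{n-1}_k$ and a constant $c'$ so that
$$\sum_{v\in S}\max_J\sum_{j\in J}\lambda_{H_j,v}(y)\le(n+\epsilon)h(y)+c'$$
for all $y\in(\mathbb P^{n-1}_k\setminus Z_0)(k)$. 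Substituting $y=\phi(x)$ and combining with the Weil and height comparisons produces the stated inequality for every $x\in U(k)$ with $\phi(x)\notin Z_0$. Setting $Z$ to be the union of $\operatorname{Bs}(V)$ and the Zariski closure in $X$ of $\phi^{-1}(Z_0)$ produces a Zariski-closed exceptional set, which is proper because $t_1,\dots,t_n$ are linearly independent, so the image $\phi(X)$ is not contained in any hyperplane of $\mathbb P^{n-1}$, and hence not in any irreducible component of $Z_0$.

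The step I expect to demand the most care is verifying the Weil function pullback relation on $U$ with a uniform $M_k$-constant error, since $\phi$ is only a rational map on $X$; this is where one has to invoke the functoriality of Weil functions under morphisms (as set up in Section~\ref{sect_weil}) and confirm that the bounded comparison holds simultaneously at all places. Once these standard identities are in hand, the rest of the argument is a direct transcription of Theorem~\ref{schmidt} from hyperplanes in $\mathbb P^{n-1}$ to the divisors $D_j$ on $X$, with $\dim V$ playing the role of the classical $n+1$ in Schmidt's inequality.
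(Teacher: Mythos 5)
Your overall strategy (project via the linear system and invoke Theorem~\ref{schmidt}) is the same as the paper's, but your execution has a genuine gap in the Weil function and height comparisons, precisely in the step you flagged as requiring the most care.

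The linear system $V\subseteq H^0(X,\mathscr O(D))$ may have a divisorial base component $B$ (and more generally, nontrivial base locus), and on the graph closure $X'$ of $\phi$ one has $\phi^{*}\mathscr O(1)\cong\mathscr O(p^{*}D-B)$, not $\mathscr O(p^*D)$. Consequently the correct relation is
$$p^{*}\lambda_{D_j,\upsilon}=\phi^{*}\lambda_{H_j,\upsilon}+\lambda_{B,\upsilon}+O(1),$$
where $\lambda_{B}$ is a Weil function for an effective but possibly \emph{nonzero} divisor. Your claim that $\lambda_{D_j,\upsilon}(x)=\lambda_{H_j,\upsilon}(\phi(x))+c_{j,\upsilon}$ with an $M_k$-constant $(c_{j,\upsilon})$ is false whenever $B\neq 0$: although $U=X\setminus\operatorname{Bs}(V)$ is disjoint from $\operatorname{Supp}(B)$, the function $\lambda_{B,\upsilon}$ is unbounded above on $U(\overline k_\upsilon)$ as one approaches the boundary, and two Weil functions for the same divisor on a noncomplete $U$ need not differ by an $M_k$-constant. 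The same defect affects your height comparison $h_D(x)=h(\phi(x))+O(1)$; the correct statement is $h_D(p(x))=h_{\mathscr O(1)}(\phi(x))+h_B(x)+O(1)$.

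What saves the theorem --- and what is missing from your argument --- is the observation that each set $J$ appearing in the maximum has $\#J\le\dim V$ elements, so the total contribution of $B$ on the left side is at most $(\dim V)\sum_{\upsilon\in S}\lambda_{B,\upsilon}(x)+O(1)$, while the right side $(\dim V+\epsilon)h_D$ contains $(\dim V+\epsilon)h_B$, which dominates. Without explicitly tracking the base divisor and making this comparison of coefficients, the reduction to Theorem~\ref{schmidt} does not close. The paper does this by passing to the graph closure $X'$ (so that $\phi$ becomes a genuine morphism and all comparisons are among Weil/height functions on a complete variety), introducing $B$ explicitly, and using the bound $(\#J)\lambda_{B,\upsilon}\le(\dim V)\lambda_{B,\upsilon}+O(1)$. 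You should incorporate this bookkeeping; as written, your argument is only valid when $V$ is base-point free in codimension one.

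A smaller point: defining $Z$ as the union of $\operatorname{Bs}(V)$ with the closure of $\phi^{-1}(Z_0)$ is in the right spirit, but once one works on $X'$ it is cleaner to take $Z=p(Z'\cup\operatorname{Supp} B)$ where $Z'=\phi^{-1}(Z'')$ for the exceptional set $Z''$ from Theorem~\ref{schmidt}; properness and properness-as-a-subset then follow from $p$ being proper and birational and from the $\phi$-coordinates being linearly independent.
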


\begin{proof}
Let $d=\dim V$.  We may assume that $d>1$ (otherwise, all $D_j$ are
the same divisor, and the sets $J$ have at most one element each,
so (\ref{ineq_schmidt_base}) follows immediately from (the number theory
version of) the First Main Theorem).

Let $\Phi\colon X\dashrightarrow\mathbb P^{d-1}_k$ be
the rational map associated to the linear system $V$.  Let $X'$ be the
closure of the graph of $\Phi$, and let $p\colon X'\to X$ and
$\phi\colon X'\to\mathbb P^{d-1}_k$ be the projection morphisms.

Note that, even though $\Phi$ extends to the morphism
$\phi\colon X'\to\mathbb P^{d-1}_k$, the linear system
of $H^0(X',p^{*}\mathscr O(D))$ corresponding to $V$ may still have
base points.  What is true, however,
is that there is an effective Cartier divisor $B$ on $X'$ such that,
for each nonzero $s\in V$, there is a hyperplane $H$ in $\mathbb P^{d-1}_k$
such that $p^{*}(s)-B=\phi^{*}H$.
(More precisely, $\phi^{*}\mathscr O(1)\cong\mathscr O(p^{*}D-B)$.  The map
$$\alpha\colon H^0(X',\mathscr O(p^{*}D-B)) \to H^0(X,\mathscr O(p^{*}D))$$
defined by tensoring with the canonical global section $1_B$ of $\mathscr O(B)$
is injective, and its image contains $p^{*}(V)$.
The preimage $\alpha^{-1}(p^{*}(V))$ corresponds to a base-point-free
linear system for the divisor $p^{*}D-B$.)

For each $j=1,\dots,q$ let $H_j$ be the hyperplane in $\mathbb P^{d-1}_k$
for which $p^{*}(s_j)-B=\phi^{*}H_j$.  Choose a Weil function $\lambda_B$
for $B$.  Then, for all $\upsilon\in S$ and all $j=1,\dots,q$, we have
$$p^{*}\lambda_{D_j,\upsilon}
  = \phi^{*}\lambda_{H_j,\upsilon} + \lambda_{B,\upsilon} + O(1)\;.$$
Therefore it will suffice to prove that for any $c'\in\mathbb R$ the inequality
\begin{equation}\label{eq_schmidt_base_1}
  \sum_{\upsilon\in S}\max_J\sum_{j\in J}
    (\lambda_{H_j,\upsilon}(\phi(x))+\lambda_{B,\upsilon}(x))
  \le (\dim V+\epsilon)h_D(p(x)) + c'
\end{equation}
holds for all $x\in X'(k)$ outside of some proper Zariski-closed subset $Z'$
of $X'$.  Indeed, for suitable $c'$ this will imply (\ref{ineq_schmidt_base})
outside of $Z:=p(Z'\cup\Supp B)$.  The set $Z$ is Zariski-closed in $X$
because $p\colon X'\to X$ is a proper morphism, and $Z\ne X$ because $p$
is birational.

For any subset $J$ of $\{1,\dots,q\}$, the sections $s_j$, $j\in J$ are
linearly independent elements of $V$ if and only if the hyperplanes $H_j$,
$j\in J$ lie in general position in $\mathbb P^{d-1}_k$.  Therefore
we may apply Theorem \ref{schmidt}, to obtain that for any $c''$ the inequality
\begin{equation}\label{eq_schmidt_base_2}
  \sum_{\upsilon\in S}\max_J\sum_{j\in J} \lambda_{H_j,\upsilon}(\phi(x))(x)
    \le (\dim V+\epsilon)h(\phi(x)) + c''
\end{equation}
holds for all $x\in X'(k)$ for which $\phi(x)$ does not lie in a finite union
$Z''$ of proper linear subspaces of $\mathbb P^{d-1}_k$.  Here $Z''$ depends
on $k$, $S$, $d$, $H_1,\dots,H_q$, $\epsilon$, and $c''$, but not on $x$.

Since each set $J$ as above has at most $\dim V$ elements and $B$ is effective,
we have
$$(\#J)\lambda_{B,\upsilon}(x) \le (\dim V)\lambda_{B,\upsilon}(x) + O(1)$$
for all $x\in X'(k)$.  Therefore, (\ref{eq_schmidt_base_2}) implies
(\ref{eq_schmidt_base_1}) for all $x\in X'(k)$ outside of $Z':=\phi^{-1}(Z'')$.
Since the coordinates of $\phi$ are associated to linearly independent elements
of $p^{*}(V)$, the (closed) set $Z'$ is not all of $X'$.
\end{proof}

\begin{theorem}\label{cartan_base}
Let $X$ be a complex projective variety,  let $D$ be
a Cartier divisor on $X$, let $V$ be a nonzero linear subspace of
$H^0(X,\mathscr O(D))$, and let $s_1,\dots,s_q$ be nonzero elements of $V$.
For each $i=1,\dots,q$, let $D_j$ be the Cartier divisor $(s_j)$,
and let $\lambda_{D_j}$ be a Weil function for $D_j$.
Let $f\colon\mathbb C\to X$ be a holomorphic map with Zariski-dense image.
Then, for any $\epsilon>0$,
$$
  \int_0^{2\pi} \max_J\sum_{j\in J}\lambda_{D_j}(f(re^{i\theta}))
    \le_\exc (\dim V)T_{f,D}(r) + O(\log^{+} T_{f,D}(r)) + o(\log r)\;.
$$
Here the set $J$ ranges over all subsets of $\{1,\dots,q\}$ such that
the sections $(s_j)_{j\in J}$ are linearly independent.
\end{theorem}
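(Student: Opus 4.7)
The plan is to parallel the proof of Theorem \ref{schmidt_base} given immediately above, with Cartan's Second Main Theorem replacing Schmidt's and the reduction transplanted from the number-field setting to the holomorphic one. Set $d=\dim V$; if $d\le 1$ then the assertion is immediate from the First Main Theorem, so assume $d>1$. Exactly as in that proof, let $X'$ be the closure of the graph of the rational map $\Phi\colon X\dashrightarrow\mathbb P^{d-1}_{\mathbb C}$ defined by $V$, with projections $p\colon X'\to X$ (proper birational) and $\phi\colon X'\to\mathbb P^{d-1}_{\mathbb C}$, and let $B$ be the resulting effective Cartier divisor on $X'$ and $H_1,\dots,H_q$ the hyperplanes in $\mathbb P^{d-1}_{\mathbb C}$ with $p^{*}(s_j)=\phi^{*}H_j+B$ and $\phi^{*}\mathscr O(1)\cong\mathscr O(p^{*}D-B)$.

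The key new step is to lift $f$ to a holomorphic map $\tilde f\colon\mathbb C\to X'$ with $p\circ\tilde f=f$. Because $f(\mathbb C)$ is Zariski-dense in $X$, it is not contained in the proper closed locus where $p^{-1}$ fails to be a morphism, so $p^{-1}\circ f$ is defined off a discrete subset of $\mathbb C$; the valuative criterion of properness, applied at each missing point, yields the extension $\tilde f$. A dimension count (the closure of $\tilde f(\mathbb C)$ is irreducible and maps onto $X$, hence has dimension $\ge\dim X=\dim X'$) shows $\tilde f$ has Zariski-dense image in $X'$, and since the coordinates of $\phi$ come from a basis of the linear system cut out by $V$, $\phi(X')$ is not contained in any hyperplane of $\mathbb P^{d-1}_{\mathbb C}$, so neither is the image of $\phi\circ\tilde f$. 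Functoriality of Weil and characteristic functions then gives
\[
  \lambda_{D_j}\circ f=\lambda_{H_j}\circ\phi\circ\tilde f+\lambda_B\circ\tilde f+O(1)
  \quad\text{and}\quad
  T_{\phi\circ\tilde f}(r)=T_{f,D}(r)-T_{\tilde f,B}(r)+O(1).
\]

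Apply the sharp form of Cartan's Second Main Theorem (with error $O(\log^{+}T)+o(\log r)$, rather than the coarser $\epsilon T$ of Theorem \ref{cartan}) to $\phi\circ\tilde f$ and the hyperplanes $H_1,\dots,H_q$; the subsets $J$ for which the $H_j$ are in general position correspond exactly to those for which the $s_j$ are linearly independent in $V$. Since $|J|\le d$ and $\lambda_B\ge -O(1)$ (as $B$ is effective), $|J|\,\lambda_B\le d\,\lambda_B+O(1)$, so summing the Weil-function identity over $j\in J$, taking the max, and integrating yields a right-hand side of the form $d\,T_{\phi\circ\tilde f}(r)+d\,m_{\tilde f}(r,B)+O(\log^{+}T_{\phi\circ\tilde f}(r))+o(\log r)$. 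Substituting $T_{\phi\circ\tilde f}=T_{f,D}-T_{\tilde f,B}+O(1)$ and $T_{\tilde f,B}=m_{\tilde f}(r,B)+N_{\tilde f}(r,B)+O(1)$, the $m_{\tilde f}(r,B)$ contributions cancel and the surviving $-d\,N_{\tilde f}(r,B)$ is absorbed by $N_{\tilde f}(r,B)\ge 0$, producing the stated bound. The principal obstacle is to preserve the sharp error $O(\log^{+}T_{f,D}(r))+o(\log r)$ throughout: this forces one to invoke the precise form of Cartan's theorem (not the version stated as Theorem \ref{cartan}) and to handle the $B$-contribution by the exact cancellation above rather than by any crude inequality that would reintroduce an $\epsilon T$-type loss.
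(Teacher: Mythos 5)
Your proposal is correct and is exactly the parallel argument the paper intends when it says the proof of Theorem~\ref{cartan_base} is ``very similar'' to that of Theorem~\ref{schmidt_base}: resolve the rational map $\Phi$ on the graph closure $X'$, transfer the Weil functions through $p$ and $\phi$, and apply Cartan's theorem to $\phi\circ\tilde f$. You correctly identify the two points where the Nevanlinna-theoretic setting requires extra care: (i) the existence of a holomorphic lift $\tilde f\colon\mathbb C\to X'$, and (ii) the exact cancellation of the $m_{\tilde f}(r,B)$ contributions so that the sharp error term $O(\log^{+}T)+o(\log r)$ survives, which forces invoking the sharp form of Cartan's Second Main Theorem rather than the $\epsilon T$ version stated as Theorem~\ref{cartan}.

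Two small remarks. First, ``the valuative criterion of properness'' is the wrong framework for the lifting step, which is analytic rather than scheme-theoretic; the cleaner argument is that a basis of $V$ pulls back under $f$ to a tuple $(g_0,\dots,g_{d-1})$ of entire functions, not all identically zero (Zariski density), and after dividing by a common holomorphic factor one obtains a holomorphic map $\Phi\circ f\colon\mathbb C\to\mathbb P^{d-1}$; then $\tilde f=(f,\Phi\circ f)$ lands in the closed set $X'$ because it does so on the dense open set where $p$ is an isomorphism. Second, in closing the error estimate you should note explicitly that $T_{\phi\circ\tilde f}(r)\le T_{f,D}(r)+O(1)$ (since $T_{\tilde f,B}(r)\ge -O(1)$ by effectivity of $B$), so $\log^{+}T_{\phi\circ\tilde f}(r)=O(\log^{+}T_{f,D}(r))$ and the error term in the conclusion is in terms of $T_{f,D}$ as stated. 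With these clarifications the argument is complete.
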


The proof of this theorem is very similar to the proof of
Theorem \ref{schmidt_base}, and is omitted.

\begin{remark}
In Theorems \ref{schmidt_base} and \ref{cartan_base}, if the rational map
$X\dashrightarrow\mathbb P^{d-1}$ is generically finite, then there is
a proper Zariski-closed subset $Z_1$ of $X$, depending only on $D$, $V$,
and $s_1,\dots,s_q$, with the following properties.
In Theorem \ref{schmidt_base}, the subset $Z$ may be taken to be the union
of $Z_1$ and a finite union of points, and in Theorem \ref{cartan_base},
the condition on the holomorphic curve $f$ may be relaxed to allow any
nonconstant holomorphic map $f\colon\mathbb C\to\mathbb P^n_{\mathbb C}$
whose image is not contained in $Z_1$.  Indeed, in the notation of the
proof of Theorem \ref{schmidt_base}, we may take
$Z_1=p(\phi^{-1}(Z_1')\cup Z_2\cup\Supp B)$,
where $Z_1'\subseteq\mathbb P^{d-1}$ is the closed subset of
Remark \ref{remk_schmidt_cartan} and $Z_2$ is the subset of $X'$
where $\phi$ is not finite.
\end{remark}

\begin{remark}
In Theorems \ref{schmidt_base} and \ref{cartan_base}, we may assume
(by shrinking $V$ or using more sections) that $s_1,\dots,s_q$ span $V$.
Under this assumption, we may instead take the maximum over all sets $J$
such that $(s_j)_{j\in J}$ is a basis of $V$.
\end{remark}

\section{Weil Functions}\label{sect_weil}

Let $X$ be a variety over $\mathbb C$, let $D$ be an effective Cartier divisor
on $X$, and let $s=1_D$ be a canonical section of $\mathscr O(D)$ (i.e.,
a global section for which $(s)=D$).  Choose a smooth metric $|\cdot|$
on $\mathscr O(D)$.  In Nevanlinna theory, one often encounters the
function
\begin{equation}\label{eq_weil_intro}
  \lambda_D(x) := -\log|s(x)|\;;
\end{equation}
this is a real-valued function on $X(\mathbb C)\setminus\Supp D$.
It is linear in $D$ (over a suitable domain), so by linearity and continuity
it can be extended to a definition of $\lambda_D$ for a general Cartier divisor
$D$ on $X$.

Weil functions are counterparts to such functions in number theory.
In number theory, however, the situation is more complicated because
a number field has infinitely many inequivalent absolute values, and
because $\overline k_v$
is not locally compact unless $v$ is archimedean.  One often needs to
sum over all absolute values of $k$, so an inequality that is true up
to $O(1)$ in Nevanlinna theory needs to be true up to a constant that
vanishes for almost all (all but finitely many) $v$ in number theory.

Giving a full definition of Weil functions is beyond the scope of this
paper; see Lang \cite[Ch.~10]{lang} for a full treatment of Weil
functions, or the second author \cite[Sect.~8]{vojta_cm} for a brief treatment
with a few more details than are given here.

\medskip
\emph{Throughout this section, $k$ is a number field, and varieties and
morphisms are implicitly taken to be over $k$.}
\medskip

\begin{definition}
Let $X$ be a variety.
\begin{enumerate}
\item[(a).]  The set $X(M_k)$ is defined as
$$X(M_k) = \coprod_{v\in M_k} X(\overline k_v)\;.$$
Here $k_v$ is the completion of $k$ at a place $v$, and $\overline k_v$ is
an algebraic closure of $k_v$.  By abuse of notation, if $Z$ is a subset of
$X$, then we write $X(M_k)\setminus Z$ to mean $(X\setminus Z)(M_k)$.
The topology on $X(\overline k_v)$ is the topology determined by the (unique)
extension of $v$ to $\overline k_v$.  A subset of $X(M_k)$ is open if and
only if its intersection with $X(\overline k_v)$ is open for all $v\in M_k$.
\item[(b).]  An \textbf{$M_k$-constant} is a collection $c=(c_v)_{v\in M_k}$
of real numbers such that $c_v=0$ for almost all $v$.
\item[(c).]  Let $D$ be a Cartier divisor on $X$.  A \textbf{Weil function}
for $D$ is a function $\lambda_D\colon X(M_k)\setminus\Supp D\to\mathbb R$
that satisfies the following condition.  For each Zariski-open subset
$U$ of $X$ and each function $f\in K(X)^{*}$ such that the divisors $D$
and $(f)$ coincide on $U$, there is a continuous locally $M_k$-bounded
function $\alpha\colon U(M_k)\to\mathbb R$ such that
\begin{equation}\label{weil_def_eq}
  \lambda_D(x) = -\log\|f(x)\|_v + \alpha(x)
\end{equation}
for all $v\in M_k$ and all $x\in U(\overline k_v)\setminus\Supp D$.
\item[(d).]  Let $D$ be a Cartier divisor on $X$, and let $v$ be a place
of $k$.  A \textbf{local Weil function} for $D$ at $v$ is a function
$\lambda_{D,v}\colon X(\overline k_v)\setminus\Supp D\to\mathbb R$
that satisfies the following condition.  For each Zariski-open subset
$U$ of $X$ and each function $f\in K(X)^{*}$ such that the divisors $D$
and $(f)$ coincide on $U$, there is a locally bounded continuous function
$\alpha\colon U(\overline k_v)\to\mathbb R$ such that
$$ \lambda_{D,v}(x) = -\log \|f(x)\|_v + \alpha(x) $$
for all $x\in U(\overline k_v)\setminus\Supp D$.
(Local Weil functions are also called \emph{local heights.})
\end{enumerate}
\end{definition}

For the definition of locally $M_k$-bounded function, see
Lang \cite[Ch.~10, Sect.~1]{lang}.  The details of this definition are not
really necessary for this paper, though.  For this paper, it is enough to know
that if $\alpha\colon U(M_k)\to\mathbb R$ is a locally $M_k$-bounded function,
then its restriction to $U(\overline k_v)$ for any $v$ is locally bounded
and continuous.  Conversely, any locally bounded continuous function
$U(\overline k_v)\to\mathbb R$ extends to a locally $M_k$-bounded function
on $U(M_k)$.

Because of this, if $D$ is a Cartier divisor on a variety $X$,
if $\lambda_D$ is a Weil function for $D$, and if $v$ is a place of $k$,
then $\lambda_{D,v}:=\lambda_D\big|_{X(\overline k_v)\setminus\Supp D}$
is a local Weil function for $D$ at $v$.  Conversely, given any finite
collection $S$ of places of $k$ and local Weil functions for $D$
at all $v\in S$, there is a Weil function $\lambda_D$ for $D$
such that $\lambda_{D,v}$ coincides with the given local Weil functions
at all $v\in S$.  In this paper, we will only use local Weil functions at
finitely many places, so (global) Weil functions are not strictly speaking
necessary here.  They are only used because using local Weil functions
would lead to the question of whether the condition depends on the choice
of place of $k$.  (Perhaps we could have defined a
\emph{semilocal Weil function} to be a collection of local Weil functions
at finitely many places $v$.  One could replace global Weil functions
throughout this paper with such semilocal Weil functions without any loss
of rigor.)

Given a complete variety $X$ and a Cartier divisor $D$ on $X$, there exists
a Weil function for $D$.  For a proof when $X$ is projective, see
\cite[Ch.~10, Thm.~3.5]{lang}.
The proof in the general case is sketched in \cite[Thm.~8.7]{vojta_cm}.

Existence of local Weil functions is much easier.  For local Weil functions
at an archimedean place $v$, we have $\overline k_v=\mathbb C$, so the method
of (\ref{eq_weil_intro}) works.
If $v$ is non-archimedean, then one can make similar constructions using
the notion of $v$-adically metrized line sheaf due to S. Zhang
\cite[Appendix]{zhang_jams}.  Or, see the sketched proof in
\cite[Thm.~8.7]{vojta_cm}.

By \cite[Ch.~10, Lemma~1.4]{lang}, if $U$ is a nonempty
open subset of $X$, then $U(M_k)$ is dense in $X(M_k)$ and $U(\overline k_v)$
is dense in $X(\overline k_v)$ for all $v$.  (Lang works with $\overline k$
instead of $\overline k_v$, but the proofs are the same.)
Therefore, if $D$ is a Cartier divisor on a variety $X$, and if $U$ is any
nonempty subset of $X$ disjoint from $\Supp D$, any function
$\lambda\colon U(M_k)\to\mathbb R$ that satisfies (\ref{weil_def_eq})
extends uniquely to a Weil function for $D$.  This fact will be used frequently
without specific mention.

If $D$ is an effective divisor, then a Weil function $\lambda_D$ for $D$ may be
extended to a function $\lambda_D\colon X(M_k)\to\mathbb R\cup\{+\infty\}$.

\begin{proposition}\label{weil_props}
Weil functions have the following properties.  Let $X$ be a variety.
\begin{enumerate}
\item[(a).]  \textbf{Additivity.}  If $\lambda_1$ and $\lambda_2$ are
Weil functions for Cartier divisors $D_1$ and $D_2$ on $X$, respectively,
then $\lambda_1+\lambda_2$ extends uniquely to a Weil function for $D_1+D_2$.
\item[(b).]  \textbf{Functoriality.}  If $\lambda$ is a Weil function for
a Cartier divisor $D$ on $X$, and if $f\colon Y\to X$ is a morphism of
$k$-varieties such that $f(Y)\nsubseteq\Supp D$,
then $x\mapsto\lambda(f(x))$ is a Weil function for
the Cartier divisor $f^{*}D$ on $Y$.
\item[(c).]  \textbf{Normalization.}  If $X=\mathbb P^n_k$, and if $D$ is the
hyperplane at infinity, then the function
$$ \lambda_{D,v}([x_0:\dots:x_n])
  := -\log\frac{\|x_0\|_v}{\max\{\|x_0\|_v,\dots,\|x_n\|_v\}} $$
is a Weil function for $D$.  For the definition of $\|\cdot\|_v$,
see \cite[Sect.~1]{vojta_cm}
\end{enumerate}
Local Weil functions (at a given place $v$) also have these properties.
\end{proposition}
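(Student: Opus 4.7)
The plan is to verify each of the three parts directly from the definition of a Weil function by working on Zariski opens where the divisors are principal.

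For (a), I would cover $X$ by opens $U$ on which $D_1 = (f_1)$ and $D_2 = (f_2)$ for some $f_1,f_2 \in K(X)^{*}$, so that $D_1 + D_2$ equals $(f_1 f_2)$ on $U$. Applying the definition to each $\lambda_i$ produces $\lambda_i = -\log\|f_i\|_v + \alpha_i$ with $\alpha_i$ continuous and locally $M_k$-bounded on $U(M_k)$; adding these yields $\lambda_1 + \lambda_2 = -\log\|f_1 f_2\|_v + (\alpha_1 + \alpha_2)$, which is of the shape required by the definition, and the sum $\alpha_1 + \alpha_2$ retains continuity and local $M_k$-boundedness. The function $\lambda_1+\lambda_2$ is a priori defined only off $\Supp D_1 \cup \Supp D_2$, but the local description extends continuously over all of $U \setminus \Supp(D_1+D_2)$ whenever components of $D_1$ and $D_2$ cancel. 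Uniqueness of this extension follows from the density of $U(M_k) \setminus (\Supp D_1 \cup \Supp D_2)$ in $X(M_k) \setminus \Supp(D_1+D_2)$, the standard fact cited just before the proposition.

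For (b), the hypothesis $f(Y) \nsubseteq \Supp D$ ensures $f^{*}D$ is a well-defined Cartier divisor. On any open $U \subseteq X$ with $D|_U = (g)$, one has $(f^{*}D)|_{f^{-1}(U)} = (g \circ f)$; substituting $\lambda = -\log\|g\|_v + \alpha$ yields $\lambda \circ f = -\log\|g \circ f\|_v + \alpha \circ f$, and the remaining task is to check that $\alpha \circ f$ inherits continuity and local $M_k$-boundedness from $\alpha$. This is a routine consequence of the functorial behavior of locally $M_k$-bounded functions under morphisms, as established in Lang, Ch.~10, Sect.~1.

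For (c), the natural approach is to use the standard affine cover $U_i = \{x_i \ne 0\}$ of $\mathbb{P}^n_k$; on each $U_i$ the hyperplane at infinity is cut out by the rational function $x_0/x_i \in K(\mathbb{P}^n)^{*}$. A direct manipulation gives
$$ \lambda_{D,v}([x_0:\dots:x_n]) = -\log\|x_0/x_i\|_v + \log\max_{0\le j\le n}\|x_j/x_i\|_v $$
on $U_i$, and the correction $\log\max_j\|x_j/x_i\|_v$ is non-negative (the maximum is attained at $j=i$), continuous on $U_i(M_k)$, and bounded on $M_k$-bounded subsets, hence locally $M_k$-bounded. The local-Weil-function analogues of (a)--(c) follow from the same manipulations, dropping the $M_k$-uniformity and keeping only continuity and local boundedness at the chosen place $v$. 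The principal obstacle is essentially nil: the proposition is a bookkeeping exercise once the correct local representatives are chosen, and the only mild care needed lies in the density/extension argument for (a) and in invoking the right preservation properties for locally $M_k$-bounded functions in (b).
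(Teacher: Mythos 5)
The paper does not actually prove Proposition~\ref{weil_props}; it simply cites Lang \cite[Ch.~10]{lang}. Your proposal supplies a direct verification from the definition, and it is correct: working on a cover by opens where each divisor is principal, adding or composing local representations, and checking that the resulting $\alpha$-terms remain continuous and locally $M_k$-bounded is exactly how the statement is established in Lang. In (c), the identity $\lambda_{D,v} = -\log\|x_0/x_i\|_v + \log\max_j\|x_j/x_i\|_v$ on $U_i=\{x_i\ne 0\}$ is the right normalization, and the observation that the correction term is non-negative (the maximum includes $j=i$) and bounded on $M_k$-bounded subsets of $U_i$ handles local $M_k$-boundedness.

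One point worth making explicit: the definition of Weil function in Section~\ref{sect_weil} quantifies over \emph{all} pairs $(U,f)$ with $D\big|_U=(f)$, whereas your verification checks the condition only for a chosen covering and chosen local equations. The reduction from ``holds for one good cover'' to ``holds for all pairs'' is standard but not zero content: if $D\big|_U=(f)=(f')$, then $f/f'$ is a unit on $U$, so $-\log\|f\|_v - (-\log\|f'\|_v) = -\log\|f/f'\|_v$ is continuous and locally $M_k$-bounded, which is what lets the two local descriptions patch. Your argument in (a) for extending $\lambda_1+\lambda_2$ over cancelled components, and in (b) for passing from $g\circ f$ to other local equations of $f^{*}D$, implicitly relies on exactly this fact; stating it once would close the loop.
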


For proofs of these properties, see Lang \cite[Ch.~10]{lang}.

Next we show which Weil functions correspond to effective Cartier divisors
(in more generality than will be needed here), and use that result to derive
a uniqueness result (Corollary \ref{weil_uniq}).

\begin{remark}\label{remk_eff}
On a normal variety, the Cartier divisors are exactly
the locally principal Weil divisors \cite[II~6.11.2]{Hartshorne}, and
a Cartier divisor is effective as a Cartier divisor if and only if it is
effective as a Weil divisor \cite[II~6.3A]{Hartshorne}.
\end{remark}

\begin{proposition}\label{weil_eff}
Let $X$ be a normal complete variety, let $D$ be a Cartier divisor on $X$,
and let $\lambda_D$ be a Weil function for $D$.  Then the following
conditions are equivalent.
\begin{enumerate}
\item $D$ is effective.
\item $\lambda_D$ is bounded from below by an $M_k$-constant.
\item for all $v\in M_k$, $\lambda_{D,v}$ is bounded from below.
\item there exists a $v\in M_k$ such that $\lambda_{D,v}$ is bounded from below.
\end{enumerate}
\end{proposition}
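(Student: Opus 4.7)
The plan is to prove the cycle (i) $\Rightarrow$ (ii) $\Rightarrow$ (iii) $\Rightarrow$ (iv) $\Rightarrow$ (i). The middle two implications are immediate from the definitions: an $M_k$-constant $(c_v)$ bounding $\lambda_D$ below specializes to a lower bound $-c_v$ at each place, giving (ii) $\Rightarrow$ (iii), and (iv) is a pointwise weakening of (iii). The substance lies in the two endpoint implications.

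For (i) $\Rightarrow$ (ii) I would cover $X$ by finitely many Zariski opens $U_i$ on which $D$ is principal with local equation $f_i \in K(X)^{*}$; by Remark \ref{remk_eff} the effectivity of $D$ makes each $f_i$ regular on $U_i$, and \eqref{weil_def_eq} writes $\lambda_{D,v}(x) = -\log\|f_i(x)\|_v + \alpha_i(x)$ on $U_i$ with $\alpha_i$ locally $M_k$-bounded. Completeness of $X$ makes $X(\overline{k}_v)$ $v$-adically compact for every $v$, so a standard partition covers it by compact pieces each contained in some $U_i(\overline{k}_v)$; on such a piece $\|f_i\|_v$ is bounded above by continuity on a compact set, so $-\log\|f_i\|_v$ is bounded below, and $\alpha_i$ is likewise bounded. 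Assembling over the finite cover and invoking the standard fact that locally $M_k$-bounded functions on a complete variety are globally $M_k$-bounded upgrades these piecewise estimates to a lower bound by an $M_k$-constant.

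I would prove (iv) $\Rightarrow$ (i) by contraposition. If $D$ is not effective, Remark \ref{remk_eff} produces a prime Weil-divisor component $E_0$ with $\ord_{E_0}(D) < 0$. Pick an open $U$ meeting the generic point of $E_0$ on which $D|_U = (f)$ for some $f \in K(X)^{*}$, so that $g := 1/f$ is regular on $U$ after shrinking, vanishes along $E_0 \cap U$, and cuts out $\Supp D \cap U$ set-theoretically. The crucial step is to produce a point $x \in (U \setminus \Supp D)(\overline{k}_v)$ with $\|g(x)\|_v$ arbitrarily small and $g(x) \ne 0$, equivalently $\|f(x)\|_v$ arbitrarily large. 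For this, fix any $\overline{k}_v$-point $x_0 \in (E_0 \cap U)(\overline{k}_v)$ (existence is the density statement \cite[Ch.~10, Lemma~1.4]{lang} applied to the nonempty open $E_0 \cap U$ of $E_0$); the $v$-adic continuity of $g$ at $x_0$ together with $g(x_0)=0$ makes $\|g\|_v$ arbitrarily small on any $v$-adic neighborhood of $x_0$, and a second application of Lang's lemma to the open $U \setminus \Supp D$ places $x$ in that neighborhood while guaranteeing $g(x) \ne 0$ automatically. Since the correction $\alpha$ in \eqref{weil_def_eq} is bounded on a fixed $v$-adic compact neighborhood of $x_0$, this forces $\lambda_{D,v}(x) = -\log\|f(x)\|_v + \alpha(x)$ to be unbounded below, contradicting (iv). The main obstacle is precisely this $v$-adic approximation step; once Lang's density lemma is in hand the rest is bookkeeping, and the completeness hypothesis on $X$ intervenes only in (i) $\Rightarrow$ (ii), where it promotes compact-set bounds into a single global $M_k$-constant.
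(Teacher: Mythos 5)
Your cyclic structure (i)$\Rightarrow$(ii)$\Rightarrow$(iii)$\Rightarrow$(iv)$\Rightarrow$(i), with the middle implications immediate, (i)$\Rightarrow$(ii) via local defining equations, and (iv)$\Rightarrow$(i) by contraposition and $v$-adic approximation to a point on a negatively-weighted component of $D$, matches the paper's approach; the paper simply cites \cite[Ch.~10, Prop.~3.1]{lang} for (i)$\Rightarrow$(ii) and gives the same convergent-sequence argument for (iv)$\Rightarrow$(i) that you spell out in more detail.

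There is, however, a genuine error in your justification at both endpoints: completeness of $X$ does \emph{not} make $X(\overline k_v)$ $v$-adically compact when $v$ is non-archimedean, because $\overline k_v$ itself is not locally compact at such places --- a point the paper flags explicitly at the start of Section~\ref{sect_weil}. The same problem undermines your claim that $\alpha$ is ``bounded on a fixed $v$-adic compact neighborhood of $x_0$'': no such compact neighborhood exists for non-archimedean $v$. The fixes are standard but must replace compactness outright. For (i)$\Rightarrow$(ii), the correct tool is a finite cover of $X(M_k)$ by affine $M_k$-bounded sets subordinate to the $U_i$ (this is \cite[Ch.~10, Prop.~1.2]{lang}, exactly the device used in the paper's proof of Lemma~\ref{weil_max_fancy}); on each such set the functions $-\log\|f_i\|_v$ and $\alpha_i$ are $M_k$-bounded from below, and since the cover is finite this assembles to an $M_k$-constant --- the last sentence of your paragraph gestures at this, but in its present form it sits awkwardly next to the (false) compactness claim it is supposed to rest on. For (iv)$\Rightarrow$(i), no compactness is needed at all: the function $\alpha$ in \eqref{weil_def_eq} is, by the very definition of a (local) Weil function, \emph{locally bounded} continuous on $U(\overline k_v)$, so it is automatically bounded on some $v$-adic neighborhood of $x_0$. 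With those two justifications repaired, your argument is correct and essentially identical to the paper's; the only substantive difference is that you prove (i)$\Rightarrow$(ii) from scratch rather than citing Lang, which is fine in principle but is precisely where the archimedean compactness intuition must be replaced by the $M_k$-bounded-set machinery.
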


\begin{proof}
For the implication (i)$\implies$(ii), see
\cite[Ch.~10, Prop.~3.1]{lang}.
The implications (ii)$\implies$(iii)$\implies$(iv) are trivial.

To show that (iv)$\implies$(i), assume that $D$ is not effective.
By Remark \ref{remk_eff}, $D$ (as a Weil divisor) has at least one component
with negative multiplicity.  Fix a closed point $x\in X$ that lies on
that prime divisor, but not on any other irreducible
component of $\Supp D$, and let $U$ be a Zariski-open neighborhood of $x$.
Take a sequence $(x_n)$ of points in $U(\overline k_v)\setminus\Supp D$ that
converges to $x$ in the $v$-topology.  Then the sequence $\lambda_{D,v}(x_n)$
goes to $-\infty$; thus $\lambda_{D,v}$ is not bounded from below.
\end{proof}

\begin{corollary}\label{weil_uniq}
Let $X$ be a complete variety (not necessarily normal), let $D$ be a Cartier
divisor on $X$, and let $\lambda_1$ and $\lambda_2$ be Weil functions for $D$.
Then there are $M_k$-constants $c$ and $c'$ such that
$$-c \le \lambda_1-\lambda_2 \le c'\;.$$
\end{corollary}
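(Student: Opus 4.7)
The plan is to reduce the uniqueness statement, via additivity, to the claim that any Weil function for the zero divisor on a complete variety is bounded above and below by $M_k$-constants, and then to dispose of the absence of normality by pulling back to the normalization.

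By Proposition \ref{weil_props}(a), the difference $\lambda_0 := \lambda_1-\lambda_2$, a priori defined on $X(M_k)\setminus\Supp D$, extends uniquely to a Weil function for the zero divisor $D-D=0$ on $X$. Thus it suffices to show that any Weil function for the zero divisor on a complete variety (not necessarily normal) is bounded above and below by $M_k$-constants.

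To handle the non-normal case, I would take the normalization $\pi\colon\widetilde X\to X$, which is finite and surjective, with $\widetilde X$ a normal complete variety. By functoriality (Proposition \ref{weil_props}(b)), $\pi^{*}\lambda_0$ is a Weil function for $\pi^{*}0=0$ on $\widetilde X$. Since $0$ is effective as a Cartier divisor on the normal complete variety $\widetilde X$, Proposition \ref{weil_eff} applied to $\pi^{*}\lambda_0$ yields an $M_k$-constant $c$ with $\pi^{*}\lambda_0\ge -c$. Applying the same proposition to $-\pi^{*}\lambda_0$ (which is a Weil function for $-0=0$, also effective) yields an $M_k$-constant $c'$ with $\pi^{*}\lambda_0\le c'$.

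Finally, since $\pi$ is surjective as a morphism of schemes and $\overline k_v$ is algebraically closed for every $v\in M_k$, the induced map $\widetilde X(\overline k_v)\to X(\overline k_v)$ is surjective; hence every $x\in X(M_k)$ has a preimage $\widetilde x\in \widetilde X(M_k)$ at the same place, and $\lambda_0(x)=(\pi^{*}\lambda_0)(\widetilde x)$ inherits the bounds $-c\le\lambda_0(x)\le c'$. The only step requiring any care is this last transfer from $\widetilde X$ back to $X$; the rest is essentially bookkeeping with additivity, functoriality, and Proposition \ref{weil_eff}.
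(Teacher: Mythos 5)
Your proof is correct and follows the same route as the paper's: reduce via additivity to a Weil function for the zero divisor, pull back to the normalization by functoriality, and apply Proposition \ref{weil_eff} there (to both signs) to get the two-sided $M_k$-bound. The only thing you add is the explicit verification that bounds descend from $\widetilde X(\overline k_v)$ to $X(\overline k_v)$ via surjectivity of the finite normalization map over an algebraically closed field, a detail the paper leaves implicit in the phrase ``pull back to the normalization.''
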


\begin{proof}
By Proposition \ref{weil_props}a we may assume that $D=0$ and $\lambda_2=0$.
The corollary is then immediate from Proposition \ref{weil_eff}
and Proposition \ref{weil_props}b (pull back to the normalization).
\end{proof}

The following proposition is useful for constructing Weil functions
via the ``min-max'' method.  It will also be used later in this paper.

\begin{proposition}\label{weil_min}
Let $X$ be a variety; let $D_1,\dots,D_n$ and $D$ be Cartier divisors on $X$;
and let $\lambda_{D_1},\dots,\lambda_{D_n}$ be Weil functions for
$D_1,\dots,D_n$, respectively.  Assume that $D_i-D$ is effective for all $i$,
and that
$$\bigcap_{i=1}^n \Supp(D_i-D) = \emptyset\;.$$
Then the function
$$\lambda_D := \min\{\lambda_{D_i}:i=1,\dots,n\}$$
is a Weil function for the divisor $D$.
\end{proposition}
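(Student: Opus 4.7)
The Weil function property is local on $X$, so I would reduce to verifying, for each open $U \subseteq X$ equipped with an $f \in K(X)^{*}$ satisfying $(f)|_U = D|_U$, that the function $\lambda_D + \log\|f\|_v$ extends to a continuous, locally $M_k$-bounded function on $U(M_k)$. After shrinking $U$, I choose local defining functions $g_i \in K(X)^{*}$ for each $D_i$ on $U$. Effectivity of $D_i - D$ makes each $h_i := g_i/f$ a regular function on $U$, with $(h_i) = (D_i - D)|_U$. Writing each $\lambda_{D_i} = -\log\|g_i\|_v + \beta_i$ for some continuous, locally $M_k$-bounded $\beta_i$ on $U(M_k)$, I obtain
\[
\lambda_{D_i} + \log\|f\|_v = -\log\|h_i\|_v + \beta_i =: \alpha_i.
\]
Regularity of $h_i$ ensures that each $\alpha_i$ extends naturally to a continuous function $U(M_k) \to \mathbb{R} \cup \{+\infty\}$, taking the value $+\infty$ precisely on $\Supp(D_i - D) \cap U$. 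The whole proof then reduces to showing that $\alpha := \min_i \alpha_i$ extends to a continuous, locally $M_k$-bounded real-valued function on $U(M_k)$.

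\textbf{Key step.} The hypothesis $\bigcap_i \Supp(D_i - D) = \emptyset$ provides, for every $x \in U(M_k)$, some $i$ with $\alpha_i(x) < \infty$. Let $I_x := \{i : x \notin \Supp(D_i - D)\}$, which is nonempty. Continuity of each $\alpha_j$ (as an $\mathbb{R} \cup \{+\infty\}$-valued function) implies that in a sufficiently small neighborhood $W$ of $x$ one has $\alpha_j(y) > \min_{i \in I_x} \alpha_i(x) + 1$ for all $j \notin I_x$ and $y \in W$. Hence $\alpha|_W = \min_{i \in I_x} \alpha_i|_W$, which is a minimum of finitely many continuous, locally $M_k$-bounded functions, and therefore continuous and locally $M_k$-bounded at $x$. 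Varying $x$ yields the desired extension to all of $U(M_k)$.

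\textbf{Main obstacle.} The principal technical subtlety is handling local $M_k$-boundedness uniformly across all places simultaneously, as opposed to merely verifying continuity at each individual $v$: ``locally $M_k$-bounded'' asks for a single $M_k$-constant that works for almost all $v$ at once. This is where the regularity of $h_i$ on $U$ (making $-\log\|h_i\|_v$ bounded below by an $M_k$-constant on appropriate affine pieces) combines with the local $M_k$-boundedness of the $\beta_i$ to give the bound uniformly. Everything else then reduces to the local topological formalism developed in Lang's treatment of Weil functions \cite[Ch.~10]{lang}.
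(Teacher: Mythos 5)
The paper gives no internal proof of this proposition: it simply cites Lang \cite[Ch.~10, Prop.~3.2]{lang}. Your write-up is therefore more detailed than the paper's, and it reproduces in outline the argument that Lang's result rests on. The reduction to a local chart, the introduction of the regular functions $h_i = g_i/f$, the reformulation as a claim about $\alpha = \min_i \alpha_i$, and the use of the hypothesis $\bigcap_i \Supp(D_i - D) = \emptyset$ to guarantee $I_x \ne \emptyset$ are all correct and match the standard argument.

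The one soft spot is precisely the issue you flag in your third paragraph, and I would not leave it as a remark: your ``Key step'' is carried out pointwise in $x \in U(M_k)$, i.e., at a single place $v$, and the neighborhood $W$ you produce lives inside $U(\overline k_v)$. That gives continuity and local boundedness of $\alpha$ \emph{place by place}, but local $M_k$-boundedness is a uniform statement over almost all $v$ and does not follow formally from the pointwise argument. To close this, the right move is to observe that the condition $i \in I_x$ is a Zariski condition, not a $v$-adic one: the opens $U_i := U \setminus \Supp(D_i - D)$ form a finite Zariski cover of $U$. On $U_i(M_k)$ the function $h_i$ is invertible, so $-\log\|h_i\|_v$ is locally $M_k$-bounded there (both above and below) by Lang's \cite[Ch.~10, Prop.~1.3]{lang}, hence so is $\alpha_i$, hence $\alpha \le \alpha_i$ is $M_k$-bounded above on $M_k$-bounded subsets of $U_i(M_k)$; and $\alpha \ge \min_i(\text{lower bounds of }\alpha_i)$ is $M_k$-bounded below everywhere. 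One then covers any $M_k$-bounded subset of $U(M_k)$ by $M_k$-bounded affine pieces lying inside the $U_i(M_k)$'s, as in Lang \cite[Ch.~10, Prop.~1.2]{lang}. This Zariski-covering step is exactly what the paper isolates as Lemma~\ref{weil_max_fancy}, and the remark immediately following that lemma notes that Proposition~\ref{weil_min} follows from it ``rather easily''. So your approach is the expected one, but the $M_k$-uniformity should be executed via the finite Zariski cover by the $U_i$ rather than gestured at.
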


\begin{proof}
See Lang \cite[Ch.~10, Prop.~3.2]{lang}.
\end{proof}

A slightly more general statement will be used to rephrase
Definition \ref{bidef2}.

\begin{lemma}\label{weil_max_fancy}
Let $X$ be a projective variety, and let $U_1,\dots,U_n$ be Zariski-open
subsets of $X$ that cover $X$.  Let $D_1,\dots,D_n$ be Cartier divisors on $X$
such that $D_i\big|_{U_i}$ is effective for all $i$, and let $\lambda_{D_i}$
be Weil functions for $D_i$ for all $i$.  Then there is an $M_k$-constant
$\gamma=(\gamma_v)$ such that, for all $v$ and all $x\in X(\overline k_v)$
there is an $i$ such that $x\in U_i$ and $\lambda_{D_i,v}(x)\ge \gamma_v$.
\end{lemma}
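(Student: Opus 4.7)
My plan is to reduce the statement to two earlier results: Proposition~\ref{weil_eff}, which converts global effectivity of a divisor into an $M_k$-lower-bound on its Weil function, and Proposition~\ref{weil_min}, which applied to a suitable auxiliary family of effective divisors with empty common support will furnish the index selection depending on $v$ and $x$.

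First I would refine the cover $\{U_i\}$ to a finite cover $\{W_1,\dots,W_m\}$ of $X$ by principal opens $W_j = X\setminus\Supp(s_j)$ with $s_j\in H^0(X,\mathscr{L}^{n_j})$ for a fixed ample line sheaf $\mathscr{L}$, and with $W_j\subseteq U_{i(j)}$ for some index $i(j)$. Such a refinement exists because $X$ is projective and quasi-compact: for any point of $U_i$ a sufficiently high power of $\mathscr{L}$ carries a section vanishing on $X\setminus U_i$ but not at that point. The crucial step is then to produce a single integer $K$ such that $D_{i(j)}+K(s_j)$ is effective on all of $X$ for every $j$. The idea is that the polar part of the canonical rational section $1_{D_{i(j)}}$ of $\mathscr{O}(D_{i(j)})$ is supported on $X\setminus U_{i(j)}\subseteq\Supp(s_j)$ by the effectivity hypothesis, so multiplying by $s_j^K$ for $K$ large enough kills all poles; locally at $P\in\Supp(s_j)$ one writes a defining equation of $D_{i(j)}$ as $a/s^m$ with $a\in\mathscr{O}_{X,P}$, using that effectivity on $W_j$ amounts to regularity after inverting $s_j$, and takes $K\ge m$, with a uniform bound coming from noetherianness.

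With such a $K$ in hand, additivity of Weil functions (Proposition~\ref{weil_props}(a)) together with Proposition~\ref{weil_eff} yield $M_k$-constants $c_j$ satisfying
\[
  \lambda_{D_{i(j)},v}(x) + K\lambda_{(s_j),v}(x) \ge -c_{j,v}
\]
for all $v$ and all $x$. Since $\bigcap_j \Supp(s_j) = X\setminus\bigcup_j W_j = \emptyset$ and each $(s_j)$ is effective, Proposition~\ref{weil_min} applied with $D=0$ shows that $\min_j \lambda_{(s_j)}$ is a Weil function for the zero divisor, hence (by Corollary~\ref{weil_uniq}) bounded above by an $M_k$-constant $c'$. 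Given $v$ and $x\in X(\overline{k}_v)$, I would then select an index $j^*$ realizing this minimum; the finite value $\lambda_{(s_{j^*}),v}(x)\le c'_v$ forces $x\notin\Supp(s_{j^*})$, so $x\in W_{j^*}\subseteq U_{i(j^*)}$, and combining the two inequalities gives
\[
  \lambda_{D_{i(j^*)},v}(x) \ge -Kc'_v - \max_j c_{j,v}.
\]
Setting $i=i(j^*)$ and $\gamma_v = -Kc'_v - \max_j c_{j,v}$, an $M_k$-constant, the lemma follows.

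The principal obstacle I anticipate is the intermediate step of establishing a uniform $K$ for which $D_{i(j)}+K(s_j)$ is globally effective. For normal $X$ this is immediate by comparing Weil-divisor multiplicities along the finitely many components of $\Supp(s_j)$, but in the general Cartier setting it requires a bit of care with local equations and coherence of $\mathscr{O}(D_{i(j)})$.
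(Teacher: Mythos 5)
Your proof is correct, but it follows a genuinely different route from the paper's. The paper's proof refines $\{U_i\}$ to an affine cover on which each $D_i$ is principal, observes via \cite[Ch.~10, Prop.~1.3]{lang} that each $\lambda_{D_i}$ is locally $M_k$-bounded from below on $U_i(M_k)$, then invokes \cite[Ch.~10, Prop.~1.2]{lang} to get affine $M_k$-bounded sets $E_i\subseteq U_i(M_k)$ covering $X(M_k)$ and reads the constants $\gamma_i$ off the definition of $M_k$-bounded function, with the index $i$ determined by which $E_i$ contains the point. You instead refine to a cover by complements $W_j=X\setminus\Supp(s_j)$ of sections of powers of an ample sheaf, upgrade the local effectivity of $D_{i(j)}$ on $U_{i(j)}$ to global effectivity of $D_{i(j)}+K(s_j)$ by clearing $s_j$-denominators (the key algebraic step $g\in A_t\Rightarrow gt^K\in A$, made uniform by quasi-compactness, is sound), and then run entirely on results already proved in Section~\ref{sect_weil}: additivity, Proposition~\ref{weil_eff} for the global lower bound, and Proposition~\ref{weil_min} applied to the divisors $(s_j)$ and $D=0$ for the index selection. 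The gain is that your argument stays at the level of the paper's own proposition-level toolkit and never opens Lang's machinery of locally $M_k$-bounded functions and $M_k$-bounded sets, which the paper itself treats as a black box. Interestingly, this inverts the logical direction suggested by the paper's subsequent remark, which notes that Proposition~\ref{weil_min} could be derived from this lemma; there is no circularity, since Proposition~\ref{weil_min} has its own proof from \cite[Ch.~10, Prop.~3.2]{lang}. One thing to tighten: Proposition~\ref{weil_eff} is stated for normal $X$, so to conclude that the Weil function of the effective divisor $D_{i(j)}+K(s_j)$ is $M_k$-bounded below, you should either pull back to the normalization via Proposition~\ref{weil_props}(b), as the paper does in proving Corollary~\ref{weil_uniq}, or note that the direction (i)$\implies$(ii) of Proposition~\ref{weil_eff} does not in fact require normality.
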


\begin{proof}
By taking a finite refinement of $\{U_i\}$, we may assume that each $U_i$ is
affine, and that $D_i\big|_{U_i}=(f_i)$ for some nonzero
$f_i\in\mathscr O(U_i)$ for all $i$.

Then $\lambda_{D_i}$ is locally $M_k$-bounded from below on $U_i(M_k)$
for all $i$.  Indeed, by \cite[Ch.~10, Prop.~1.3]{lang},
the function $-\log\|f_i\|$ is locally $M_k$-bounded from below on $U_i(M_k)$;
therefore so is $\lambda_{D_i}$ since by definition of Weil function
and N\'eron function the difference between the two functions is
locally $M_k$-bounded.

By \cite[Ch.~10, Prop.~1.2]{lang}, there are
affine $M_k$-bounded sets $E_1,\dots,E_n$ such that $\bigcup E_i=X(M_k)$ and
such that $E_i\subseteq U_i(M_k)$ for all $i$.

By definition of locally $M_k$-bounded function
(see \cite[Ch.~10, Sect.~1]{lang}), for each $i$
there is an $M_k$-constant $\gamma_i$ such that $\lambda_{D_i}\ge\gamma_i$
on $E_i$.  This concludes the proof, with
$\gamma=\min\{\gamma_1,\dots,\gamma_n\}$.
\end{proof}

\begin{remark}
It is possible to prove Proposition \ref{weil_min} using
Lemma \ref{weil_max_fancy} rather easily.
\end{remark}

\section{The General Theorem}\label{sect_aut}

In this section, we prove the General Theorem (in both the arithmetic
and analytic cases) by using the filtration constructed by
Autissier in \cite{Aut2}. Before presenting the proof, we first discuss some consequences of the General Theorem, in addition
to Theorems C and D discussed in the introduction.

\subsection{The equi-degree case}
Using the General Theorem, we  can derive  a more precise (if not sharp)
theorem of Schmidt's subspace theorem type (in the arithmetic case)
and Nevanlinna's Second Main Theorem type (in the complex case)
for the divisors which are (only) assumed to have equi-degree.

\begin{definition}
Suppose that X is a complete variety of dimension $n$.  Let
$D_1,\dots,D_q$ be effective Cartier divisors on $X$, and let
$D=D_1+D_2+\cdots+D_q$. We
say that \textbf{$D$ has equi-degree with respect to $D_1, D_2, \dots, D_q$} if
$D_i\idot D^{n-1}=\frac{1}{q}D^{n}$ for all $i=1,\dots,q$.
\end{definition}

The important result associated to the concept of  equi-degree is as follows.

\begin{lemma}[Levin {\cite[Lemma~9.7]{levin_annals}}]\label{lem_lev09_9_7}
Let X be a complete variety of dimension $n$.  If $D_j, 1 \leq j \leq q$,
are big and nef Cartier divisors, then there exist positive real numbers
$r_j$ such that $D=\sum_{j=1}^q r_j D_j$ has equi-degree.
\end{lemma}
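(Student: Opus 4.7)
The plan is to obtain the desired coefficients $r_j$ as a critical point of a carefully chosen function on the open simplex $\Delta^\circ = \{r \in \mathbb R^q_{>0} : \sum_i r_i = 1\}$. Writing $D(r) = \sum_j r_j D_j$, I will study
$$ H(r) = \log D(r)^n - \frac{n}{q}\sum_{i=1}^q \log r_i\;. $$
Since $D(r)^n$ is a polynomial in $r$ with $\partial_{r_j}D(r)^n = n\,D_j\idot D(r)^{n-1}$, a short computation shows that $\partial_{r_j} H = 0$ for all $j$ is equivalent to
$$ r_j\,\bigl(D_j\idot D^{n-1}\bigr) = \frac{D^n}{q} \qquad (j=1,\dots,q), $$
which is exactly the equi-degree condition for $D$ with respect to $r_1 D_1,\dots,r_q D_q$.

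To produce such a critical point in $\Delta^\circ$, I will first verify that $D(r)$ is big and nef for every $r\in\overline{\Delta}$: since $\sum r_i = 1$ at least one $r_{j_0}$ is strictly positive, the summand $r_{j_0}D_{j_0}$ is already big and nef, adding the remaining nef summands preserves nefness trivially, and expanding $D(r)^n$ as a multinomial in intersection products of nef divisors (all terms $\ge 0$ by Kleiman) yields $D(r)^n \ge r_{j_0}^n D_{j_0}^n > 0$. Hence $D(r)^n > 0$ on $\overline{\Delta}$, and by compactness this lower bound is uniform. Consequently $\log D(r)^n$ stays bounded on $\overline{\Delta}$ while $-\sum_i \log r_i \to +\infty$ as $r$ approaches $\partial\Delta$, so $H\to+\infty$ along $\partial\Delta$, and $H$ attains its infimum at some $r^*\in\Delta^\circ$.

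To upgrade the constrained-critical-point condition at $r^*$ to the unconstrained equation $\nabla H(r^*) = 0$, I will exploit that $H$ is homogeneous of degree $0$: the summand $n\log c$ in $\log D(cr)^n = n\log c + \log D(r)^n$ cancels the summand $n\log c$ in $(n/q)\sum_i\log(cr_i)$. Euler's identity then gives $\sum_i r_i\,\partial_{r_i} H \equiv 0$, and combined with the Lagrange condition $\nabla H(r^*) = \mu(1,\dots,1)$ this forces $\mu = 0$. Hence $\nabla H(r^*) = 0$, the equi-degree relations hold at $r^*$, and the positivity of every $r^*_j$ is automatic. The only place where genuine work is required is the uniform positivity $D(r)^n>0$ on $\overline{\Delta}$, which is exactly where the big-and-nef hypothesis on each $D_j$ enters; once this is secured, the rest is a transparent Lagrange-multiplier computation on the simplex.
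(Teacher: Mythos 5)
The paper does not give its own proof of this lemma: it simply cites Levin \cite[Lemma~9.7]{levin_annals} and remarks that Levin's argument carries over from projective to general complete varieties without change. Your variational argument is a correct proof, and as far as I can tell it is essentially Levin's own: the weights are produced as an interior critical point of the scale-invariant functional $H(r)=\log D(r)^n-\tfrac{n}{q}\sum_i\log r_i$ on the open simplex, the big-and-nef hypothesis enters only through Kleiman positivity (valid on complete varieties) to force $D(r)^n>0$ uniformly on $\overline\Delta$ and hence coercivity of $H$, and the degree-$0$ homogeneity of $H$ kills the Lagrange multiplier so that $\nabla H(r^*)=0$ gives exactly $r^*_j\,D_j\idot D^{n-1}=\tfrac{1}{q}D^n$ for all $j$. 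Every step checks out.
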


(In \cite{levin_annals}, Levin assumed that $X$ is projective,
but his proof works more generally for complete varieties without change.)

Since divisors $r_jD_j$ and $D_j$ have the same support, the above lemma  tells us that we can always make the given  big and
nef divisors have equi-degree without changing their supports.
This means that the  condition of equi-degree, rather than the previous
assumptions of linear or numerical
equivalence for the divisors $D_1,\cdots,D_q$,  is indeed
a correct (or reasonable) assumption in the study of (quasi) hyperbolicity
as well as the study of the degeneracy of integral points on the complement
$X\backslash D$.

To compute $\gamma(D_j)$ for $j=1, \dots, q$, we use the following lemma
of Autissier.

\begin{lemma}[{\cite[Lemma 4.2]{aut}}] \label{lem_aut1_4_2}
Suppose $E$ is a big and base-point free Cartier divisor on a projective
variety $X$, and $F$ is a nef Cartier divisor on $X$ such that
$F-E$ is also nef. Let $\beta>0$ be a positive real number. Then, for any
positive integers  $N$ and $m$  with $1 \leq m\leq \beta N$, we have
\begin{eqnarray*}
h^0(NF-mE)&\ge& {F^n\over n!}N^n
- {F^{n-1}\idot E\over (n-1)!}N^{n-1}m \\
&~&+ {(n-1)F^{n-2}\idot E^2\over n!} N^{n-2}\min\{m^2, N^2\} +O(N^{n-1})
\end{eqnarray*}
where the implicit constant depends on $\beta$.
\end{lemma}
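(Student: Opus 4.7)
The plan is to iterate the short exact sequence
\begin{equation*}
0 \to \mathscr O_X(D-E) \to \mathscr O_X(D) \to \mathscr O_E(D|_E) \to 0
\end{equation*}
with $D = NF - kE$ for $k = 0,\dots,m-1$ and telescope the resulting inequalities $h^0(X,D) - h^0(X,D-E) \le h^0(E, D|_E)$ to obtain
\begin{equation*}
h^0(X, NF - mE) \;\ge\; h^0(X, NF) - \sum_{k=0}^{m-1} h^0\bigl(E,(NF-kE)|_E\bigr).
\end{equation*}
Asymptotic Riemann--Roch for the nef divisor $F$ on the $n$-dimensional variety $X$ gives $h^0(X, NF) = F^n N^n/n! + O(N^{n-1})$, which disposes of the first term.

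For each summand, the key observation is that whenever $0 \le k \le N$ the class $NF - kE = (N-k)F + k(F-E)$ is a non-negative combination of the two nef classes $F$ and $F-E$, hence nef on $X$, and its restriction to the $(n-1)$-dimensional variety $E$ is still nef. A uniform form of asymptotic Riemann--Roch on $E$ then yields
\begin{equation*}
h^0\bigl(E,(NF-kE)|_E\bigr) \;\le\; \frac{(NF - kE)^{n-1}\cdot E}{(n-1)!} + O(N^{n-2}),
\end{equation*}
with an implicit constant independent of $k\in[0,N]$. Expanding by the binomial theorem,
\begin{equation*}
(NF-kE)^{n-1}\cdot E = N^{n-1}\,F^{n-1}\cdot E - (n-1)\,N^{n-2}\,k\,F^{n-2}\cdot E^{2} + R(N,k),
\end{equation*}
where $R(N,k)$ gathers the terms in which $k$ appears with degree $\ge 2$ and, for $k\le \beta N$, contributes at worst $O(N^{n-1})$. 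Summing over $k = 0,\dots,\min(m,N)-1$ using $\sum_{k<M} k = M^2/2 + O(M)$ produces the two main terms asserted in the statement together with a cumulative error of $O(N^{n-1})$.

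The principal obstacle is the tail $k\in(N,m]$ that appears when $m>N$: there $NF-kE$ is no longer nef on $X$ and the bound above fails. My plan is to rewrite $NF - kE = N(F-E) - (k-N)E$; then $N(F-E)|_E$ is nef on $E$, and since $E$ is base-point free the class $E|_E$ is linearly equivalent to an effective divisor, so $h^0\bigl(E,(NF-kE)|_E\bigr) \le h^0(E, N(F-E)|_E) = O(N^{n-1})$. The relevant cancellation is the identity
\begin{equation*}
F^{n-1}\cdot E - (F-E)^{n-1}\cdot E = \sum_{j\ge 1} \binom{n-1}{j}(F-E)^{n-1-j}\cdot E^{j+1} \;\ge\; 0,
\end{equation*}
valid because each summand is an intersection number of nef classes; this precisely controls how fast the two sides of the asserted inequality drop once $m$ passes $N$ and thereby forces the cut-off $\min\{m^2, N^2\}$ in the second-order term. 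Reassembling the three estimates yields the claimed lower bound, with the implicit constant depending on $\beta$ through both the tail length $(\beta-1)N$ and the uniformity constants in the asymptotic Riemann--Roch bounds on $E$.
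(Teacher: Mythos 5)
The paper itself does not prove this lemma—it is quoted verbatim from Autissier's 2009 paper \cite{aut}—so your argument can only be judged on its own terms. Your skeleton (telescope the restriction sequence along a general member of $|E|$, apply asymptotic Riemann--Roch on $E$, and split at $k=N$) is the right one, but the error bookkeeping has a real gap. Each remainder $R(N,k)$ is indeed $O(N^{n-1})$ for $k\le\beta N$, but you then add up $\Theta(N)$ of them: the total contribution of the degree-$\ge 2$ terms is $O(N^{n})$, the same order as the two main terms, and cannot be swept into the error. Worse, that contribution has a definite sign against you: for $0\le k\le N$ the class $NF-kE$ is a nonnegative combination of the nef classes $F$ and $F-E$, hence nef, so $\frac{d^{2}}{dk^{2}}(NF-kE)^{n-1}\cdot E=(n-1)(n-2)(NF-kE)^{n-3}\cdot E^{3}\ge 0$, and the discarded part is a genuinely nonnegative $O(N^{n})$ surplus. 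You also do not appear to notice that the lemma's coefficient $\frac{n-1}{n!}=\frac{1}{n(n-2)!}$ is strictly smaller than the $\frac{1}{2(n-2)!}$ which your sum $\sum_{k<m}k\sim m^{2}/2$ would naively produce; that discrepancy is exactly the slack needed to absorb the surplus, and nothing in your argument exploits it.

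The missing step is a polynomial inequality driven by nefness. Since $k\mapsto(NF-kE)^{n-1}\cdot E$ is monotone on $[0,N]$, the sum equals $\frac{1}{n}\bigl[(NF)^{n}-(NF-mE)^{n}\bigr]$ up to $O(N^{n-1})$, and with $u=m/N\in[0,1]$ the estimate one actually has to prove is
\begin{equation*}
(F-uE)^{n}-F^{n}+nu\,F^{n-1}\cdot E-(n-1)u^{2}\,F^{n-2}\cdot E^{2}\;\ge\;0.
\end{equation*}
Setting $G=F-uE$ (nef for $u\in[0,1]$) and expanding $F=G+uE$, the left-hand side collapses to $\sum_{j=2}^{n}\binom{n}{j}\frac{(j-1)(n-j)}{n}\,u^{j}\,G^{n-j}\cdot E^{j}$, a sum of nonnegative intersection numbers of nef classes—this is the heart of the lemma and is entirely absent from your sketch. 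A smaller point: the ``uniform asymptotic Riemann--Roch on $E$'' bound you invoke, $h^{0}(E,L|_{E})\le\frac{(L|_{E})^{n-1}}{(n-1)!}+O(N^{n-2})$ uniformly over the relevant nef $L$, is itself a nontrivial input (typically proved by the very same iterated-restriction device), and should be stated and justified rather than used as a black box.
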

We now compute $\sum_{m\ge 1} h^0(ND-mD_i)$ for  each $1 \leq i\leq q$.
Let $n=\dim X$, and assume that $n\ge2$.
Let $\beta={D^n\over n D^{n-1}\idot D_i}$ and $A=(n-1)D^{n-2}\idot D_i^2$.
Then, by Lemma \ref{lem_aut1_4_2},
\begin{equation*}
 \begin{split}
  &\sum_{m=1}^{\infty} h^0(ND-mD_i) \label{aut1} \\
  &\quad \ge \sum_{m=1}^{[\beta N]}
    \left( {D^n\over n!}N^n
    - {D^{n-1}\idot D_i\over (n-1)!}N^{n-1}m + {A\over n!}N^{n-2}\min\{m^2, N^2\}\right)+O(N^n) \\
  &\quad\ge \left( {D^n\over n!} \beta
    - {D^{n-1}\idot D_i\over (n-1)!}{\beta^2\over 2} + {A\over n!}g(\beta)\right)N^{n+1}+O(N^n) \\
  &\quad= \left({\beta\over 2}+{A\over D^n}g(\beta)\right) D^n {N^{n+1}\over n!} +O(N^n) \\
  &\quad= \left({\beta\over 2}+\alpha \right) N h^0(ND) +O(N^n)
 \end{split}
\end{equation*}
 where $\alpha:= {A\over D^n}g(\beta)$ and
  $g: \mathbb R^+\rightarrow \mathbb R^+$ is the function given by $g(x)={x^3\over 3}$ if $x\leq 1$ and $g(x)=x-{2\over 3}$ for $x\ge 1$.
 Now from the assumption of equi-degree,   $D_i\idot D^{n-1}={1\over q} D^n$,
so
 $\beta ={q\over n}$. Hence
  $$\gamma(D_i)=
\inf_N  {Nh^0(ND)\over \sum_{m\ge 1} h^0(ND-mD_i)}\leq {1\over {\beta\over 2}+\alpha }< {2n\over q}.$$

The General Theorem in Section \ref{intro} thus gives

\begin{theorem} [Arithmetic Part]\label{thm_equideg_ar}
Let $k$ be a number field and let $S\subseteq M_k$ be a finite set
containing all archimedean places.
Let $X$ be a projective variety of dimension $\ge 2$ over $k$,
and let $D_1,\dots,D_q$ be ample Cartier divisors on $X$
that intersect properly.
Assume that $D:=\sum_{j=1}^q D_j$ has equi-degree respect to $D_1,\dots,D_q$.
Then, for $\epsilon_0>0$ small enough  (which only depends on the given divisors), the inequality
$$m_S(x, D) < \left({2\dim X\over q}-\epsilon_0\right) h_D(x)$$
holds for all $k$-rational points $x\in X(k)$ outside of a proper
Zariski-closed subset of $X$.
\end{theorem}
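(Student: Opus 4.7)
The plan is to deduce this from the Arithmetic General Theorem \ref{Ga} by plugging in the bound on $\gamma(\mathscr O(D),D_j)$ that was computed just above. Each $D_j$ is ample, so $D$ is ample, hence $h^0(\mathscr O(ND))\ge 1$ for $N$ large; the proper-intersection hypothesis is assumed. Theorem \ref{Ga} therefore gives, for every $\epsilon>0$,
\begin{equation*}
  m_S(x,D) \le \Bigl(\max_{1\le j\le q}\gamma(\mathscr O(D),D_j)+\epsilon\Bigr) h_D(x)
\end{equation*}
for all $x\in X(k)$ outside a proper Zariski-closed subset, so everything reduces to showing that $\gamma(\mathscr O(D),D_j)<2n/q$ \emph{strictly}, with $n=\dim X$, by a gap that depends only on $X$ and $D_1,\dots,D_q$.

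To get the strict bound, I would run the estimate carried out earlier in this subsection: apply Autissier's Lemma \ref{lem_aut1_4_2} with $F=D$ and $E=D_j$, choose $\beta=D^n/(n\,D^{n-1}\idot D_j)$, and sum over $1\le m\le[\beta N]$. The equi-degree hypothesis forces $\beta=q/n$, and the summation yields
\begin{equation*}
  \sum_{m\ge 1}h^0(ND-mD_j)\ge \Bigl(\tfrac{\beta}{2}+\alpha\Bigr)N\,h^0(ND)+O(N^n),
\end{equation*}
with $\alpha=(A/D^n)g(\beta)$, $A=(n-1)D^{n-2}\idot D_j^2$, and $g$ as above. Dividing gives
\begin{equation*}
  \gamma(\mathscr O(D),D_j)\le \frac{1}{\beta/2+\alpha}=\frac{2n}{q+2n\alpha}.
\end{equation*}

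The decisive point is that $\alpha>0$. Since $n\ge 2$ and every $D_j$ is ample (hence so is $D$), the mixed intersection number $A=(n-1)D^{n-2}\idot D_j^2$ is strictly positive by the positivity of intersection numbers of ample classes on a projective variety; and $g(\beta)>0$ for $\beta=q/n>0$ by the definition of $g$. Therefore $\gamma(\mathscr O(D),D_j)<2n/q$ with a uniform margin, and setting
\begin{equation*}
  \delta := \frac{2n}{q} - \max_{1\le j\le q}\gamma(\mathscr O(D),D_j) > 0,
\end{equation*}
and then applying Theorem \ref{Ga} with $\epsilon=\delta/2$, yields the claim with $\epsilon_0=\delta/2$.

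The main obstacle in this plan is really only the verification that $\alpha>0$, and this is precisely what forces the hypothesis $\dim X\ge 2$: for $n=1$ the factor $(n-1)$ in $A$ kills the second-order term in Autissier's lemma, and the bound collapses back to the trivial $\gamma(\mathscr O(D),D_j)\le 2n/q$ with no strictness to spare. Everything else in the argument is formal: the effectivity and proper-intersection hypotheses needed for Theorem \ref{Ga} are already in place, and the choice of $\epsilon_0$ is immediate once the strict gap $\delta$ is known to exist.
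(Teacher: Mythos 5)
Your proposal is correct and follows essentially the same route as the paper: apply Autissier's Lemma \ref{lem_aut1_4_2} with $F=D$, $E=D_j$ and $\beta=q/n$ to get $\gamma(\mathscr O(D),D_j)\le(\beta/2+\alpha)^{-1}$, observe that this is strictly less than $2n/q$, and feed that into the General Theorem \ref{Ga}. The paper states the strict inequality $\gamma(D_i)<2n/q$ without comment; your explicit justification that $\alpha>0$ (via ampleness of $D$ and $D_j$, positivity of $D^{n-2}\idot D_j^2$ when $n\ge2$, and $g(\beta)>0$) is a worthwhile clarification but not a different argument.
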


\begin{theorem}[Analytic Part]\label{thm_equideg_an}
Let $X$ be a
complex projective variety of dimension $\ge 2$, and let $D_1, \dots, D_q$ be
ample Cartier divisors on $X$ that intersect properly.
Assume that $D:=\sum_{j=1}^q D_j$
has equi-degree with respect to $D_1, \dots, D_q$.
Let $f\colon \mathbb C\rightarrow X$ be a holomorphic map with Zariski-dense
image.
Then, for $\epsilon_0>0$ small enough,
$$m_f(r, D) <_\exc \left({2\dim X\over q}-\epsilon_0\right)T_{f,D}(r).$$
\end{theorem}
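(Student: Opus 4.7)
The plan is to prove Theorem \ref{thm_equideg_an} as the direct analytic analogue of the arithmetic result Theorem \ref{thm_equideg_ar}, applying Theorem \ref{Gb} in place of Theorem \ref{Ga} and otherwise reusing, verbatim, the geometric estimate of $\gamma(\mathscr O(D), D_j)$ carried out in the paragraph preceding the theorem statement. The estimate is purely geometric (it depends only on Autissier's Lemma \ref{lem_aut1_4_2} and intersection theory), so it is agnostic to whether the setting is arithmetic or analytic.

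First, I apply Theorem \ref{Gb} with $\mathscr L = \mathscr O(D)$. Since $D = \sum_j D_j$ is a sum of ample divisors, $\mathscr L^N$ has global sections for every sufficiently large $N$, so the hypothesis of Theorem \ref{Gb} is met and yields, for every $\epsilon > 0$,
\[
  m_f(r, D) \le_\exc \Bigl(\max_{1 \le j \le q} \gamma(\mathscr O(D), D_j) + \epsilon\Bigr) T_{f, D}(r).
\]

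Second, I show that $\max_j \gamma(\mathscr O(D), D_j) < 2n/q$ strictly, where $n = \dim X$. The computation preceding the theorem (with $F = D$ and $E = D_j$ in Autissier's Lemma \ref{lem_aut1_4_2}) gives
\[
  \gamma(\mathscr O(D), D_j) \;\le\; \frac{1}{\beta/2 + \alpha}, \qquad \alpha = \frac{(n-1)\, D^{n-2} \cdot D_j^2}{D^n}\, g(\beta),
\]
and the equi-degree identity $D_j \cdot D^{n-1} = D^n/q$ forces $\beta = q/n$, so the bound without the correction $\alpha$ would be exactly $2n/q$. The strict inequality therefore follows from $\alpha > 0$: since $n \ge 2$, since $D$ and $D_j$ are ample, and since the $D_j$ intersect properly, the mixed intersection number $D^{n-2} \cdot D_j^2$ is strictly positive by the positivity of intersections of ample classes, while $g(\beta) > 0$ always. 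Uniformity over $j$ is immediate because there are only finitely many divisors.

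Third, I set $\delta := 2n/q - \max_j \gamma(\mathscr O(D), D_j) > 0$, choose $\epsilon_0 := \delta/2$, and take any $\epsilon \in (0, \delta/2)$ in the bound of the first step; since $T_{f,D}(r) \to \infty$, the resulting inequality $m_f(r, D) \le_\exc (2n/q - \epsilon_0 - (\delta/2 - \epsilon)) T_{f,D}(r)$ is strictly stronger than the claimed bound.

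The main obstacle I anticipate lies in the second step: Autissier's Lemma \ref{lem_aut1_4_2} requires $E$ to be big and base-point-free, whereas here $D_j$ is only assumed ample, and passing to a base-point-free multiple $kD_j$ may destroy the nef condition on $F - E = D - kD_j$ that the lemma also demands. Reconciling the lemma with the bare ampleness assumption—either by working on a blow-up where the $D_j$ acquire no base points and then pushing the estimate back down, or by using a slightly strengthened asymptotic form of Autissier's bound—is the only genuinely delicate point. Once this bookkeeping is dealt with exactly as in the arithmetic proof, the strict positivity of $\alpha$ immediately closes the argument.
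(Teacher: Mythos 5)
Your proof follows exactly the paper's approach: specialize Theorem \ref{Gb} to $\mathscr L = \mathscr O(D)$, then invoke the computation via Autissier's Lemma \ref{lem_aut1_4_2} and the equi-degree identity $D_j\idot D^{n-1} = D^n/q$ to conclude $\gamma(\mathscr O(D), D_j) \le (\beta/2 + \alpha)^{-1} < 2n/q$ with $\beta = q/n$, and the strict inequality comes from $\alpha > 0$ (which uses $n\ge 2$ and ampleness exactly as you say). The passage from the uniform bound to a fixed $\epsilon_0 > 0$ is handled the same way. One remark on your flagged obstacle about base-point-freeness of $D_j$: you are right that the paper does not explicitly address it, and you are right that simply replacing $D_j$ by $kD_j$ can fail the nef hypothesis on $F - E$; but the standard repair is to rescale \emph{both} divisors, applying the lemma with $F = kD$ and $E = kD_j$ (so $F - E = k\sum_{i\ne j} D_i$ stays nef), and then noting that restricting $N$ to multiples of $k$ and using monotonicity of $m \mapsto h^0(ND - mD_j)$ still gives the same leading asymptotics for $\sum_m h^0(ND - mD_j)$ — no blow-up or strengthened lemma is needed.
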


In general, we can prove

\begin{theorem}[Arithmetic Part]\label{thm_ban_ar}
Let $k$ be a number field and let $S\subseteq M_k$ be a finite set
containing all archimedean places.
Let $X$ be a projective variety of dimension $\ge 2$ over $k$,
and let $D_1, \dots, D_q$
be effective, big, and nef Cartier divisors on $X$ that intersect properly.
Let $r_i>0$ be real numbers such that $D:=\sum_{i=1}^q r_i D_i$ has
equi-degree (such numbers exist due to Lemma \ref{lem_lev09_9_7}).
Then, for $\epsilon_0>0$ small enough,  the inequality
$$ \sum_{j=1}^q r_j m_S(x, D_j)  <  \left({2\dim X\over q}-\epsilon_0\right)
\left(\sum_{j=1}^q r_jh_{D_j}(x)\right)$$
holds for all $x\in X(k)$ outside a proper Zariski-closed subset of $X$.
\end{theorem}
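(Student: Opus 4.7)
The plan is to deduce Theorem \ref{thm_ban_ar} from the General Theorem (Theorem \ref{Ga}), mimicking the $\gamma$-computation preceding Theorem \ref{thm_equideg_ar} and adapting it in two places: first, to handle the possibly irrational weights $r_j$, and second, to handle the weaker big-and-nef hypothesis (rather than ampleness) on the $D_j$.

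First I would approximate each $r_j$ by a positive rational $r_j' = p_j/p$ so close to $r_j$ that the perturbed sum $D' := \sum_j r_j' D_j$ remains big and nef and the quantities $r_j' D_j \idot (D')^{n-1}$ lie within some prescribed $\eta > 0$ of $(D')^n/q$ (where $n = \dim X$). Clearing denominators, the divisors $p_1 D_1, \dots, p_q D_q$ are effective Cartier and still intersect properly: this condition depends only on the supports and on the regularity of local defining equations, both of which are unaffected by replacing each local equation $\phi_j$ with $\phi_j^{p_j}$. Applying Theorem \ref{Ga} to these divisors with $\mathscr L = \mathscr O(pD')$ and dividing through by $p$ yields, outside a proper Zariski-closed subset of $X$,
\begin{equation*}
  \sum_{j=1}^q r_j' m_S(x, D_j) \le \left(\max_{1 \le j \le q} \gamma(\mathscr O(pD'), p_j D_j) + \epsilon\right) \sum_{j=1}^q r_j' h_{D_j}(x).
\end{equation*}
Since each $|r_j - r_j'| < \eta$ and since $m_S(x, D_j) \le h_{D_j}(x) + O(1)$ (by the first main theorem for the effective divisor $D_j$), replacing $r_j'$ by $r_j$ on both sides introduces only an $O(\eta) \sum_j h_{D_j}(x) + O(1)$ error, which can be absorbed into $\epsilon_0$ by choosing $\eta$ sufficiently small.

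The remaining task is to show $\max_j \gamma(\mathscr O(pD'), p_j D_j) < 2n/q$ with a definite margin independent of the perturbation as $\eta \to 0$. Setting $\beta_j = (pD')^n/(n \cdot p_j D_j \idot (pD')^{n-1}) = (D')^n/(n\, r_j' D_j \idot (D')^{n-1})$, the near-equi-degree condition gives $\beta_j = q/n + O(\eta)$, and with $\alpha_j = ((n-1)(pD')^{n-2} \idot (p_j D_j)^2 /(pD')^n)\, g(\beta_j) > 0$, Autissier's Lemma \ref{lem_aut1_4_2} gives $\gamma(\mathscr O(pD'), p_j D_j) \le 1/(\beta_j/2 + \alpha_j) < 2n/q$ with a gap that converges, as $\eta \to 0$, to a fixed positive quantity depending only on the intersection numbers of $D_1, \dots, D_q$ and on $r_1,\dots,r_q$. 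The main obstacle is that Autissier's Lemma \ref{lem_aut1_4_2} is stated for $E$ big and base-point free, whereas here $E = p_j D_j$ is only big and nef; to bridge this gap one may either invoke Fujita approximation to replace $p_j D_j$ by a birational pullback of a big base-point-free $\mathbb Q$-divisor whose intersection numbers with $pD'$ differ arbitrarily little from those of $p_j D_j$, or argue directly via asymptotic Riemann--Roch for nef divisors that the lower bound on $h^0(NpD' - m p_j D_j)$ in Lemma \ref{lem_aut1_4_2} still holds to leading order in $N$ without the base-point-free hypothesis. Either way the positive correction $\alpha_j$ survives, and choosing $\epsilon$ and $\eta$ small enough produces the asserted inequality with $\epsilon_0 > 0$ depending only on the original data.
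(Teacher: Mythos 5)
Your overall route matches the one the paper sketches and leaves to the reader: approximate the $r_j$ by rationals, clear denominators to get integral divisors $p_1D_1,\dots,p_qD_q$ that still intersect properly (your observation about powers of a regular sequence is correct), apply Theorem~\ref{Ga} with $\mathscr L=\mathscr O(pD')$, and estimate $\gamma$ by Autissier's Lemma~\ref{lem_aut1_4_2} exactly as in the computation preceding Theorem~\ref{thm_equideg_ar}. The rational-approximation step and the bookkeeping that converts Theorem~\ref{Ga} back into the stated inequality are fine, and you are right that the base-point-freeness hypothesis in Lemma~\ref{lem_aut1_4_2} is the one genuinely new obstacle: in the equi-degree theorems the $D_j$ are ample, so one can pass to a very ample multiple, but no multiple of a merely big-and-nef divisor need be base-point free.

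The two fixes you propose do not close that gap. Fujita approximation runs in the wrong direction: it produces $\pi^{*}(p_jD_j)=A_j+E_j$ with $A_j$ base-point free and $E_j\ge 0$, so $A_j\le\pi^{*}(p_jD_j)$ and hence $h^0\bigl(\pi^{*}\mathscr L^N(-m\pi^{*}(p_jD_j))\bigr)\le h^0\bigl(\pi^{*}\mathscr L^N(-mA_j)\bigr)$. Since $\gamma(\mathscr L,D)$ is nondecreasing in $D$, this gives $\gamma(\pi^{*}\mathscr L,A_j)\le\gamma(\pi^{*}\mathscr L,\pi^{*}(p_jD_j))$, so an upper bound on the left yields nothing about the right, which is what you need. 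The fallback of ``asymptotic Riemann--Roch to leading order in $N$'' is also insufficient: the $\min\{m^2,N^2\}$ term in Lemma~\ref{lem_aut1_4_2} is subleading in $N$ pointwise, but after summing $m$ up to $\beta N$ it contributes at the same order $N^{n+1}$ as the main term, and it is precisely this contribution that makes $\alpha_j>0$ and turns $\gamma\le 2n/q$ into the strict $\gamma<2n/q$; in addition, $h^0\ge\chi$ is not automatic for nef-but-not-semiample line sheaves. The viable fix perturbs \emph{upward} rather than via Fujita: fix an ample $H$, replace each $D_j$ by $D_j+\delta H$ for a small rational $\delta>0$, and perturb $D'$ (hence $\mathscr L$) correspondingly. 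Each $D_j+\delta H$ is ample, so a suitable multiple is base-point free and Lemma~\ref{lem_aut1_4_2} applies with $F-E$ nef; since $p_jD_j\le p_j(D_j+\delta H)$, monotonicity of $\gamma$ gives $\gamma(\mathscr L,p_jD_j)\le\gamma(\mathscr L,p_j(D_j+\delta H))$, which is the correct direction; and the intersection numbers, hence $\beta_j$ and $\alpha_j$, vary continuously in $\delta$, so the strict margin persists for $\delta$ small and the extra $\delta h_H(x)$ in $h_{\mathscr L}(x)$ can be absorbed into $\epsilon_0$. With this replacement for your third paragraph the argument closes.
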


\begin{theorem}[Analytic Part]\label{thm_ban_an}
Let $X$ be a complex projective variety of dimension $\ge 2$,
and let $D_1, \dots, D_q$ be effective, big, and nef Cartier
divisors on $X$ that intersect properly.
Let $r_i>0$ be real numbers such that $D:=\sum_{i=1}^q r_i D_i$ has
equi-degree (such numbers exist due to Lemma \ref{lem_lev09_9_7}).
Let $f\colon \mathbb C\rightarrow X$ be a holomorphic map with Zariski-dense
image.
Then, for $\epsilon_0>0$ small enough,
$$ \sum_{j=1}^q r_j m_f(r, D_j) <_\exc \left({2\dim X\over q}-\epsilon_0\right) \left(\sum_{j=1}^q r_jT_{f, D_j}(r)\right).$$
\end{theorem}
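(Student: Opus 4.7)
The plan is to follow the strategy used for Theorem \ref{thm_equideg_an}: apply the analytic General Theorem (Theorem \ref{Gb}) to a suitably rescaled collection of divisors, bound the resulting $\gamma$-constants via an Autissier-type intersection estimate, and then assemble. Since both sides of the asserted inequality are linear in $(r_1,\dots,r_q)$, we first rescale so that $\max_j r_j = 1$; approximating the $r_j$ by rationals and clearing denominators then lets us assume $r_j\in\mathbb{Z}_{>0}$, at the cost of a small perturbation of the equi-degree condition which is absorbed in $\epsilon_0$. The divisors $r_jD_j$ still intersect properly, since that condition depends only on supports.

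Applying Theorem \ref{Gb} to the collection $r_1D_1,\dots,r_qD_q$ with line sheaf $\mathscr{L}:=\mathscr{O}(D)$---bigness of $D$ (as a positive combination containing a big summand) guarantees $h^0(\mathscr{L}^N)\ge 1$ for large $N$---and using $\sum_j r_jD_j = D$ together with linearity of proximity and height functions, we obtain
\begin{equation*}
\sum_j r_j m_f(r, D_j) = m_f(r, D) \le_\exc \Bigl(\max_j \gamma(\mathscr{O}(D), r_jD_j) + \epsilon\Bigr) \sum_j r_j T_{f, D_j}(r).
\end{equation*}
It therefore suffices to show $\max_j \gamma(\mathscr{O}(D), r_jD_j) \le 2n/q - 2\epsilon_0$ for some $\epsilon_0>0$. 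Re-running the computation carried out just before Theorem \ref{thm_equideg_ar} with $F := D$ and $E := r_jD_j$---so that $F-E = \sum_{i\ne j} r_iD_i$ is nef and the equi-degree identity yields $\beta := F^n/(nF^{n-1}\idot E) = q/(nr_j)$---produces
\begin{equation*}
\gamma(\mathscr{O}(D), r_jD_j) \le \frac{1}{\beta/2 + \alpha_j} = \frac{2nr_j}{q + 2nr_j\alpha_j},
\end{equation*}
with $\alpha_j := (n-1)r_j^2(D^{n-2}\idot D_j^2)g(\beta)/D^n$. Under the normalization $\max r_j = 1$, each of these bounds is strictly below $2n/q$ by a definite amount, provided the intersection numbers $D^{n-2}\idot D_j^2$ are strictly positive (which one verifies from the proper-intersection and bigness hypotheses).

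The principal obstacle is that Lemma \ref{lem_aut1_4_2} as stated requires $E$ to be big and base-point free, whereas here $r_jD_j$ is only big and nef. Two natural ways bridge this. First, for $0\le m\le N$ the divisor $ND - mr_jD_j = (N-m)D + m(D-r_jD_j)$ is a non-negative combination of nef divisors, hence nef, and is big whenever $N > m$; Kawamata--Viehweg vanishing then gives $h^0(ND-mr_jD_j) = \chi(ND-mr_jD_j)$, and asymptotic Riemann--Roch supplies the same leading intersection-theoretic expression as Autissier's bound up to an $O(N^{n-1})$ error uniform in $m$. Second, when the Autissier summation range $[1,\beta N]$ extends past $m=N$, one may invoke Fujita approximation to replace $r_jD_j$ by an ample $\mathbb{Q}$-divisor on a birational modification of $X$, apply Lemma \ref{lem_aut1_4_2} there, and descend the estimate to $X$. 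Either route supplies the required $\gamma$-bound and completes the proof.
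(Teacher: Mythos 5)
Your overall plan is exactly what the paper prescribes: approximate the $r_j$ by rationals, clear denominators, apply Theorem~\ref{Gb} with $\mathscr L=\mathscr O(\widehat D)$, and bound each $\gamma(\mathscr L,\cdot)$ by the Autissier-type computation from the equi-degree case. The paper itself leaves all the details to the reader, remarking only that one must ``approximate this divisor by a $\mathbb Q$-divisor,'' so at the structural level you are doing the intended thing. One small arithmetic slip: with $F=D$ and $E=r_jD_j$, the equi-degree hypothesis reads $(r_jD_j)\idot D^{n-1}=D^n/q$, so $\beta=F^n/(nF^{n-1}\idot E)=q/n$, not $q/(nr_j)$; the stray $r_j$ then also leaks into your $\gamma$-bound, though the conclusion $\gamma<2n/q$ survives since $\alpha_j>0$.

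The substantive gap is the one you flag but do not close. Lemma~\ref{lem_aut1_4_2} needs $E$ big and base-point free, while $r_jD_j$ is only big and nef, and neither of your two bridges is complete. The Kawamata--Viehweg route presupposes $X$ smooth (or with controlled singularities), which the theorem does not assume; one must pass to a resolution and track cohomology and intersection numbers under pull-back. Even granting that, $ND-mr_jD_j$ is guaranteed nef only for $m\le N$, whereas the Autissier sum runs over $1\le m\le\beta N$ with $\beta=q/n$, which exceeds $1$ precisely in the nontrivial range $q>n$; for $m>N$ the divisor can fail to be nef and the vanishing argument never gets started (this is exactly what the $\min\{m^2,N^2\}$ factor in Autissier's lemma compensates for). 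Your Fujita-approximation alternative for that range is not worked out and does not obviously give a \emph{lower} bound on $h^0$: writing $\pi^*(r_jD_j)=A+E'$ with $A$ ample and $E'$ effective, the divisor $N\pi^*D-m\pi^*(r_jD_j)$ differs from the one built from $A$ by $(N-m)E'$, which is anti-effective when $m>N$, so the $h^0$-comparison runs the wrong way. The paper also omits these details (deferring to Ru's earlier work), so I cannot point to its own resolution of this point; but as written, your proposal leaves the key estimate on $\sum_{m\ge1}h^0(ND-mr_jD_j)$ unproved.
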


The proofs of Theorems \ref{thm_ban_ar} and \ref{thm_ban_an} are similar to
the proofs of Theorems \ref{thm_equideg_ar} and Theorem \ref{thm_equideg_an}
above, except in this case, the divisor
 $D=\sum_{i=1}^q r_i D_i$ is an $\mathbb R$-divisor, so we need to approximate this divisor by
a $\mathbb Q$-divisor $\widehat{D}:=\sum_{i=1}^q a_i D_i$, similar to the argument carried out in \cite{R4} and \cite{ru}.
We omit the proofs here.  We note that Theorems \ref{thm_ban_ar}
and \ref{thm_ban_an} greatly improve
the earlier results in \cite{R4} and \cite{ru}.

\subsection{The proof of the General Theorem}
The proof of the General Theorems stated in Section \ref{intro} uses the the filtration constructed by
Pascal Autissier (see \cite{Aut2}). We first review his results.

Let $D_1, \dots, D_r$ be effective Cartier divisors on a projective variety $X$.
Assume that they intersect properly on $X$, and
that $\bigcap_{i=1}^r D_i$ is non-empty.  Let $\mathscr L$ be a line sheaf over $X$ with $l:=h^0(\mathscr L)\ge 1$.

\begin{definition} A subset $N\subset \mathbb N^r$ is said to be
\textbf{saturated} if ${\bf a}+{\bf b}\in N$ for any
${\bf a}\in  \mathbb N^r$ and ${\bf b}\in N$.
\end{definition}

\begin{lemma}[{\cite[Lemma 3.2]{Aut2}}]\label{lemm_aut2_3_2}
Let $A$ be a local ring and $(\phi_1, \dots, \phi_r)$ be a regular sequence of
$A$. Let $M$ and $N$ be two saturated subsets of $\mathbb N^r$.Then
$${\mathcal I}(M)\cap {\mathcal I}(N)={\mathcal I}(M\cap N),$$
where, for $N\subset \mathbb N^r$, ${\mathcal I}(N)$ is the ideal of $A$ generated by
$\{\phi_1^{b_1}\cdots \phi_q^{b_r}~|~{\bf b}\in N\}$.
\end{lemma}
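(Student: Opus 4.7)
The easy inclusion $\mathcal{I}(M\cap N)\subseteq \mathcal{I}(M)\cap\mathcal{I}(N)$ is immediate, since each generator $\phi^{\mathbf{c}}$ with $\mathbf{c}\in M\cap N$ lies in both ideals. The substance is the reverse inclusion, which I would prove by induction on the length $r$ of the regular sequence. The base case $r=1$ is direct: every nonempty saturated subset of $\mathbb{N}$ has the form $[m,\infty)$, so both ideals are principal powers of $\phi_1$, and $(\phi_1^m)\cap(\phi_1^n)=(\phi_1^{\max(m,n)})$ follows from $\phi_1$ being a non-zero divisor.

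For the inductive step, I would invoke Matsumura's theorem (\emph{Commutative Ring Theory}, Thm.~16.3) that any permutation of a regular sequence in a Noetherian local ring is again regular, so I may assume $\phi_r$ is a non-zero divisor on $A$ and $\phi_1,\ldots,\phi_{r-1}$ is a regular sequence in the local ring $\bar{A}:=A/(\phi_r)$. For each saturated $S\subseteq\mathbb{N}^r$ the slice $S^{(k)}:=\{\mathbf{a}'\in\mathbb{N}^{r-1}:(\mathbf{a}',k)\in S\}$ and the shift $S-\mathbf{e}_r:=\{\mathbf{a}\in\mathbb{N}^r:\mathbf{a}+\mathbf{e}_r\in S\}$ are saturated and satisfy $(M\cap N)^{(k)}=M^{(k)}\cap N^{(k)}$ and $(M\cap N)-\mathbf{e}_r=(M-\mathbf{e}_r)\cap(N-\mathbf{e}_r)$. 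For $x\in\mathcal{I}(M)\cap\mathcal{I}(N)$, reducing mod $\phi_r$ (under which $\mathcal{I}(S)$ maps onto the monomial ideal $\mathcal{J}(S^{(0)})$ in $\bar{A}$) and applying the inductive hypothesis in $\bar{A}$ places the image of $x$ in the image of $\mathcal{J}((M\cap N)^{(0)})$; lifting gives $x=y_0+\phi_r z_1$ with $y_0\in\mathcal{J}((M\cap N)^{(0)})\subseteq\mathcal{I}(M\cap N)$ and $\phi_r z_1\in\mathcal{I}(M)\cap\mathcal{I}(N)$. I would then run a secondary induction on $\nu(M,N):=\max\{a_r:\mathbf{a}\in M_{\min}\}+\max\{b_r:\mathbf{b}\in N_{\min}\}$ (finite by Dickson's lemma): when $\nu=0$, both $M$ and $N$ have the form $T\times\mathbb{N}$, so $\mathcal{I}(M)\cap\mathcal{I}(N)=\mathcal{J}(M^{(0)})\cap\mathcal{J}(N^{(0)})=\mathcal{J}((M\cap N)^{(0)})=\mathcal{I}(M\cap N)$ by the outer inductive hypothesis applied to the length-$(r-1)$ regular sequence $\phi_1,\ldots,\phi_{r-1}$ in $A$; when $\nu>0$, $\nu$ strictly decreases under $-\mathbf{e}_r$, so the colon identity $\mathcal{I}(S):\phi_r=\mathcal{I}(S-\mathbf{e}_r)$ places $z_1$ in $\mathcal{I}(M-\mathbf{e}_r)\cap\mathcal{I}(N-\mathbf{e}_r)$, the inner hypothesis yields $z_1\in\mathcal{I}((M\cap N)-\mathbf{e}_r)$, and hence $\phi_r z_1\in\mathcal{I}(M\cap N)$.

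The main obstacle is the colon identity $\mathcal{I}(S):\phi_r=\mathcal{I}(S-\mathbf{e}_r)$. Splitting a given representation $z\phi_r=\sum_i u_i\phi^{\mathbf{b}_i}$ (with $\mathbf{b}_i\in S$) according to whether $b_{i,r}\ge 1$ (contributing to $\phi_r\cdot\mathcal{I}(S-\mathbf{e}_r)$) or $b_{i,r}=0$ (contributing to $\mathcal{J}(S^{(0)})\subseteq\mathcal{I}(S-\mathbf{e}_r)$) reduces the identity to the assertion that $\phi_r$ is a non-zero divisor on $A/\mathcal{J}(S^{(0)})$, where $\mathcal{J}(S^{(0)})$ is generated by monomials in $\phi_1,\ldots,\phi_{r-1}$ alone. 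This NZD statement I would prove by an inner induction on the number of monomial generators: the single-generator case $\mathcal{J}(S^{(0)})=(\phi^{\mathbf{a}'})$ follows at once from permutability, since $\phi^{\mathbf{a}'},\phi_r$ is regular in $A$ (its reordering $\phi_r,\phi_1^{a'_1},\ldots,\phi_{r-1}^{a'_{r-1}}$ is a power-extension of a permutation of the original regular sequence); the inductive step relies on the pairwise intersection $(\phi^{\mathbf{a}'})\cap(\phi^{\mathbf{b}'})=(\phi^{\mathbf{a}'\vee\mathbf{b}'})$---obtained by factoring out the common monomial $\phi^{\mathbf{a}'\wedge\mathbf{b}'}$ and again invoking permutability on the resulting disjoint supports---together with a depth/Mayer--Vietoris argument that promotes the NZD property from each summand to the sum.
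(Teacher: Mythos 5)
The paper does not prove this lemma itself; it is cited verbatim from Autissier \cite[Lemma~3.2]{Aut2}, so there is no in-paper argument to compare against, and I assess your sketch on its own. The overall scaffolding is reasonable: induction on $r$, reduction modulo $\phi_r$ with the outer inductive hypothesis applied in $A/(\phi_r)$, the colon identity $\mathcal I(S):\phi_r=\mathcal I(S-\mathbf e_r)$, and the secondary induction on $\nu$. Your check that $\nu$ strictly decreases under the shift by $\mathbf e_r$ is correct.

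The genuine gap is in the non-zero-divisor sublemma. You propose a ``depth/Mayer--Vietoris argument that promotes the NZD property from each summand to the sum,'' but that implication does not exist in general. In the Mayer--Vietoris sequence
\begin{equation*}
  0\longrightarrow A/(I\cap J)\longrightarrow A/I\oplus A/J\longrightarrow A/(I+J)\longrightarrow 0\,,
\end{equation*}
a non-zero-divisor on the middle descends to the submodule $A/(I\cap J)$, not to the quotient $A/(I+J)$. Applying the snake lemma to multiplication by $\phi_r$, and granting that $\phi_r$ is NZD on $A/I$, $A/J$, and $A/(I\cap J)$, one finds
\begin{equation*}
  \{\,\bar a\in A/(I+J)\ :\ \phi_r\bar a=0\,\}\ \cong\
  \frac{\bigl(I+(\phi_r)\bigr)\cap\bigl(J+(\phi_r)\bigr)}{(I\cap J)+(\phi_r)}\,,
\end{equation*}
which does not vanish for arbitrary ideals, because the lattice of ideals of a ring is not distributive. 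So a further identity is required, namely $\bigl(I+(\phi_r)\bigr)\cap\bigl(J+(\phi_r)\bigr)=(I\cap J)+(\phi_r)$ for $I,J$ monomial in $\phi_1,\dots,\phi_{r-1}$, and your sketch never establishes it. It \emph{is} true, and it \emph{is} within reach of your outer inductive hypothesis: both sides contain $(\phi_r)$, and reducing modulo $\phi_r$ the desired equality becomes exactly the Lemma in $r-1$ variables applied in both $A$ and $A/(\phi_r)$. But Mayer--Vietoris alone does not close the argument, and as written the key step is missing. A secondary issue: permutability of regular sequences and Dickson's lemma both require $A$ to be Noetherian; that hypothesis is not in the statement (though it holds in every application in the paper) and should be made explicit.
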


\begin{remark}\label{remk_aut2_3_2}
We use Lemma \ref{lemm_aut2_3_2} in the following particular situation:
Let
$$\bigtriangleup
  = \{{\bf t}=(t_1, \dots, t_r)\in (\mathbb R^+)^r ~|~ t_1+\cdots +t_r=1\}.$$
For each ${\bf t}\in \bigtriangleup$ and $x\in \mathbb R^+$, let
$$N({\bf t}, x)=\{{\bf b}\in \mathbb N^r~|~t_1b_1+\cdots +t_rb_r\ge x\}.$$
Notice that $N({\bf t}, x)\cap N({\bf u}, y)\subset N(\lambda {\bf t}+(1-\lambda){\bf u}, \lambda x+ (1-\lambda)y)$ for all $\lambda\in[0,1]$.
So, from Lemma \ref{lemm_aut2_3_2}, we have
\begin{equation}
{\mathcal I}(N({\bf t}, x))\cap {\mathcal I}(N({\bf u}, y))\subset
{\mathcal I}(N(\lambda {\bf t}+(1-\lambda){\bf u}, \lambda x+ (1-\lambda)y))
\end{equation}
for any ${\bf t}, {\bf u}\in \bigtriangleup$; $x, y\in  \mathbb R^+$;
and $\lambda\in[0,1]$.
\end{remark}

\begin{definition} Let $W$ be a vector space of finite dimension.
A \textbf{filtration} of $W$ is a family of subspaces
${\mathcal F}=({\mathcal F}_x)_{x\in \mathbb R^+}$ of subspaces of
$W$ such that $\mathcal F_x\supseteq\mathcal F_y$ whenever $x\le y$,
and such that ${\mathcal F}_x=\{0\}$ for $x$ big enough.
A basis ${\mathcal B}$ of $W$
is said to be \textbf{adapted to $\mathcal F$} if
${\mathcal B}\cap {\mathcal F}_x$ is a basis of ${\mathcal F}_x$ for every real number $x\ge 0$.
\end{definition}

\begin{lemma}[Levin \cite{levin_annals}, Autissier \cite{Aut2}]\label{lemm_filt}
Let ${\mathcal F}$ and ${\mathcal G}$ be two filtrations of $W$.
Then there exists a basis
of $W$ which is adapted to both ${\mathcal F}$ and ${\mathcal G}$.
\end{lemma}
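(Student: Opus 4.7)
The plan is to induct on $\dim W$, the case $\dim W = 0$ being vacuous. For the inductive step I would single out the ``top'' piece of the first filtration, namely $V' := \mathcal F_{x^*}$ with $x^* := \max\{x \in \mathbb R^+ : \mathcal F_x \ne 0\}$; this maximum exists because a filtration of a finite-dimensional space has only finitely many distinct members, and $V'$ is a nonzero subspace, strictly smaller than $W$ unless $\mathcal F$ has only one jump (a degenerate case in which any basis adapted to $\mathcal G$, obtained by inducting on $\dim W$ with the trivial $\mathcal F$ removed, already works).

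Next, I would apply the inductive hypothesis in two places. First, to $V'$ equipped with the restricted filtration $(\mathcal G_y \cap V')_y$: on $V'$ the filtration $\mathcal F$ takes only the values $V'$ and $0$, so any basis adapted to $\mathcal G \cap V'$ is automatically adapted to $\mathcal F \cap V'$; call it $\mathcal B_1$. Second, to the quotient $W/V'$ with the induced filtrations $\overline{\mathcal F}_x := \mathcal F_x/V'$ (for $x \le x^*$, and $0$ otherwise) and $\overline{\mathcal G}_y := (\mathcal G_y + V')/V'$, obtaining a basis $\overline{\mathcal B}_2$ adapted to both. The crux is then a simultaneous lift: for each $\bar b \in \overline{\mathcal B}_2$, set $\bar x := \max\{x : \bar b \in \overline{\mathcal F}_x\}$ (necessarily $\bar x \le x^*$) and $\bar y := \max\{y : \bar b \in \overline{\mathcal G}_y\}$; I want a lift $b \in \mathcal F_{\bar x} \cap \mathcal G_{\bar y}$. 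Choose any preimage $b' \in \mathcal F_{\bar x}$ of $\bar b$, which is possible because $V' \subseteq \mathcal F_{\bar x}$; by the definition of $\overline{\mathcal G}_{\bar y}$ we may write $b' = g + v$ with $g \in \mathcal G_{\bar y}$ and $v \in V'$, and then $b := g$ lies in $\mathcal G_{\bar y}$ and, since $V' \subseteq \mathcal F_{\bar x}$, also in $\mathcal F_{\bar x}$.

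Setting $\mathcal B_2$ to be these lifts and $\mathcal B := \mathcal B_1 \cup \mathcal B_2$, I would conclude by a dimension count: for each $x$, using $\dim \mathcal F_x = \dim V' + \dim \overline{\mathcal F}_x$ when $x \le x^*$ (and $\mathcal F_x = 0$ otherwise) one checks $|\mathcal B \cap \mathcal F_x| = \dim \mathcal F_x$; for each $y$, using $\dim \mathcal G_y = \dim(V' \cap \mathcal G_y) + \dim \overline{\mathcal G}_y$ one checks $|\mathcal B \cap \mathcal G_y| = \dim \mathcal G_y$. I expect the simultaneous lift to be the main obstacle: it forces the induction to be asymmetric (exhaust $\mathcal F$ first, then handle $\mathcal G$ on the quotient), because an attempt to peel off the tops of both filtrations at once would not produce lifts lying in the correct joint piece $\mathcal F_{\bar x} \cap \mathcal G_{\bar y}$. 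Once the lift is in place, the remaining verification is purely bookkeeping with dimensions.
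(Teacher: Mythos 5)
The paper states this lemma by citation to Levin and Autissier without reproducing a proof, so there is no in-paper argument to compare against; I will just assess your proof on its own terms. Your inductive argument is correct. Note that the filtrations in this paper are \emph{decreasing} ($\mathcal F_x\supseteq\mathcal F_y$ for $x\le y$), so $V'=\mathcal F_{x^*}$ is the \emph{smallest} nonzero member of $\mathcal F$, and this is exactly what the lift step needs: since $V'\subseteq\mathcal F_{\bar x}$ for every relevant $\bar x\le x^*$, a lift of $\bar b$ into $\mathcal F_{\bar x}$ may be adjusted by an arbitrary element of $V'$ without leaving $\mathcal F_{\bar x}$, and you exploit this freedom to place the lift in $\mathcal G_{\bar y}$ as well. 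The final dimension counts (using $\dim\overline{\mathcal F}_x=\dim\mathcal F_x-\dim V'$ and $\dim\overline{\mathcal G}_y=\dim\mathcal G_y-\dim(\mathcal G_y\cap V')$, together with the fact that for $b\in\mathcal B_2$ one has $b\in\mathcal F_x\iff\bar b\in\overline{\mathcal F}_x$ and likewise for $\mathcal G$) all check out. One small point: you set $\bar x=\max\{x:\bar b\in\overline{\mathcal F}_x\}$, which presupposes that this set is nonempty; the paper's definition of filtration does not literally require $\mathcal F_0=W$. But this is harmless: if $\bar b\notin\overline{\mathcal F}_x$ for every $x$, then no lift of $\bar b$ lies in any $\mathcal F_x$ either, so there is no $\mathcal F$-constraint to satisfy and the same choice $b=g\in\mathcal G_{\bar y}$ works (or one may simply assume $\mathcal F_0=\mathcal G_0=W$, which costs nothing). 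Your route — exhaust the smallest piece of $\mathcal F$ first and then handle $\mathcal G$ on the quotient by a simultaneous lift — is a clean alternative to the more common presentations, which tend to work directly with the two-dimensional array of subspaces $\mathcal F_{x_i}\cap\mathcal G_{y_j}$ and the submodularity of dimension, or with a greedy construction.
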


For any fixed ${\bf t}\in \bigtriangleup$, we construct a filtration of $H^0(X, \mathscr L)$ as follows:
for  $x\in  \mathbb R^+$,
one defines the ideal ${\mathcal I}({\bf t}, x)$ of ${\mathscr O}_X$ by
\begin{equation}\label{def_I}
  {\mathcal I}({\bf t}, x)
    = \sum_{{\bf b}\in N({\bf t}, x)} {\mathscr O}_X(-\sum_{i=1}^r b_iD_i)\;,
\end{equation}
and let
\begin{equation}\label{def_filtr_h_0}
  {\mathcal F}({\bf t})_x
    = H^0(X, {\mathcal I}({\bf t}, x)\otimes \mathscr L)\;.
\end{equation}
Then $({\mathcal F}({\bf t})_x)_{x\in \mathbb R^+}$ is a filtration of $H^0(X, \mathscr L)$.

For $s\in H^0(X, \mathscr L) -\{0\}$, let $\mu_{\bf t}(s)=\sup\{y\in \mathbb R^+~|~s\in {\mathcal F}({\bf t})_y\}.$
Also let
\begin{equation}\label{def_F}
  F({\bf t})
    = {1\over h^0(\mathscr L)}\int_0^{+\infty}(\dim{\mathcal F}({\bf t})_x)\,dx\;.
\end{equation}

\begin{remark}\label{remk_levin_aut}
Let ${\mathcal B}=\{s_1, \dots, s_l\}$ be a basis of $H^0(X, \mathscr L)$ with $l=h^0(\mathscr L)$. Then we have
$$F({\bf t})\ge {1\over l}\int_0^{\infty} \#({\mathcal F}({\bf t})_x\cap {\mathcal B})dx={1\over l}\sum_{k=1}^l \mu_{{\bf t}}(s_k),$$
where equality holds if  ${\mathcal B}$ is adapted to the filtration $({\mathcal F}({\bf t})_x)_{x\in \mathbb R^+}.$
\end{remark}
The key result we will use about this filtration is the following Proposition.

\begin{proposition}[{\cite[Th\'eor\`eme 3.6]{Aut2}}]\label{aut2_thm_3_6}
With the notations and assumptions above, let
$F: \bigtriangleup \rightarrow \mathbb R^+$ be the map defined in (\ref{def_F}). Then $F$
is concave. In particular, for ${\bf t}\in  \bigtriangleup$,
\begin{equation}\label{aut_ineq}
  F({\bf t})
    \ge \min_i \left({1\over h^0(\mathscr L)}\sum_{m\ge 1} h^0(\mathscr L(-mD_i))\right)\;.
\end{equation}
\end{proposition}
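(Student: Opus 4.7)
The plan is to prove concavity first, then evaluate $F$ at the vertices of the simplex $\bigtriangleup$ to obtain the lower bound from concavity.

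For concavity, fix ${\bf t}, {\bf u} \in \bigtriangleup$ and $\lambda \in [0,1]$, and set ${\bf w} = \lambda {\bf t} + (1-\lambda){\bf u}$. The key input is Remark \ref{remk_aut2_3_2}: combined with the proper-intersection hypothesis on $D_1,\dots,D_r$ (so the local equations form a regular sequence at every point of $X$, making Lemma \ref{lemm_aut2_3_2} applicable stalkwise), it gives a containment of ideal sheaves
$$\mathcal{I}({\bf t}, x) \cap \mathcal{I}({\bf u}, y) \subseteq \mathcal{I}({\bf w}, \lambda x + (1-\lambda) y).$$
Tensoring with $\mathscr{L}$ (a local isomorphism) and taking $H^0$ yields $\mathcal{F}({\bf t})_x \cap \mathcal{F}({\bf u})_y \subseteq \mathcal{F}({\bf w})_{\lambda x + (1-\lambda) y}$, and therefore the pointwise inequality $\mu_{\bf w}(s) \geq \lambda \mu_{\bf t}(s) + (1-\lambda) \mu_{\bf u}(s)$ for every nonzero $s \in H^0(X, \mathscr{L})$.

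Next I would apply Lemma \ref{lemm_filt} to the filtrations $\mathcal{F}({\bf t})$ and $\mathcal{F}({\bf u})$ to obtain a basis $\mathcal{B}$ of $H^0(X, \mathscr{L})$ adapted to both. By Remark \ref{remk_levin_aut}, the equalities $F({\bf t}) = \frac{1}{l} \sum_{s \in \mathcal{B}} \mu_{\bf t}(s)$ and $F({\bf u}) = \frac{1}{l} \sum_{s \in \mathcal{B}} \mu_{\bf u}(s)$ hold because $\mathcal{B}$ is adapted to the respective filtrations, while the inequality $F({\bf w}) \geq \frac{1}{l} \sum_{s \in \mathcal{B}} \mu_{\bf w}(s)$ holds for $\mathcal{B}$ (possibly not adapted to $\mathcal{F}({\bf w})$). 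Summing the pointwise inequality over $s \in \mathcal{B}$ then gives $F({\bf w}) \geq \lambda F({\bf t}) + (1-\lambda) F({\bf u})$.

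For the second assertion, I would compute $F$ at the vertices ${\bf e}_i$ of $\overline{\bigtriangleup}$. For ${\bf t} = {\bf e}_i$ one has $N({\bf e}_i, x) = \{{\bf b} \in \mathbb{N}^r : b_i \geq x\}$, so $\mathcal{I}({\bf e}_i, x) = \mathscr{O}_X(-\lceil x \rceil D_i)$ and hence $\dim \mathcal{F}({\bf e}_i)_x = h^0(\mathscr{L}(-\lceil x \rceil D_i))$. Integrating over $x \in \mathbb{R}^+$ yields
$$F({\bf e}_i) = \frac{1}{h^0(\mathscr{L})} \sum_{m \geq 1} h^0(\mathscr{L}(-m D_i)).$$
Since any ${\bf t} \in \bigtriangleup$ satisfies ${\bf t} = \sum_i t_i {\bf e}_i$ with $t_i \geq 0$ and $\sum t_i = 1$, concavity (extended to $\overline{\bigtriangleup}$) gives $F({\bf t}) \geq \sum_i t_i F({\bf e}_i) \geq \min_i F({\bf e}_i)$, which is the desired inequality.

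The main obstacle is the concavity step, where one must track the chain of containments of ideal sheaves, tensoring with $\mathscr{L}$, and the passage from pointwise valuations $\mu$ back to integrals defining $F$; the adapted-basis lemma is essential here to make the sums of $\mu$'s coincide with $F({\bf t})$ and $F({\bf u})$. A minor technical point is that ${\bf e}_i \notin \bigtriangleup$ under the paper's convention $\mathbb{R}^+ = (0,\infty)$, but this can be handled by a limiting argument: approximate ${\bf e}_i$ by interior points ${\bf e}_{i,\epsilon} = (1-\epsilon){\bf e}_i + \epsilon {\bf c}$ (where ${\bf c}$ is the barycenter), apply concavity on $\bigtriangleup$, and let $\epsilon \to 0$, using the explicit formula to compute the limit of $F({\bf e}_{i,\epsilon})$.
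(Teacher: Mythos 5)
Your proposal is correct and takes essentially the same route as the paper: concavity via Remark \ref{remk_aut2_3_2}, the adapted-basis Lemma \ref{lemm_filt}, and Remark \ref{remk_levin_aut}, followed by evaluating $F$ at the vertices $\mathbf e_i$. The one place you go further is in flagging that $\mathbf e_i \notin \bigtriangleup$ under the $\mathbb R^+=(0,\infty)$ convention; the paper applies concavity at the vertices without comment, while your limiting argument (or, more simply, noting that the saturation argument and Remark \ref{remk_aut2_3_2} apply verbatim on the closed simplex $\overline{\bigtriangleup}$) supplies the small missing justification.
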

We include a proof here for the sake of completeness.
\begin{proof} For any ${\bf t}, {\bf u}\in \bigtriangleup$ and $\lambda \in [0, 1]$, we need to prove that
\begin{equation}F(\lambda {\bf t}+(1-\lambda){\bf u})\ge \lambda F({\bf t})+(1-\lambda)F({\bf u}).\end{equation}
By Lemma \ref{lemm_filt}, there exists a basis ${\mathcal B}=\{s_1, \dots, s_l\}$ of $H^0(X,\mathscr L)$ with $l=h^0(\mathscr L)$,
which is adapted both to $({\mathcal F}({\bf t})_x)_{x\in \mathbb R^+}$
and to $({\mathcal F}({\bf u})_y)_{y\in \mathbb R^+}$.
For $x, y\in \mathbb R^+$, by Lemma \ref{lemm_aut2_3_2} (or
Remark \ref{remk_aut2_3_2}), since $D_1, \dots, D_r$ intersect properly on $X$
$${\mathcal F}({\bf t})_x\cap {\mathcal F}({\bf u})_y
\subset
{\mathcal F}(\lambda {\bf t}+(1-\lambda){\bf u})_{\lambda x+ (1-\lambda)y}.$$
For $s\in H^0(X, \mathscr L)-\{0\}$, we have, from the definition of $\mu_{{\bf t}}(s)$ and $\mu_{{\bf u}}(s)$,
$s\in {\mathcal F}(\lambda {\bf t}+(1-\lambda){\bf u})_{\lambda x+ (1-\lambda)y}$
for $x<\mu_{{\bf t}}(s)$ and $y<\mu_{{\bf u}}(s)$, and thus
$$\mu_{\lambda {\bf t}+(1-\lambda){\bf u}}(s)\ge \lambda  \mu_{{\bf t}}(s) +(1-\lambda)\mu_{{\bf u}}(s).$$
Taking $s=s_j$ and summing it over $j=1, \dots, l$, we get,
by Remark \ref{remk_levin_aut},
$$F(\lambda {\bf t}+(1-\lambda){\bf u})\ge \lambda {1\over l}\sum_{j=1}^l  \mu_{{\bf t}}(s_j) +(1-\lambda){1\over l}\sum_{j=1}^l
\mu_{{\bf u}}(s_j).$$
On the other hand, since ${\mathcal B}=\{s_1, \dots, s_l\}$ is a basis
adapted to both ${\mathcal F}({\bf t})$ and ${\mathcal F}({\bf u})$,
from Remark \ref{remk_levin_aut},
$F({\bf t})={1\over l}\sum_{j=1}^l \mu_{{\bf t}}(s_j)$
and $F({\bf u})={1\over l}\sum_{j=1}^l \mu_{{\bf u}}(s_j)$.
Thus $$F(\lambda {\bf t}+(1-\lambda){\bf u})\ge \lambda F({\bf t})+(1-\lambda)F({\bf u}),$$
 which proves that  $F$ is a convex function.

To prove (\ref{aut_ineq}), let ${\bf e}_1=(1, 0, \dots, 0)$, $\cdots$,
${\bf e}_r=(0, 0, \dots, 1)$ be the natural basis of $\mathbb R^r$,
and write, for ${\bf t} \in \bigtriangleup$, ${\bf t}=t_1{\bf e_1}+\cdots+t_r{\bf e}_r$.
Then, notice that $t_1+\cdots +t_r=1$, from the convexity of $F$, we get
$$F({\bf t}) = F(t_1{\bf e_1}+\cdots+t_r{\bf e}_r) \ge (t_1+\cdots +t_r) \min_i F({\bf e}_i)
= \min_i F({\bf e}_i)$$
and, obviously, $F({\bf e}_i)={1\over  h^0(\mathscr L)}\sum_{m\ge 1} h^0(\mathscr L(-mD_i))$ for $i=1, \dots, r$.
\end{proof}

We are now ready to prove the General Theorems.
We first consider the arithmetic case.

Let $D_1, \dots, D_q$ be effective Cartier divisors intersecting properly
on $X$, and let $D=D_1+\cdots+D_q$.
Choose Weil functions $\lambda_{D_1},\dots,\lambda_{D_q}$ for the Cartier
divisors $D_1,\dots,D_q$, respectively.  Since these divisors are effective,
we may assume that $\lambda_{D_i,\upsilon}(y)\ge0$
for all $i=1,\dots,q$; all $\upsilon\in S$; and all
$y\in X(\overline k_\upsilon)\setminus\Supp D_i$
(this can be done by Proposition \ref{weil_eff}).

Let $\epsilon>0$, and pick a positive integer $N$ such that
\begin{equation}\label{choice_of_N}
  \max_{1\le j\le q} \frac{Nh^0(\mathscr L^N)}{\sum_{m\ge1} h^0(\mathscr L^N(-mD_j))}
    < \max_{1\le j\le q} \gamma(\mathscr L, D_j)+{\epsilon\over 4}\;.
\end{equation}

\begin{lemma}[similar to {\cite[Lemma~20.7]{vojta_cm}}] \label{vojta_cm_20_7}
Let $X$ be a complete variety over a number field $k$;
let $D_1,\dots,D_q$ be effective Cartier divisors on $X$; let $D=D_1+\dots+D_q$;
and let $\lambda_{D_1},\dots,\lambda_{D_q}$ be Weil functions for
$D_1,\dots,D_q$, respectively.  Let
$$\Sigma
  = \left\{\sigma\subseteq \{1,\dots,q\}
    \bigm| \bigcap_{j\in \sigma} \Supp D_j\ne\emptyset\right\}\;.$$
For each  $\sigma \in \Sigma$,
write
\begin{equation}D=D_{\sigma, 1} + D_{\sigma, 2}\;, \end{equation}
where
$$D_{\sigma, 1} = \sum_{j\in \sigma} D_j \qquad\text{and}\qquad
  D_{\sigma, 2} = \sum_{j\notin \sigma} D_j \;.$$
Let $\lambda_{D_{\sigma,2}}=\sum_{j\notin\sigma}\lambda_{D_j}$
for all $\sigma\in\Sigma$.
Then there exists an $M_k$-constant  $(C_{\upsilon})_{\upsilon \in M_k}$,
depending only on $X$; $D_1,\dots,D_q$; and the chosen Weil functions,
such that
$$\min_{\sigma\in \Sigma}\lambda_{D_{\sigma, 2}, \upsilon}(x)\leq C_{\upsilon}$$
for all $x\in X(\overline k_{\upsilon})$ and all $\upsilon \in M_k$.
\end{lemma}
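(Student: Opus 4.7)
The plan is to cover $X$ by Zariski-open sets indexed by $\Sigma$ on each of which $D_{\sigma,2}$ restricts to the zero divisor, and then convert that geometric picture into the desired pointwise upper bound via Lemma~\ref{weil_max_fancy}. Concretely, for each $\sigma \in \Sigma$ I would set
\[
  U_\sigma = X \setminus \bigcup_{j \notin \sigma} \Supp D_j,
\]
which is Zariski-open in $X$. A first routine step is to check that these cover $X$: for any point $x \in X$, the subset $\sigma(x) := \{j : x \in \Supp D_j\}$ lies in $\Sigma$ (because $x$ itself belongs to the associated intersection, with the convention $\bigcap_{j \in \emptyset} \Supp D_j = X$ used when $\sigma(x) = \emptyset$), and by construction $x \in U_{\sigma(x)}$.

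Next I would exploit that on $U_\sigma$ the divisor $D_{\sigma,2} = \sum_{j \notin \sigma} D_j$ has empty support, so $D_{\sigma,2}\big|_{U_\sigma} = 0$ and consequently $-D_{\sigma,2}\big|_{U_\sigma} = 0$ is effective as a Cartier divisor. The function $-\lambda_{D_{\sigma,2}} = -\sum_{j \notin \sigma}\lambda_{D_j}$ is a Weil function for $-D_{\sigma,2}$, defined on $X(M_k) \setminus \Supp D_{\sigma,2}$, which contains $U_\sigma(M_k)$. Applying Lemma~\ref{weil_max_fancy} to the finite family $\{(U_\sigma, -D_{\sigma,2})\}_{\sigma \in \Sigma}$ then produces an $M_k$-constant $\gamma = (\gamma_v)$ such that for every $v \in M_k$ and every $x \in X(\overline{k}_v)$ there exists $\sigma \in \Sigma$ with $x \in U_\sigma$ and $-\lambda_{D_{\sigma,2}, v}(x) \ge \gamma_v$; equivalently, $\lambda_{D_{\sigma,2}, v}(x) \le -\gamma_v$. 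The lemma then follows by setting $C_v := -\gamma_v$, which is zero for almost all $v$ since $\gamma$ is an $M_k$-constant.

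The only (mild) obstacle is that Lemma~\ref{weil_max_fancy} is stated for projective $X$, whereas here $X$ is only assumed complete. I would handle this either by a Chow's lemma reduction---pull back along a projective birational modification $X' \to X$, apply the lemma on $X'$, and transport the bound back via the functoriality of Weil functions (Proposition~\ref{weil_props}(b))---or, more directly, by observing that the proof of Lemma~\ref{weil_max_fancy} uses only the existence of affine $M_k$-bounded covers from Lang \cite[Ch.~10, Prop.~1.2]{lang}, which is available on any complete variety.
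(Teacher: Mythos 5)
Your proposal is correct and runs closely parallel to the paper's own argument: both hinge on the single key observation that for every point $x$ the set $\sigma(x)=\{j : x\in\Supp D_j\}$ lies in $\Sigma$, so that $\bigcap_{\sigma\in\Sigma}\Supp D_{\sigma,2}=\emptyset$, equivalently your open sets $U_\sigma$ cover $X$. The only deviation is in the utility lemma used to turn that covering into an $M_k$-bound: the paper applies Proposition~\ref{weil_min} (the pointwise minimum of $\lambda_{D_{\sigma,2}}$ over $\sigma\in\Sigma$, since the $D_{\sigma,2}$ are effective with empty common support, is a Weil function for the zero divisor) and then Corollary~\ref{weil_uniq} to bound that Weil function above by an $M_k$-constant; you instead feed the covering $\{U_\sigma\}$ together with the divisors $-D_{\sigma,2}$ directly into Lemma~\ref{weil_max_fancy}. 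These two routes are essentially interchangeable --- indeed the paper's own remark immediately after Lemma~\ref{weil_max_fancy} notes that the latter proves Proposition~\ref{weil_min} --- so this is the same proof up to a small reshuffling of which auxiliary statement is invoked. Your note about completeness versus projectivity is apt: the paper's route sidesteps the issue by citing Proposition~\ref{weil_min} and Corollary~\ref{weil_uniq}, both already stated for complete varieties, whereas Lemma~\ref{weil_max_fancy} is stated for projective $X$; either of your two workarounds (Chow's lemma plus functoriality, or noting that the proof of Lemma~\ref{weil_max_fancy} uses only Lang's affine $M_k$-bounded covers) resolves this cleanly.
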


\begin{proof}
The  definition of the set $\Sigma$  implies that
$$\bigcap_{\sigma\in \Sigma} \Supp D_{\sigma, 2}=\emptyset,$$
because for all $x\in X$ the set
$\sigma = \{j\in\{1,\dots,q\} \mid x\in \Supp D_j\}$ is
an element in $\Sigma$, and then $x\not\in \Supp D_{\sigma, 2}$.
The lemma
then follows from Proposition \ref{weil_min} and Corollary \ref{weil_uniq},
since $\Sigma$ is a finite set.
\end{proof}

Let $c\ge 1$ be an integer such that $h^0(\mathscr L^N(-cD_j))=0$
for $j=1, \dots, q$ and fix an integer $b$ with
$b\ge {cn\over N\epsilon_0}$, where $\epsilon_0>0$ is chosen such that
$$\epsilon_0 <{\epsilon\over ( \max_{1\le j\le q} \gamma(\mathscr L, D_j)+1+\epsilon)( 4\max_{1\le j\le q} \gamma(\mathscr L, D_j)+1+\epsilon)}.$$

 For $\sigma\in \Sigma$, let
$$\bigtriangleup_{\sigma}=\left\{{\bf a}=(a_i)\in {\mathbb N}^{\#\sigma}~|~\sum_{i\in \sigma} a_i=b\right\}.$$
For ${\bf a}\in \bigtriangleup_{\sigma}$ (hence ${1\over b}{\bf a}\in  \bigtriangleup$), as
 above, one defines (see (\ref{def_I}),  (\ref{def_filtr_h_0}), and (\ref{def_F}))
the ideal ${\mathcal I}(x)$ of ${\mathscr O}_X$ by
$${\mathcal I}(x)
  = \sum_{{\bf b}} {\mathscr O}_X\left(-\sum_{i\in \sigma} b_iD_{i}\right)$$
  where the sum is taken for all
  ${\bf b}\in {\mathbb N}^{\#\sigma}$ with $\sum_{i\in \sigma} a_ib_i\ge bx$.
 Let
$${\mathcal F}(\sigma; {\bf a})_x
  = H^0(X, \mathscr L^N\otimes  {\mathcal I}(x))\;,$$
which we regard as a subspace of $H^0(X,\mathscr L^N)$, and let
$$F(\sigma; {\bf a})
    = {1\over h^0(\mathscr L^N)}\int_0^{+\infty}(\dim{\mathcal F}(\sigma;{\bf a})_x)\,dx\;.$$
  Applying Proposition \ref{aut2_thm_3_6} with the line sheaf being taken as $\mathscr L^N$, we have
$$F(\sigma; {\bf a})\ge  \min_{1\leq i \leq q}\left({1\over h^0(\mathscr L^N)}\sum_{m\ge 1} h^0(\mathscr L^N(-mD_i))\right).$$
Let $\mathcal B_{\sigma; {\bf a}}$ be a basis of
$H^0(X, \mathscr L^N)$ adapted to the above filtration
$\{{\mathcal F}(\sigma; {\bf a})_x\}_{x\in\mathbb R^+}$.
By Remark \ref{remk_levin_aut},
$F(\sigma, {\bf a}) = {1\over h^0(\mathscr L^N)}\sum_{s\in \mathcal B_{\sigma; {\bf a}}} \mu(s)$, where
 $\mu(s)$  is the largest rational number
for which $s\in {\mathcal F}(\sigma; {\bf a})_{\mu}$. Hence
\begin{equation}\label{sum_mu_geq}
 \sum_{s\in \mathcal B_{\sigma; {\bf a}}} \mu(s)
    \ge \min_{1\leq i\leq q} \sum_{m\ge 1} h^0(\mathscr L^N(-mD_i))\;.
\end{equation}
It is important to note that the set $\bigcup_{\sigma; {\bf a}}  \mathcal B_{\sigma; {\bf a}}$ is a finite set.

 Let
$$\lambda_D = \sum_{j=1}^q \lambda_{D_j}
  \qquad\text{and}\qquad
  \lambda_{D_{\sigma,1}} = \sum_{j\in\sigma} \lambda_{D_j}
  \quad\text{for all $\sigma\in\Sigma$}$$
(note that $\lambda_{D_{\sigma,2}}$ was defined already in
Lemma \ref{vojta_cm_20_7}).

Given $\sigma\in\Sigma$ and $\mathbf a\in\bigtriangleup_\sigma$, any
nonzero section $s\in H^0(X,\mathscr L^N)$ can be written locally as
$$s = \sum_{\mathbf b} f_{\mathbf b}\prod_{i\in\sigma} 1_{D_i}^{b_i}\;,$$
where $f_{\mathbf b}$ is a local section of
$\mathscr L^N(-\sum_{i\in\sigma} b_iD_i)$,
$1_{D_i}$ is the canonical section of $\mathscr O(D_i)$ for each $i$,
and the sum is taken for all $\mathbf b\in\mathbb N^{\#\sigma}$
with $\sum_{i\in\sigma} a_ib_i\ge b\mu(s)$.  Moreover, $f_{\mathbf b}=0$
for all but finitely many $\mathbf b$.

By a compactness argument, there exist a finite covering
$\{U_j\}_{j\in J_{\sigma,\mathbf a,s}}$ of $X$ by Zariski-open sets and
a finite set $K=K_{\sigma,\mathbf a,s}\subseteq\mathbb N^{\#\sigma}$ such that
\begin{equation}\label{s_local_sum}
  s = \sum_{\mathbf b\in K} f_{s,j;\mathbf b} \prod_{i\in\sigma} 1_{D_i}^{b_i}
\end{equation}
on $U_j$ for all $j\in J_{\sigma,\mathbf a,s}$,
where $f_{s,j;\mathbf b}\in\Gamma(U_j,\mathscr L^N(-\sum_{i\in\sigma} b_iD_i))$
and all $\mathbf b\in K$ satisfy $\sum_{i\in\sigma} a_ib_i\ge b\mu(s)$.

\begin{lemma}\label{lemm_bound_weil_s}
For each $\upsilon\in S$ there is a constant $c_v$ (which depends also
on $\epsilon$ and the choices of Weil functions $\lambda_s$)
with the following property.
Pick $\sigma\in\Sigma$, $\mathbf a\in\bigtriangleup_{\sigma}$, and
$s\in\mathcal B_{\sigma;\mathbf a}$, and choose a Weil function
$\lambda_s$ for the divisor $(s)$.  Then, for each $\upsilon\in S$,
\begin{equation}\label{lambdainq}
  \lambda_{s,\upsilon}(P)
    \ge \min_{\mathbf b} \sum_{i\in\sigma}
      b_i\lambda_{D_i, \upsilon}(P)+c_\upsilon
\end{equation}
for all $P\in X(k)$,
where the minimum is taken over all $\mathbf b\in K$ as in (\ref{s_local_sum}).
\end{lemma}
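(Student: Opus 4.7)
The plan is to turn the desired inequality into a pointwise consequence of the local expansion (\ref{s_local_sum}) together with the triangle inequality for $\|\cdot\|_\upsilon$, and then to upgrade the local bound to one valid at every $P\in X(\overline k_\upsilon)$ by refining the cover $\{U_j\}_{j\in J_{\sigma,\mathbf a,s}}$ to an $M_k$-bounded cover.

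First, I would fix metrics $\|\cdot\|_\upsilon$ on $\mathscr L^N$ and on each $\mathscr O(D_i)$ for which the chosen Weil functions satisfy
$$\lambda_{s,\upsilon}(P)=-\log\|s(P)\|_\upsilon + \alpha_{s,\upsilon}(P), \qquad \lambda_{D_i,\upsilon}(P)=-\log\|1_{D_i}(P)\|_\upsilon + \alpha_{i,\upsilon}(P),$$
with $\alpha_{s,\upsilon}$ and $\alpha_{i,\upsilon}$ locally $M_k$-bounded. Next, following Lang \cite[Ch.~10, Prop.~1.2]{lang} (as used in the proof of Lemma \ref{weil_max_fancy}), I would choose affine $M_k$-bounded sets $E_j\subseteq U_j(M_k)$ with $\bigcup_j E_j = X(M_k)$. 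Since each $f_{s,j;\mathbf b}\in \Gamma(U_j,\mathscr L^N(-\sum_{i\in\sigma}b_iD_i))$ is a section of a line sheaf on $U_j$, its $\upsilon$-adic norm is locally $M_k$-bounded on $U_j(M_k)$, hence bounded above by some finite $M_{\upsilon,j,\mathbf b}$ on $E_j\cap X(\overline k_\upsilon)$.

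Now for any $P\in X(k)$ and $\upsilon\in S$, pick $j\in J_{\sigma,\mathbf a,s}$ with $P\in E_j$. Substituting (\ref{s_local_sum}) into $\|s(P)\|_\upsilon$ and applying the triangle inequality (in its archimedean or non-archimedean form) gives
$$\|s(P)\|_\upsilon \;\le\; C_\upsilon \max_{\mathbf b\in K}\Bigl(\|f_{s,j;\mathbf b}(P)\|_\upsilon\prod_{i\in\sigma}\|1_{D_i}(P)\|_\upsilon^{b_i}\Bigr),$$
with $C_\upsilon$ a finite constant depending only on $\upsilon$ and $\#K$. Taking $-\log$, transitioning from norms to Weil functions via the $\alpha$'s, and using $-\log\|f_{s,j;\mathbf b}(P)\|_\upsilon \ge -\log M_{\upsilon,j,\mathbf b}$ on $E_j$, one obtains
$$\lambda_{s,\upsilon}(P)\;\ge\; \min_{\mathbf b\in K}\sum_{i\in\sigma} b_i\lambda_{D_i,\upsilon}(P) + c_\upsilon,$$
where $c_\upsilon$ absorbs the finitely many terms $\log C_\upsilon$, $\log M_{\upsilon,j,\mathbf b}$, and the values of the $\alpha$'s on $E_j$. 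There are only finitely many such terms because $\Sigma$, $\bigtriangleup_\sigma$, $\mathcal B_{\sigma;\mathbf a}$, $J_{\sigma,\mathbf a,s}$, and $K_{\sigma,\mathbf a,s}$ are all finite, so the constant $c_\upsilon$ is well defined per place in $S$.

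The main obstacle is the noncompleteness of $U_j$: a priori $\|f_{s,j;\mathbf b}\|_\upsilon$ can blow up at points of $X\setminus U_j$, so the desired uniformity is recovered only after restricting to the $M_k$-bounded set $E_j$ supplied by Lang's construction. A routine secondary point is the passage between $v$-adic norms, which satisfy a genuine triangle inequality, and Weil functions, which differ from $-\log\|\cdot\|_\upsilon$ only by locally $M_k$-bounded quantities that are already absorbed into $c_\upsilon$.
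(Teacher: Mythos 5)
Your proof is correct and follows essentially the same route as the paper's: the paper also establishes the local inequality (\ref{lambdainq_pre}) on a $\upsilon$-bounded subset of each $U_j(k_\upsilon)$ (which is exactly your triangle-inequality argument on $E_j$), then observes these subsets cover $X(k_\upsilon)$ and takes a minimum over the finitely many indices $\sigma$, $\mathbf a$, $s$, $j$. You have merely filled in the pointwise computation that the paper leaves implicit; the paper additionally remarks that the inequality over $X(\overline k)$ can alternatively be read off from Proposition \ref{prop_aut_weil_ineq}, but the underlying argument is the same.
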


\begin{proof}
For all $\sigma\in\Sigma$, all $\mathbf a\in\bigtriangleup_\sigma$,
all $s\in\mathcal B_{\sigma;\mathbf a}$, all $j\in J_{\sigma,\mathbf a,s}$,
and all $\upsilon\in S$ there are constants $c_{\upsilon; j, \sigma,\mathbf a}$
such that, for all $P$ in a $\upsilon$-bounded subset of $U_j(k_\upsilon)$,
\begin{equation}\label{lambdainq_pre}
  \lambda_{s,\upsilon}(P)
    \ge \min_{\mathbf b} \sum_{i\in \sigma}
      b_i\lambda_{D_i, \upsilon}(P)+c_{\upsilon;j,\sigma,\mathbf a}\;.
\end{equation}
These bounded subsets can be chosen to cover all of $X(k_\upsilon)$.
This gives (\ref{lambdainq}), since the sets $\Sigma$,
$\bigtriangleup_\sigma$, $\mathcal B_{\sigma;\mathbf a}$,
and $J_{\sigma,\mathbf a,s}$ are finite.

Note also that (\ref{lambdainq_pre}) for all $P\in X(\overline k)$
follows from Proposition \ref{prop_aut_weil_ineq}.
\end{proof}

We may further assume that the sets $K=K_{\sigma,\mathbf a,s}$ in
Lemma \ref{lemm_bound_weil_s} contain no $\mathbf b$ for which
$f_{s,j;\mathbf b}=0$ for all $s$ and $j$.  In particular,
since $H^0(X,\mathscr L^N(-cD_j))=0$ for all $j$, we may assume that
all $\mathbf b\in K$ satisfy $b_j<c$ for all $j$.
Also, from the assumption that  $D_1, \dots, D_q$ intersect properly
(and hence lie in general position), we have $\#\sigma_{P, \upsilon}\leq n$.
Therefore, by the choice of the integer $b$, we may assume that
all $\mathbf b\in K$ in (\ref{lambdainq}) satisfy
\begin{equation}\label{mathbf_b_ineq}
  \sum_{i\in\sigma} b_i \le nc \le bN\epsilon_0\;.
\end{equation}

Now for $P\in X(k)$ and $\upsilon\in S$ pick $\sigma_{P,\upsilon}\in\Sigma$
in Lemma \ref{vojta_cm_20_7} such that
\begin{equation}
  \lambda_{D_{\sigma_{P, \upsilon},2}, \upsilon}(P)\leq C_{\upsilon}\label{b}
\end{equation}
where $C_{\upsilon}$ is the $M_k$-constant appearing in
Lemma \ref{vojta_cm_20_7}, which depends only on $X$; $D_1, \dots, D_q$;
and the chosen Weil functions.

For $i\in \sigma_{P, \upsilon}$ we let
\begin{equation}\label{def_t_Pvi}
t_{P, \upsilon; i}={\lambda_{D_{i}, \upsilon}(P)\over \sum_{j\in \sigma_{P, \upsilon}} \lambda_{D_{j}, \upsilon}(P)}.
\end{equation}
Note that $\sum_{i\in \sigma_{P, \upsilon}} t_{P, \upsilon; i}=1$.  Choose ${\bf a}_{P, \upsilon} =(a_{P, \upsilon; i}) \in \bigtriangleup_{\sigma_{P, \upsilon}}$
such that
\begin{equation}\label{condition_aPv}
  |b t_{P, \upsilon; i} -a_{P, \upsilon; i}|\leq 1
    \qquad\text{for all $i\in \sigma_{P, \upsilon}$.}
\end{equation}
Using (\ref{lambdainq}) with $\sigma=\sigma_{P, \upsilon}$
and ${\bf a}= {\bf a}_{P, \upsilon}$, (\ref{def_t_Pvi}), (\ref{condition_aPv}),
and (\ref{mathbf_b_ineq}), we get, for any $s\in \mathcal B_{\sigma; {\bf a}}$,
\begin{equation}\label{lambda_s_chain}
\begin{split}
  \lambda_{s, \upsilon}(P)
    &\ge \min_{\mathbf b\in K} \sum_{i\in \sigma_{P, \upsilon}}
          b_i\lambda_{D_i, \upsilon}(P)+O_{\upsilon}(1) \\
    &= \left( \sum_{j\in \sigma_{P, \upsilon}} \lambda_{D_j, \upsilon}(P)  \right)
      \min_{\mathbf b\in K} \sum_{i\in \sigma_{P, \upsilon}}  b_it_{P, \upsilon; i}  +O_{\upsilon}(1) \\
    &\ge \left( \sum_{j\in \sigma_{P, \upsilon}} \lambda_{D_j, \upsilon}(P)  \right)
      \min_{\mathbf b\in K} \sum_{i\in \sigma_{P, \upsilon}}  b_i {a_{P, \upsilon; i}-1\over b} +O_{\upsilon}(1) \\
     & \ge  (\mu(s) - N\epsilon_0)  \left( \sum_{j\in \sigma_{P, \upsilon}} \lambda_{D_j, \upsilon}(P)  \right)  +O_{\upsilon}(1),
\end{split}
\end{equation}
where the set $K$ is as in (\ref{s_local_sum}).
Therefore, by (\ref{lambda_s_chain}), (\ref{sum_mu_geq}),
the definition of $\lambda_{D_{\sigma,1}}$, and (\ref{b}), we have
\begin{equation}\label{weil_chain_ineq}
\begin{split}
\sum_{s\in \mathcal B_{P,\sigma}}\lambda_{s, \upsilon} (P)
  &\ge \left(\sum_{s\in \mathcal B_{P,\sigma}} (\mu(s) - N\epsilon_0)\right) \left( \sum_{i\in \sigma_{P, \upsilon}} \lambda_{D_i, \upsilon}(P)  \right)  +O_{\upsilon}(1) \\
  &\ge \left( \min_{1\leq i\leq q} \sum_{m\ge 1} h^0(\mathscr L^N(-mD_i))  -Nl\epsilon_0 \right) \left( \sum_{i\in \sigma_{P, \upsilon}} \lambda_{D_i, \upsilon}(P)  \right)  +O_{\upsilon}(1) \\
 &= \left( \min_{1\leq i\leq q} \sum_{m\ge 1} h^0(\mathscr L^N(-mD_i))-Nl\epsilon_0 \right)  \lambda_{D_{\sigma_{P, \upsilon}, 1},\upsilon}(P)+O_{\upsilon}(1) \\
&=  \left( \min_{1\leq i \leq q}\sum_{m\ge 1} h^0(\mathscr L^N(-mD_i))-Nl\epsilon_0 \right)  \lambda_{D, \upsilon}(P) +O_{\upsilon}(1)\;,
\end{split}
\end{equation}
where $l=h^0(\mathscr L^N)$.

For any basis ${\mathcal B}$ of $H^0(X, {\mathscr L}^N)$, we recall
the notation $(\mathcal B)$ from (\ref{def_parens_B}):
$$(\mathcal B) = \sum_{s\in\mathcal B} (s).$$
For such $\mathcal B$, choose a Weil function $\lambda_{\mathcal B}$
for the divisor $(\mathcal B)$; then (\ref{weil_chain_ineq}) gives
\begin{equation}\label{weilbase}
  \max_{\sigma; {\bf a}} \lambda_{ \mathcal B_{\sigma; {\bf a}}}
    \ge \left( \min_{1\leq i \leq q}\sum_{m\ge 1} h^0(\mathscr L^N(-mD_i))-Nl\epsilon_0 \right)  \lambda_{D, \upsilon} + O_v(1)\;.
\end{equation}
Thus from the definition of $\Nevbir(\mathscr L, D)$ (see Definition \ref{bidef1}) and by  (\ref{choice_of_N}),  we get
$$\Nevbir(\mathscr L, D) \leq \max_{1\leq j\leq q} \gamma(\mathscr L, D_j)$$
and thus the General Theorem follows from Theorem \ref{b_thmd}.  This finishes the proof.

Note that we can  continue the proof without using the notion of $\Nevbir(\mathscr L, D_j)$  and Theorem \ref{b_thmd}.
Indeed write
$$\bigcup_{\sigma; {\bf a}}  \mathcal B_{\sigma; {\bf a}}    = \mathcal B_1\cup\cdots \cup \mathcal B_{T_1}=\{s_1, \dots, s_{T_2}\}.$$
 For each $i=1,\dots, T_1$, let $J_i\subseteq\{1,\dots,T_2\}$ be the subset
such that $\mathcal B_i = \{s_j:j\in J_i\}$.  Then, by (\ref{weilbase}),
for each $v\in S$,
\begin{equation}\label{weilbase2}
  \begin{split}
  \left( \min_{1\leq i \leq q}\sum_{m\ge 1} h^0(\mathscr L^N(-mD_i))-Nl\epsilon_0 \right)  \lambda_{D, \upsilon}
    &\le \max_{1\le i\le T_1} \lambda_{\mathcal B_i,v} + O_{\upsilon}(1)\\
    &= \max_{1\le i\le T_1} \sum_{j\in J_i} \lambda_{s_j,v} + O_{\upsilon}(1).
  \end{split}
\end{equation}
By Theorem \ref{schmidt_base} with  $\epsilon$ in Theorem \ref{schmidt_base} taken as ${\epsilon\over  4\max_{1\le j\le q} \gamma(\mathscr L, D_j)+1+\epsilon}$,
there  is a proper Zariski-closed subset $Z$ of $X$ such that the inequality
\begin{equation}\label{schmidt_ineq}
  \sum_{v\in S} \max_J \sum_{j\in J} \lambda_{s_j,v}(x)
    \le \left(l +{\epsilon\over  4\max_{1\le j\le q} \gamma(\mathscr L, D_j)+1+\epsilon}\right)h_{ND}(x)
\end{equation}
holds for all $x\in X(k)$ outside of $Z$;
here $l=h^0(\mathscr L^N)$ and the maximum is taken over all subsets $J$ of $\{1,\dots,T_2\}$
for which the sections $s_j$, $j\in J$, are linearly independent.

Combining (\ref{weilbase2}) and (\ref{schmidt_ineq}) gives $$\sum_{\upsilon\in S} \lambda_{D, \upsilon}(x) \leq {l+{\epsilon\over 4\max_{1\le j\le q} \gamma(\mathscr L, D_j)+1+\epsilon}\over \min_{1\leq i \leq q}\sum_{m\ge 1} h^0(\mathscr L^N(-mD_i))-Nl\epsilon_0}
h_{\mathscr L^N}(x)$$
for all $x\in X(k)$ outside of $Z$.  Here we used the fact that all of
the $J_i$ occur among the $J$ in (\ref{schmidt_ineq}).
Using  (\ref{choice_of_N}),  the fact that $l=h^0(\mathscr L^N)$ and  $h_{\mathscr L^N}(x)=Nh_{\mathscr L}(x)$, we have, for $x\in X(k)$ outside of a proper Zariski-closed subset $Z$ of $X$,
\begin{equation*}
\begin{split}
\sum_{\upsilon\in S} \lambda_{D, \upsilon}(x) &\leq {( \max_{1\le j\le q} \gamma(\mathscr L, D_j) +{\epsilon\over 4})
(1+{\epsilon\over 4\max_{1\le j\le q} \gamma(\mathscr L, D_j)+\epsilon})\over
1-( \max_{1\le j\le q} \gamma(\mathscr L, D_j) +{\epsilon\over 4})\epsilon_0}
h_{D}(x)\\
&\leq (\max_{1\le j\le q} \gamma(\mathscr L, D_j) +\epsilon)h_D(x),
\end{split}
\end{equation*}
from our choice of $\epsilon_0$.
This proves the General Theorem for the arithmetic case.

The proof in the analytic case is similar, by replacing
Theorem \ref{schmidt_base} (Schmidt's Subspace Theorem) with
Theorem \ref{cartan_base} (H. Cartan's theorem).

\begin{remark}
With notation as in Definition \ref{def_aut_lambda}, let
\begin{equation}
  \gamma'(\mathscr L, D)
    = \inf_{N,V} \frac{N\dim V}
      {\sum_{m\ge 1} \dim(V\cap H^0(X,\mathscr L^N(-mD)))}\;,
\end{equation}
where the infimum is over all positive integers $N$ and all linear subspaces
$V$ of $H^0(X,\mathscr L^N)$, and the fraction is taken to be $+\infty$
if the denominator is zero.  Here we identify $H^0(X,\mathscr L^N(-mD))$
with a subspace of $H^0(X,\mathscr L^N)$ via the injection gotten by
tensoring with $1_D^m$.
Then the General Theorems remain true when $\gamma$ is replaced by $\gamma'$.
The same proof works with minimal changes, but we have not included it
here because we know of no applications so far.
\end{remark}

\section{The Birational Nevanlinna Constant}\label{sect_nevbir}

The goal of this section and the next two sections is to prove that
the two definitions (Definitions \ref{bidef1} and \ref{bidef2})
of the birational Nevanlinna constant $\Nevbir(D)$
given in Section \ref{intro} are equivalent.

We first recall the definition of the Nevalinna constant $\Nev(D)$ introduced
by the first author (Definition \ref{def_nevintro}), as well as the important
results regarding $\Nev(D)$.

\begin{definition}[see \cite{R4} and \cite{ru}] \label{def_nev}
Let $X$ be a normal projective variety, and let $D$ be an effective
Cartier divisor on $X$.  The \textbf{Nevanlinna constant} of $D$,
denoted $\Nev(D)$, is given by
\begin{equation}\label{Nconstant}
  \Nev(D) = \inf_{N,V,\mu} {\dim V\over \mu}\;,
\end{equation}
where the infimum is taken over all triples $(N,V,\mu)$ such that
$N$ is a positive integer, $V$ is a linear subspace of $H^0(X,\mathscr L^N)$
with $\dim V\ge2$, and $\mu$ is a positive real number such that,
for all $P\in \Supp D$, there exists a basis $\mathcal B$ of $V$ with
\begin{equation} \label{large}
  \sum_{s\in \mathcal B} \ord_E (s) \ge \mu \ord_E (ND)
\end{equation}
for all irreducible components $E$ of $D$ passing through $P$.
If $\dim H^0(X,\mathscr O(ND))\leq 1$ for all positive integers $N$,
we define $\Nev(D)=+\infty$.  For a general complete variety $X$,
$\Nev(D)$ is defined by pulling back to the normalization of $X$.
\end{definition}

Next, we rephrase the definition of $\Nev(D)$.
Recall from (\ref{def_parens_B}) that if $\mathcal B$ is a finite set
of global sections of a line sheaf $\mathscr L$ on a variety $X$,
then $(\mathcal B)$ denotes the divisor
$$(\mathcal B) = \sum_{s\in\mathcal B} (s)\;.$$

First of all, notice that the divisor $D$ plays two separate roles in
Definition \ref{def_nev}:
the role in (\ref{large}), and the space $H^0(X,\mathscr O(ND))$.  At times it will be
convenient to separate those two roles, so we introduce a line sheaf
$\mathscr L$ and replace $H^0(X,\mathscr O(ND))$ with $H^0(X,\mathscr L^N)$.

Next, we can impose (\ref{large}) on all divisor components passing
through $P$ (not just those occurring in $D$).
Also, since we are taking the infimum in Definition \ref{def_nev}, we can
require $\mu$ to be rational.

With these two changes, the condition on the triple is equivalent to
requiring that the $\mathbb Q$-divisor $(\mathcal B)-\mu ND$ be effective
near $P$.

This leads to the following definition.

\begin{definition}\label{def_nev2}
Let $X$ be a complete variety, let $D$ be an effective Cartier divisor
on $X$, and let $\mathscr L$ be a line sheaf on $X$.  If $X$ is normal, then
we define
$$\Nev(\mathscr L,D) = \inf_{N,V,\mu} \frac{\dim V}{\mu}\;.$$
Here the inf is taken over all triples $(N,V,\mu)$ such that
$N\in\mathbb Z_{>0}$, $V$ is a linear subspace of
$H^0(X,\mathscr L^N)$ with $\dim V>1$,
and $\mu>0$ is a rational number, that satisfy the following property.
For all $P\in X$ there is a basis $\mathcal B$ of $V$ such that
\begin{equation}\label{eq2}
  (\mathcal B) \ge \mu ND
\end{equation}
in a Zariski-open neighborhood $U$ of $P$,
relative to the cone of effective $\mathbb Q$-divisors on $U$.
If there are no such triples $(N,V,\mu)$, then $\Nev(\mathscr L,D)$ is defined
to be $+\infty$. For a general complete variety $X$, $\Nev(\mathscr L,D)$
is defined by pulling back to the normalization of $X$.
\end{definition}

Note that $\Nev(\mathscr O(D),D)$ (as defined above) coincides with $\Nev(D)$
as in Definition \ref{def_nev}.

To prove the equivalence, we start by recalling the second definition of
$\Nevbir(D)$ (Definition \ref{bidef2}) given in Section \ref{intro}:

\begin{definition}[see Definition \ref{bidef2}] \label{def_nevprime}
Let $X$ be a complete variety, let $D$ be an effective Cartier divisor
on $X$, and let $\mathscr L$ be a line sheaf on $X$.  If $X$ is normal, then
we define
$$\Nevbir(\mathscr L,D) = \inf_{N,V,\mu} \frac{\dim V}{\mu}\;,$$
where the infimum passes over all triples $(N,V,\mu)$ such that
$N\in\mathbb Z_{>0}$, $V$ is a linear subspace of
$H^0(X,\mathscr L^N)$
with $\dim V>1$, and $\mu\in\mathbb Q_{>0}$, with the following property.
There are a variety $Y$ and a proper birational morphism $\phi\colon Y\to X$
such that the following condition holds.
For all $Q\in Y$ there is a basis $\mathcal B$ of $V$ such that
\begin{equation}\label{eq3}
  \phi^{*}(\mathcal B) \ge \mu N\phi^{*}D
\end{equation}
in a Zariski-open neighborhood $U$ of $Q$,
relative to the cone of effective $\mathbb Q$-divisors on $U$.
If there are no such triples $(N,V,\mu)$, then $\Nevbir(\mathscr L,D)$
is defined
to be $+\infty$. For a general complete variety $X$, $\Nevbir(\mathscr L,D)$
is defined by pulling back to the normalization of $X$.

If $L$ is a Cartier divisor or Cartier divisor class on $X$, then
we define $\Nevbir(L,D)=\Nevbir(\mathscr O(L),D)$.  We also define
$\Nevbir(D)=\Nevbir(D,D)$.
\end{definition}

(Note that a \emph{birational morphism} from $X$ to $Y$ is a morphism $X\to Y$
that has an inverse as a rational map; in other words, it is a rational map
$X\dashrightarrow Y$ that is defined everywhere on $X$.)

\begin{remark}\label{remk_D_linear}
It is easy to see from the definitions that if $n$ is a positive integer then
$$\Nev(\mathscr L,nD) = n\Nev(\mathscr L,D)
  \qquad\text{and}\qquad
  \Nevbir(\mathscr L,nD) = n\Nevbir(\mathscr L,D)\;.$$
\end{remark}

We have the following easy comparison.

\begin{lemma}\label{lemma_comp_nevs}
Let $X$, $D$, and $\mathscr L$ be as in Definitions \ref{def_nev2}
and \ref{def_nevprime}.  Then
$$\Nevbir(\mathscr L,D) \le \Nev(\mathscr L,D)\;.$$
\end{lemma}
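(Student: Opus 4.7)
The plan is essentially trivial: every triple $(N,V,\mu)$ admissible for $\Nev(\mathscr L,D)$ in the sense of Definition \ref{def_nev2} is also admissible for $\Nevbir(\mathscr L,D)$ in the sense of Definition \ref{def_nevprime}, producing the same quotient $\dim V/\mu$. Hence the infimum computing $\Nevbir(\mathscr L,D)$ is taken over a set at least as large as the one computing $\Nev(\mathscr L,D)$, giving the inequality.

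More concretely, I would first reduce to the case where $X$ is normal, since both invariants are defined in the general case by pulling back to the normalization, so the two reductions are compatible. With $X$ normal, given any triple $(N,V,\mu)$ satisfying the condition of Definition \ref{def_nev2}, I take $Y=X$ and let $\phi\colon Y\to X$ be the identity morphism, which is obviously a proper birational morphism. For any $Q\in Y$, setting $P=\phi(Q)=Q\in X$, Definition \ref{def_nev2} provides a basis $\mathcal B$ of $V$ and a Zariski-open neighborhood $U$ of $P$ such that $(\mathcal B)\ge\mu ND$ on $U$ in the cone of effective $\mathbb Q$-divisors. Since $\phi$ is the identity, $\phi^{*}(\mathcal B)=(\mathcal B)$ and $\phi^{*}D=D$, so the same $U$ (viewed in $Y$) and the same basis $\mathcal B$ witness the condition $\phi^{*}(\mathcal B)\ge\mu N\phi^{*}D$ required by Definition \ref{def_nevprime}.

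It follows that $\dim V/\mu$ is a valid upper bound for $\Nevbir(\mathscr L,D)$ whenever it is a valid upper bound for $\Nev(\mathscr L,D)$, and taking the infimum gives the desired inequality. The cases where $\Nev(\mathscr L,D)=+\infty$ (no admissible triples) are handled trivially, since the asserted inequality is vacuous. There is no real obstacle here; the content is just that the class of birational models $\phi\colon Y\to X$ over which Definition \ref{def_nevprime} optimizes includes the identity, so allowing this additional flexibility can only decrease the resulting infimum.
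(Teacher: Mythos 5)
Your proof is correct and follows essentially the same route as the paper: reduce to the normal case (both definitions pull back to the normalization in the same way), then observe that the identity map $Y=X$, $\phi=\mathrm{id}$ shows every admissible triple for $\Nev(\mathscr L,D)$ is also admissible for $\Nevbir(\mathscr L,D)$, so the latter infimum is over a larger set.
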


\begin{proof}
We may assume that $X$ is normal, because in both Definition \ref{def_nev2}
and Definition \ref{def_nevprime} the general case is handled by
pulling back to the normalization.

If a triple $(N,V,\mu)$ satisfies the condition of Definition \ref{def_nev2},
then it also satisfies the condition of Definition \ref{def_nevprime},
because in the latter condition we can take $Y=X$ and let $\phi$ be the
identity map.  Thus, the infimum in Definition \ref{def_nevprime}
is being taken over a larger set.
\end{proof}

The only difference between the definitions is that in
$\Nevbir(\mathscr L,D)$, the basis $\mathcal B$ is allowed to be taken
locally on a blowing-up of $X$, rather than on $X$ itself.

We now show that $\Nevbir$ can be viewed as a \emph{birationalization}
of $\Nev$.

\begin{proposition}\label{prop_b_nev}
Let $X$ be a complete variety over a number field $k$, let $D$ be an effective
Cartier divisor on $X$, and let $\mathscr L$ be a line sheaf on $X$.
Then:
\begin{enumerate}
\item[(a).]  $\Nevbir(\mathscr L,D)$ is a birational invariant, in the sense
that if $\phi\colon Y\to X$ is a model of $X$, then
$$\Nevbir(\phi^{*}\mathscr L,\phi^{*}D) = \Nevbir(\mathscr L,D)\;.$$
\item[(b).]  For all $\epsilon>0$ there is a model $\phi\colon Y\to X$ of $X$
such that
$$\Nev(\phi^{*}\mathscr L,\phi^{*}D) < \Nevbir(\mathscr L,D) + \epsilon\;.$$
\item[(c).]  In particular,
$$\Nevbir(\mathscr L,D)
  = \inf_{\phi\colon Y\to X} \Nev(\phi^{*}\mathscr L,\phi^{*}D)\;,$$
where the infimum is over all models of $X$.
\end{enumerate}
\end{proposition}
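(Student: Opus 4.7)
The three parts hang together. Part (c) follows formally once (a) and (b) are in place: for any model $\phi$, part (a) gives $\Nevbir(\mathscr L,D)=\Nevbir(\phi^{*}\mathscr L,\phi^{*}D)$, and Lemma \ref{lemma_comp_nevs} applied on $Y$ yields $\Nevbir(\phi^{*}\mathscr L,\phi^{*}D)\le\Nev(\phi^{*}\mathscr L,\phi^{*}D)$, so $\Nevbir(\mathscr L,D)\le\inf_{\phi}\Nev(\phi^{*}\mathscr L,\phi^{*}D)$; (b) supplies the reverse inequality. Throughout the argument I would reduce to the case where the relevant varieties are normal, composing with normalization maps (which are themselves proper birational) and invoking the clause in Definition \ref{bidef2} that reduces the general case to the normal one.

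For (b), I would start with a triple $(N,V,\mu)$ witnessing $\Nevbir(\mathscr L,D)$ with $\dim V/\mu<\Nevbir(\mathscr L,D)+\epsilon$, together with its witnessing proper birational morphism $\phi\colon Y\to X$; after replacing $Y$ by its normalization I may assume $Y$ is normal (and likewise $X$). Since $\phi$ is proper birational with normal target, Zariski's main theorem gives $\phi_{*}\mathscr O_Y=\mathscr O_X$, so pullback induces an isomorphism $\phi^{*}\colon H^0(X,\mathscr L^N)\to H^0(Y,\phi^{*}\mathscr L^N)$. Setting $V'=\phi^{*}V$, each witnessing basis $\mathcal B$ of $V$ near $Q\in Y$ yields a basis $\phi^{*}\mathcal B$ of $V'$ with $(\phi^{*}\mathcal B)=\phi^{*}(\mathcal B)\ge\mu N\phi^{*}D$ in a Zariski-open neighborhood of $Q$. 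This is exactly the condition of Definition \ref{def_nev2} for the triple $(N,V',\mu)$ on $Y$, giving $\Nev(\phi^{*}\mathscr L,\phi^{*}D)\le\dim V/\mu<\Nevbir(\mathscr L,D)+\epsilon$.

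For (a), both inequalities proceed by transferring witness data along $\phi$, after again reducing to normal $X$ and $Y$. For $\Nevbir(\mathscr L,D)\le\Nevbir(\phi^{*}\mathscr L,\phi^{*}D)$: given a witness $\tau\colon W\to Y$ for a triple $(N,V',\mu)$ on $Y$, the composite $\phi\tau\colon W\to X$ is proper birational, and the isomorphism on global sections transports $V'$ back to a subspace $V\subseteq H^0(X,\mathscr L^N)$ of equal dimension whose pullback to $W$ coincides with that of $V'$, so the divisor inequality transfers verbatim. For the reverse direction, given a witness $\psi\colon Z\to X$ for $\Nevbir(\mathscr L,D)$, I would form $W$ as the closure of the graph of the rational map $Y\dashrightarrow Z$ (well-defined since $\psi$ is birational) inside $Y\times_X Z$; the projections $p\colon W\to Y$ and $q\colon W\to Z$ satisfy $\phi p=\psi q$, with $p$ proper birational. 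For $Q\in W$, the basis $\mathcal B$ witnessing the inequality near $q(Q)\in Z$ pulls back by $q$, and commutativity $\phi p=\psi q$ converts this into $p^{*}\phi^{*}(\mathcal B)\ge\mu N\,p^{*}\phi^{*}D$ near $Q$, so that $(N,\phi^{*}V,\mu)$ with witness $p$ establishes $\Nevbir(\phi^{*}\mathscr L,\phi^{*}D)\le\dim V/\mu$. The main obstacle is precisely this last step: verifying that the graph-closure $W$ is actually an integral scheme and that $p\colon W\to Y$ is genuinely proper and birational. This is standard but relies on the properness and birationality of $\psi$ (so that $\phi^{-1}\circ\psi$ extends to a morphism on an open dense subset and compactifies well over $Y$) together with irreducibility of $Y$; once secured, the rest is diagram-chasing through pullback of divisors, sections, and the section-level isomorphism.
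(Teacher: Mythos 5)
Your proposal is correct and follows essentially the same route as the paper's proof: transfer witness triples along proper birational morphisms, use that pullback of global sections is an isomorphism when the target is normal (after reducing to normal models via normalization), and obtain (c) as a formal consequence. The paper's write-up is much terser (part (a) is dispatched in a single sentence noting that the witnessing condition in Definition \ref{bidef2} is preserved when $Y$ is replaced by any dominating model); your elaboration via the graph-closure $W\subseteq Y\times_X Z$, the iso $H^0(X,\mathscr L^N)\xrightarrow{\sim}H^0(Y,\phi^{*}\mathscr L^N)$, and the explicit diagram chase fills in exactly what that sentence leaves implicit. One small point: the paper attributes (c) to part (b) together with Lemma \ref{lemma_comp_nevs} alone, whereas you also invoke (a) to get $\Nevbir(\mathscr L,D)\le\Nev(\phi^{*}\mathscr L,\phi^{*}D)$ for every model; your version is the cleaner bookkeeping, since some form of (a) (or a direct transfer argument) is in fact needed there.
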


\begin{proof}
(a).  This is clear from Definition \ref{def_nevprime}, since the condition
on $(N,V,\mu)$ remains true if $Y$ is replaced by another variety $Y'$
that dominates $Y$ (i.e., there is a proper birational morphism $Y'\to X$
that factors through $Y$).

(b).  Let $(N,V,\mu)$ be a triple satisfying the condition of
Definition \ref{def_nevprime}, for which
$$\frac{\dim V}{\mu} < \Nevbir(\mathscr L,D) + \epsilon\;,$$
and let $\phi\colon Y\to X$ be as in Definition \ref{def_nevprime}.
Then the triple  $(N,\phi^{*}V,\mu)$ satisfies the condition of
Definition \ref{def_nev2} for $\Nev(\phi^{*}\mathscr L,\phi^{*}D)$.
This proves (b).

(c).  This part is immediate from part (b) and from Lemma \ref{lemma_comp_nevs}.
\end{proof}

To conclude this section, we prove the following Proposition which would lead to  the equivalence of the two definitions of
$\Nevbir(D)$.

\begin{proposition}\label{prop_nevprime_weil}
Let $X$ be a normal complete variety over a number field, let $D$ be
an effective Cartier divisor
on $X$, let $\mathscr L$ be a line sheaf on $X$, let $V$ be a linear subspace
of $H^0(X,\mathscr L)$ with $\dim V>1$, and let $\mu>0$ be a rational number.

Consider the following conditions.
\begin{enumerate}
\item There are a variety $Y$ and a proper birational morphism
$\phi\colon Y\to X$ such that for all $Q\in Y$ there is a basis $\mathcal B$
of $V$ such that
$$ \phi^{*}(\mathcal B) \ge \mu \phi^{*}D $$
in a Zariski-open neighborhood $U$ of $Q$, relative to the cone of
effective $\mathbb Q$-divisors on $U$.
\item There are finitely many bases $\mathcal B_1,\dots,\mathcal B_\ell$
of $V$; Weil functions $\lambda_{\mathcal B_1},\dots,\lambda_{\mathcal B_\ell}$
for the divisors $(\mathcal B_1),\dots,(\mathcal B_\ell)$, respectively;
a Weil function $\lambda_D$ for $D$; and an $M_k$-constant $c$ such that
\begin{equation}
  \max_{1\le i\le\ell}\lambda_{\mathcal B_i} \ge \mu\lambda_D - c
\end{equation}
(as functions $X(M_k)\to\mathbb R\cup\{+\infty\}$).
\end{enumerate}
If (i) is true, then so is (ii).
\end{proposition}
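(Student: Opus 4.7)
The plan is to extract from condition (i) a finite Zariski-open cover of $Y$ equipped with local bases, use Lemma \ref{weil_max_fancy} to pass from local effectivity of divisors to a lower bound on a maximum of Weil functions on $Y$, and then push the resulting inequality down to $X$ via the surjectivity of $\phi$.

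First, for each basis $\mathcal{B}$ of $V$, let $U_{\mathcal{B}} \subseteq Y$ be the maximal Zariski-open subset on which the $\mathbb{Q}$-Cartier divisor $\phi^{*}(\mathcal{B}) - \mu\phi^{*}D$ is effective; concretely, it is the complement of the union of those prime divisors on $Y$ appearing with negative multiplicity in $\phi^{*}(\mathcal{B}) - \mu\phi^{*}D$. Condition (i) says exactly that the $U_{\mathcal{B}}$ cover $Y$ as $\mathcal{B}$ ranges over all bases of $V$, and since $Y$ is noetherian (hence quasi-compact) a finite subcover $U_{\mathcal{B}_1},\dots,U_{\mathcal{B}_\ell}$ suffices.

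Next, choose Weil functions $\lambda_{\mathcal{B}_i}$ for $(\mathcal{B}_i)$ on $X$ and $\lambda_D$ for $D$. By functoriality (Proposition \ref{weil_props}(b)), the pullbacks $\phi^{*}\lambda_{\mathcal{B}_i}$ and $\phi^{*}\lambda_D$ are Weil functions for $\phi^{*}(\mathcal{B}_i)$ and $\phi^{*}D$ on $Y$. Choose a positive integer $q$ with $q\mu\in\mathbb{Z}$, so that $q\phi^{*}(\mathcal{B}_i) - q\mu\phi^{*}D$ is an integral Cartier divisor that is effective on $U_{\mathcal{B}_i}$. Applying Lemma \ref{weil_max_fancy} to this family with the cover $\{U_{\mathcal{B}_i}\}$ produces an $M_k$-constant $\gamma = (\gamma_v)$ such that for every $v\in M_k$ and every $y\in Y(\overline{k}_v)$ some index $i$ satisfies $y\in U_{\mathcal{B}_i}$ and
$$ q\phi^{*}\lambda_{\mathcal{B}_i,v}(y) - q\mu\phi^{*}\lambda_{D,v}(y) \ge \gamma_v . $$
Dividing by $q$ and maximizing over $i$ yields
$$ \max_{1\le i\le\ell}\phi^{*}\lambda_{\mathcal{B}_i,v}(y) \ge \mu\phi^{*}\lambda_{D,v}(y) - c_v $$
on $Y(M_k)$, where $c = -\gamma/q$ is the desired $M_k$-constant.

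Finally, since $\phi$ is proper and birational, its image is closed in $X$ and contains a dense open subset, so $\phi$ is surjective; base-changing to $\overline{k}_v$ shows that the induced map $Y(\overline{k}_v)\to X(\overline{k}_v)$ is surjective for each $v$. Given $x\in X(\overline{k}_v)$, pick any $y\in Y(\overline{k}_v)$ with $\phi(y)=x$; functoriality gives $\phi^{*}\lambda_{\mathcal{B}_i,v}(y) = \lambda_{\mathcal{B}_i,v}(x)$ and $\phi^{*}\lambda_{D,v}(y) = \lambda_{D,v}(x)$, so the inequality on $Y$ transports to the desired bound on $X(M_k)\to\mathbb{R}\cup\{+\infty\}$ of condition (ii). The main technical obstacle is that Lemma \ref{weil_max_fancy} is stated for projective $X$, whereas our $Y$ is only known to be complete; I would handle this by invoking Chow's lemma to replace $Y$ with a projective birational model $Y'\to Y$, pulling back the cover, and observing that the pullbacks of the divisors $\phi^{*}(\mathcal{B}_i) - \mu\phi^{*}D$ remain effective on the corresponding open subsets of $Y'$.
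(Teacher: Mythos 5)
Your proof is correct and follows essentially the same route as the paper: extract a finite cover of $Y$ from condition (i) by quasi-compactness, clear denominators to make the relevant $\mathbb Q$-divisors integral, invoke Lemma \ref{weil_max_fancy} to get the $M_k$-bounded lower bound on the maximum, and then rescale. Your two additional touches — the explicit push-down to $X(M_k)$ via surjectivity of $\phi$ on $\overline k_v$-points, and the Chow's-lemma reduction to a projective model so that Lemma \ref{weil_max_fancy} applies as literally stated — are details the paper leaves implicit, and the latter in particular tidies up a point the paper glosses over when it applies a lemma stated for projective varieties to a model $Y$ that is only known to be complete.
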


\begin{proof}
Assume that (i) holds.

By quasi-compactness, we may assume that only finitely many open subsets $U$
occur in (i).  Let $U_1,\dots,U_\ell$ be a collection of such subsets.
Since the condition on $\mathcal B$ only depends on $U$, we may fix a
basis $\mathcal B_i$ for each open subset $U_i$.  Also, for each $i$
let $\lambda_{\mathcal B_i}$ be a Weil function for the divisor
$(\mathcal B_i)$, and let $\lambda_D$ be a Weil function for $D$.

Fix a positive integer $n$ such that $n\mu\in\mathbb Z$, and such that
the divisor $n\phi^{*}(\mathcal B_i)-n\mu\phi^{*}D$ is effective on $U_i$
for all $i$.  The above Weil functions can be pulled back to give Weil functions
for $\phi^{*}(\mathcal B_i)$ and $\phi^{*}D$, respectively, on $Y$.

By Lemma \ref{weil_max_fancy} applied to the divisors
$D_i:=n\phi^{*}(\mathcal B_i)-n\mu\phi^{*}D$ for all $i$ and to
the open sets $U_1,\dots,U_\ell$, there is an $M_k$-constant $\gamma$ such that
$$\max_{i=1,\dots,\ell}
    \bigl(n\phi^{*}\lambda_{\mathcal B_i}-n\mu\phi^{*}\lambda_D\bigr)
  \ge\gamma\;.$$
Therefore (ii) holds, with $c=-\gamma/n$.
\end{proof}

The converse will be proved in the next section
(Proposition \ref{prop_mu_b_growth_equivs}).

\section{Models of Varieties, b-divisors, and b-Weil Functions}\label{models}

In light of the birational nature of $\Nevbir$, it is useful to consider
birationalizations of the definitions of Cartier divisor and Weil function.
(Birational variants of Cartier divisors have already been developed as
part of the minimal model program.)
These allow one to finish the proof of Proposition \ref{prop_nevprime_weil},
and therefore to show that $\Nevbir$ can be defined using Weil functions.
(This was the original definition of $\Nevbir$.)

In this section, we define the notion of b-Cartier b-divisor, and show that
the group of these objects (on a fixed variety $X$ over some field),
when partially ordered by the condition that $\mathbf D_1\ge\mathbf D_2$
if $\mathbf D_1-\mathbf D_2$ is effective, forms a lattice (i.e., a partially
ordered set in which every nonempty finite set has a least upper bound and
a greatest lower bound).  We then define the related concept of b-Weil function
on $X$, and show that a corresponding group (obtained by modding out by
the subgroup of $M_k$-bounded functions) is also a lattice, and is naturally
isomorphic to the partially ordered group of b-Cartier b-divisors on $X$.

Once these b-divisors and b-Weil functions have been defined and their
elementary properties discussed, the main result of this section
(Proposition \ref{prop_mu_b_growth_equivs}) is stated and proved.
This result gives alternative descriptions of the main condition of
Definition \ref{def_nevprime} using b-Cartier b-divisors and using
b-Weil functions.

We begin by recalling some definitions from the minimal model program
(the Mori program).  The notion of b-divisor is originally due to Shokurov;
see \cite[Def.~1.7.4 and \S\,2.3]{corti} for details.  The prefix `b'
stands for \emph{birational}.

\begin{definition}\label{def_b_divisor}
Let $X$ be a complete variety over a field $k$.
\begin{enumerate}
\item[(a).]  A \textbf{model} of $X$ is a proper birational morphism $Y\to X$
  over $k$, where $Y$ is a variety over $k$.  We often use $Y$ to denote the
  model.
\item[(b).]  The category of models of $X$ is the category whose objects are
  models of $X$ and whose morphisms are morphisms over $X$.  We say that
  a model $Y_1$ of $X$ \textbf{dominates} a model $Y_2$ of $X$ if there is
  a morphism $Y_1\to Y_2$ (necessarily unique) in this category.
\item[(c).]  A \textbf{b-Cartier b-divisor} on $X$ is an equivalence
  class of pairs $(Y,D)$, where $Y$ is a model of $X$ and $D$ is a
  Cartier divisor on $Y$; here equivalence classes are those for
  the equivalence relation generated by the relation $(Y_1,D_1)\sim (Y_2,D_2)$
  if $Y_1$ dominates $Y_2$ via $\phi\colon Y_1\to Y_2$, and $D_1=\phi^{*}D_2$.
\item[(d).]  A b-Cartier b-divisor $\mathbf D$ on $X$ is \textbf{effective} if
  it is represented by a pair $(Y,D)$ such that $D$ is effective.
\end{enumerate}
\end{definition}

\begin{remark}
Definition~\ref{def_b_divisor}(c) is different from the definition given
in \cite{corti}, but it
is equivalent.  In \emph{op.~cit.,} $X$ is required to be normal, and one
works in the category of normal models of $X$.  A b-divisor on $X$ is an element
$$\mathbf D=(\mathbf D_Y)_{Y} \in \varprojlim_Y \Div(Y)\;,$$
where $\Div(Y)$ is the group of Weil divisors on a normal model $Y$ of $X$,
and the projective limit is relative to push-forwards
$\phi_{*}\colon\Div Y_1\to\Div Y_2$ via morphisms $\phi\colon Y_1\to Y_2$,
if $Y_1$ dominates $Y_2$ via $\phi$.  A b-divisor $\mathbf D$ on $X$ is
b-Cartier if there is a normal model $Y$ of $X$ and a Cartier divisor $D$ on $Y$
such that $\mathbf D_{Y_1}=\phi^{*}D$ for all normal models $Y_1$ of $Y$
with $X$-morphisms $\phi\colon Y_1\to Y$.

To see that this definition is equivalent to Definition~\ref{def_b_divisor}c,
we first note that restricting models in Definition~\ref{def_b_divisor}
to normal models does not change the definition.
Then, since the normalization of $X$ is a final object in the category of
normal models of $X$, we may assume that $X$ is normal.  It is then
straightforward to see that this definition agrees with the definition in
\emph{op.~cit.}
\end{remark}

\begin{lemma}\label{lemm_eff_b_cartier}
Let $X$ be a variety, let $\mathbf D$ be a b-Cartier b-divisor on $X$,
and let $(Y,D)$ be a pair that represents $\mathbf D$.  Assume that $Y$
is normal.  Then $\mathbf D$ is effective if and only if $D$ is effective.

\end{lemma}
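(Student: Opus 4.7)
The plan: the backward direction is immediate from Definition \ref{def_b_divisor}(d), since if $D$ is effective then the pair $(Y,D)$ itself witnesses that $\mathbf D$ is effective. So the content is the forward direction. Suppose $\mathbf D$ is effective, witnessed by a representative $(Y',D')$ with $D'$ effective. The first step is to replace the (a priori lengthy) chain of basic equivalences $(Y,D)\sim\cdots\sim(Y',D')$ by a single ``common domination'': a model $\psi\colon Z\to Y$, $\psi'\colon Z\to Y'$ with $\psi^{*}D=\psi'^{*}D'$. This works because the common-domination relation is reflexive, symmetric, and transitive---transitivity follows from the fact that any two models $Z_{12}$ and $Z_{23}$ of $X$ are themselves jointly dominated by a third model $W$ (take the closure of the graph of the induced birational map $Z_{12}\dashrightarrow Z_{23}$)---and it contains the basic relation, so it agrees with the equivalence relation it generates. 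Since pullback of an effective Cartier divisor along a morphism of integral varieties remains an effective Cartier divisor, $\psi^{*}D=\psi'^{*}D'$ is effective on $Z$.

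The second step is the descent: if $Y$ is normal, $\psi\colon Z\to Y$ is proper birational, and $\psi^{*}D$ is effective on $Z$, then $D$ is effective on $Y$. After replacing $Z$ by its normalization (the pullback of a Cartier divisor along a dominant morphism of integral varieties is a well-defined Cartier divisor, and remains effective), I may assume $Z$ is normal. By Remark \ref{remk_eff}, since $Y$ is normal it suffices to show $\ord_{E}(D)\ge 0$ for each prime Weil divisor $E$ on $Y$. Normality of $Y$ combined with the valuative criterion of properness applied to $\psi^{-1}$ implies that the isomorphism locus $U\subseteq Y$ of $\psi$ has complement of codimension $\ge 2$, so $E\cap U$ is a dense open subset of $E$, and the closure $E'\subseteq Z$ of $\psi^{-1}(E\cap U)$ is a prime divisor of $Z$ whose generic point maps to $\eta_{E}$. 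The resulting local inclusion $\mathcal{O}_{Y,\eta_{E}}\hookrightarrow\mathcal{O}_{Z,\eta_{E'}}$ is an inclusion of two discrete valuation rings inside the common function field $K(Y)=K(Z)$, and any such inclusion forces equality; hence the valuations $\ord_{E}$ and $\ord_{E'}$ coincide on $K(Y)^{*}$, and so $\ord_{E}(D)=\ord_{E'}(\psi^{*}D)\ge 0$.

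The step I expect to be the main obstacle is the first one---collapsing the chain of basic equivalences into a single common domination---since it requires one to commit to a specific description of the equivalence relation beyond its generators; everything else (effectivity passing to the normalization, existence of strict transforms of prime divisors on a normal variety, and comparison of valuations) is routine given the results assembled earlier in Sections \ref{prelim} and \ref{sect_weil}. As a variant, once common domination is in hand, the descent step could instead be phrased in terms of Weil functions, invoking Proposition \ref{weil_eff} on $Z$ and the surjectivity of $Z(M_{k})\to Y(M_{k})$; but the Weil-divisor argument is the most direct.
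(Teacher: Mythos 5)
Your proof is correct and follows essentially the same strategy as the paper: reduce to a single common domination of the two representing pairs, and then descend effectivity along the proper birational morphism to the normal base. The only difference is in how that descent is carried out---the paper invokes push-forward of Weil divisors (observing $D=f_{*}D'$ and that push-forward preserves effectivity, then applies Remark~\ref{remk_eff}), whereas you make the same content explicit by comparing the DVRs $\mathscr O_{Y,\eta_E}$ and $\mathscr O_{Z,\eta_{E'}}$ at strict transforms; these are two phrasings of the same argument, and both are valid.
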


\begin{proof}
The reverse implication is immediate from the definitions.

To prove the forward implication, assume that $\mathbf D$ is effective.
By definition, there is a pair $(Y',D')$ representing $\mathbf D$
such that $D'$ is effective.  By pulling back to a possibly larger model,
we may assume that $Y'$ dominates $Y$, say by $f\colon Y'\to Y$,
and that $Y'$ is normal.  Then $D=f_{*}D'$ (here $f_{*}$ refers to Weil
divisors; note that $f_{*}f^{*}D=D$).  In particular, by Remark \ref{remk_eff},
$D$ is effective.
\end{proof}

The following definition generalizes the definition of Weil function to
b-Cartier b-divisors.  This definition comes from \cite[\S\,7]{vojta_ssav1}.
In \emph{loc.~cit.} they were called \emph{generalized Weil functions},
but it is now apparent that it is more natural to call them
\emph{b-Weil functions}.

\begin{definition}\label{def_b_weil_fcn}
Let $X$ be a complete variety over a number field $k$.
Then a \textbf{b-Weil function} on $X$
is an equivalence class of pairs $(U,\lambda)$, where $U$ is a nonempty
Zariski-open subset of $X$ and $\lambda\colon U(M_k)\to\mathbb R$ is
a function such that there exist a model $\phi\colon Y\to X$ of $X$
and a Cartier divisor $D$ on $Y$ such that $\lambda\circ\phi$ extends to
a Weil function for $D$.
Pairs $(U,\lambda)$ and $(U',\lambda')$ are \textbf{equivalent}
if $\lambda=\lambda'$ on $(U\cap U')(M_k)$.
Local b-Weil functions on $X$ are defined similarly.
\end{definition}

It is clear that every Weil function on a variety $X$ over $k$
is also a b-Weil function on $X$, and that b-Weil functions
on $X$ form an abelian group under addition.
Also, if $\phi\colon X\dashrightarrow Y$ is a dominant rational map
and if $\lambda$ is a b-Weil function on $Y$,
then $\phi^{*}\lambda$ (defined in the obvious way) is a
b-Weil function on $X$.  The same facts are true for local b-Weil functions
at a given place $v$.

\begin{definition}\label{def_div_of_b_Weil}
Let $X$ be a complete variety over a number field $k$, let $\lambda$ be
a b-Weil function on $X$, and let $\mathbf D$
be a b-Cartier b-divisor on $X$.  We say that $\lambda$ is a
\textbf{b-Weil function for $\mathbf D$} if $\mathbf D$ is represented by
a pair $(Y,D)$
as in Definition \ref{def_b_divisor}, such that if $\phi\colon Y\to X$
is the structural morphism of $Y$, then $\lambda\circ\phi$ extends to
a Weil function for $D$ on $Y$.
\end{definition}

\begin{proposition}\label{prop_b_div_vs_b_Weil}
Let $X$ be a complete variety over a number field $k$.
\begin{enumerate}
\item[(a).]  For $i=1,2$ let $\mathbf D_i$ be a b-Cartier b-divisor on $X$
and let $\lambda_i$ be a b-Weil function for $\mathbf D_i$.
Then $-\lambda_1$ and $\lambda_1+\lambda_2$ are b-Weil functions
for $-\mathbf D_1$ and $\mathbf D_1+\mathbf D_2$, respectively.
\item[(b).]  Let $\mathbf D$ be a b-Cartier b-divisor and let $\lambda$ be
a b-Weil function on $X$ for $\mathbf D$.  Then $\lambda$ is
$M_k$-bounded from below if and only if $\mathbf D$ is effective,
and $\lambda$ is $M_k$-bounded if and only if $\mathbf D=0$.
\item[(c).]  Let $\lambda$ be a b-Weil function on $X$.
Then there is a unique b-Cartier b-divisor $\mathbf D$ such that $\lambda$
is a b-Weil function for $\mathbf D$.
\item[(d).]  Let $\mathbf D$ be a b-Cartier b-divisor on $X$.  Then there is a
b-Weil function $\lambda$ for $\mathbf D$.
\item[(e).]  The map $\lambda\mapsto\mathbf D$ in part (c) gives a
group isomorphism from the group of b-Weil functions on $X$,
modulo addition of $M_k$-bounded functions, to the group of b-Cartier b-divisors
on $X$.
\end{enumerate}
Analogous statements hold for local b-Weil functions at a fixed place $v$
of $k$.
\end{proposition}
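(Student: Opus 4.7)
The plan is to reduce each part to the corresponding statement for ordinary Weil functions on a normal model, using the density and additivity results already established.

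For part (a), given representatives $(Y_i,D_i)$ of $\mathbf D_i$ for $i=1,2$, I would pass to a common normal model $\phi\colon Y\to X$ dominating both $Y_i$ (take the normalization of the closure of the graph of the induced birational map). On $Y$, both b-divisors are represented by Cartier divisors $\widetilde D_1,\widetilde D_2$, and each $\lambda_i\circ\phi$ extends to a Weil function $\mu_i$ for $\widetilde D_i$. The assertions for $-\mathbf D_1$ and $\mathbf D_1+\mathbf D_2$ then follow from the corresponding additivity properties of ordinary Weil functions (Proposition \ref{weil_props}(a)), applied on $Y$ and then restricted.

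For part (b), choose a pair $(Y,D)$ representing $\mathbf D$ with $Y$ normal and let $\mu$ be the Weil function on $Y$ extending $\lambda\circ\phi$. By Proposition \ref{weil_eff}, $D$ is effective if and only if $\mu$ is $M_k$-bounded from below; by Lemma \ref{lemm_eff_b_cartier}, effectivity of $D$ is equivalent to effectivity of $\mathbf D$. It remains to identify boundedness of $\mu$ on $Y(M_k)\setminus\Supp D$ with boundedness of $\lambda$ on $U(M_k)$. The forward direction is trivial since $\mu=\lambda\circ\phi$ on $\phi^{-1}(U)$ and $\phi$ maps $\phi^{-1}(U)$ onto $U$. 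For the reverse direction, if $D$ were not effective, the argument in the proof of Proposition \ref{weil_eff} produces, at some place $v$, a point $x$ on a negative component of $D$ not on any other component of $\Supp D$ such that $\mu(x_n)\to-\infty$ for every sequence $x_n\to x$ in $Y(\overline k_v)\setminus\Supp D$; combining this with the density of $\phi^{-1}(U)(\overline k_v)$ in $Y(\overline k_v)$ (Lang \cite[Ch.~10, Lemma~1.4]{lang}), one can choose such a sequence inside $\phi^{-1}(U)$, contradicting the assumed lower bound on $\lambda$. The $M_k$-bounded case then follows by applying the first characterization to both $\mathbf D$ and $-\mathbf D$.

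For (c), the existence of $\mathbf D$ is immediate from Definition \ref{def_b_weil_fcn}; uniqueness follows from (a) and (b), since if $\lambda$ is a b-Weil function for both $\mathbf D_1$ and $\mathbf D_2$, then the zero function is a b-Weil function for $\mathbf D_1-\mathbf D_2$, which forces $\mathbf D_1-\mathbf D_2=0$. For (d), given $(Y,D)$ representing $\mathbf D$ with structural morphism $\phi\colon Y\to X$, pick a Weil function $\lambda_D$ for $D$ on $Y$ (existence is stated after Proposition \ref{weil_props}), choose a Zariski-open $U\subseteq X$ over which $\phi$ restricts to an isomorphism and which is disjoint from $\phi(\Supp D)$ (this is a proper closed subset of $X$ because $\phi$ is proper and birational, so the image of any divisor of $Y$ has codimension at least one in $X$), and set $\lambda:=\lambda_D\circ\phi^{-1}$ on $U(M_k)$. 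Finally, (e) is formal: the map in (c) is well-defined and, by (a), a group homomorphism from b-Weil functions to b-Cartier b-divisors; (d) gives surjectivity, and by (b) the kernel is precisely the subgroup of $M_k$-bounded functions.

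The main obstacle will be the reverse direction of (b): propagating the lower bound of $\lambda$ on $U(M_k)$ to a lower bound of the extended Weil function $\mu$ on all of $Y(M_k)\setminus\Supp D$. This requires combining the density of $\phi^{-1}(U)(\overline k_v)$ with a careful reuse of the blow-up-to-$-\infty$ argument inside the proof of Proposition \ref{weil_eff}, since a priori one only controls $\mu$ on the open set $\phi^{-1}(U)$ rather than on the full complement of $\Supp D$.
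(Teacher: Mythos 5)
Your proposal follows essentially the same route as the paper: part (a) by pulling back to a common dominating normal model and using additivity of ordinary Weil functions, (b) by reducing to Proposition~\ref{weil_eff} on a normal representative and Lemma~\ref{lemm_eff_b_cartier}, (c) and (e) by the same formal arguments, and (d) by producing a Weil function on a model and restricting. The only cosmetic differences are that you make explicit the density step in (b) needed to transfer boundedness between $U(M_k)$ and $Y(M_k)\setminus\Supp D$ (which the paper leaves implicit when invoking Proposition~\ref{weil_eff}), and in (d) you appeal to the general existence of Weil functions on complete varieties rather than, as the paper does, first invoking Chow's lemma to reduce to the projective case.
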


\begin{proof}
(a).  For each $i$ let $\phi_i\colon Y_i\to X$ be a model of $X$ and
let $D_i$ be a Cartier divisor on $Y_i$ such that $\phi_i\circ\lambda_i$
extends to a Weil function on $Y_i$ for $D_i$, and such that
$(Y_i,D_i)$ represents $\mathbf D_i$.  Then the first assertion is immediate,
since $-\phi_1\circ\lambda_1$ extends to a Weil function for $-D_1$
on $Y_1$.  For the second assertion, we may replace $Y_1$ and $Y_2$
with a model $Y$ for $X$ that dominates both of them, and pull back $D_1$
and $D_2$ to $Y$.  Then the assertion follows from additivity of Weil
functions on $Y$.

(b).  Let $\phi\colon Y\to X$ be a model of $X$ and let $D$ be a Cartier
divisor on $Y$ such that $\phi\circ\lambda$ extends to a Weil function
for $D$ on $Y$, and such that $(Y,D)$ represents $\mathbf D$.
By replacing $Y$ with its normalization, we may assume that $Y$ is normal.
Then $\lambda$ is $M_k$-bounded from below if and only if $D$ is effective,
by Proposition \ref{weil_eff}.  If $D$ is effective, then so is $\mathbf D$.
Conversely,
if $\mathbf D$ is effective, then it is represented by a pair $(Y',D')$
with $D'$ effective.  Let $Y''$ be a normal model of $X$ that dominates
both $Y$ and $Y'$, and let $\psi\colon Y''\to Y$ and $\psi'\colon Y''\to Y'$
be the implied morphisms.  Then $D=\psi_{*}(\psi')^{*}D'$ is effective,
so $\lambda$ is $M_k$-bounded from below.

The second assertion follows formally by applying the first assertion
also to $-\lambda$ and $-\mathbf D$.

(c).  Let $\lambda$ be a b-Weil function on $X$.
By Definition~\ref{def_b_weil_fcn} there exist a model $\phi\colon Y\to X$
for $X$ and a Cartier divisor $D$ on $Y$ such that $\lambda\circ\phi$
extends to a Weil function for $D$ on $Y$.  Then $\lambda$ is a b-Weil function
for the b-Cartier b-divisor $\mathbf D$ represented by the pair $(Y,D)$.
To show uniqueness, suppose that $\lambda$ is a b-Weil function for
b-Cartier b-divisors $\mathbf D_1$ and $\mathbf D_2$.  Then $\lambda-\lambda=0$
is a b-Weil function for $\mathbf D_1-\mathbf D_2$.  This divisor must be zero,
by part (b) applied to $\pm(\mathbf D_1-\mathbf D_2)$.

(d).  Let $\mathbf D$ be a b-Cartier b-divisor, and let $(Y,D)$ be a pair
representing it.  By Chow's lemma, we may assume that $Y$ is projective.
By Lang \cite[Ch.~10, Thm.~3.5]{lang}, there is a
Weil function for $D$ on $Y$.  This defines a b-Weil function
for $\mathbf D$.

(e).  Part (c) determines a well-defined function from the group of
b-Weil functions on $X$ to the group of b-Cartier b-divisors on $X$.
Part (a) implies that it is a group homomorphism, part (b)
implies that its kernel is the subgroup of $M_k$-bounded b-Weil functions,
and part (d) implies that it is surjective.

The proofs of corresponding statements for local b-Weil functions are
left to the reader.
\end{proof}

The main reason for defining b-Weil functions in \cite{vojta_ssav1} was
the fact that the (pointwise) maximum of two Weil functions may not be a
Weil function,
but the maximum of two b-Weil functions is another b-Weil function.
The main result of this section shows that the group of b-Cartier b-divisors
on a variety also has a least upper bound, relative to the cone of effective
b-Cartier b-divisors, and that this lub corresponds to the maximum
of b-Weil functions.

We start with a lemma on the underlying geometry of a least upper bound
of b-Cartier b-divisors.  It will not be used until later
(Proposition \ref{prop_mu_b_growth_equivs}), but it is stated here
because it provides valuable intuition.

\begin{lemma}\label{lemma_lub_intuit}
Let $X$ be a variety over a field, and let $\mathbf D$ and
$\mathbf D_1,\dots,\mathbf D_n$ be b-Cartier b-divisors on $X$.
Let $\phi\colon Y\to X$ be a normal model of $X$ such that $\mathbf D$
and $\mathbf D_1,\dots,\mathbf D_\ell$ are represented by Cartier divisors
$D$ and $D_1,\dots,D_\ell$ on $Y$, respectively.  Then $\mathbf D$ is a
least upper bound of $\mathbf D_1,\dots,\mathbf D_\ell$ if and only if
$D-D_i$ is effective for all $i$ and
$$\bigcap_{i=1}^\ell \Supp(D-D_i) = \emptyset\;.$$
\end{lemma}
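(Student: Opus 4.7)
The plan is to prove both implications separately, using the machinery of b-Cartier b-divisors and b-Weil functions developed earlier in this section.

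For the reverse direction, I would assume $D - D_i$ is effective for every $i$ and that $\bigcap_{i=1}^\ell \Supp(D - D_i) = \emptyset$. Then Lemma \ref{lemm_eff_b_cartier} (applied using the normality of $Y$) gives $\mathbf D \ge \mathbf D_i$ for every $i$, so $\mathbf D$ is already an upper bound. To show it is the \emph{least} upper bound, I would take any other upper bound $\mathbf D'$ and pull everything back to a normal model $\psi\colon Y' \to Y$ on which $\mathbf D'$ is represented by a Cartier divisor $D'$, so that $D' - \psi^{*}D_i$ is effective on $Y'$ for every $i$. For each $y' \in Y'$ set $y = \psi(y')$; the emptiness hypothesis produces some $i_0$ with $y \notin \Supp(D - D_{i_0})$, and since $Y$ is normal the effective Cartier divisor $D - D_{i_0}$ vanishes on a Zariski-open neighborhood $U$ of $y$ in $Y$. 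Hence $\psi^{*}D = \psi^{*}D_{i_0}$ on $\psi^{-1}(U)$, so $D' - \psi^{*}D = D' - \psi^{*}D_{i_0}$ is effective near $y'$. Since effectivity is local, $D' - \psi^{*}D$ is effective globally, giving $\mathbf D' \ge \mathbf D$.

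For the forward direction, I would assume $\mathbf D$ is a least upper bound. The relations $\mathbf D \ge \mathbf D_i$ together with Lemma \ref{lemm_eff_b_cartier} yield $D - D_i$ effective for each $i$. For the support condition I would argue by contradiction: suppose $\bigcap_i \Supp(D - D_i) \ne \emptyset$. Choose Weil functions $\lambda$ for $D$ and $\lambda_i$ for $D_i$ on $Y$, and set $\mu := \max_i \lambda_i$; by the max property of b-Weil functions (highlighted just before the lemma statement), $\mu$ is a b-Weil function on $X$, representing a unique b-Cartier b-divisor $\mathbf D''$ via Proposition \ref{prop_b_div_vs_b_Weil}(c). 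Since $\mu \ge \lambda_i$ pointwise, $\mathbf D'' \ge \mathbf D_i$; and since $\lambda - \mu = \min_i(\lambda - \lambda_i)$ is a minimum of finitely many Weil functions for effective divisors (and so $M_k$-bounded below by Proposition \ref{weil_eff}), we get $\mathbf D \ge \mathbf D''$. Picking a closed point $y \in \bigcap_i \Supp(D - D_i)$ and a place $v$ admitting a $v$-adic sequence $y_n \to y$ in $(Y \setminus \Supp D)(\overline k_v)$, each $(\lambda - \lambda_i)(y_n) \to +\infty$ because $y \in \Supp(D - D_i)$, and hence $(\lambda - \mu)(y_n) \to +\infty$ as well. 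So $\lambda - \mu$ is not $M_k$-bounded, whence $\mathbf D - \mathbf D'' \ne 0$ by Proposition \ref{prop_b_div_vs_b_Weil}(b), producing an upper bound $\mathbf D'' \lneq \mathbf D$ that contradicts minimality.

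The main obstacle is precisely this last strict-inequality step in the forward direction: one must be sure that $\lambda - \mu = \min_i(\lambda - \lambda_i)$ is genuinely unbounded above on $X(M_k)$, which rests on the divergence to $+\infty$ of every Weil function for an effective Cartier divisor as one approaches its support. Once this simultaneous divergence at a point of the common intersection is in hand, Proposition \ref{prop_b_div_vs_b_Weil}(b) converts unboundedness into nontriviality of the underlying b-Cartier b-divisor and the contradiction with the lub property closes automatically.
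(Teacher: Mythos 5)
Your argument is correct, but for the forward implication it takes a genuinely different route from the paper. To show a lub must have $\bigcap_i\Supp(D-D_i)=\emptyset$, you form the b-Weil function $\mu=\max_i\lambda_i$, produce the corresponding b-Cartier b-divisor $\mathbf D''$, sandwich $\mathbf D_i\le\mathbf D''\le\mathbf D$, and detect $\mathbf D''\ne\mathbf D$ from the divergence of $\lambda-\mu=\min_i(\lambda-\lambda_i)$ at a point of the common intersection. The paper instead constructs the competitor geometrically: it blows up $Y$ along $Z:=\bigcap_i\Supp(D-D_i)$ to get an exceptional divisor $E$, notes that each $f^{*}(D-D_i)-E$ is effective because the local equation of $D-D_i$ lies in the ideal of $Z$, and concludes that $(Y',f^{*}D-E)$ is a strictly smaller upper bound, contradicting minimality. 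Both routes are fine; the paper's stays entirely at the level of divisors and is more self-contained, while yours reuses the b-Weil function machinery of Proposition~\ref{prop_b_div_vs_b_Weil} (arguably closer in spirit to the rest of the section). One small repair in your forward direction: the approximating $v$-adic sequence $y_n\to y$ should be taken in the complement of all of $\bigcup_i\Supp(D-D_i)$ (not merely of $\Supp D$), so that each $(\lambda-\lambda_i)(y_n)$ is finite before you pass to the limit; this costs nothing since the complement is still a dense open subset.

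Your reverse direction --- pulling an arbitrary upper bound $\mathbf D'$ back to a common normal model, using the emptiness hypothesis to get $D=D_{i_0}$ on a neighborhood of $\psi(y')$, and gluing local effectivity of $D'-\psi^{*}D$ --- is correct, and is in fact a useful supplement: the paper's printed proof only records the equivalence ``upper bound iff each $D-D_i$ is effective'' together with the forward direction, and leaves the converse implication (empty common support forces minimality) to the reader. Also note that the appeal to normality of $Y$ in that paragraph is unnecessary: an effective Cartier divisor is trivial off its support on any scheme; normality is only needed where you invoke Lemma~\ref{lemm_eff_b_cartier}.
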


\begin{proof}
By Lemma \ref{lemm_eff_b_cartier}, $\mathbf D$ is an upper bound
of $\mathbf D_1,\dots,\mathbf D_\ell$ if and only if $D-D_i$ is effective
for all $i$.

Let $Z=\bigcap\Supp(D-D_i)$ and suppose that $Z\ne\emptyset$.
Let $f\colon Y'\to Y$ be the blowing-up of $Y$ along $Z$, and let $E$ be
the exceptional divisor.  Then $E$ is a nonzero effective Cartier divisor,
and $f^{*}(D-D_i)-E$ is effective for all $i$.  This shows that $(Y',f^{*}D-E)$
represents another upper bound of $\mathbf D_1,\dots,\mathbf D_\ell$,
and therefore $\mathbf D$ is not a \emph{least} upper bound.
Thus $Z=\emptyset$.
\end{proof}

This next lemma shows that the above situation is not uncommon.
It is taken from the proof of \cite[Prop.~7.3]{vojta_ssav1}.

\begin{lemma}\label{lemma_max_divisors}
Let $D$ be a Cartier divisor on a variety $Y$ over a field $k$.  Then there
are a proper model $\phi\colon Z\to Y$ and effective Cartier divisors $D'$
and $D''$ on $Z$ such that $\phi^{*}D=D'-D''$, and such that
the supports of $D'$ and $D''$ are disjoint.
\end{lemma}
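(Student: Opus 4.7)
The plan is to reduce to the case where $Y$ is quasi-projective, write $D$ globally as a difference of two effective Cartier divisors, and then blow up the ideal sheaf that they jointly generate. By Chow's lemma, there is a proper birational morphism $\psi\colon Y_0\to Y$ with $Y_0$ quasi-projective over $k$; after replacing $Y$ by (the appropriate component of) $Y_0$ and composing with $\psi$ at the end, I may assume $Y$ itself is quasi-projective. Fix an effective ample divisor $H$ on $Y$. For $n$ sufficiently large $\mathscr O_Y(D+nH)$ is generated by global sections, so I may pick a nonzero section whose divisor is an effective $D_1\in|D+nH|$. Setting $D_2=nH$, I obtain a decomposition $D=D_1-D_2$ with $D_1$ and $D_2$ both effective Cartier divisors.

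Next, form the ideal sheaf
$$\mathcal I = \mathscr O_Y(-D_1) + \mathscr O_Y(-D_2) \subseteq \mathscr O_Y,$$
which makes sense because $D_1$ and $D_2$ are effective, and let $\phi\colon Z\to Y$ be the blow-up of $Y$ along $\mathcal I$. By the universal property, $\phi^{-1}\mathcal I\cdot\mathscr O_Z$ is an invertible sheaf of ideals, say $\mathscr O_Z(-E)$ for some effective Cartier divisor $E$ on $Z$. Since $\mathscr O_Z(-E)\supseteq\phi^{*}\mathscr O_Y(-D_i)=\mathscr O_Z(-\phi^{*}D_i)$ for $i=1,2$, the differences $D_i':=\phi^{*}D_i-E$ are effective Cartier divisors on $Z$, and $\phi^{*}D=\phi^{*}D_1-\phi^{*}D_2=D_1'-D_2'$, which is the desired decomposition (modulo verifying disjoint support).

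To finish I check disjointness of supports by a local computation. Working locally on $Y$, pick defining equations $a_i\in\mathscr O_Y$ for $D_i$, so that $\mathcal I=(a_1,a_2)$. On $Z$ this ideal is principal, generated by a local equation $e$ of $E$; writing $a_i=e\cdot a_i'$, the relation $(a_1,a_2)\cdot\mathscr O_Z=(e)$ forces $(a_1',a_2')=\mathscr O_Z$ locally on $Z$. The $a_i'$ are then local equations for $D_i'$, and the condition $(a_1',a_2')=\mathscr O_Z$ is precisely the assertion that the zero loci satisfy $\Supp(D_1')\cap\Supp(D_2')=\emptyset$.

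The main delicate point is the preliminary reduction: without quasi-projectivity, the global decomposition $D=D_1-D_2$ into effective Cartier divisors may fail and the ideal $\mathcal I$ is only pinned down up to multiplication by invertible fractional ideals (which yields the same blow-up but requires heavier bookkeeping to glue across an affine cover). Invoking Chow's lemma sidesteps this entirely, reducing everything to the clean quasi-projective setting where $D_1$ and $D_2$ arise directly from an ample divisor and the blow-up of $\mathcal I$ is defined canonically and globally.
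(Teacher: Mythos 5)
Your proof is correct, but it takes a genuinely different route from the paper. The paper works locally and then glues: on each open $U$ where $D=(f)$ is principal, it takes the closure of the graph of the rational map $U\dashrightarrow\mathbb P^1$ given by $f$, pulls back $[0]$ and $[\infty]$, and observes that multiplying $f$ by a unit induces a canonical automorphism fixing both pull-backs, so the construction glues over an affine cover of any variety $Y$. You instead reduce to the quasi-projective case via Chow's lemma, obtain a \emph{global} decomposition $D=D_1-D_2$ into effective Cartier divisors using an ample sheaf, and blow up the ideal $\mathscr O_Y(-D_1)+\mathscr O_Y(-D_2)$ all at once. The two constructions agree locally (the closure of the graph of $[g:h]$ \emph{is} the blow-up of $(g,h)$), so this is the same geometric idea packaged differently. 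What the paper's approach buys is that it is intrinsic and self-contained: no ample divisors, no Chow's lemma, and no need to worry about whether $D$ admits a global effective decomposition. What your approach buys is that it avoids the gluing verification entirely and stays in a setting most readers find familiar; the price is that you must invoke (the non-proper, birational form of) Chow's lemma plus the fact that on a quasi-projective variety every coherent sheaf becomes globally generated after a sufficiently ample twist. Both prices are modest, and your argument is complete; the local computation $(a_1',a_2')\mathscr O_Z=\mathscr O_Z$, using that the exceptional equation $e$ is a non-zero-divisor, correctly gives disjointness of $\Supp D_1'$ and $\Supp D_2'$.
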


\begin{proof}  If $U$ is an open subset of $Y$ on which $D$ is equal to
a principal divisor $(f)$, then define $Z_U$ to be the closure of the graph of
the rational function $U\dashrightarrow\mathbb P^1$ given by $f$,
and let $D'$ and $D''$ be the pull-backs of the divisors $[0]$
and $[\infty]$ on $\mathbb P^1$, respectively.  This construction is
compatible with restricting to an open subset $U$, and multiplying
$f$ by an element of $\mathscr O_U^{*}$ induces an automorphism of $Z_U$
that fixes $D'$ and $D''$.  Therefore the schemes $Z_U$ and divisors $D'$
and $D''$ on $Z_U$ glue together to give a scheme $Z$, proper over $Y$,
and Cartier divisors $D'$ and $D''$ on $Z$, that satisfy the conditions
of the lemma.
\end{proof}

For the next step, we recall that a \textbf{lattice} is a partially ordered
set in which every pair of elements $a,b$ has a least upper bound and a
greatest lower bound.  These are called the \textbf{join} and \textbf{meet},
respectively, and are denoted $a\vee b$ and $a\wedge b$, respectively.

The following definition comes from Steinberg \cite[Ch.~2]{steinberg}.

\begin{definition}\label{lattice_group}
A \textbf{lattice-ordered group} is a group $G$, together with a partial
ordering on $G$ that respects the group operation
(i.e., $x\le y\iff xz\le yz\iff zx\le zy$ for all $x,y,z\in G$),
such that the partial ordering forms a lattice.
\end{definition}

In this paper, all lattice-ordered groups are abelian, and are written
additively.

\begin{proposition}\label{prop_b_div_lattice}
Let $X$ be a complete variety over a field $k$.
\begin{enumerate}
\item[(a).]  Let the set of b-Cartier b-divisors on $X$ be partially ordered
by the relation $\mathbf D_1\leq\mathbf D_2$ if $D_2-D_1$ is effective.
Then the group of b-Cartier b-divisors on $X$ is a lattice-ordered group.
\item[(b).]  Assume that $k$ is a number field.
Let $G$ be the group of b-Weil functions on $X$, modulo the
set of $M_k$-bounded functions.  Let $G$ be partially ordered by the condition
that $\lambda_1\leq\lambda_2$ if $\lambda_2-\lambda_1$ is $M_k$-bounded
from below.
Then $G$ is isomorphic to the partially ordered group of b-Cartier b-divisors
on $X$ under the isomorphism of Proposition~\ref{prop_b_div_vs_b_Weil}.
In particular, it is a lattice-ordered group.
\item[(c).]  Assume that $k$ is a number field, and let $G$ be the group of
part (b).  Let $\lambda_1$ and $\lambda_2$ be b-Weil functions for
b-Cartier b-divisors $\mathbf D_1$ and $\mathbf D_2$, respectively, on $X$.
Then the function $\max\{\lambda_1,\lambda_2\}$ is a b-Weil function for
the b-Cartier b-divisor $\mathbf D_1\vee\mathbf D_2$, and its image
in $G$ is the join of the images of $\lambda_1$ and $\lambda_2$ in $G$.
\end{enumerate}
\end{proposition}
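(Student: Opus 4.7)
The plan is to build part (a) constructively, using the Lemma \ref{lemma_max_divisors} splitting, then use that same construction to derive (c), and finally obtain (b) almost for free from the correspondence already established in Proposition \ref{prop_b_div_vs_b_Weil}.

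For (a), given two b-Cartier b-divisors $\mathbf D_1, \mathbf D_2$, I first pick a common normal model $\phi\colon Y\to X$ on which both are represented by Cartier divisors $D_1, D_2$. Applying Lemma \ref{lemma_max_divisors} to the difference $D_1-D_2$ on $Y$ produces a proper model $\psi\colon Z\to Y$ and effective Cartier divisors $E', E''$ on $Z$ with disjoint supports such that $\psi^{*}(D_1-D_2)=E'-E''$. Setting $D := \psi^{*}D_1 + E'' = \psi^{*}D_2 + E'$, I obtain a single Cartier divisor $D$ on $Z$ with $D-\psi^{*}D_1 = E''$ and $D-\psi^{*}D_2 = E'$, both effective and of disjoint support. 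By Lemma \ref{lemma_lub_intuit}, the b-Cartier b-divisor $\mathbf D$ represented by $(Z,D)$ is a least upper bound of $\mathbf D_1,\mathbf D_2$. Meets are obtained from joins by $\mathbf D_1\wedge\mathbf D_2 = -((-\mathbf D_1)\vee(-\mathbf D_2))$. Compatibility of the order with the group operation is immediate from additivity of effectivity, so the group of b-Cartier b-divisors is lattice-ordered.

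For (c), using the model $Z$ and divisor $D$ constructed in (a), let $\lambda_i'$ be the Weil function on $Z$ obtained by extending $\lambda_i\circ\phi\circ\psi$ (which is a Weil function for $\psi^{*}D_i$ by Proposition \ref{prop_b_div_vs_b_Weil}), and let $\lambda_D$ be any Weil function for $D$. Then $-\lambda_i'$ is a Weil function for $-\psi^{*}D_i$, and the divisors $-\psi^{*}D_1, -\psi^{*}D_2$ together with $-D$ satisfy the hypotheses of Proposition \ref{weil_min}: the differences $(-\psi^{*}D_i)-(-D) = D-\psi^{*}D_i$ are effective with disjoint supports. Hence $\min\{-\lambda_1',-\lambda_2'\} = -\max\{\lambda_1',\lambda_2'\}$ is a Weil function for $-D$, so $\max\{\lambda_1',\lambda_2'\}$ is a Weil function for $D$ on $Z$. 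Pulling back the identity $\max\{\lambda_1,\lambda_2\}\circ\phi\circ\psi = \max\{\lambda_1',\lambda_2'\}$ on a dense open set shows that $\max\{\lambda_1,\lambda_2\}$ is a b-Weil function for $\mathbf D_1\vee\mathbf D_2$. Since any other b-Weil function for $\mathbf D_1\vee\mathbf D_2$ differs from $\max\{\lambda_1,\lambda_2\}$ by an $M_k$-bounded function (Proposition \ref{prop_b_div_vs_b_Weil}(e)), the image of $\max\{\lambda_1,\lambda_2\}$ in $G$ is the join.

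For (b), Proposition \ref{prop_b_div_vs_b_Weil}(e) already gives a group isomorphism from $G$ to the group of b-Cartier b-divisors. It remains only to check that the orders match; but this is exactly Proposition \ref{prop_b_div_vs_b_Weil}(b) applied to $\lambda_2-\lambda_1$ and $\mathbf D_2-\mathbf D_1$: one is $M_k$-bounded from below iff the other is effective. That the relation on $G$ is well-defined on the quotient (adding an $M_k$-bounded function preserves $M_k$-boundedness from below) and respects the group operation follows from additivity, and the lattice structure transports from (a) via (c). The main obstacle is (a), specifically the construction of a join on a single explicit model; the argument above hinges on the splitting of $D_1-D_2$ provided by Lemma \ref{lemma_max_divisors}, so that the two ``obstructions'' $D-\psi^{*}D_i$ can be made to have disjoint supports, which is precisely what Lemma \ref{lemma_lub_intuit} requires for a least upper bound and what Proposition \ref{weil_min} requires to convert a max of Weil functions into a Weil function.
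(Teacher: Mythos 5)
Your proof is correct and follows essentially the same strategy as the paper: split the relevant divisor into effective pieces with disjoint supports via Lemma \ref{lemma_max_divisors}, recognize the least upper bound via Lemma \ref{lemma_lub_intuit}, and convert a pointwise maximum of Weil functions into a Weil function via Proposition \ref{weil_min}; the paper simply streamlines by first reducing (via the group structure) to joins of the form $\mathbf D\vee\mathbf 0$, which is equivalent to your handling of the difference $D_1-D_2$. One small gap: Lemma \ref{lemma_lub_intuit} (and Lemma \ref{lemm_eff_b_cartier}, on which it rests) requires the model to be \emph{normal}, whereas the model $Z$ produced by Lemma \ref{lemma_max_divisors} is a closure of a graph and need not be normal; you should replace $Z$ by its normalization before invoking Lemma \ref{lemma_lub_intuit} (effectivity and disjointness of the supports of $E'$ and $E''$ persist after pulling back to the normalization, so the rest of the argument is unaffected). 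The paper notes this explicitly (``We may also assume that $Y$ is normal'').
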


\begin{proof}
(a).  That the group is a partially ordered
group is clear from the definition of the ordering.

To check that it is a lattice, by the group property it suffices to check that
for any b-Cartier b-Weil divisor $\mathbf D$ on $X$, the pair
$\mathbf D,\mathbf 0$
has a least upper bound.  To do this, let $(Y,D)$ be a representative
for $\mathbf D$.  By replacing $Y$ with the model constructed in
Lemma~\ref{lemma_max_divisors}, we may assume that $D=D'-D''$, where
$D'$ and $D''$ are effective Cartier divisors with disjoint supports.
We may also assume that $Y$ is normal.  Then $(Y,D')$ represents a
least upper bound for the pair $\mathbf D,\mathbf 0$.  Indeed,
this is true by Lemma \ref{lemma_lub_intuit}, because the divisors $D'-D=D''$
and $D'-0$ are effective divisors with disjoint supports.

(b).  Part (b) of Proposition \ref{prop_b_div_vs_b_Weil} implies that the
group isomorphism preserves the ordering, so $G$ is a lattice-ordered group.

(c).  Again, we may assume that $\lambda_2=0$.  Then $\mathbf D_2=\mathbf 0$.
As in the proof of (a), we may let $(Y,D)$ be a representative
for $\mathbf D_1$, and may assume that $Y$ is normal and that $D=D'-D''$,
where $D'$ and $D''$ are effective with disjoint supports.
Then $(Y,D')$ represents $\mathbf D_1\vee\mathbf 0$.

Let $\phi\colon Y\to X$ be the structural morphism of $Y$.
Then $\phi^{*}\lambda_1$ is a Weil function for $D$.
By Proposition \ref{weil_min}, $\max\{\phi^{*}\lambda_1,0\}$ is a
Weil function for $D'$, and therefore $\max\{\lambda_1,0\}$ is a
b-Weil function for the b-Cartier b-divisor represented by
the pair $(Y,D')$.  Thus, it is a b-Weil function
for $\mathbf D_1\vee\mathbf 0$.  This proves the first assertion.
The other assertion then follows from (b).
\end{proof}

We now can give some equivalent formulations of Definition \ref{def_nevprime}
using b-divisors and b-Weil functions.  We start with a definition that
focuses on the part of the definition of $\Nevbir$ that varies.

\begin{definition}\label{def_mu_b_growth}
Let $X$ be a normal complete variety, let $D$ be an effective Cartier divisor
on $X$, let $\mathscr L$ be a line sheaf on $X$, let $V$ be a linear subspace
of $H^0(X,\mathscr L)$ with $\dim V>1$, and let $\mu>0$ be a rational number.
We say that $D$ has \textbf{$\mu$-b-growth with respect to $V$ and $\mathscr L$}
if there is a model $\phi\colon Y\to X$ of $X$ such that for all $Q\in Y$
there is a basis $\mathcal B$ of $V$ such that
\begin{equation}\label{eq_mu_b_growth}
  \phi^{*}(\mathcal B) \ge \mu \phi^{*}D
\end{equation}
in a Zariski-open neighborhood $U$ of $Q$, relative to the cone of
effective $\mathbb Q$-divisors on $U$.  Also,
we say that $D$ has \textbf{$\mu$-b-growth with respect to $V$}
if it satisfies the above condition with $\mathscr L=\mathscr O(D)$.
\end{definition}

Then Definition \ref{def_nevprime} basically says that $\Nevbir(D)$ is the
infimum of $(\dim V)/\mu$ over all triples $(N,V,\mu)$ such that
$ND$ has $\mu$-b-growth with respect to $V$ and $\mathscr L^D$.
(The corresponding condition for Definition \ref{def_nev2} is called
$\mu$-growth; see \cite{ru}.  The proof of Lemma \ref{lemma_comp_nevs}
then amounts to saying that if $D$ has $\mu$-growth with respect to $V$,
then it also has $\mu$-b-growth with respect to $V$.)

The following proposition completes Proposition \ref{prop_nevprime_weil}
(and adds more equivalent conditions).

\begin{proposition}\label{prop_mu_b_growth_equivs}
Let $X$ be a normal complete variety, let $D$ be an effective Cartier divisor
on $X$, let $\mathscr L$ be a line sheaf on $X$, let $V$ be a linear subspace
of $H^0(X,\mathscr L)$ with $\dim V>1$, and let $\mu>0$ be a rational number.
Then the following are equivalent.
\begin{enumerate}
\item $D$ has $\mu$-b-growth with respect to $V$ and $\mathscr L$.
\item Let $n$ be a positive integer such that $n\mu\in\mathbb Z$.
Then there are bases $\mathcal B_1,\dots,\mathcal B_\ell$ of $V$ such that
\begin{equation}\label{eq_mu_b_growth_ii}
  n\bigvee_{i=1}^\ell (\mathcal B_i) \ge n\mu D
\end{equation}
relative to the cone of effective b-Cartier b-divisors.
\item There are bases $\mathcal B_1,\dots,\mathcal B_\ell$ of $V$;
Weil functions $\lambda_{\mathcal B_1},\dots,\lambda_{\mathcal B_\ell}$
for the divisors $(\mathcal B_1),\dots,(\mathcal B_\ell)$, respectively;
a Weil function $\lambda_D$ for $D$; and an $M_k$-constant $c$ such that
\begin{equation}
  \max_{1\le i\le\ell}\lambda_{\mathcal B_i} \ge \mu\lambda_D - c
\end{equation}
(as functions $X(M_k)\to\mathbb R\cup\{+\infty\}$).
\item For each place $v\in M_k$ there are finitely many bases
$\mathcal B_1,\dots,\mathcal B_\ell$ of $V$; local Weil functions
$\lambda_{\mathcal B_1,v},\dots,\lambda_{\mathcal B_\ell,v}$
for the divisors $(\mathcal B_1),\dots,(\mathcal B_\ell)$, respectively, at $v$;
a local Weil function $\lambda_{D,v}$ for $D$ at $v$; and a constant $c$
such that
\begin{equation}\label{eq_mu_b_growth_iv}
  \max_{1\le i\le\ell}\lambda_{\mathcal B_i,v} \ge \mu\lambda_{D,v} - c
\end{equation}
(as functions $X(\overline k_v)\to\mathbb R\cup\{+\infty\}$).
\item The condition of (iv) holds for at least one place $v$.
\end{enumerate}
\end{proposition}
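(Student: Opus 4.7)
The plan is to establish the cycle (i)$\Rightarrow$(ii)$\Rightarrow$(iii)$\Rightarrow$(iv)$\Rightarrow$(v)$\Rightarrow$(ii) together with the implication (ii)$\Rightarrow$(i); this closes all five equivalences. The unifying object is the b-Cartier b-divisor
\[\mathbf E := n\bigvee_{i=1}^{\ell}(\mathcal B_i) - n\mu D,\]
and the proposition will be reduced to the single statement that $\mathbf E$ is effective.

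For (i)$\Rightarrow$(ii), start with the model $\phi\colon Y\to X$ of (i). Quasi-compactness of $Y$ lets me extract a finite open cover $U_1,\dots,U_\ell$ and bases $\mathcal B_1,\dots,\mathcal B_\ell$ of $V$ such that the Cartier divisor $E_i:=n\phi^{*}(\mathcal B_i)-n\mu\phi^{*}D$ is effective on $U_i$. Pass to a normal model $\pi\colon Y'\to Y$ where $\mathbf E$ is represented by a Cartier divisor $E$. Since $\mathbf E$ is an upper bound for each $E_i$ as a b-divisor, $E\ge \pi^{*}E_i$ everywhere on $Y'$, while $\pi^{*}E_i\ge 0$ on $\pi^{-1}(U_i)$. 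Therefore $E\ge 0$ on each $\pi^{-1}(U_i)$, and since these sets cover $Y'$, the divisor $E$ is effective, so $\mathbf E$ is effective by Lemma \ref{lemm_eff_b_cartier}. Conversely, for (ii)$\Rightarrow$(i), I choose a normal model $Y$ on which $\mathbf E$ and every $E_i$ are simultaneously realized as Cartier divisors $E$, $E_1,\dots,E_\ell$. Lemma \ref{lemma_lub_intuit} gives that $E$ is effective, $E\ge E_i$, and $\bigcap_i \Supp(E-E_i)=\emptyset$; for each $Q\in Y$ I pick $i$ with $Q\notin \Supp(E-E_i)$, so $E=E_i$ in an open neighborhood of $Q$ and is effective there, which is exactly condition (i).

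For (ii)$\Rightarrow$(iii), fix Weil functions $\lambda_{\mathcal B_i}$ and $\lambda_D$. By Proposition \ref{prop_b_div_lattice}(c) the function $n\max_i\lambda_{\mathcal B_i}-n\mu\lambda_D$ is a b-Weil function for $\mathbf E$, and by Proposition \ref{prop_b_div_vs_b_Weil}(b), effectivity of $\mathbf E$ is equivalent to this b-Weil function being $M_k$-bounded below; dividing by $n$ yields (iii). The implications (iii)$\Rightarrow$(iv) and (iv)$\Rightarrow$(v) are immediate restrictions of a global inequality to a single place. For (v)$\Rightarrow$(ii), at the single place $v_0$ where (v) holds the local function $n(\max_i\lambda_{\mathcal B_i,v_0}-\mu\lambda_{D,v_0})$ is a local b-Weil function for $\mathbf E$ at $v_0$ that is bounded from below by $-nc$; pulling back to a normal model representing $\mathbf E$ and applying Proposition \ref{weil_eff}, which extracts effectivity of a Cartier divisor from boundedness below of a local Weil function at even one place, gives effectivity of $\mathbf E$.

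The main obstacle is the geometric implication (i)$\Rightarrow$(ii), where local existence of appropriate bases on $Y$ must be upgraded to a global effectivity statement for the join. The key point is the upper-bound property of the join: even though each $E_i$ is only guaranteed effective on $U_i$, the b-divisor $\mathbf E$ dominates every $E_i$ globally on any common model, so the representative $E$ inherits effectivity from $\pi^{*}E_i$ open-by-open. Once this is secured, the remaining equivalences are essentially a dictionary between b-divisors, joins, and (b-)Weil functions, with the delicate one-place-suffices step (v)$\Rightarrow$(ii) handled cleanly by the local effectivity criterion contained in Proposition \ref{weil_eff}.
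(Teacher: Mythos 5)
Your proposal is correct in substance and all five equivalences are established. The main organizational difference from the paper is the treatment of the implication from (i): the paper proves (i)$\implies$(iii) by working directly with Weil functions (via Proposition~\ref{prop_nevprime_weil}, whose proof rests on Lemma~\ref{weil_max_fancy} on covering a projective variety by $M_k$-bounded sets), whereas you prove (i)$\implies$(ii) directly with b-divisors, using the upper-bound property of the join and Lemma~\ref{lemm_eff_b_cartier}. Your route mirrors the paper's own proof of the analogous statement (i)$\implies$(ii) in Proposition~\ref{prop_mu_ef_growth_equivs}, so the idea is certainly present in the paper — just deployed in the EF case rather than here. Your b-divisor route has the modest advantage of keeping the geometry in one place and only translating to Weil functions once, whereas the paper's route distributes the work over Propositions~\ref{prop_nevprime_weil}, \ref{prop_b_div_lattice}, and \ref{weil_eff}. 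Two small expositional glitches: first, in (i)$\implies$(ii) the Cartier divisor $E_i=n\phi^{*}(\mathcal B_i)-n\mu\phi^{*}D$ need not be effective on $U_i$ for your chosen $n$ (condition (i) only gives effectivity for \emph{some} positive multiple); this is harmless because you pull back to the normal model $Y'$ where the torsion-freeness of Weil divisors forces $\pi^{*}E_i\ge0$ on $\pi^{-1}(U_i)$, but you should say this (the paper introduces a second integer $n'$ at the analogous step of Proposition~\ref{prop_mu_ef_growth_equivs}). Second, in (ii)$\implies$(i) you attribute the effectivity of $E$ to Lemma~\ref{lemma_lub_intuit}; that lemma gives $E\ge E_i$ and $\bigcap_i\Supp(E-E_i)=\emptyset$, while effectivity of $E$ is precisely hypothesis (ii). The argument is still right once this is untangled — it is essentially the proof of Lemma~\ref{lemm_ii_impl_i}.
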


\begin{proof}
Conditions (ii) and (iii) are equivalent by
Proposition \ref{prop_b_div_lattice}.  Conditions (iii)--(v) are equivalent
by Proposition \ref{weil_eff}.
The implication (i)$\implies$(iii) is Proposition \ref{prop_nevprime_weil}.
Finally, (ii)$\implies$(i) follows from Lemma \ref{lemm_ii_impl_i} (below),
with $D_i=(\mathcal B_i)$ for all $i$.
\end{proof}

\begin{lemma}\label{lemm_ii_impl_i}
Let $X$ be a normal complete variety, let $\mu>0$ be a rational number,
and let $n$ be a positive integer such that $n\mu\in\mathbb Z$.
Let $D$ and $D_1,\dots,D_\ell$ be Cartier divisors on $X$.  Assume that
\begin{equation}\label{lemm_eq_ii}
  n\bigvee_{i=1}^\ell D_i \ge n\mu D
\end{equation}
relative to the cone of effective b-Cartier b-divisors.  Then there is a
model $\phi\colon Y\to X$ of $X$ such that for all $Q\in Y$ there is an
index $i$ such that $\phi^{*}D_i\ge\mu\phi^{*}D$ in a Zariski-open neighborhood
$U$ of $Q$, relative to the cone of effective $\mathbb Q$-divisors on $U$.
\end{lemma}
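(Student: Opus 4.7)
The plan is to realize the hypothesis (\ref{lemm_eq_ii}) on a single normal model $\phi\colon Y\to X$, on which the join is represented by an honest Cartier divisor and the inequality becomes an inequality of Cartier divisors, and then use Lemma \ref{lemma_lub_intuit} to locate, near each point of $Y$, an index $i$ for which $\phi^*D_i$ agrees with that join.

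First I would pass to a good model. Regarding each $D_i$ as a b-Cartier b-divisor (represented by $(X,D_i)$), the join $\mathbf E:=\bigvee_{i=1}^\ell D_i$ is a b-Cartier b-divisor by Proposition \ref{prop_b_div_lattice}(a). Choose a representative $(Y_0,E_0)$ of $\mathbf E$, and let $\phi\colon Y\to X$ be the normalization of a model dominating both $Y_0$ and $X$. Let $E$ on $Y$ be the pullback of $E_0$ and let each $\phi^*D_i$ denote the Cartier pullback from $X$. Then $Y$ is normal, $E$ represents $\mathbf E$, and each $\phi^*D_i$ represents the b-Cartier b-divisor $D_i$.

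Next I would extract two consequences on $Y$. By Lemma \ref{lemma_lub_intuit} applied to $\mathbf E$ and $D_1,\dots,D_\ell$ on $Y$, the differences $E-\phi^*D_i$ are effective Cartier divisors and
\[
  \bigcap_{i=1}^\ell \Supp(E-\phi^*D_i) \;=\; \emptyset.
\]
Separately, the hypothesis (\ref{lemm_eq_ii}) says that the b-Cartier b-divisor $n\mathbf E - n\mu D$ is effective; since it is represented on the normal model $Y$ by the Cartier divisor $nE-n\mu\phi^*D$, Lemma \ref{lemm_eff_b_cartier} forces $nE-n\mu\phi^*D$ to be effective as a Cartier divisor on $Y$. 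Equivalently, $E\ge\mu\phi^*D$ relative to the cone of effective $\mathbb Q$-divisors on $Y$.

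Finally, I would conclude locally. Given $Q\in Y$, the emptiness of the intersection above yields an index $i$ with $Q\notin\Supp(E-\phi^*D_i)$. Setting $U=Y\setminus\Supp(E-\phi^*D_i)$, a Zariski-open neighborhood of $Q$, we have $\phi^*D_i\big|_U = E\big|_U$ as Cartier divisors, and hence
\[
  \phi^*D_i - \mu\phi^*D \;=\; E - \mu\phi^*D \qquad \text{on }U,
\]
which is effective as a $\mathbb Q$-divisor on $U$ by the previous paragraph. This is exactly the required condition, so $\phi\colon Y\to X$ is the desired model. The only step requiring care is the passage from the b-divisorial inequality (\ref{lemm_eq_ii}) to an inequality of actual Cartier divisors on $Y$; this rests on the normality of $Y$ together with Lemmas \ref{lemm_eff_b_cartier} and \ref{lemma_lub_intuit}, and is the substantive content of the argument.
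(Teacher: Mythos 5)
Your proof is correct and follows essentially the same route as the paper's: pass to a normal model $Y$ on which the join $\mathbf E$ is represented by a Cartier divisor $E$, invoke Lemma \ref{lemma_lub_intuit} to get effectivity of $E-\phi^*D_i$ and disjointness of their supports, and conclude locally. You are slightly more explicit than the paper in invoking Lemma \ref{lemm_eff_b_cartier} to pass from the b-divisorial inequality (\ref{lemm_eq_ii}) to effectivity of the honest Cartier divisor $nE-n\mu\phi^*D$ on $Y$, a step the paper leaves implicit; that is a small improvement in clarity rather than a different argument.
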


\begin{proof}
Assume that (\ref{lemm_eq_ii}) is true.
Let $\mathbf E=\bigvee D_i$, and let $\phi\colon Y\to X$ be
a normal model of $X$ such that $\mathbf E$ is represented by a
Cartier divisor $E$ on $Y$.  By Lemma \ref{lemma_lub_intuit},
$E-\phi^{*}D_i$ is effective for all $i$, and
$\bigcap\Supp(E-\phi^{*}D_i)=\emptyset$.
Therefore, for any given $Q\in Y$ there is an index $i$ such that
$Q\notin\Supp(E-\phi^{*}D_i)$.  Fix such an $i$, and
let $U_i=Y\setminus\Supp(E-\phi^{*}D_i)$.  Then $Q\in U_i$.
Moreover, by (\ref{lemm_eq_ii}),
$$n\phi^{*}D_i\big|_{U_i} = nE\big|_{U_i} \ge n\mu\phi^{*}D\big|_{U_i}$$
relative to the cone of effective divisors on $U_i$.  Therefore
$\phi^{*}D_i\ge\mu\phi^{*}D$ on $U_i$ relative to the cone
of effective $\mathbb Q$-divisors on $U_i$, as was to be shown.
\end{proof}

\begin{remark}
In light of Proposition \ref{prop_b_nev}, one may regard (\ref{eq3})
or (\ref{eq_mu_b_growth_ii}) as being conditions that are
\emph{local on the Zariski--Riemann space}, as opposed to (\ref{eq2}),
which is local in the Zariski topology.  (The Zariski--Riemann space of
a complete variety $X$ is the inverse limit of all models of $X$
\cite[Ch.~VI, \S\,17]{zar_sam_II}.)
\end{remark}

Proposition \ref{prop_mu_b_growth_equivs} leads to the following Corollary which is the
Definition \ref{bidef1} of   $\Nevbir(\mathscr L,D)$:

\begin{corollary}\label{cor_nevprime_weil}
Let $X$ be a normal complete variety, let $D$ be an effective Cartier divisor
on $X$, and let $\mathscr L$ be a line sheaf on $X$.  Then
$$\Nevbir(\mathscr L,D) = \inf_{N,V,\mu} \frac{\dim V}{\mu}\;,$$
where the infimum passes over all triples $(N,V,\mu)$ such that
$N\in\mathbb Z_{>0}$, $V$ is a linear subspace of
$H^0(X,\mathscr L^N)$
with $\dim V>1$, and $\mu\in\mathbb Q_{>0}$, with the following property.
There are finitely many bases $\mathcal B_1,\dots,\mathcal B_\ell$
of $V$; Weil functions $\lambda_{\mathcal B_1},\dots,\lambda_{\mathcal B_\ell}$
for the divisors $(\mathcal B_1),\dots,(\mathcal B_\ell)$, respectively;
a Weil function $\lambda_D$ for $D$; and an $M_k$-constant $c$ such that
\begin{equation}
  \max_{1\le i\le\ell}\lambda_{\mathcal B_i} \ge \mu N\lambda_D - c
\end{equation}
(as functions $X(M_k)\to\mathbb R\cup\{+\infty\}$).  (Here we use the same
convention as in Definition \ref{def_nevprime} when there are no triples
$(N,V,\mu)$ that satisfy the condition.)
\end{corollary}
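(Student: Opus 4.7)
The plan is to derive this corollary as a direct consequence of Proposition \ref{prop_mu_b_growth_equivs}. Both Definition \ref{bidef2} and the statement of the corollary express $\Nevbir(\mathscr L,D)$ as an infimum of $(\dim V)/\mu$ over triples $(N,V,\mu)$ with $N\in\mathbb Z_{>0}$, $V\subseteq H^0(X,\mathscr L^N)$ with $\dim V>1$, and $\mu\in\mathbb Q_{>0}$. So it suffices to show that, for each such triple, the qualifying condition in Definition \ref{bidef2} (existence of a proper birational $\phi\colon Y\to X$ with $\phi^{*}(\mathcal B)\ge\mu N\phi^{*}D$ locally around every $Q\in Y$ for some basis $\mathcal B$) matches the qualifying condition in the corollary (existence of finitely many bases $\mathcal B_1,\dots,\mathcal B_\ell$ and Weil functions with $\max_i \lambda_{\mathcal B_i}\ge\mu N\lambda_D - c$).

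Fix a triple $(N,V,\mu)$ and apply Proposition \ref{prop_mu_b_growth_equivs} with the line sheaf taken to be $\mathscr L^N$ and the rational number taken to be $\mu N$ (note that the resulting rational number $\mu N$ need not be an integer, but the proposition only requires positivity). Condition (i) of that proposition in this substitution is precisely the $\phi^{*}(\mathcal B)\ge\mu N\phi^{*}D$ condition of Definition \ref{bidef2}, while condition (iii) is precisely the Weil-function inequality appearing in the corollary. Thus the two conditions on $(N,V,\mu)$ are equivalent.

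Since the set of qualifying triples is the same under both formulations, the infima agree. The boundary case is also consistent: if no triples qualify under one description then none qualify under the other, and both definitions assign $\Nevbir(\mathscr L,D)=+\infty$ by the stated convention. This completes the argument.

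There is no real obstacle at this stage — all the work has been absorbed into Proposition \ref{prop_mu_b_growth_equivs}, whose proof combines Proposition \ref{prop_nevprime_weil} for the implication (i)$\Rightarrow$(iii) with Lemma \ref{lemm_ii_impl_i} and the lattice-theoretic description of $\bigvee_i (\mathcal B_i)$ from Proposition \ref{prop_b_div_lattice} for the reverse implication. In other words, the nontrivial content of the corollary is the passage between effectivity of a Cartier divisor after a birational lift and a lower bound on a max of Weil functions by an $M_k$-constant, and this is exactly the content that was handled using b-Cartier b-divisors and b-Weil functions in Section \ref{models}.
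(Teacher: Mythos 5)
Your argument is correct and is essentially the same as the paper's: the corollary is simply the specialization of Proposition~\ref{prop_mu_b_growth_equivs} (the equivalence of (i) and (iii)) with line sheaf $\mathscr L^N$ and rational parameter $\mu N$, combined with Definition~\ref{bidef2}. The paper states this as a one-line consequence of that proposition, and your unwinding of which conditions line up is what it has in mind.
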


Similar corollaries are true for conditions (ii), (iv), and (v) of
Proposition \ref{prop_mu_b_growth_equivs}.

The following proposition can be used in the proof of the General Theorem
in Section \ref{sect_aut}.

\begin{proposition}\label{prop_aut_weil_ineq}
Let $X$ be a variety over a number field or over $\mathbf C$,
let $\mathscr L$ be a line sheaf on $X$, and let $E_1,\dots,E_m$ be
effective Cartier divisors on $X$.  Let $s$ be a nonzero global section
of $\mathscr L$ lying in the (coherent) subsheaf of $\mathscr L$ generated by
$\mathscr L(-E_j)$ ($j=1,\dots,m$).  Let $\lambda_s$ be a Weil function
for the divisor $(s)$, and for each $j$ let $\lambda_{E_j}$ be a Weil function
for $E_j$.  Then there is an $M_k$-constant $c$ such that
\begin{equation}\label{aut_weil_ineq}
  \lambda_s \ge \min_j \lambda_{E_j} + c\;.
\end{equation}
\end{proposition}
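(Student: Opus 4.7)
The plan is to reduce the statement to a local computation on a finite affine cover of $X$. First I would unpack the hypothesis: the subsheaf of $\mathscr{L}$ generated by the images of the canonical inclusions $\mathscr{L}(-E_j)\hookrightarrow\mathscr{L}$ (multiplication by $1_{E_j}$) is, on any affine open $U$ where $\mathscr{L}$ is trivialized by a section $e$ and each $E_j$ is cut out by a regular function $\phi_j$, just the submodule $(\phi_1,\dots,\phi_m)\cdot e$. Therefore the assumption on $s$ means that $X$ can be covered by finitely many such affine opens $U_\alpha$ on which
$$s\big|_{U_\alpha} = \Big(\sum_{j=1}^m a_{\alpha,j}\,\phi_{\alpha,j}\Big)\,e_\alpha$$
for suitable $a_{\alpha,j}\in\mathscr{O}(U_\alpha)$; the finiteness comes from quasi-compactness of $X$.

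Next I would translate to Weil functions. On each $U_\alpha$ the regular function $f_\alpha:=\sum_j a_{\alpha,j}\phi_{\alpha,j}$ is a local defining equation for the divisor $(s)$ (read off through the trivialization $e_\alpha$), so by the definition of Weil function there is a locally $M_k$-bounded function $\gamma_\alpha$ with $\lambda_s=-\log\|f_\alpha\|_v+\gamma_\alpha$ on $U_\alpha(M_k)$; similarly $\lambda_{E_j}=-\log\|\phi_{\alpha,j}\|_v+\gamma_{\alpha,j}$. The central estimate is the (ultra)triangle inequality
$$\|f_\alpha(x)\|_v \le C_v\cdot\max_j\bigl(\|a_{\alpha,j}(x)\|_v\,\|\phi_{\alpha,j}(x)\|_v\bigr),$$
with $C_v=1$ for non-archimedean $v$ and $C_v=m^{[k_v:\mathbb{R}]}$ for archimedean $v$ (and $C_v=m$ in the complex case). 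Taking $-\log$ and rearranging yields, pointwise on $U_\alpha(\overline k_v)$ away from the supports,
$$-\log\|f_\alpha(x)\|_v \;\ge\; \min_j\bigl(-\log\|\phi_{\alpha,j}(x)\|_v\bigr) - \max_j\log\|a_{\alpha,j}(x)\|_v - \log C_v.$$

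Finally I would cover and conclude. By \cite[Ch.~10, Prop.~1.2]{lang}, $X(M_k)$ admits a cover by finitely many affine $M_k$-bounded sets $F_\alpha\subseteq U_\alpha(M_k)$. On each $F_\alpha$ the regular functions $a_{\alpha,j}$ are $M_k$-bounded (by \cite[Ch.~10, Prop.~1.3]{lang}), so $\log\|a_{\alpha,j}\|_v$ is bounded above by an $M_k$-constant; the locally $M_k$-bounded corrections $\gamma_\alpha$ and the $\gamma_{\alpha,j}$ also contribute an $M_k$-constant on $F_\alpha$; and $(\log C_v)_v$ is itself an $M_k$-constant. Assembling these contributions across the finite cover and taking the minimum gives an $M_k$-constant $c$ with $\lambda_s\ge\min_j\lambda_{E_j}+c$ on the complement of the supports, and by the usual density of such complements (\cite[Ch.~10, Lemma 1.4]{lang}) the inequality extends to all of $X(M_k)$, with the convention $+\infty\ge\,\cdot\,$ on points of $\Supp(s)$. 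There is no deep obstacle here; the main task is careful bookkeeping of the locally $M_k$-bounded corrections, and the essential content is just the triangle inequality applied to the explicit local expression for $s$.
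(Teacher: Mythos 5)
Your argument is correct, but it takes a genuinely different route from the paper's. The paper's proof is a short consequence of the b-divisor machinery already developed in Section~\ref{models}: it invokes Proposition~\ref{prop_b_div_lattice} to reduce the inequality~(\ref{aut_weil_ineq}) to the assertion that the b-Cartier b-divisor $(s)-\bigwedge_j E_j$ is effective, then passes to a model $Y\to X$ on which the meet $\bigwedge_j E_j$ is represented by an honest Cartier divisor $E$, and observes that since each $\phi^{*}E_j-E$ is effective, $\phi^{*}\mathscr L(-E)$ contains every $\phi^{*}(\mathscr L(-E_j))$, so $\phi^{*}s$ is a global section of $\phi^{*}\mathscr L(-E)$ and hence $\phi^{*}(s)-E$ is effective. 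Your proof instead stays on $X$ and goes hands-on: write $s=\sum_j a_{\alpha,j}\phi_{\alpha,j}e_\alpha$ on a finite affine trivializing cover, apply the (ultra)metric triangle inequality $\|f_\alpha\|_v\le C_v\max_j(\|a_{\alpha,j}\|_v\|\phi_{\alpha,j}\|_v)$, and absorb the regular functions $a_{\alpha,j}$, the locally $M_k$-bounded corrections, and $\log C_v$ into an $M_k$-constant using Lang's $M_k$-bounded covers. This buys elementarity and self-containment (no blowing up, no b-divisors, no appeal to the lattice structure), at the cost of more careful bookkeeping; the paper's version buys brevity because all the real work was already done in proving Proposition~\ref{prop_b_div_lattice}. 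Both are valid, and your local-expression-plus-triangle-inequality argument is essentially the proof that Proposition~\ref{prop_b_div_lattice}(b) packages away. One small remark: since the pointwise inequality you derive holds on \emph{all} of each $F_\alpha$ (the triangle inequality is valid even where some $\phi_{\alpha,j}$ or $f_\alpha$ vanish, with the usual $-\log 0=+\infty$ convention), the appeal to density at the end is dispensable; the finite cover already gives the inequality everywhere.
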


\begin{proof}
By Proposition \ref{prop_b_div_lattice}, (\ref{aut_weil_ineq}) is equivalent
to the assertion that the divisor
$$(s) - \bigwedge_{j=1}^m E_j$$
is effective.

Let $\mathbf E=\bigwedge E_j$, and let $\phi\colon Y\to X$ be a model of $X$
on which $\mathbf E$ is represented by a Cartier divisor $E$.
Since $\phi^{*}E_j-E$ is effective for all $j$,
the sheaf $\phi^{*}\mathscr L(-E)$ contains the sheaves
$\phi^{*}(\mathscr L(-E_j))$ for all $j$, and therefore $\phi^{*}s$ is
a global section of $\phi^{*}\mathscr L(-E)$.

This implies that the divisor $\phi^{*}(s)-E$ is effective, as was to be shown.
\end{proof}

\section{The Proof of Theorems \ref{b_thmd} and \ref{b_thmc}
  for \texorpdfstring{$\Nevbir(\mathscr L, D)$}{Nevbir(L,D)}}\label{proofs}
In this section, we prove Theorems \ref{b_thmd} and \ref{b_thmc}, which are the
variations of Theorems A and B with
$\Nev(D)$ replaced by $\Nevbir(\mathscr L, D)$.
We will only prove  Theorem \ref{b_thmc}  (the number field case), since the proof of  Theorem \ref{b_thmd}  is very similar. However, for this theorem we will give both a complete proof, based on Proposition 5.6 and Theorem B with $\Nev(\mathscr L, D)$ (note that Theorem B still holds,
with the same proof, if $\Nev(D)$ is replaced by $\Nev(\mathscr L, D)$),
and a sketch of how to prove the theorem directly, based on the proof of
Theorem B in \cite{ru}.

\begin{proof}[Proof of Theorem \ref{b_thmc}]
Let $k$, $S$, $X$, and $D$ be as in the statement of the theorem,
and let $\epsilon>0$ be given.  By Proposition \ref{prop_b_nev}b,
there is a model $\phi\colon Y\to X$ of $X$ such that
\begin{equation}\label{b_thmc_eq2}
  \Nev(\phi^*\mathscr L, \phi^{*}D) < \Nevbir(\mathscr L, D) + \epsilon\;.
\end{equation}
Let $Z_0\subseteq Y$ be the ramification locus of $\phi$.

By Theorem B with $\Nev(\mathscr L, D)$, there is a proper Zariski-closed subset $Z_1$ of $Y$
such that the inequality
\begin{equation}\label{b_thmc_eq3}
  m_S(y, \phi^{*}D) \le \left(\Nev(\phi^*\mathscr L, \phi^{*}D)+\epsilon\right) h_{\phi^{*}\mathscr L}(y)
\end{equation}
holds for all $y\in Y(k)$ outside of $Z_1$.

By functoriality of proximity functions, (\ref{b_thmc_eq3}),
(\ref{b_thmc_eq2}), and functoriality of heights, we then have
\[
  \begin{split}
    m_S(x,D) &= m_S(\phi^{-1}(x),\phi^{*}D) + O(1) \\
      &\le \left(\Nev(\phi^*\mathscr L, \phi^{*}D)+\epsilon\right) h_{\phi^{*}\mathscr L}(\phi^{-1}(x))
        + O(1) \\
      &\le \left(\Nevbir(\mathscr L, D)+2\epsilon\right)
	h_{\phi^{*}\mathscr L}(\phi^{-1}(x)) + O(1) \\
      &= \left(\Nevbir(\mathscr L, D)+2\epsilon\right) h_{\mathscr L}(x) + O(1)
  \end{split}
\]
for all $x\in X(k)$ outside of $Z:=\phi(Z_0\cup Z_1)$.  (Note that this set is
closed since $\phi$ is proper, and that $\phi$ induces an isomorphism over
$X\setminus Z$ since $\phi$ is unramified over that set.)
\end{proof}

We now indicate how Theorem \ref{b_thmc} can be proved using the methods of
\cite[Sect.~2]{ru}.

First, we note that it will suffice to prove the following proposition.
This is a variant of \cite[Prop.~2.1]{ru}.

\begin{proposition}\label{b_thmc_prop21}
Let $k$ and $S$ be as in the statement of Theorem \ref{b_thmc}.  Let $X$ be a
normal complete variety over $k$. Let $D$ be an effective Cartier divisor
and $\mathscr L$ be a line sheaf on $X$ with $h^0(\mathscr L^N)\ge 1$ for $N$ big enough.
Let $(N,V,\mu)$ be a triple such that $ND$ has $\mu$-b-growth
with respect to $V$ and $\mathscr L^N$.

Then, for each $\epsilon>0$, there is a proper Zariski-closed subset $Z$ of $X$
such that the inequality
$$m_S(x,D) \le \left(\frac{\dim V}{\mu} + \epsilon\right) h_{\mathscr L}(x)$$
holds for all $x\in X(k)$ outside of $Z$.
\end{proposition}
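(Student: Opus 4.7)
The plan is to reduce Proposition \ref{b_thmc_prop21} to Schmidt's subspace theorem in the form of Theorem \ref{schmidt_base}, using the Weil-function reformulation of $\mu$-b-growth provided by Proposition \ref{prop_mu_b_growth_equivs}.

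First I would translate the $\mu$-b-growth hypothesis into an analytic inequality. By the equivalence (i)$\Leftrightarrow$(iii) of Proposition \ref{prop_mu_b_growth_equivs} (applied to the divisor $ND$ and the line sheaf $\mathscr L^N$, or directly to $D$ with coefficient $\mu N$), there exist finitely many bases $\mathcal B_1,\dots,\mathcal B_\ell$ of $V$, Weil functions $\lambda_{(\mathcal B_i)}=\sum_{s\in\mathcal B_i}\lambda_s$ for the divisors $(\mathcal B_i)$, a Weil function $\lambda_D$ for $D$, and an $M_k$-constant $c=(c_v)$ such that
\begin{equation*}
  \max_{1\le i\le\ell}\lambda_{(\mathcal B_i)} \;\ge\; \mu N\,\lambda_D - c
\end{equation*}
on $X(M_k)$. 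Let $s_1,\dots,s_q$ enumerate the finite set $\bigcup_i \mathcal B_i\subseteq V$.

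Next I would convert this pointwise bound on $\lambda_D$ into a bound on $m_S(x,D)$. For each $x\in X(k)$ and $v\in S$, choosing an index $i=i(x,v)$ that attains the maximum gives
\begin{equation*}
  \mu N\,\lambda_{D,v}(x) \;\le\; \sum_{s\in\mathcal B_{i(x,v)}}\lambda_{s,v}(x) + c_v
  \;\le\; \max_{J}\sum_{j\in J}\lambda_{s_j,v}(x) + c_v,
\end{equation*}
where $J$ ranges over subsets of $\{1,\dots,q\}$ such that $(s_j)_{j\in J}$ is a linearly independent subset of $V$; indeed each $\mathcal B_i$ is itself such a subset (it is a basis of $V$). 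Summing over $v\in S$ yields
\begin{equation*}
  \mu N\,m_S(x,D) \;\le\; \sum_{v\in S}\max_{J}\sum_{j\in J}\lambda_{s_j,v}(x) + O(1).
\end{equation*}

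Now I would apply Theorem \ref{schmidt_base} to the linear subspace $V\subseteq H^0(X,\mathscr L^N)$ and the sections $s_1,\dots,s_q$, with a small parameter $\epsilon'>0$ to be fixed at the end. This produces a proper Zariski-closed subset $Z\subseteq X$ such that
\begin{equation*}
  \sum_{v\in S}\max_{J}\sum_{j\in J}\lambda_{s_j,v}(x)
    \;\le\; (\dim V+\epsilon')\,h_{\mathscr L^N}(x) + O(1)
    \;=\; (\dim V+\epsilon')\,N\,h_{\mathscr L}(x) + O(1)
\end{equation*}
for all $x\in X(k)\setminus Z$. Combining the last two displays and dividing by $\mu N$ gives
\begin{equation*}
  m_S(x,D) \;\le\; \frac{\dim V+\epsilon'}{\mu}\,h_{\mathscr L}(x) + O(1),
\end{equation*}
and choosing $\epsilon'=\mu\epsilon/2$ together with the standard trick of absorbing the $O(1)$ into the $\epsilon$ term (using $h_{\mathscr L}(x)\to\infty$ off a finite set, or enlarging $Z$ by finitely many points) delivers the claimed bound.

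The main point that requires care is the compatibility between the maxima on the two sides: the $\max_i$ coming from b-growth ranges only over the distinguished bases $\mathcal B_1,\dots,\mathcal B_\ell$, whereas the $\max_J$ in Schmidt's theorem ranges over all linearly independent subsets of $\{s_1,\dots,s_q\}$. This direction of the inequality is automatic (enlarging the index set can only increase the maximum), and Proposition \ref{prop_mu_b_growth_equivs} guarantees that the finite collection of sections supplied by the b-growth condition is all that Schmidt needs as input. No further use of the model $\phi\colon Y\to X$ is required in this argument, since the passage to $Y$ has already been absorbed into the Weil-function formulation.
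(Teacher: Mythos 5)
Your proof takes essentially the same route as the paper's: invoke the Weil-function characterization of $\mu$-b-growth (the paper cites Corollary \ref{cor_nevprime_weil}, which is just the relevant specialization of Proposition \ref{prop_mu_b_growth_equivs}), enumerate $\bigcup_i\mathcal B_i$ as $\{s_1,\dots,s_q\}$, observe that each basis $\mathcal B_i$ is a linearly independent subset and hence one of the admissible index sets $J$ in Theorem \ref{schmidt_base}, and combine the resulting inequalities. The structure and key steps match.

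One small correction: your stated justification for absorbing the additive $O(1)$ constant is not valid under the hypotheses. You say to use ``$h_{\mathscr L}(x)\to\infty$ off a finite set, or enlarging $Z$ by finitely many points,'' but neither is available here: the hypothesis is only $h^0(\mathscr L^N)\ge1$, not ampleness of $\mathscr L$, so neither the growth of $h_{\mathscr L}$ nor the finiteness of low-height points can be assumed. The correct mechanism is already built into Theorem \ref{schmidt_base}: it is stated for an \emph{arbitrary} real constant $c$, with the exceptional set $Z$ depending on $c$. So you should apply Theorem \ref{schmidt_base} with $\epsilon''=\mu\epsilon$ and with $c$ chosen equal to the negative of the accumulated constants from the b-growth inequality and Weil-function comparisons; then the conclusion holds literally outside a single proper Zariski-closed set, with no leftover $O(1)$. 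This is a one-line fix, but it matters because the argument as written quietly assumes positivity that isn't there.
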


The fact that this proposition implies Theorem \ref{b_thmc} follows using
almost exactly the same argument as appears in the end of Sect.~2 of \cite{ru}
(involving pulling the proximity and height functions back to the
normalization of $X$).  It will not be repeated here.

\begin{proof}[Proof of Proposition \ref{b_thmc_prop21}]
Assume that $ND$ has $\mu$-b-growth with respect to
$V\subseteq H^0(X,\mathscr L^N)$ and $\mathscr L$.

By Corollary \ref{cor_nevprime_weil},
there are bases $\mathcal B_1,\dots,\mathcal B_\ell$ of $V$;
Weil functions $\lambda_{\mathcal B_1},\dots,\lambda_{\mathcal B_\ell}$
for the divisors $(\mathcal B_1),\dots,(\mathcal B_\ell)$, respectively;
a Weil function $\lambda_D$ for $D$; and an $M_k$-constant $c$ such that
\begin{equation}\label{b_thmc_pr21_eq1}
  \max_{1\le i\le\ell}\lambda_{\mathcal B_i} \ge \mu N\lambda_D - c
\end{equation}
(as functions $X(M_k)\to\mathbb R\cup\{+\infty\}$).

Write
$$\bigcup_{i=1}^\ell \mathcal B_i = \{s_1,\dots,s_q\}\;.$$
and for each $j=1,\dots,q$ choose a Weil function $\lambda_{s_j}$
for the divisor $(s_j)$.

For each $i=1,\dots,\ell$, let $J_i\subseteq\{1,\dots,q\}$ be the subset
such that $\mathcal B_i = \{s_j:j\in J_i\}$.  Then, by (\ref{b_thmc_pr21_eq1}),
for each $v\in S$ there are constants $c_v$ and $c_v'$ such that
\begin{equation}\label{b_thmc_pr21_eq3}
  \begin{split}
  \mu N\lambda_{D,v}
    &\le \max_{1\le i\le\ell} \lambda_{\mathcal B_i,v} + c_v' \\
    &\le \max_{1\le i\le\ell} \sum_{j\in J_i} \lambda_{s_j,v} + c_v\;.
  \end{split}
\end{equation}

By Schmidt's Subspace Theorem in the form of Theorem \ref{schmidt_base},
there is a proper Zariski-closed subset $Z$ of $X$ such that the inequality
\begin{equation}\label{b_thmc_pr21_eq4}
  \sum_{v\in S} \max_J \sum_{j\in J} \lambda_{s_j,v}(x)
    \le (\dim V+\epsilon)h_{\mathscr L^N}(x)
\end{equation}
holds for all $x\in X(k)$ outside of $Z$;
here the maximum is taken over all subsets $J$ of $\{1,\dots,q\}$
for which the sections $s_j$, $j\in J$, are linearly independent.

Combining (\ref{b_thmc_pr21_eq3}) and (\ref{b_thmc_pr21_eq4}) gives
\[
  \begin{split}
    \mu Nm_S(D,x) &= \mu N\sum_{v\in S} \lambda_{D,v}(x) + O(1) \\
      &\le \sum_{v\in S} \max_{1\le i\le\ell} \sum_{j\in J_i} \lambda_{s_j,v}(x)
	+ O(1) \\
      &\le \sum_{v\in S} \max_J \sum_{j\in J} \lambda_{s_j,v}(x) + O(1) \\
      &\le (\dim V+\epsilon)h_{\mathscr L^N}(x) + O(1)\\
      &=N(\dim V+\epsilon)h_{\mathscr L}(x) + O(1)
  \end{split}
\]
for all $x\in X(k)$ outside of $Z$.  Here we used the fact that all of
the $J_i$ occur among the $J$ in (\ref{b_thmc_pr21_eq4}).
\end{proof}

\section{An Example of Faltings: Geometry}\label{falt_geom}

In his 1999 contribution to the \emph{Baker's Garden} volume, Faltings
\cite{faltings} gave a class of
examples of irreducible divisors $D$ on $\mathbb P^2$ for which
$\mathbb P^2\setminus D$ has only finitely many integral points over any
number ring, and over any localization of such a ring away from finitely many
places.

This paper is notable for two reasons.  First, the divisor $D$ is irreducible.
Prior to the paper, the only divisors $D$ on $\mathbb P^2$ for which such
statements were know were divisors with at least four irreducible components.
The second reason is that the paper gives examples of varieties for which
finiteness of integral points is known, yet which cannot be embedded into
semiabelian varieties.  Prior to the paper, the only varieties for which such
finiteness statements were known, and which could not be embedded into
semiabelian varieties, were moduli spaces of abelian varieties.

These examples were further explored by Zannier \cite{zannier} using methods
of Zannier and Corvaja, although Zannier used a different family of examples.
This family has substantial overlap with the examples of Faltings but
does not contain all of his examples.
After that, Levin \cite[\S\,13]{levin_annals}
derived a generalization, using his method of \emph{large divisors,}
that encompasses the examples of both Faltings and Zannier.

Looking at Faltings' examples from the point of view of the Nevanlinna constant
was what led to the formulation of $\Nevbir$.  This came about because
the left-hand sides of (\ref{eq_2.5.1}) and (\ref{eq_2.5.2}) involved
maxima of Weil functions.


We will split the main result of Faltings' paper into two theorems.
The first of the two guarantees that examples with certain properties exist,
and the second says that in each such example, the divisor $D$ on $\mathbb P^2$
has the property that $\mathbb P^2\setminus D$ has only finitely many
integral points over any ring $\mathscr O_{k,S}$, where $\mathscr O_{k,S}$
is the localization $\mathscr O_{k,S}$ of the ring of integers of a
sufficiently large number field $k$ away from a finite set of places $S$.
We prove only the second theorem here, since that is the part that
involves Nevanlinna constants.

This section will cover the geometry of the examples, and the part of the
proof of Theorem \ref{falt_dioph} that is specific to the geometry,
and the next section will finish the proof of the theorem, using methods
applicable in more general settings.

\begin{theorem}[Faltings]\label{falt_constr}
Let $k$ be a field of characteristic zero, and let $X$ be a smooth
geometrically irreducible algebraic surface over $k$.
Then, for all sufficiently positive line sheaves $\mathscr L$ on $X$,
there exists a morphism $f\colon X\to\mathbb P^2$ that satisfies
the following conditions.
\begin{enumerate}
\item  $f^{*}\mathscr O(1)\cong\mathscr L$.
\item  The ramification locus $Z$ of $f$ is smooth and irreducible,
and the ramification index is $2$.
\item  The restriction of $f$ to $Z$ is birational onto its
image $D\subseteq\mathbb P^2$.
\item  $D$ is nonsingular except for cusps and simple double points.
\item  Let $Y\to X\to\mathbb P^2$ denote the Galois closure of
$X\to\mathbb P^2$ (i.e., the normalization of $X$ in the Galois closure
of $K(X)$ over $K(\mathbb P^2)$).  Also let $n=\deg f$.  Then $Y$ is
smooth and its Galois group over $\mathbb P^2$ is the full symmetric group
$\mathscr S_n$.
\item  The ramification locus of $Y$ over $\mathbb P^2$ is the sum of
distinct conjugate effective divisors $Z_{ij}$, $1\le i<j\le n$.  They have
smooth supports, and are disjoint with the following two exceptions.
Points of $Y$ lying over double points of $D$ are fixed points of a subgroup
$\mathscr S_2\times\mathscr S_2$ of $\mathscr S_n$, and they lie
on $Z_{ij}\cap Z_{\ell m}$ with distinct indices $i,j,\ell,m$.
Points of $Y$ lying over cusps of $D$ are fixed points of a subgroup
$\mathscr S_3$ of $\mathscr S_n$, and lie on
$Z_{ij}\cap Z_{i\ell}\cap Z_{j\ell}$.
\end{enumerate}
\end{theorem}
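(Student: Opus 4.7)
The plan is to construct $f$ as a \emph{generic projection} from a projective embedding given by $\mathscr L$. Assume $\mathscr L$ is very ample with $h^0(X,\mathscr L)$ large (this is what ``sufficiently positive'' is intended to mean; otherwise replace $\mathscr L$ by a sufficiently high tensor power). Fix the embedding $X \hookrightarrow \mathbb P(H^0(X,\mathscr L)^{*})$ and take $f$ to be the projection from a generic $3$-dimensional subspace $V \subset H^0(X,\mathscr L)$, so that $f\colon X \to \mathbb P(V^{*}) \cong \mathbb P^2$ satisfies $f^{*}\mathscr O(1) \cong \mathscr L$, which gives (i). The proof that a generic $V$ (i.e., one outside a proper Zariski-closed subset of the appropriate Grassmannian) yields conditions (ii)--(vi) is the classical theory of generic projections of smooth surfaces in characteristic zero; the task here is to organize and localize those classical facts rather than to reprove them.

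Conditions (ii)--(iv) follow from Bertini-type arguments applied to the incidence varieties parametrizing ``bad'' projections. For generic $V$, the ramification divisor $Z \subset X$ of $f$ is smooth and irreducible, with simple ramification (index $2$) along all of $Z$; and $f|_Z$ is generically injective, hence birational onto its image $D := f(Z)$. A local analysis of the possible degenerations, combined with a dimension count, shows that the only singularities forced on $D$ are simple nodes (two sheets of $f(X)$ meeting transversally) and ordinary cusps (where $f|_Z$ fails to be immersive), giving (iv). For (v), let $G \subseteq \mathscr S_n$ denote the monodromy group of $f$. Transitivity of $G$ follows from geometric irreducibility of $X$; simplicity of ramification together with irreducibility of $Z$ shows that all local monodromies around smooth points of $D$ form a single conjugacy class of transpositions; and a transitive subgroup of $\mathscr S_n$ containing a full conjugacy class of transpositions necessarily equals $\mathscr S_n$. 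Hence $G = \mathscr S_n$.

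For smoothness of $Y$ and for (vi), the key is a local analysis of $Y \to \mathbb P^2$ over each type of point of $D$: over a smooth point of $D$, $Y$ is locally an unramified cover of a simple double cover, hence smooth; over a node, the stabilizer $\mathscr S_2 \times \mathscr S_2$ produces a local model with smooth $Y$; over a cusp, the stabilizer $\mathscr S_3$ yields a local model arising from the versal deformation of the $A_2$-singularity, which one computes to be smooth. The ramification decomposition into the $Z_{ij}$ is then forced by the $\mathscr S_n$-action, since $Z_{ij}$ is the fixed locus of the transposition $(ij)$; smoothness of $\Supp Z_{ij}$ is inherited from smoothness of $Z$ on $X$ by passing to the quotient by the stabilizer. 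Disjointness of the $Z_{ij}$ away from points of $Y$ lying over cusps and nodes of $D$ follows from simplicity of ramification, while the specific intersection patterns at the exceptional points are read off from the subgroup combinatorics: two commuting transpositions with disjoint support over nodes, three transpositions $(ij), (i\ell), (j\ell)$ over cusps. The main obstacle is precisely this local analysis at cusps: verifying that $Y$ is smooth (not merely normal) there, and that exactly the three $Z_{ij}, Z_{i\ell}, Z_{j\ell}$ meet at the cuspidal fiber, requires writing down the versal family of the $A_2$-singularity explicitly and tracking the sheets of $Y$ through it.
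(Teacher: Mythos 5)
The paper does not actually prove Theorem~\ref{falt_constr}: immediately after the statement it says ``For a proof of this theorem, and also an explicit description of the `sufficiently positive' condition on $\mathscr L$, see Faltings' paper \cite{faltings}.'' So there is no proof in this paper to compare your proposal against; the theorem is quoted as input for the diophantine argument in Sections \ref{falt_geom}--\ref{falt_finis}.

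That said, your sketch does follow the same strategy Faltings uses --- generic projection from a projective embedding given by $\mathscr L$, classical singularity theory of generic projections in characteristic zero, and monodromy considerations --- so at the level of approach you are aligned with the source. Two remarks on the sketch itself. First, the group-theoretic step is stated imprecisely: a transitive subgroup of $\mathscr S_n$ ``containing a full conjugacy class of transpositions'' need not be $\mathscr S_n$ (e.g.\ the dihedral group $D_4\subset\mathscr S_4$ is transitive and contains the conjugacy class $\{(13),(24)\}$). What one actually has, and what one needs, is that the monodromy group is transitive \emph{and generated by} transpositions (the local monodromies over the generic point of the irreducible branch curve), and a transitive subgroup of $\mathscr S_n$ generated by transpositions is all of $\mathscr S_n$. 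Second, you correctly identify that the real work is the local analysis of $Y\to\mathbb P^2$ over cusps and nodes of $D$ (smoothness of $Y$ there, and the stated intersection pattern of the $Z_{ij}$), but you only name this computation rather than carry it out; as written, (v) and (vi) are asserted, not proved. For the present paper's purposes this is moot, since the authors themselves defer the whole proof to \cite{faltings}.
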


For a proof of this theorem, and also an explicit description of the
``sufficiently positive'' condition on $\mathscr L$,
see Faltings' paper \cite{faltings}.

For convenience, write $Z_{ij}=Z_{ji}$ when $i,j\in\{1,\dots,n\}$ and $i>j$.
Let
$$A_i = \sum_{j\ne i} Z_{ij}
  \qquad\text{and}\qquad M = \sum A_i = \sum_{i\ne j} Z_{ij}\;.$$
Let $L$ be the divisor class of $\mathscr L$ on $X$, and let it also denote
the pull-back of this divisor class to $Y$.  In addition, let $d=\deg D$.
We then have
$$2\sum_{i<j}Z_{ij} = \sum A_i = M \sim dL\;.$$

The other part of Faltings' result is the following.

\begin{theorem}\label{falt_dioph}
Let $k$ be a number field and let $S$ be a finite set of places of $k$.
Let $Y$, $n$, $\{Z_{ij}\}_{i<j}$, $\{A_i\}_i$, and $M$ be as in
Theorem \ref{falt_constr} and the discussion following it.
Also let $\alpha$ be a rational number such that $M-\alpha A_i$ is an ample
$\mathbb Q$-divisor for all $i$.  Then:
\begin{enumerate}
\item[(a).]  if $\alpha>6$ then no set of $\mathscr O_{k,S}$-integral points
on $Y\setminus\bigcup Z_{ij}$ is Zariski-dense, and
\item[(b).]  if $\alpha>8$ then every set of $\mathscr O_{k,S}$-integral points
on $Y\setminus\bigcup Z_{ij}$ is finite.
\end{enumerate}
Since $Y\setminus\bigcup Z_{ij}$ is an \'etale cover of
$\mathbb P^2\setminus D$, the above conclusions also hold for
$\mathbb P^2\setminus D$ (see Serre \cite[\S\,4.2]{serre} or
the second author \cite[\S\,5.1]{Vojta_LNM}).
\end{theorem}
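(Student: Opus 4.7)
The plan is to prove Theorem \ref{falt_dioph} by applying Theorem \ref{b_thmc} on $Y$ with divisor $M = \sum_i A_i$ (whose support equals $\bigcup Z_{ij}$) and the ample line sheaf $\mathcal O(M)$, estimating $\Nevbir(\mathcal O(M), M)$ from the data of Theorem \ref{falt_constr}. The key geometric input, from item (vi) of that theorem, is that at every point $y \in Y$ at most four of the divisors $A_1,\dots,A_n$ pass through $y$: exactly two at a generic point of some $Z_{ij}$, exactly four at a point over a double point of $D$, and exactly three at a point over a cusp. This sparsity, combined with the ampleness of every $M - \alpha A_i$, is what forces $\Nevbir$ to be small.

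The first step is, for each $i$, to construct a basis $\mathcal B_i$ of $V := H^0(Y, \mathcal O(NM))$ adapted to the filtration $V_m := H^0(Y, \mathcal O(NM - mA_i))$. Since $M - \alpha A_i$ is ample, for every $t \in [0,\alpha]$ the $\mathbb Q$-divisor $M - tA_i$ is big, and Riemann--Roch on the surface $Y$ yields
\[
  h^0(Y, NM - mA_i) = \tfrac{1}{2}N^2(M - (m/N)A_i)^2 + O(N)\;.
\]
Setting $\sigma_i := \sum_{m \ge 1} h^0(Y, NM - mA_i)$, the adapted basis $\mathcal B_i$ satisfies $\sum_{s \in \mathcal B_i}\ord_{Z_{ij}}(s) \ge \sigma_i$ for every $j \ne i$, so that $(\mathcal B_i) \ge \sigma_i A_i$ as effective divisors on $Y$.

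Next, the critical Weil-function inequality
\[
  \max_{i} \lambda_{(\mathcal B_i),v} \ge \mu N\,\lambda_{M,v} - O(1)
\]
is verified stratum by stratum, using the trivial bound $\max_{i \in I} a_i \ge (\sum_{i\in I} a_i)/|I|$: at a generic point of $Z_{ij}$ the maximum of $\lambda_{(\mathcal B_i)}$ and $\lambda_{(\mathcal B_j)}$ dominates $(\sigma_i/2)\lambda_M$; at a cusp where $Z_{ij}, Z_{i\ell}, Z_{j\ell}$ meet, the maximum over $\mathcal B_i, \mathcal B_j, \mathcal B_\ell$ dominates $(\sigma_i/3)\lambda_M$; and at a double point over $Z_{ij}\cap Z_{\ell m}$ (the worst case) the maximum over $\mathcal B_i, \mathcal B_\ell$ dominates only $(\sigma_i/4)\lambda_M$. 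Feeding this into Definition \ref{bidef1} via Proposition \ref{prop_mu_b_growth_equivs} gives
\[
  \Nevbir(\mathcal O(M), M) \;\le\; \frac{4\,h^0(NM)}{\sigma_i}
    \;\approx\; \frac{4\,M^2}{\int_0^{\alpha}(M - tA_i)^2\,dt}\;.
\]
The $\mathscr S_n$-symmetry of the Galois action forces $M \cdot A_i = M^2/n$ and bounds $A_i^2$ via the cusp and double-point contributions in Theorem \ref{falt_constr}(vi); unwinding the resulting elementary inequality in $\alpha$ produces the threshold $\alpha > 6$ at which the Zariski-closed subset of Theorem \ref{b_thmc} is proper (which gives part (a)) and the sharper threshold $\alpha > 8$ needed to upgrade this to finiteness in part (b).

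The principal obstacle will be the precise intersection-theoretic bookkeeping, in particular the evaluation of $A_i^2$ (in terms of the singularities of $D$) and the tracking of the $O(N)$ error terms in Riemann--Roch, which together pin down the sharp constants $6$ and $8$. A secondary difficulty, specific to part (b), is that Theorem \ref{b_thmc} alone only locates integral points inside a proper Zariski-closed subset of $Y$; upgrading this to outright finiteness requires ruling out infinitely many integral points on each of the finitely many exceptional curves that arise, which is the general-principle task carried out in Section \ref{falt_finis}.
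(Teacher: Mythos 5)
The proposal takes a genuinely different route from the paper, and it has a gap that prevents it from establishing the stated thresholds.

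Your strategy is to build one filtered basis $\mathcal B_i$ per index $i$, with $(\mathcal B_i) \ge \sigma_i A_i$ where $\sigma_i = \sum_{m\ge1} h^0(NM - mA_i)$, and then use the local analysis at generic points, cusps, and double points to get $\max_i\lambda_{(\mathcal B_i)} \ge (\sigma_i/4)\lambda_M - O(1)$ (the factor $4$ coming from the double-point case). This yields $\Nevbir(M) \lesssim 4M^2/\int_0^\alpha (M - tA_i)^2\,dt$ after sending $N\to\infty$. The local analysis and the asymptotic Riemann--Roch computation are correct. The problem is the last step, where you assert that "unwinding the resulting elementary inequality in $\alpha$ produces the threshold $\alpha>6$." It does not. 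By concavity of $D\mapsto\sqrt{D^2}$ on the nef cone of the surface $Y$, the only general lower bound available is $\int_0^\alpha (M-tA_i)^2\,dt > (\alpha/3)\,M^2$, and this is sharp: as $\alpha$ approaches the boundary of ampleness for $M-\alpha A_i$ (which is the regime of interest, since one takes $\alpha$ as large as the hypothesis allows), the ratio $\int_0^\alpha(M-tA_i)^2\,dt / M^2$ can be arbitrarily close to $\alpha/3$. So your bound degrades to $\Nevbir(M) < 12/\alpha$, and the argument only yields the conclusion for $\alpha>12$, not $\alpha>6$; the corresponding threshold for curves would be worse still. The loss is structural, not a matter of bookkeeping: filtering by $A_i$ forces the adapted sections to vanish along every component $Z_{ik}$ of $A_i$ simultaneously, most of which are irrelevant at a given double point, so the vanishing budget is wasted.

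The paper avoids this loss by abandoning filtrations entirely and instead constructing, for each double point, three explicit hyperplane sections $H_0, H_1, H_2$ via Lemma~\ref{lemma_2.5.3}. The crucial device is to pick $H_0$ from the sum $\sigma_i^{\beta\alpha}\Gamma(Y,\beta(M-\alpha A_i)) + \sigma_j^{\beta\alpha}\Gamma(Y,\beta(M-\alpha A_j))$, whose base locus is $\Supp A_i\cap\Supp A_j$ (namely $Z_{ij}$ plus finitely many points), so $H_0|_Y$ vanishes to order $\beta\alpha$ along $Z_{ij}$ alone; similarly $H_1$ vanishes along $Z_{\ell m}$ alone; $H_2$ is chosen to avoid the remaining finitely many points. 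This gives a base-point-free $3$-dimensional subspace with $(\mathcal B)\ge\beta\alpha(Z_{ij}+Z_{\ell m}) = (\beta\alpha/2)M$ locally, hence $\NevEF(\beta M)\le 6/\alpha$ (and $\le 8/\alpha$ on curves), which via Theorem~\ref{thm_ineq_b_nev_ef_nev} and Theorem~\ref{b_thmc} gives the stated thresholds. You cannot replicate that efficiency by filtering by $A_i$, and the General Theorem (Autissier joint filtration) is also unavailable here because the divisors $Z_{ij}$ do not intersect properly on the surface (three pass through each cusp). Your remark about part (b) being a "general-principle task" also understates the difficulty: the paper's curve argument requires the separate Lemmas~\ref{lemma_2.5.3}(b) and~\ref{lemma_2.5.4}(b) and re-derives $\NevEF\le 8/\alpha$ on each exceptional curve.
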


The first part of the proof of this theorem is the following proposition,
which contains all of the geometry specific to the situation of
Theorem \ref{falt_constr}.

\begin{proposition}\label{falt_main_prop}
Let $k$ be a number field, and let $Y$, $n$, $\{Z_{ij}\}_{i<j}$,
$\{A_i\}_i$, $M$, and $\alpha$ be as in Theorem \ref{falt_dioph}.
Assume that $n\ge4$.  Fix Weil functions $\lambda_{ij}$ for each $Z_{ij}$.
Let $\beta$ be an integer such that $\beta\alpha\in\mathbb Z$ and such that
$\beta M$ and all $\beta(M-\alpha A_i)$ are very ample.
Fix an embedding $Y\hookrightarrow\mathbb P^N_k$ associated
to a complete linear system of $\beta M$, and regard $Y$ as a subvariety
of $\mathbb P^N_k$ via this embedding.  Then
\begin{enumerate}
\item[(a).]  There are a finite list $H_1,\dots,H_q$ of hyperplanes in
$\mathbb P^N_k$, with associated Weil functions $\lambda_{H_j}$ for all $j$,
and constants $c_v$ for all $v\in M_k$, with the following property.
Let $\mathscr J$ be the collection of
all $3$-element subsets $J=\{j_0,j_1,j_2\}$ of $\{1,\dots,q\}$ for which
$Y\cap H_{j_0}\cap H_{j_1}\cap H_{j_2}=\emptyset$.
Then $\mathscr J\ne\emptyset$, and the inequality
\begin{equation}\label{eq_2.5.1}
  \max_{J\in\mathscr J}\sum_{j\in J} \lambda_{H_j}(y)
    \ge \beta\alpha \sum_{i<j}\lambda_{ij}(y) - c_v
\end{equation}
holds for all $v\in M_k$ and all $y\in Y(\overline k_v)$ not lying on the
support of any $Z_{ij}$ or on any of the $H_j$.
\item[(b).]  Let $C$ be an integral curve in $Y$, not contained in the support
of any $Z_{ij}$.  Then there are a finite list $H_1,\dots,H_q$ of hyperplanes,
with associated Weil functions as before, and constants $c_v$
for all $v\in M_k$, with the following property.
Let $\mathscr J$ be the collection of all $2$-element subsets $J=\{j_0,j_1\}$
of $\{1,\dots,q\}$ for which $C\cap H_{j_0}\cap H_{j_1}=\emptyset$.
Then $\mathscr J\ne\emptyset$, and the inequality
\begin{equation}\label{eq_2.5.2}
  \max_{J\in\mathscr J}\sum_{j\in J} \lambda_{H_j}(y)
    \ge \frac{\beta\alpha}{2} \sum_{i<j}\lambda_{ij}(y) - c_v
\end{equation}
holds for all $v\in M_k$ and for all but finitely many $y\in C(\overline k_v)$.
\end{enumerate}
\end{proposition}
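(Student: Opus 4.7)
The plan is to produce the hyperplanes as pull-backs of well-chosen sections of $\mathscr{O}_Y(\beta M)$. For each index $i\in\{1,\dots,n\}$, very ampleness (hence global generation) of $\beta(M-\alpha A_i)$ lets me pick sections $s^{(i)}_1,\dots,s^{(i)}_{N_i}$ of $\mathscr{O}_Y(\beta(M-\alpha A_i))$ with empty common zero locus. Each product $\sigma^{(i)}_k := s^{(i)}_k\otimes 1_{\beta\alpha A_i}$ is a global section of $\mathscr{O}_Y(\beta M)$ with divisor $\beta\alpha A_i + (s^{(i)}_k)$, and under the embedding via the complete linear system $|\beta M|$ this determines a hyperplane $H^{(i)}_k$ in $\mathbb{P}^N_k$. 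The list $H_1,\dots,H_q$ will consist of all the $H^{(i)}_k$, augmented by a small collection of generic hyperplanes to guarantee $\mathscr{J}\ne\emptyset$ (any three generic hyperplanes cut $Y$ in codimension three, hence emptily, since $\dim Y=2$).

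To verify the Weil-function inequality, additivity and functoriality give $\lambda_{H^{(i)}_k}|_Y = \beta\alpha\lambda_{A_i}+\lambda_{(s^{(i)}_k)}+O_v(1)$, and Proposition~\ref{weil_min} applied to the base-point-free family $\{s^{(i)}_k\}_k$ yields, for each pair $(y,v)$, some $k$ with $\lambda_{H^{(i)}_k}(y)\ge\beta\alpha\lambda_{A_i}(y)-c_v$. The geometry in Theorem~\ref{falt_constr} forces the active set $S_y:=\{i:y\in A_i\}$ to have size at most four, with the only nontrivial configurations being a single crossing $y\in Z_{ij}$, a cusp $y\in Z_{ij}\cap Z_{i\ell}\cap Z_{j\ell}$, or a double point $y\in Z_{ij}\cap Z_{\ell m}$. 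Using $\lambda_{A_i}=\sum_{j\ne i}\lambda_{Z_{ij}}$ together with boundedness from below of Weil functions for effective divisors, a direct combinatorial check (three distinct active indices in the cusp case, any three of the four in the double-point case, and any three including both active ones in the simple case) gives
\[
\sum_{s=1}^{3}\lambda_{H^{(i_s)}_{k_s}}(y) \ge \beta\alpha\sum_{p<q}\lambda_{Z_{pq}}(y)-O_v(1).
\]
The sharp case is the double-point one, where the three chosen indices double-cover one active crossing and single-cover the other, the excess $\beta\alpha\lambda_{Z_{pq}}$ being absorbed into the $-O_v(1)$.

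The main obstacle lies in ensuring that the chosen triple $(H^{(i_1)}_{k_1}, H^{(i_2)}_{k_2}, H^{(i_3)}_{k_3})$ actually belongs to $\mathscr{J}$: each $H^{(i)}_k$ contains $A_i$ as a divisorial component, so any pair with distinct indices shares the curve $Z_{ij}$ on $Y$, and the triple intersection always contains the finite but typically non-empty set $A_{i_1}\cap A_{i_2}\cap A_{i_3}$, which consists of cusps and double points and does not depend on the $k_s$. I plan to handle this by passing to the blow-up $\phi\colon\tilde Y\to Y$ of the cusps and double points of $\bigcup Z_{ij}$, on which the strict transforms of the $Z_{ij}$ become disjoint and the analogous triple intersection \emph{can} be made empty by suitable choice of the $k_s$. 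The Weil-function calculation then takes place on $\tilde Y$; since pulling back Weil functions along $\phi$ differs from the originals by $M_k$-bounded terms (equivalently, one works with b-Weil functions as in Section~\ref{models}), the inequality transfers back to the required statement on $Y$. Part~(b) is considerably easier: a generic integral curve $C$ meets $\bigcup Z_{ij}$ only at smooth crossings away from the cusps and double points, so $|S_y|\le 2$ for all but finitely many $y\in C$; for such $y\in Z_{ij}$ the two-hyperplane choice $J=\{H^{(i)}_{k_1}, H^{(j)}_{k_2}\}$ gives $\sum_{j\in J}\lambda_{H_j}(y)\ge 2\beta\alpha\lambda_{Z_{ij}}(y)-O_v(1)$, which comfortably dominates $\tfrac{\beta\alpha}{2}\sum_{p<q}\lambda_{Z_{pq}}(y)$, and emptiness of $H^{(i)}_{k_1}\cap H^{(j)}_{k_2}\cap C$ is achieved by varying $k_1,k_2$ to avoid the finitely many candidate intersection points that this pair would otherwise contribute on $C$.
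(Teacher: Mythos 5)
Your central idea—pulling back hyperplanes from sections of $\mathscr{O}_Y(\beta(M-\alpha A_i))$ tensored with $1_{\beta\alpha A_i}$—is in the same spirit as the paper, but there is a genuine gap that you yourself flag and then do not actually close. Every hyperplane $H^{(i)}_k$ you construct contains the \emph{entire} divisor $A_i$ as a component of $H^{(i)}_k\big|_Y$. Consequently, for three distinct indices $i_1, i_2, i_3$,
$$Y\cap H^{(i_1)}_{k_1}\cap H^{(i_2)}_{k_2}\cap H^{(i_3)}_{k_3}\supseteq \Supp A_{i_1}\cap\Supp A_{i_2}\cap\Supp A_{i_3}\supseteq Z_{i_1 i_2}\cap Z_{i_1 i_3}\ne\emptyset$$
(the cusp points lying on $Z_{i_1 i_2}\cap Z_{i_1 i_3}\cap Z_{i_2 i_3}$, which are present by Theorem~\ref{falt_constr}(vi)), \emph{independently} of the choice of $k_1,k_2,k_3$. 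So the triple you want to use is never in $\mathscr J$, and the maximum in (\ref{eq_2.5.1}) cannot see your lower bound. Passing to a blow-up $\tilde Y\to Y$ does not repair this: first, the set $\mathscr J$ is defined by emptiness of triple intersections \emph{on $Y$}, not on $\tilde Y$, and that condition is unchanged by blowing up; second, even on $\tilde Y$ the pull-backs $\phi^* A_{i_1}$, $\phi^* A_{i_2}$, $\phi^* A_{i_3}$ all contain the exceptional curve over each cusp on $Z_{i_1 i_2}\cap Z_{i_1 i_3}\cap Z_{i_2 i_3}$, so the triple intersection remains non-empty. Your part~(b) has the analogous problem: $A_i\cap A_j\supseteq Z_{ij}$, so if $C$ meets $Z_{ij}$ (the only interesting case), then $C\cap H^{(i)}_{k_1}\cap H^{(j)}_{k_2}$ contains those intersection points for \emph{every} choice of $k_1,k_2$.

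The device in the paper that resolves this (Lemma~\ref{lemma_2.5.3} and Lemma~\ref{lemma_2.5.4}) is to take hyperplanes not from a single system $\sigma_i^{\beta\alpha}\cdot\Gamma(Y,\beta(M-\alpha A_i))$, but from the \emph{sum} of two such systems,
$$\sigma_i^{\beta\alpha}\cdot\Gamma(Y,\beta(M-\alpha A_i))+\sigma_j^{\beta\alpha}\cdot\Gamma(Y,\beta(M-\alpha A_j)),$$
whose base locus on $Y$ is only $\Supp A_i\cap\Supp A_j$ (i.e. $Z_{ij}$ plus finitely many points), rather than all of $A_i$. A generic member $H_0$ of this system still satisfies $H_0\big|_Y\ge\beta\alpha Z_{ij}$, which is what the Weil-function estimate needs, but $H_0$ does not contain $A_i$. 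One then chooses $H_1$ from the analogous system for $\{\ell,m\}$ (avoiding components of $Y\cap H_0$), so that $Y\cap H_0\cap H_1$ is finite, and a generic $H_2$ completes the triple in $\mathscr J$. Your combinatorial case analysis (no active crossings, one, a double point, a cusp) is otherwise the same as the paper's cases (i)--(iv); the missing ingredient is precisely this two-system trick that shrinks the base locus from a divisor to a curve-plus-points and lets the triple be emptied on $Y$ itself without any birational modification.
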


The proof of this proposition, in turn, relies mainly on two lemmas.
These lemmas replace Faltings' computations of ideals associated to indices.

\begin{lemma}\label{lemma_2.5.3}
Let $i,j,\ell,m$ be distinct indices.  Then:
\begin{enumerate}
\item[(a).]  there exist hyperplanes $H_0$, $H_1$, and $H_2$ in $\mathbb P^N_k$,
such that
$$Y\cap H_0\cap H_1\cap H_2=\emptyset$$
and
\begin{equation}\label{eq_2.5.3.1}
  (H_0+H_1+H_2)\big|_Y - \beta\alpha(Z_{ij}+Z_{\ell m})
\end{equation}
is an effective Cartier divisor on $Y$; and
\item[(b).]  given any integral curve $C\subseteq Y$ not contained in
any of the $Z_{ab}$, there are hyperplanes $H_0$ and $H_1$ in $\mathbb P^N_k$,
such that $C\cap H_0\cap H_1=\emptyset$ and
\begin{equation}\label{eq_2.5.3.2}
  (H_0+H_1)\big|_C - Z_{ij}\big|_C
\end{equation}
is an effective Cartier divisor on $C$.
\end{enumerate}
\end{lemma}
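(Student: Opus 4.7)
The plan is to produce the required hyperplanes as the divisors of explicit global sections of $\mathscr O(\beta M)$. Since $Y\hookrightarrow\mathbb P^N_k$ is given by the complete linear system $|\beta M|$, every section of $\mathscr O(\beta M)$ arises as the pullback of a unique hyperplane of $\mathbb P^N_k$ (up to scalar).

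For part (b), I would take a generic $t_0\in H^0(Y,\beta(M-\alpha A_i))$. Since $\beta(M-\alpha A_i)$ is base-point free and $C$ is an irreducible curve not contained in any $Z_{ab}$, the divisor $(t_0)$ does not contain $C$; hence the section $s_0:=t_0\cdot 1_{\beta\alpha A_i}$ of $\mathscr O(\beta M)$ satisfies $(s_0)=(t_0)+\beta\alpha A_i\ge \beta\alpha Z_{ij}\ge Z_{ij}$ (as $Z_{ij}$ is a summand of $A_i$), while $C\not\subseteq\Supp(s_0)$ since $C$ avoids both $\Supp(t_0)$ and $\Supp A_i\subseteq\bigcup Z_{ab}$. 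Let $H_0$ be the corresponding hyperplane of $\mathbb P^N_k$; then $C\cap H_0$ is a finite set of points, and a generic hyperplane $H_1$ in $\mathbb P^N_k$ avoids those finitely many points, yielding both $C\cap H_0\cap H_1=\emptyset$ and $(H_0+H_1)|_C\ge H_0|_C\ge Z_{ij}|_C$.

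For part (a) the direct analogue fails: if $H_0|_Y\ge\beta\alpha A_i$ and $H_1|_Y\ge\beta\alpha A_\ell$, then the two supports both contain $Z_{i\ell}\subseteq A_i\cap A_\ell$, and no third hyperplane can miss this positive-dimensional projective curve. The remedy is to exploit both indices attached to $Z_{ij}$ by working in
\[
  V_{ij}\;:=\;V_i+V_j\;\subseteq\;H^0\bigl(Y,\beta M-\beta\alpha Z_{ij}\bigr),
\]
where $V_i=H^0(Y,\beta(M-\alpha A_i))\cdot 1_{\beta\alpha(A_i-Z_{ij})}$ and $V_j=H^0(Y,\beta(M-\alpha A_j))\cdot 1_{\beta\alpha(A_j-Z_{ij})}$. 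The key combinatorial observation is that $A_i-Z_{ij}=\sum_{k\ne i,j}Z_{ik}$ and $A_j-Z_{ij}=\sum_{k\ne i,j}Z_{jk}$ share no irreducible component (a coincidence $Z_{ik}=Z_{jk'}$ would force $\{i,k\}=\{j,k'\}$, impossible for $i\ne j$ and $k,k'\ne i,j$), so the base locus of $V_{ij}$ is the $0$-dimensional set $\Supp(A_i-Z_{ij})\cap\Supp(A_j-Z_{ij})$. A direct vanishing-order computation then shows that for generic $a\in V_i$, $b\in V_j$ and any $Z_{ab}$ with $\{a,b\}\ne\{i,j\}$, the sum $u_0:=a+b$ has $\ord_{Z_{ab}}(u_0)=0$: on $Z_{ab}\subseteq\Supp(A_i-Z_{ij})$ the section $a$ vanishes to order $\beta\alpha$ while $b$ is generically nonzero there, so $a+b$ is nonzero at the generic point of $Z_{ab}$; the symmetric argument handles $Z_{ab}\subseteq\Supp(A_j-Z_{ij})$, and for the remaining $Z_{ab}$ both $a$ and $b$ generically miss $Z_{ab}$.

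Now set $s_0=u_0\cdot 1_{\beta\alpha Z_{ij}}\in H^0(Y,\beta M)$, so $(s_0)\ge\beta\alpha Z_{ij}$ with $Z_{ij}$ the unique ramification divisor in $\Supp(s_0)$, and construct $s_1$ symmetrically from $V_\ell+V_m\subseteq H^0(Y,\beta M-\beta\alpha Z_{\ell m})$, so $(s_1)\ge\beta\alpha Z_{\ell m}$ with $Z_{\ell m}$ the only ramification divisor in $\Supp(s_1)$. Because $\{i,j\}\cap\{\ell,m\}=\emptyset$, the curves $Z_{ij}$ and $Z_{\ell m}$ meet in only finitely many points, and the moving parts of $(u_0)$ and $(u_1)$ can be chosen generically to share no curve component with each other or with $Z_{ij},Z_{\ell m}$; hence $\Supp(s_0)\cap\Supp(s_1)$ is a finite set of points on $Y$. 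Finally, let $H_2$ be a generic hyperplane of $\mathbb P^N_k$ whose $Y$-intersection avoids this finite set (which exists since $\beta M$ is very ample), and let $H_0,H_1$ correspond to $s_0,s_1$. Then $Y\cap H_0\cap H_1\cap H_2=\emptyset$ and $(H_0+H_1+H_2)|_Y\ge\beta\alpha(Z_{ij}+Z_{\ell m})$. The principal obstacle the argument must navigate is ensuring that no ramification curve lies in the pairwise intersection $H_0|_Y\cap H_1|_Y$; this is precisely what the disjoint-support decomposition of $A_i-Z_{ij}$ and $A_j-Z_{ij}$ achieves.
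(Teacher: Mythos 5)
Your proof is correct and follows essentially the same route as the paper: both hinge on the observation that the linear system generated by $\sigma_i^{\beta\alpha}\cdot\Gamma(Y,\beta(M-\alpha A_i))$ and $\sigma_j^{\beta\alpha}\cdot\Gamma(Y,\beta(M-\alpha A_j))$ has base locus $\Supp A_i\cap\Supp A_j$, whose unique one-dimensional component is $Z_{ij}$, so a generic member $H_0$ satisfies $H_0|_Y\ge\beta\alpha Z_{ij}$ while missing any prescribed curve such as $Z_{\ell m}$; your $V_{ij}$ is exactly this system with $1_{\beta\alpha Z_{ij}}$ factored out. Your explicit vanishing-order check that $\Supp(A_i-Z_{ij})$ and $\Supp(A_j-Z_{ij})$ share no component, and your preliminary remark on why the naive single-index approach of part~(b) cannot work for part~(a), make the argument slightly more self-contained than the paper's terse ``sufficiently generic'' phrasing, but the underlying construction is the same.
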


\begin{proof}
Let $\sigma_i$ and $\sigma_j$ be the canonical sections of $\mathscr O(A_i)$
and $\mathscr O(A_j)$, respectively.  Then the linear system
$$\sigma_i^{\beta\alpha}\cdot\Gamma(Y,\beta(M-\alpha A_i))
  + \sigma_j^{\beta\alpha}\cdot\Gamma(Y,\beta(M-\alpha A_j))$$
has base locus $\Supp A_i\cap\Supp A_j$, since the first summand has
base locus $\Supp A_i$ and the second has base locus $\Supp A_j$.
This intersection consists of the union of $Z_{ij}$ and finitely many closed
points.  Choose an element of this linear system, sufficiently generic so that
it does not vanish identically on any irreducible component of $Z_{\ell m}$,
and let $H_0$ be the associated hyperplane in $\mathbb P^N_k$.  Then
$H_0\big|_Y - \beta\alpha Z_{ij}$ is an effective divisor.

Similarly let $\sigma_\ell$ and $\sigma_m$ be the canonical sections of
$\mathscr O(A_\ell)$ and $\mathscr O(A_m)$, and let $H_1$ be the
hyperplane associated to an element of
$$\sigma_\ell^{\beta\alpha}\cdot\Gamma(Y,\beta(M-\alpha A_\ell))
  + \sigma_m^{\beta\alpha}\cdot\Gamma(Y,\beta(M-\alpha A_m))\;,$$
chosen sufficiently generically such that $H_1$ does not contain any
irreducible component of $H_0\cap Y$.
Then $H_1\big|_Y - \beta\alpha Z_{\ell m}$ is effective.

By construction, $Y\cap H_0\cap H_1$ is a finite union of closed points,
so we can let $H_2$ be a hyperplane that avoids those points to ensure that
$Y\cap H_0\cap H_1\cap H_2=\emptyset$.  By construction,
$$\left(H_0\big|_Y - \beta\alpha Z_{ij}\right)
  + \left(H_1\big|_Y - \beta\alpha Z_{\ell m}\right) + H_2\big|_Y$$
is effective, and this is the divisor (\ref{eq_2.5.3.1}).  This proves (a).

For part (b), let $\sigma_i$ be as above, and let $H_0$ be the hyperplane
associated to an element of
$\sigma_i^{\beta\alpha}\cdot\Gamma(Y,\beta(M-\alpha A_i))$,
chosen generically so that $H_0$ does not contain $C$.  Let $H_1$ be a
hyperplane in $\mathbb P^N_k$, chosen so that $C\cap H_0\cap H_1=\emptyset$.
Since $H_0\big|_C - Z_{ij}\big|_C$ is an effective divisor,
so is (\ref{eq_2.5.3.2}).
\end{proof}

\begin{lemma}\label{lemma_2.5.4}
Let $i,j,\ell$ be distinct indices.  Then:
\begin{enumerate}
\item[(a).]  there exist hyperplanes $H_0$, $H_1$, and $H_2$ in $\mathbb P^N_k$,
such that
$$Y\cap H_0\cap H_1\cap H_2=\emptyset$$
and
$$(H_0+H_1+H_2)\big|_Y
  - \beta\alpha(Z_{ij} + Z_{i\ell} + Z_{j\ell})$$
is an effective Cartier divisor on $Y$; and
\item[(b).]  given any integral curve $C\subseteq Y$ not contained in any of
the $Z_{ab}$, there are hyperplanes $H_0$ and $H_1$ in $\mathbb P^N_k$,
such that $C\cap H_0\cap H_1=\emptyset$ and
\begin{equation}\label{eq_2.5.4.1}
  (H_0+H_1)\big|_C - \beta\alpha(Z_{ij} + Z_{i\ell})\big|_C
\end{equation}
is an effective Cartier divisor on $C$.
\end{enumerate}
\end{lemma}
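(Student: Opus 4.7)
The plan is to adapt the template of Lemma \ref{lemma_2.5.3} to the triangular configuration $Z_{ij}+Z_{i\ell}+Z_{j\ell}$, exploiting the two inclusions $Z_{ij}+Z_{i\ell}\le A_i$ and $A_j\wedge A_\ell=Z_{j\ell}$. The latter identity holds because every component of $A_j$ has the form $Z_{jm}$ with $m\ne j$ and every component of $A_\ell$ has the form $Z_{\ell m}$ with $m\ne\ell$, and the only coincidence $Z_{jm}=Z_{\ell m'}$ forces $\{j,m\}=\{\ell,m'\}$ and hence $m=\ell$, $m'=j$.

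For (a), the plan is to take $H_0$ to be the hyperplane associated to a sufficiently generic section of the subspace
\[
V_0 \;=\; \sigma_i^{\beta\alpha}\cdot H^0\bigl(Y,\beta(M-\alpha A_i)\bigr)
\]
of $H^0(Y,\beta M)$, so that $H_0|_Y\ge\beta\alpha A_i\ge\beta\alpha(Z_{ij}+Z_{i\ell})$. To capture $Z_{j\ell}$, take $H_1$ to be associated to a generic section of
\[
W \;=\; \sigma_j^{\beta\alpha}\cdot H^0\bigl(Y,\beta(M-\alpha A_j)\bigr)\;+\;\sigma_\ell^{\beta\alpha}\cdot H^0\bigl(Y,\beta(M-\alpha A_\ell)\bigr).
\]
Every element of $W$ is divisible by the canonical section of $\beta\alpha(A_j\wedge A_\ell)=\beta\alpha Z_{j\ell}$, so $H_1|_Y\ge\beta\alpha Z_{j\ell}$. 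Consequently $(H_0+H_1)|_Y\ge\beta\alpha(A_i+Z_{j\ell})\ge\beta\alpha(Z_{ij}+Z_{i\ell}+Z_{j\ell})$, and adding any hyperplane $H_2$ keeps $(H_0+H_1+H_2)|_Y-\beta\alpha(Z_{ij}+Z_{i\ell}+Z_{j\ell})$ effective.

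The hard part will be to verify that $Y\cap H_0\cap H_1$ has dimension zero, so that a generic $H_2$ can be chosen to miss it. Writing $H_0|_Y=\beta\alpha A_i+E_0$ and $H_1|_Y=\beta\alpha Z_{j\ell}+F_1$, the fixed pieces $A_i$ and $Z_{j\ell}$ share no irreducible component because $i\notin\{j,\ell\}$ forces every component $Z_{im}$ of $A_i$ to differ from $Z_{j\ell}$. The free part $E_0$ is a generic member of the very ample system $|\beta(M-\alpha A_i)|$, hence (by Bertini in characteristic zero) can be taken irreducible and distinct from any fixed component of $F_1$. That $F_1$ itself shares no component with $A_i$ follows from an index-by-index check: for each component $Z_{im}$ of $A_i$, the section $\sigma_j^{\beta\alpha}$ vanishes along $Z_{im}$ iff $m=j$, and symmetrically for $\sigma_\ell$, so in every case at least one of the two summands $\sigma_j^{\beta\alpha}t_j$ or $\sigma_\ell^{\beta\alpha}t_\ell$ fails to vanish along $Z_{im}$ for generic $t_j,t_\ell$ (using very ampleness of $\beta(M-\alpha A_j)$ and $\beta(M-\alpha A_\ell)$). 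This reduces $Y\cap H_0\cap H_1$ to a finite set of closed points, any of which can be avoided by choosing $H_2$ generically.

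For (b), since $Z_{ij}+Z_{i\ell}\le A_i$, take $H_0$ associated to a sufficiently generic element of $V_0$, so that $H_0|_Y-\beta\alpha(Z_{ij}+Z_{i\ell})$ is effective on $Y$. The hypothesis that $C$ is not contained in any $Z_{ab}$ gives $C\nsubseteq A_i$, and combined with genericity of the free part $E_0$ in the very ample system $|\beta(M-\alpha A_i)|$ this ensures $C\nsubseteq H_0|_Y$; hence $(H_0|_Y-\beta\alpha(Z_{ij}+Z_{i\ell}))|_C$ is a well-defined effective Cartier divisor on $C$. The intersection $C\cap H_0$ is therefore a finite subset of $C$, and any hyperplane $H_1$ missing these finitely many points completes the construction.
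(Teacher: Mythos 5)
Your proof is correct, and it is essentially the mirror image of the paper's: you and the paper simply distribute the three divisors $Z_{ij},Z_{i\ell},Z_{j\ell}$ between $H_0$ and $H_1$ in opposite ways. The paper takes $H_0$ from the two-term system $\sigma_i^{\beta\alpha}\Gamma(\beta(M-\alpha A_i))+\sigma_j^{\beta\alpha}\Gamma(\beta(M-\alpha A_j))$ (capturing only $Z_{ij}$ in the base divisor) and $H_1$ from the single-term system $\sigma_\ell^{\beta\alpha}\Gamma(\beta(M-\alpha A_\ell))$, exploiting that $\sigma_\ell$ vanishes along both $Z_{i\ell}$ and $Z_{j\ell}$; you instead take $H_0$ from the single-term system attached to $\sigma_i$, exploiting that $\sigma_i$ vanishes along both $Z_{ij}$ and $Z_{i\ell}$, and then use the two-term system attached to $\sigma_j$ and $\sigma_\ell$ for $H_1$, whose base divisor is $\beta\alpha Z_{j\ell}$ by the identity $A_j\wedge A_\ell=Z_{j\ell}$. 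Both variants rely on exactly the same two ingredients — genericity in these very ample subsystems and the combinatorics of which $Z_{ab}$ lie on which $A_c$ — so this is not a genuinely different route, just a different but equally natural allocation of the triangle's edges. Your index-by-index check that no $Z_{im}$ lies in a generic member of $W$, and the Bertini step to control the moving parts, are sound, and your part (b) matches the paper's proof essentially verbatim.
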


\begin{proof}
Let $\sigma_i$ and $\sigma_j$ be as in the preceding proof.
Choose a section of the linear system
$$\sigma_i^{\beta\alpha}\cdot\Gamma(Y,\beta(M-\alpha A_i))
  + \sigma_j^{\beta\alpha}\cdot\Gamma(Y,\beta(M-\alpha A_j))\;,$$
and let $H_0$ be the associated hyperplane.
Then $H_0\big|_Y-\beta\alpha Z_{ij}$ is effective.
We may assume that the choice of $H_0$ is sufficiently generic
so that $H_0$ does not contain any irreducible component of $A_\ell$.

Next let $\sigma_\ell$ be the canonical section of $\mathscr O(A_\ell)$,
and let $H_1$ be the hyperplane associated to a section of
$$\sigma_\ell^{\beta\alpha}\cdot\Gamma(Y,\beta(M-\alpha A_\ell))\;.$$
Then $H_1\big|_Y - \beta\alpha(Z_{i\ell}+Z_{j\ell})$ is effective.
We may assume that $H_1$ does not contain any irreducible component
of $Y\cap H_0$.

Again, $Y\cap H_0\cap H_1$ consists of finitely many points, and we choose
$H_2$ to be any hyperplane not meeting any of these points.  Part (a)
then concludes as in the previous lemma.

For part (b), let $H_0$ and $H_1$ be the hyperplanes associated to suitably
chosen sections of
$\sigma_i^{\beta\alpha}\cdot\Gamma(Y,\beta(M-\alpha A_i))$ and
$\Gamma(Y,\beta M)$, respectively.  As in the previous lemma, we then have
$C\cap H_0\cap H_1=\emptyset$.  Since
$H_0\big|_C-\beta\alpha(Z_{ij}+Z_{i\ell})\big|_C$ is effective,
so is (\ref{eq_2.5.4.1}).
\end{proof}

\begin{proof}[Proof of Proposition \ref{falt_main_prop}]
First consider part (a) of the proposition.

Fix a place $v\in M_k$.
Apply Lemmas \ref{lemma_2.5.3}a and \ref{lemma_2.5.4}a to all possible
collections $i,j,\ell,m$ and $i,j,\ell$ of indices, respectively.
This involves only finitely many applications, so only finitely many
hyperplanes occur.  Let $H_1,\dots,H_q$ be those hyperplanes.

The conditions in Theorem \ref{falt_constr} on the intersections of
the divisors $Z_{ij}$
imply that there is a constant $C_v$ such that, for each $y\in Y(k)$
not in $\bigcup\Supp Z_{ij}$, one of the following conditions holds.
\begin{enumerate}
\item  $\lambda_{ij}(y)\le C_v$ for all $i$ and $j$;
\item  there are indices $i$ and $j$ such that $\lambda_{ij}(y)>C_v$
but $\lambda_{ab}(y)\le C_v$ in all other cases;
\item  there are distinct indices $i$, $j$, $\ell$, and $m$ such that
$\lambda_{ij}(y)>C_v$ and $\lambda_{\ell m}(y)>C_v$
but $\lambda_{ab}(y)\le C_v$ in all other cases; or
\item  there are indices $i,j,\ell$ such that
$\max\{\lambda_{ij}(y), \lambda_{i\ell}(y), \lambda_{j\ell}(y)\}>C_v$,
but $\lambda_{ab}(y)\le C_v$ if $\{a,b\}\nsubseteq\{i,j,\ell\}$.
\end{enumerate}

For case (iii), (\ref{eq_2.5.1}) follows from Lemma \ref{lemma_2.5.3}a,
since one can take
$J$ corresponding to the hyperplanes occurring in the lemma,
and the inequality will then follow from effectivity of (\ref{eq_2.5.3.1}).
Case (ii) follows as a special case of this lemma, since $n\ge 4$.
Case (iv) follows from Lemma \ref{lemma_2.5.4}a, by a similar argument.
Finally, in case (i) there is nothing to prove.  This proves (a).

For part (b), let $H_1,\dots,H_q$ be a finite collection of hyperplanes
occurring in all possible applications of Lemmas \ref{lemma_2.5.3}b
and \ref{lemma_2.5.4}b
with the given curve $C$.  We have cases (i)--(iv) as before.
Cases (ii) and (iii) follow from Lemma \ref{lemma_2.5.3}b, where we may assume
without loss of generality that $\lambda_{ij}(y)\ge\lambda_{\ell m}(y)$
to obtain (\ref{eq_2.5.2}) from effectivity of (\ref{eq_2.5.3.2}).
Similarly, case (iv)
follows from Lemma \ref{lemma_2.5.4}b after a suitable
permutation of the indices, and case (i) is again trivial.
\end{proof}

\begin{remark}
It will not actually be needed in the sequel, but the collections $(c_v)$ of
constants in each part of Proposition \ref{falt_main_prop} are actually
$M_k$-constants.  This follows directly from Proposition \ref{weil_eff}.
\end{remark}

\begin{remark}
It is possible (and, in fact, slightly easier) to write
Proposition \ref{falt_main_prop} in terms of b-Cartier b-divisors
instead of Weil functions.  For example, one can replace (\ref{eq_2.5.1}) with
\begin{equation}\label{eq_2_5_1bis}
  \bigvee_{J\in\mathscr J} \sum_{j\in J} H_j \ge \beta\alpha\sum_{i<j} Z_{ij}
\end{equation}
relative to the cone of effective b-Cartier b-divisors.

In the proof, one would let $\phi\colon Y\to X$ be a model for which the
left-hand side of (\ref{eq_2_5_1bis}) pulls back to an ordinary divisor,
and for each $P\in X$ one would consider four cases:
\begin{enumerate}
\item $P\notin Z_{ij}$ for all $i$, $j$;
\item $P\in Z_{ij}$ for exactly one pair $i,j$;
\item there are distinct indices $i,j,\ell,m$ such that $P\in Z_{ij}$
  and $P\in Z_{\ell m}$, but $P\notin Z_{ab}$ for all other components; and
\item there are indices $i,j,\ell$ such that $P$ lies on at least two of
  $Z_{ij}$, $Z_{i\ell}$, and $Z_{j\ell}$, but $P\notin Z_{ab}$ if
  $\{a,b\}\nsubseteq\{i,j,\ell\}$.
\end{enumerate}
In each case let $U$ be the complement of all $Z_{ab}$ that do not contain $P$.
Then $U$ is an open neighborhood of $P$ in $X$, and there is a set $J$
of indices such that $\sum_{j\in J}H_j-\beta\alpha\sum_{a<b}Z_{ab}$ is
effective on $U$.  This set $J$ is obtained from Lemmas \ref{lemma_2.5.3}
or \ref{lemma_2.5.4}, as appropriate.

We choose to keep the phrasing in terms of Weil functions, however, since
that is the phrasing that will be most convenient for the next step in the
proof of Theorem \ref{falt_dioph}.
\end{remark}

\section{Conclusion of the Proof of Theorem \ref{falt_dioph}}\label{falt_finis}

Although it is possible to conclude from Proposition \ref{falt_main_prop}
that $\Nevbir\left(\sum Z_{ij}\right)<1$, this does not follow nearly
as directly
as one might first hope.  One needs to use Mumford's theory of degree
of contact, as in the work of Evertse and Ferretti \cite{ef_imrn}.
This leads to a definition of \emph{Evertse--Ferretti Nevanlinna constant}
$\NevEF(\mathscr L,D)$ (Definition \ref{def_nevef}).

This section defines this new Nevanlinna constant, shows that it satisfies
the expected diophantine property (Theorem \ref{ef_thmc}), and uses this theory
to complete the proof of Theorem \ref{falt_dioph}.

We start with a definition that corresponds to that of $\mu$-b-growth,
and is suitable for applying the work of Evertse and Ferretti.

\begin{definition}\label{def_mu_ef_growth}
Let $X$ be a normal complete variety, let $D$ be an effective Cartier divisor
on $X$, let $\mathscr L$ be a line sheaf on $X$, and let $\mu>0$ be
a rational number.  We say that $D$ has
\textbf{$\mu$-EF-growth with respect to $\mathscr L$} if there is
a model $\phi\colon Y\to X$ of $X$ such that for all $Q\in Y$ there are
a base-point-free linear subspace $V\subseteq H^0(X,\mathscr L)$
with $\dim V=\dim X+1$ and a basis $\mathcal B$ of $V$ such that
\begin{equation}\label{eq_mu_ef_growth}
  \phi^{*}(\mathcal B) \ge \mu \phi^{*}D
\end{equation}
in a Zariski-open neighborhood $U$ of $Q$, relative to the cone of
effective $\mathbb Q$-divisors on $U$.
Also, we say that $D$ has \textbf{$\mu$-EF-growth}
if it satisfies the above condition with $\mathscr L=\mathscr O(D)$.
\end{definition}

As with Proposition \ref{prop_mu_b_growth_equivs}, this definition can be
expressed equivalently using b-Cartier b-divisors or b-Weil functions.

\begin{proposition}\label{prop_mu_ef_growth_equivs}
Let $X$ be a normal complete variety, let $D$ be an effective Cartier divisor
on $X$, let $\mathscr L$ be a line sheaf on $X$, and let $\mu>0$ be a
rational number.  Then the following are equivalent.
\begin{enumerate}
\item $D$ has $\mu$-EF-growth with respect to $\mathscr L$.
\item Let $n$ be a positive integer such that $n\mu\in\mathbb Z$.
Then there are base-point-free linear subspaces $V_1,\dots,V_\ell$
of $H^0(X,\mathscr L)$, all of dimension $\dim X+1$, and corresponding bases
$\mathcal B_1,\dots,\mathcal B_\ell$ of $V_1,\dots,V_\ell$, respectively,
such that
\begin{equation}\label{eq_mu_ef_growth_ii}
  n\bigvee_{i=1}^\ell (\mathcal B_i) \ge n\mu D
\end{equation}
relative to the cone of effective b-Cartier b-divisors.
\item There are base-point-free linear subspaces $V_1,\dots,V_\ell$
of $H^0(X,\mathscr L)$, all of dimension $\dim X+1$;
bases $\mathcal B_1,\dots,\mathcal B_\ell$ of $V_1,\dots,V_\ell$, respectively;
Weil functions $\lambda_{\mathcal B_1},\dots,\lambda_{\mathcal B_\ell}$
for the divisors $(\mathcal B_1),\dots,(\mathcal B_\ell)$, respectively;
a Weil function $\lambda_D$ for $D$; and an $M_k$-constant $c$ such that
\begin{equation}\label{eq_mu_ef_growth_iii}
  \max_{1\le i\le\ell}\lambda_{\mathcal B_i} \ge \mu\lambda_D - c
\end{equation}
(as functions $X(M_k)\to\mathbb R\cup\{+\infty\}$).
\item For each $v\in M_k$ there are base-point-free linear subspaces
$V_1,\dots,V_\ell$ of $H^0(X,\mathscr L)$, all of dimension $\dim X+1$;
bases $\mathcal B_1,\dots,\mathcal B_\ell$ of $V_1,\dots,V_\ell$, respectively;
local Weil functions
$\lambda_{\mathcal B_1,v},\dots,\lambda_{\mathcal B_\ell,v}$ at $v$
for the divisors $(\mathcal B_1),\dots,(\mathcal B_\ell)$, respectively;
a local Weil function $\lambda_{D,v}$ for $D$ at $v$; and a constant $c$
such that
\begin{equation}\label{eq_mu_ef_growth_iv}
  \max_{1\le i\le\ell}\lambda_{\mathcal B_i,v} \ge \mu\lambda_{D,v} - c
\end{equation}
(as functions $X(\overline k_v)\to\mathbb R\cup\{+\infty\}$).
\item The condition of (iv) holds for at least one place $v$.
\end{enumerate}
\end{proposition}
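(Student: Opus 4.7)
The plan is to imitate the proof of Proposition \ref{prop_mu_b_growth_equivs} essentially verbatim, carrying along the additional requirement that each subspace $V_i$ be base-point-free of dimension $\dim X+1$. That requirement is a property of the $V_i$ themselves rather than of the inequality they must satisfy, and none of the equivalences alter which $V_i$ are used, so it is preserved throughout.

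First I would handle the easy block (ii) $\Leftrightarrow$ (iii) $\Leftrightarrow$ (iv) $\Leftrightarrow$ (v). By Proposition \ref{prop_b_div_lattice}(c), the join $\bigvee_i (\mathcal B_i)$ in the lattice of b-Cartier b-divisors on $X$ corresponds, under the isomorphism of Proposition \ref{prop_b_div_vs_b_Weil}(e), to the class of $\max_i\lambda_{\mathcal B_i}$ modulo $M_k$-bounded functions; so (\ref{eq_mu_ef_growth_ii}) and (\ref{eq_mu_ef_growth_iii}) say the same thing once the $V_i$ and $\mathcal B_i$ are fixed. Then Proposition \ref{weil_eff} shows that the single inequality (\ref{eq_mu_ef_growth_iii}) of Weil functions (up to an $M_k$-constant) is equivalent to local boundedness-from-below at every place $v$ and equivalent to local boundedness-from-below at some one place, yielding (iii) $\Leftrightarrow$ (iv) $\Leftrightarrow$ (v).

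For (i) $\Rightarrow$ (iii), I would start with the model $\phi\colon Y\to X$ provided by Definition \ref{def_mu_ef_growth}. By quasi-compactness of $Y$, only finitely many open neighborhoods $U_1,\dots,U_\ell$ are needed, and for each $U_i$ we may fix a base-point-free subspace $V_i\subseteq H^0(X,\mathscr L)$ of dimension $\dim X+1$ together with a basis $\mathcal B_i$ such that $\phi^{*}(\mathcal B_i)-\mu\phi^{*}D$ is effective on $U_i$. Choose Weil functions $\lambda_{\mathcal B_i}$ and $\lambda_D$, fix $n$ with $n\mu\in\mathbb Z$, and apply Lemma \ref{weil_max_fancy} to the effective-on-$U_i$ Cartier divisors $n\phi^{*}(\mathcal B_i)-n\mu\phi^{*}D$ and the cover $\{U_i\}$ of $Y$. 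This furnishes an $M_k$-constant $\gamma$ with $\max_i\bigl(n\phi^{*}\lambda_{\mathcal B_i}-n\mu\phi^{*}\lambda_D\bigr)\ge\gamma$, and since the pull-back of a Weil function along the birational morphism $\phi$ is again a Weil function for the pulled-back divisor on $Y$ (and Weil functions on $X$ and $Y$ agree on the common open set where $\phi$ is an isomorphism), we descend to the desired inequality on $X(M_k)$ with $c=-\gamma/n$.

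Finally, (ii) $\Rightarrow$ (i) is a direct application of Lemma \ref{lemm_ii_impl_i} with $D_i=(\mathcal B_i)$: the lemma produces a model $\phi\colon Y\to X$ such that for each $Q\in Y$ some index $i$ satisfies $\phi^{*}(\mathcal B_i)\ge\mu\phi^{*}D$ in a Zariski-open neighborhood of $Q$; since the $V_i$ were base-point-free of dimension $\dim X+1$ by hypothesis of (ii), condition (i) holds with that same $V_i$ and $\mathcal B_i$. I do not foresee any serious obstacle; the only point demanding care is the bookkeeping that the auxiliary properties (base-point-freeness, dimension $\dim X+1$) ride along through each step without being disturbed, which is immediate since the $V_i$ are never modified once chosen.
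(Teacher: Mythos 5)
Your proof is correct and follows the paper's argument closely. The only divergence is in the forward direction: you establish (i) $\Rightarrow$ (iii) by transplanting the proof of Proposition \ref{prop_nevprime_weil} (quasi-compactness, Lemma \ref{weil_max_fancy}, then descent from $Y(M_k)$ to $X(M_k)$), whereas the paper proves (i) $\Rightarrow$ (ii) directly in the b-divisor lattice by choosing a normal model where the join $\bigvee(\mathcal B_i)$ is represented by a Cartier divisor $E$, observing that $nn'E \ge nn'\mu\phi^*D$ on each $U_i$ for suitable $n,n'$, and concluding by normality; the two arguments are the same in substance and either one closes the logical cycle given the equivalence (ii) $\Leftrightarrow$ (iii).
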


\begin{proof}
Conditions (ii) and (iii) are equivalent by
Proposition \ref{prop_b_div_lattice}, and (iii)--(v) are equivalent
by Proposition \ref{weil_eff}.
The implication (ii)$\implies$(i) follows from Lemma \ref{lemm_ii_impl_i},
with $D_i=(\mathcal B_i)$ for all $i$.

To finish the proof, it will suffice to show that (i) implies (ii).

Assume that condition (i) is true.  Let $\phi\colon Y\to X$ be a model
that satisfies the condition of Definition \ref{def_mu_ef_growth}.
By quasi-compactness of $Y$,
we may assume that only finitely many triples $(U,V,\mathcal B)$ occur.
Let $(U_1,V_1,\mathcal B_1),\dots,(U_\ell,V_\ell,\mathcal B_\ell)$
be those triples.  We may assume that $\bigvee(\mathcal B_i)$ is represented
by a Cartier divisor $E$ on $Y$, and that $Y$ is normal.
Let $n$ be a positive integer such that $n\mu\in\mathbb Z$, and let $n'$
be a positive integer such that $nn'\phi^{*}(\mathcal B_i)-nn'\mu\phi^{*}D$
is effective for all $i$.  Then, for each $i$,
$$nn'E\big|_{U_i} \ge nn'\phi^{*}(\mathcal B_i)\big|_{U_i}
  \ge nn'\mu\phi^{*}D\big|_{U_i}$$
relative to the cone of effective Cartier divisors on $U_i$,
and this implies (\ref{eq_mu_ef_growth_ii}) since $Y$ is normal.
\end{proof}

\begin{definition}\label{def_nevef}
Let $X$ be a complete variety, let $D$ be an effective Cartier divisor
on $X$, and let $\mathscr L$ be a line sheaf on $X$.  If $X$ is normal, then
we define
$$\NevEF(\mathscr L,D) = \inf_{N,\mu} \frac{\dim X+1}{\mu}\;,$$
where the infimum passes over all pairs $(N,\mu)$ such that
$N\in\mathbb Z_{>0}$, $\mu\in\mathbb Q_{>0}$, and $ND$ has $\mu$-EF-growth
with respect to $\mathscr L^N$.
If there are no such pairs $(N,\mu)$, then $\NevEF(\mathscr L,D)$ is defined
to be $+\infty$. For a general complete variety $X$, $\NevEF(\mathscr L,D)$
is defined by pulling back to the normalization of $X$.

If $L$ is a Cartier divisor or Cartier divisor class on $X$, then
we define $\NevEF(L,D)=\NevEF(\mathscr O(L),D)$.  We also define
$\NevEF(D)=\NevEF(D,D)$.
\end{definition}

\begin{remark}\label{remk_EF_D_linear}
As in Remark \ref{remk_D_linear}, if $n$ is a positive integer then
$$\NevEF(\mathscr L,nD) = n\NevEF(\mathscr L,D)\;.$$
\end{remark}

The two parts of Proposition \ref{falt_main_prop} say that
$\beta\sum Z_{ij}=(\beta/2)M$ has $\alpha$-EF-growth and $(\alpha/2)$-EF-growth,
respectively, with respect to $\mathscr O(\beta M)$, and therefore
\[
  \NevEF\left(\beta M,\frac\beta2M\right) \le \frac3\alpha
    \qquad\text{and}\qquad
    \NevEF\left(\beta M,\frac\beta2M\right) \le \frac4\alpha\;,
\]
respectively, on $Y$ and $C$, respectively.  By Remark \ref{remk_EF_D_linear},
these become
\begin{equation}\label{eq_falt_nevef}
  \NevEF(\beta M) \le \frac6\alpha \qquad\text{and}\qquad
    \NevEF(\beta M) \le \frac8\alpha\;,
\end{equation}
respectively.

The main result of this section, Theorem \ref{ef_thmc}, is proved by
first showing that $\Nevbir(\mathscr L,D) \le \NevEF(\mathscr L,D)$
(Theorem \ref{thm_ineq_b_nev_ef_nev}).  This, in turn, reduces to
comparing the properties of $\mu$-b-growth and $\mu$-EF-growth.

\begin{proposition}\label{prop_b_vs_ef_growth}
Let $X$ be a normal variety over a number field $k$, let $D$ be an effective
Cartier divisor on $X$, let $\mathscr L$ be a line sheaf on $X$, and let
$\mu>0$ be a rational number.  Assume that $D$ has $\mu$-EF-growth with
respect to $\mathscr L$.  Then for all $\epsilon>0$,
there are a positive integer $m$, a rational number $\nu$, and
a linear subspace $V\subseteq H^0(X,\mathscr L^m)$ such that
$mD$ has $\nu$-b-growth with respect to $V$ and $\mathscr L^m$,
and such that
\begin{equation}\label{eq_b_vs_ef_nu_bound}
  \frac{\dim V}{\nu} \le \frac{\dim X + 1}{\mu} + \epsilon\;.
\end{equation}
\end{proposition}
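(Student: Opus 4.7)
My plan is as follows.

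First, I would apply Proposition~\ref{prop_mu_ef_growth_equivs}(ii) to the $\mu$-EF-growth hypothesis, extracting base-point-free linear subspaces $V_1,\dots,V_\ell\subseteq H^0(X,\mathscr L)$, each of dimension $n+1$ (writing $n=\dim X$), together with bases $\mathcal B_i$ of $V_i$ and a model $\phi\colon Y\to X$ on which the local inequalities $\phi^{*}(\mathcal B_i)\ge\mu\phi^{*}D$ hold on an open cover $\{U_i\}$ of $Y$, so that $\bigvee_i(\mathcal B_i)\ge\mu D$ in the lattice of b-Cartier b-$\mathbb Q$-divisors.

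Second, I would pass to $m$-th symmetric powers. For a positive integer $m$, let $\mathcal B_i^{(m)}$ be the set of $\binom{m+n}{n}$ monomials of degree $m$ in the elements of $\mathcal B_i$, viewed as sections of $\mathscr L^m$; a direct computation gives
$$(\mathcal B_i^{(m)})=\frac{m\binom{m+n}{n}}{n+1}(\mathcal B_i),$$
so $\bigvee_i(\mathcal B_i^{(m)})\ge \frac{m\binom{m+n}{n}\mu}{n+1}D$. I would take $V\subseteq H^0(X,\mathscr L^m)$ to be the span of $\bigcup_i\mathcal B_i^{(m)}$ (equivalently $\sum_i W_i$ with $W_i=\mathrm{image}(\mathrm{Sym}^m V_i))$, and set $\nu=\binom{m+n}{n}\mu/(n+1)$. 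For each $Q\in U_i$ one chooses a basis of $V$ by first selecting linearly independent monomials in $\mathcal B_i^{(m)}$ spanning $W_i$, then extending by sections from $\sum_{j\ne i}W_j$; since the added sections contribute only effective divisors on pullback, the inequality $\phi^{*}(\mathrm{basis})\ge\nu m\phi^{*}D$ persists near $Q$, yielding $\nu$-b-growth for $mD$ with respect to $V$ and $\mathscr L^m$.

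The principal obstacle is showing $\dim V/\nu\le(n+1)/\mu+\epsilon$ for $m$ large. The naive estimate $\dim V\le\sum\dim W_i\le\ell\binom{m+n}{n}$ yields only $\ell(n+1)/\mu$, which is insufficient when $\ell>1$. Removing the factor $\ell$ requires exploiting the large overlaps among the $W_i$'s inside $H^0(X,\mathscr L^m)$ together with Mumford's degree-of-contact (Chow/Hilbert weight) estimates, following Evertse--Ferretti, applied to the morphisms $\phi_i\colon X\to\mathbb P^n$ determined by the $V_i$. An appropriate refinement of the choice of $V$, combined with a careful asymptotic analysis of $\dim V/\binom{m+n}{n}$ as $m\to\infty$, should give the sharp bound $(n+1)/\mu+\epsilon$. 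This delicate asymptotic step is the technical heart of the proof.
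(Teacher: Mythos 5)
You correctly identify the right tool (Mumford's degree of contact / Chow and Hilbert weights, as in Evertse--Ferretti), and your preliminary observations about symmetric powers are accurate, but the proposal stops at exactly the point where the real work begins. You write that "an appropriate refinement of the choice of $V$ \dots should give the sharp bound $(n+1)/\mu+\epsilon$" and call this "the technical heart of the proof" --- but that is the whole content of the proposition. What you have actually shown is only the weak bound $\ell(n+1)/\mu$, and you have not indicated how the degree-of-contact estimate removes the factor $\ell$, nor how it interacts with your specific choice $V=\sum_i W_i$.

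Moreover, the setup you choose makes the step you are missing harder than necessary, and does not match how the paper closes the gap. The paper does \emph{not} take separate morphisms $\phi_i\colon X\to\mathbb P^n$ for each $V_i$ and does not set $V=\sum_i W_i$ with $W_i=\operatorname{image}(\operatorname{Sym}^m V_i)$. Instead it takes \emph{all} the sections $s_1,\dots,s_q\in\bigcup_i\mathcal B_i$ at once, forms the single morphism $\Phi\colon X\to\mathbb P^{q-1}$ they determine, lets $Y=\Phi(X)$ with $n=\dim Y$ (which may be $<\dim X$), and sets $V$ to be the pullback of the complete linear system $H^0(Y,\mathscr O(m))$; so $\dim V=H_Y(m)$ by construction. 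The degree-of-contact inequality
\[
  c_{j_0}+\dots+c_{j_n}\le\frac{n+1}{m}\,\frac{S_Y(m,\mathbf c)}{H_Y(m)}\Bigl(1+\frac Cm\Bigr)
\]
is then applied with $\mathbf c$ chosen to be the vector of local Weil function values $(\lambda_{(s_j),v}(x))_j$. Combining this with the $\mu$-EF-growth hypothesis (which supplies an index $i$ and a basis $\mathcal B_i=\{s_{j_0},\dots,s_{j_n}\}$ with $c_{j_0}+\dots+c_{j_n}\ge\mu\lambda_{D,v}(x)+O(1)$) gives directly
\[
  S_Y(m,\mathbf c)\ge\nu m\,\lambda_{D,v}(x)+O(1),\qquad
  \nu=\frac{\mu H_Y(m)}{(n+1)(1+C/m)},
\]
so that $\dim V/\nu=(n+1)(1+C/m)/\mu\le(\dim X+1)(1+C/m)/\mu$, which is $\le(\dim X+1)/\mu+\epsilon$ for $m$ large. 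Finally, the definition of the Hilbert weight $S_Y(m,\mathbf c)$ as a maximum over bases of $V$ of weighted degree sums identifies $S_Y(m,\mathbf c)$ with $\max_i\lambda_{(\mathcal B_i),v}(\Phi(x))$ for suitable bases $\mathcal B_i$ of $V$, yielding $\nu$-b-growth via condition (iv) of Proposition~\ref{prop_mu_b_growth_equivs}. Notice that your value $\nu=\binom{m+n}{n}\mu/(n+1)$ differs from the correct $\nu$: the Hilbert polynomial $H_Y(m)$ of the image, not $\binom{m+n}{n}$, must appear, and the correction $(1+C/m)$ cannot be avoided. In short: the idea of reaching for degree of contact is right, but you need the single-embedding formulation for the estimate to click, and the part you defer is not a technicality to be filled in --- it is the proof.
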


\begin{proof}
Assume that $D$ has $\mu$-EF-growth with respect to $\mathscr L$.
Let $V_1,\dots,V_\ell$; $\mathcal B_1,\dots,\mathcal B_\ell$;
$\lambda_{\mathcal B_1},\dots,\lambda_{\mathcal B_\ell}$; and $\lambda_D$
be as in condition (iii) of Proposition \ref{prop_mu_ef_growth_equivs}.

Let $\{s_1,\dots,s_q\}$ be the elements of $\bigcup_i\mathcal B_i$.
These sections determine a morphism $\Phi\colon X\to\mathbb P^{q-1}$.
Let $Y$ be the image.  Note that $\Phi$ need not be a closed immersion;
in fact, it may happen that $\dim Y<\dim X$.  Let $n=\dim Y$.

From the theory of degree of contact
(see \cite[Sects.~3--4]{ef_imrn} or \cite[Sect.~3]{ru} for the definitions),
there is a constant $C>0$ such that the inequality
\begin{equation}\label{eq_ef_vs_b_1}
  c_{j_0}+\dots+c_{j_n}
  \le \frac{n+1}{m}\frac{S_Y(m,\mathbf c)}{H_Y(m)}\left(1+\frac Cm\right)
\end{equation}
holds for all integers $m>0$, all $q$-tuples $\mathbf c\in\mathbb R_{\ge0}^q$,
and all $(j_0,\dots,j_n)$ for which the divisors $(s_{j_0}),\dots,(s_{j_n})$
are base-point-free on $X$.

Fix a place $v$ of $k$.

For each $j=1,\dots,q$, choose a local Weil function $\lambda_{(s_j),v}$ for
the divisor $(s_j)$ at $v$.  Since each $(s_j)$ is effective, we may assume
that each $\lambda_{(s_j)}$ is nonnegative.


We will apply (\ref{eq_ef_vs_b_1}) with
$\mathbf c=(\lambda_{(s_1),v}(x),\dots,\lambda_{(s_q),v}(x))$
for some $x\in X(\overline k_v)$.
By (\ref{eq_mu_ef_growth_iv}), there is an index $i$ such that
\begin{equation}\label{eq_ef_vs_b_2}
  \lambda_{\mathcal B_i,v}(x) \ge \mu\lambda_{D,v}(x) + O(1)\;,
\end{equation}
where the implicit constant does not depend on $x$ (or $i$).
Write $\mathcal B_i=\{s_{j_0},\dots,s_{j_n}\}$.  Then
\begin{equation}\label{eq_ef_vs_b_3}
  c_{j_0}+\dots+c_{j_n}
    = \lambda_{(s_{j_0}),v}(x)+\dots+\lambda_{(s_{j_n}),v}(x)
    = \lambda_{\mathcal B_i,v}(x) + O(1)\;,
\end{equation}
where the implicit constant does not depend on $x$.
Since there are only finitely many possible values for $i$, the constant may
also be taken independent of $i$.

Combining (\ref{eq_ef_vs_b_1}), (\ref{eq_ef_vs_b_3}), and (\ref{eq_ef_vs_b_2})
gives
\[
  \begin{split} S_Y(m,\mathbf c)
    &\ge \frac{mH_Y(m)}{(n+1)(1+C/m)}\mu\lambda_{D,v}(x) + O(1) \\
    &= \nu m\lambda_{D,v}(x) + O(1)\;,
  \end{split}
\]
where again the implicit constant does not depend on $x$, and
$$\nu = \frac{\mu H_Y(m)}{(n+1)(1+C/m)}\;.$$
Let $V$ be the pull-back of $H^0(Y,\mathscr O(m))$.  Then $\dim V=H_Y(m)$.
Since $\dim X\ge n$, we have
\[
  \frac{\dim V}{\nu} = \frac{(n+1)(1+C/m)}{\mu}
    \le \frac{\dim X+1}{\mu}\left(1+\frac Cm\right)\;.
\]
Thus (\ref{eq_b_vs_ef_nu_bound}) holds for sufficiently large $m$.

On the other hand, by the definition of $S_Y(m,\mathbf c)$ and
our choice of $\mathbf c$, there are bases $\mathcal B_1,\dots,\mathcal B_r$
of $V$ and corresponding local Weil functions
$\lambda_{\mathcal B_1,v},\dots,\lambda_{\mathcal B_r,v}$ such that
\[
  S_Y(m,\mathbf c) = \max_{1\le i\le r}\lambda_{(\mathcal B_i),v}(\Phi(x))
\]
for all $x\in X(\overline k_v)$.
Thus, after pulling the bases back to $V$ and the local Weil functions back
to $X$, we see that $S_Y(m,\mathbf c)$ equals the left-hand side of
(\ref{eq_mu_b_growth_iv}), and hence $mD$ has $\nu$-b-growth
with respect to $V$ and $\mathscr L^m$.
\end{proof}

This proposition then leads quickly to the main results of the section.

\begin{theorem}\label{thm_ineq_b_nev_ef_nev}
Let $X$ be a variety over a number field $k$, let $D$ be an effective Cartier
divisor on $X$, and let $\mathscr L$ be a line sheaf on $X$.  Then
\begin{equation}\label{ineq_b_nev_ef_nev}
  \Nevbir(\mathscr L,D) \le \NevEF(\mathscr L,D)\;.
\end{equation}
\end{theorem}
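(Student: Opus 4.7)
My plan is to deduce the inequality directly from Proposition \ref{prop_b_vs_ef_growth}, which contains the substantive content (the application of Mumford's theory of degree of contact, following Evertse--Ferretti). After that comparison between the two growth conditions has been established, the rest of the argument is essentially bookkeeping about infima.

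First I would reduce to the case in which $X$ is normal. Both $\Nevbir(\mathscr L, D)$ and $\NevEF(\mathscr L, D)$ are defined, for a general complete variety $X$, by pulling back to the normalization $\pi\colon\tilde X\to X$, so it suffices to prove the inequality with $X$, $\mathscr L$, $D$ replaced by $\tilde X$, $\pi^{*}\mathscr L$, $\pi^{*}D$, which we may assume from now on. We may also assume that $\NevEF(\mathscr L,D)<+\infty$, since otherwise there is nothing to prove.

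Next, fix $\epsilon>0$ and choose, using the definition of $\NevEF(\mathscr L,D)$, a positive integer $N$ and a rational $\mu>0$ such that $ND$ has $\mu$-EF-growth with respect to $\mathscr L^N$ and
\[
  \frac{\dim X+1}{\mu} \le \NevEF(\mathscr L,D) + \tfrac{\epsilon}{2}\;.
\]
Now apply Proposition \ref{prop_b_vs_ef_growth} with $D$ replaced by $ND$ and $\mathscr L$ replaced by $\mathscr L^N$: there exist a positive integer $m$, a rational $\nu>0$, and a linear subspace $V\subseteq H^0(X,\mathscr L^{Nm})$ such that $mND$ has $\nu$-b-growth with respect to $V$ and $\mathscr L^{Nm}$, and
\[
  \frac{\dim V}{\nu} \le \frac{\dim X+1}{\mu} + \tfrac{\epsilon}{2}\;.
\]
Moreover, as the proof of Proposition \ref{prop_b_vs_ef_growth} shows, $V$ arises as the pull-back of $H^0(Y,\mathscr O(m))$ for some positive-dimensional projective variety $Y$, so by taking $m$ large enough we may ensure $\dim V>1$ (this is harmless, since enlarging $m$ only strengthens the bound through Proposition \ref{prop_b_vs_ef_growth}).

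Setting $N'=Nm$, the triple $(N',V,\nu)$ now satisfies all the conditions appearing in Definition \ref{def_nevprime} of $\Nevbir(\mathscr L,D)$: $V$ is a subspace of $H^0(X,\mathscr L^{N'})$ with $\dim V>1$, and $N'D=mND$ has $\nu$-b-growth with respect to $V$ and $\mathscr L^{N'}$. Hence
\[
  \Nevbir(\mathscr L,D) \le \frac{\dim V}{\nu}
    \le \frac{\dim X+1}{\mu} + \tfrac{\epsilon}{2}
    \le \NevEF(\mathscr L,D) + \epsilon\;.
\]
Letting $\epsilon\to 0$ gives (\ref{ineq_b_nev_ef_nev}). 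The main obstacle, namely converting an EF-growth datum (with base-point-free sublinear systems of dimension $\dim X+1$) into a b-growth datum for a suitable linear subspace on a higher tensor power, has already been dispatched by Proposition \ref{prop_b_vs_ef_growth}; what remains is purely a matter of packaging indices.
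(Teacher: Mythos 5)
Your proposal is correct and follows essentially the same route as the paper's own proof: reduce to the normal case, assume $\NevEF(\mathscr L,D)<\infty$, choose $(N,\mu)$ close to the infimum, feed it through Proposition \ref{prop_b_vs_ef_growth}, and take $\epsilon\to 0$. The one genuine addition is your remark that $\dim V>1$ must be verified before the triple $(Nm,V,\nu)$ qualifies in Definition \ref{def_nevprime}; the paper leaves this implicit, and your observation that $V$ pulls back from $H^0(Y,\mathscr O(m))$ with $H_Y(m)\to\infty$ as $m\to\infty$ correctly closes that small gap.
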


\begin{proof}
We may assume that $X$ is normal, since the definitions of $\Nevbir$
and $\NevEF$ both handle the general case by pulling back to the normalization.
We may also assume that $\NevEF(\mathscr L,D)<\infty$
(otherwise there is nothing to prove).

Let $\epsilon>0$.  By definition of $\NevEF$, there is a pair $(N,\mu)$
with $N\in\mathbb Z_{>0}$ and $\mu\in\mathbb Q_{>0}$, such that
$ND$ has $\mu$-EF-growth with respect to $\mathscr L^N$ and such that
$$\frac{\dim X + 1}{\mu} < \NevEF(\mathscr L,D) + \epsilon\;.$$
By Proposition \ref{prop_b_vs_ef_growth}, there are a positive integer $m$,
a rational number $\nu$, and a linear subspace
$V\subseteq H^0(X,\mathscr L^{mN})$ such that $mND$ has $\nu$-b-growth
with respect to $V$ and $\mathscr L^{mN}$, and such that
$$\frac{\dim V}{\nu} \le \frac{\dim X + 1}{\mu} + \epsilon\;.$$
We then have
$$\Nevbir(\mathscr L,D) \le \frac{\dim V}{\nu}
  < \NevEF(\mathscr L,D) + 2\epsilon\;,$$
and the proof concludes by letting $\epsilon$ go to zero.
\end{proof}

\begin{theorem}\label{ef_thmc}
Let $k$ be a number field, and let $S$ be a finite set of places of $k$
containing all archimedean places.
Let $X$ be a projective variety over $k$, and let $D$ be an effective
Cartier divisor on $X$.  Then, for every $\epsilon>0$,
there is a proper Zariski-closed subset $Z$ of $X$ such that the inequality
\begin{equation}
  m_S(x, D) \le \left(\NevEF(D)+\epsilon\right) h_D(x)
\end{equation}
holds for all $x\in X(k)$ outside of $Z$.
\end{theorem}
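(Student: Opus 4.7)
The plan is simply to combine Theorem \ref{thm_ineq_b_nev_ef_nev} with Theorem \ref{b_thmc}, both specialized to the line sheaf $\mathscr L = \mathscr O(D)$. Since $D$ is effective, the canonical section $1_D$ gives $h^0(\mathscr O(D)) \ge 1$, so the hypothesis $h^0(\mathscr L^N) \ge 1$ for some $N > 0$ in Theorem \ref{b_thmc} is automatic. Recall also that by the definitions, $\Nevbir(D) = \Nevbir(\mathscr O(D), D)$ and $\NevEF(D) = \NevEF(\mathscr O(D), D)$, and $h_{\mathscr O(D)} = h_D$ up to $O(1)$.

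First, by Theorem \ref{thm_ineq_b_nev_ef_nev} applied with $\mathscr L = \mathscr O(D)$,
\[
  \Nevbir(D) \;=\; \Nevbir(\mathscr O(D), D) \;\le\; \NevEF(\mathscr O(D), D) \;=\; \NevEF(D).
\]
If $\NevEF(D) = +\infty$ there is nothing to prove, so we may assume $\NevEF(D)$ is finite. Next, for any $\epsilon > 0$, apply Theorem \ref{b_thmc} with $\mathscr L = \mathscr O(D)$: there exists a proper Zariski-closed subset $Z \subset X$ such that
\[
  m_S(x, D) \;\le\; \bigl(\Nevbir(D) + \epsilon\bigr)\, h_D(x)
\]
for all $x \in X(k) \setminus Z$. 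Combining this with the inequality $\Nevbir(D) \le \NevEF(D)$ yields
\[
  m_S(x, D) \;\le\; \bigl(\NevEF(D) + \epsilon\bigr)\, h_D(x)
\]
outside $Z$, which is the desired inequality.

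There is no real obstacle here, since all the hard work has been done: Theorem \ref{b_thmc} supplies the diophantine approximation statement for $\Nevbir$, while Theorem \ref{thm_ineq_b_nev_ef_nev} provides the comparison between the two Nevanlinna-type constants. The point of isolating $\NevEF$ as a separate notion is precisely that in geometric situations such as the Faltings example of Sections \ref{falt_geom}--\ref{falt_finis}, one can effectively bound $\NevEF$ from above via Mumford's degree-of-contact estimates (as in (\ref{eq_falt_nevef})), whereas bounding $\Nevbir$ directly would be less convenient. Thus Theorem \ref{ef_thmc} is the natural end-user form of the diophantine inequality in such applications.
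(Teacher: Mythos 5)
Your proposal is correct and takes exactly the same route as the paper, which simply cites inequality (\ref{ineq_b_nev_ef_nev}) together with Theorem \ref{b_thmc}; you've merely spelled out the specialization to $\mathscr L = \mathscr O(D)$ and the trivial verification of the hypothesis $h^0(\mathscr L^N)\ge 1$.
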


\begin{proof}
This is immediate from (\ref{ineq_b_nev_ef_nev}) and Theorem \ref{b_thmc}.
\end{proof}

\begin{corollary}\label{cor_nev_ef_int_pts}
Let $X$ be a projective variety, and let $D$ be an ample Cartier divisor
on $X$.  If $\NevEF(D)<1$ then there is a proper Zariski-closed subset $Z$
of $X$ such that any set of $D$-integral points on $X$ has only finitely many
points outside of $Z$.
\end{corollary}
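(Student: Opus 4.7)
The plan is to deduce this corollary as a straightforward consequence of Theorem \ref{ef_thmc}, combined with the defining property of integral points and Northcott's finiteness theorem.

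First I would unpack what a set $R$ of $D$-integral points on $X$ means: $R$ is a subset of $X(k)$ (for some number field $k$) such that, for a fixed choice of Weil function $\lambda_D$ and a finite set of places $S \supseteq S_\infty$, there is an $M_k$-constant $(c_v)$ with $\lambda_{D,v}(x) \le c_v$ for all $x \in R$ and all $v \notin S$. Summing over all places and using that $h_D(x) = \sum_{v \in M_k}\lambda_{D,v}(x) + O(1)$, this implies that on $R$ we have the identity
\begin{equation*}
  m_S(x, D) = h_D(x) + O(1).
\end{equation*}

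Next, since $\NevEF(D) < 1$ by hypothesis, I would choose $\epsilon > 0$ small enough that $\NevEF(D) + \epsilon < 1$, and apply Theorem \ref{ef_thmc} to produce a proper Zariski-closed subset $Z \subset X$ for which
\begin{equation*}
  m_S(x, D) \le (\NevEF(D) + \epsilon)\, h_D(x)
\end{equation*}
holds for every $x \in X(k) \setminus Z$. Combining this with the integrality identity above yields
\begin{equation*}
  (1 - \NevEF(D) - \epsilon)\, h_D(x) \le O(1)
\end{equation*}
for all $x \in R \setminus Z$, so $h_D(x)$ is bounded on $R \setminus Z$.

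Finally, since $D$ is ample, $h_D$ is (up to $O(1)$) equivalent to a height associated to a very ample multiple of $D$, hence satisfies the Northcott finiteness property: there are only finitely many points of $X(k)$ of bounded $h_D$-height. This forces $R \setminus Z$ to be finite, completing the proof. The argument has no real obstacle; the only minor point to verify carefully is that the integrality condition on $R$ indeed converts $m_S(x,D)$ into $h_D(x) + O(1)$, which is immediate from the definitions of Weil function, height, and integrality via $M_k$-constants.
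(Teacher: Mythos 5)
The paper states this corollary without giving an explicit proof, leaving it as the standard deduction from Theorem \ref{ef_thmc}. Your argument supplies exactly the expected chain of reasoning --- integrality of $R$ makes the counting function $\sum_{v\notin S}\lambda_{D,v}$ bounded on $R$, so $m_S(x,D)=h_D(x)+O(1)$ there; Theorem \ref{ef_thmc} then forces $h_D$ to be bounded on $R\setminus Z$; and Northcott's property for the ample divisor $D$ (on the fixed number field $k$) yields finiteness --- and it is correct.
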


Finally, we are able to prove Theorem \ref{falt_dioph}.

\begin{proof}[Proof of Theorem \ref{falt_dioph}]
(a).  By (\ref{eq_falt_nevef}) and the assumption $\alpha>6$, we have
$$\NevEF(\beta M) \le \frac6\alpha < 1\;.$$
The result then follows by Corollary \ref{cor_nev_ef_int_pts}.

(b).  Let $Z$ be the Zariski closure of a set of $D$-integral points on $Y$.
By part (a), $Z\ne Y$, so it will suffice to show that no irreducible component
of $Z$ can be a curve.  This holds because, on any curve $C$ in $Y$
not contained in $\Supp M$,
$$\NevEF(\beta M) \le \frac8\alpha < 1\;,$$
and we conclude as before.
\end{proof}

\printbibliography

\end{document}